\documentclass[a4paper,10pt,reqno]{amsart}
          \usepackage{amssymb}
	  \usepackage{amsmath}
          \usepackage{amsfonts}
          \usepackage[english]{babel}
          \usepackage[utf8]{inputenc}
\usepackage{bbold}
\usepackage{calrsfs}

% % to write a little more text per pages
\usepackage[margin=3cm]{geometry}

% Couleurs
\usepackage{color}
% Hyperlinks
 \usepackage[unicode,colorlinks,plainpages=false,hyperindex=true,bookmarksnumbered=true,bookmarksopen=true,pdfpagelabels]{hyperref}
\hfuzz1pc % Don't bother to report overfull boxes if overage is < 1pc
%Showing labels 
%\usepackage[notref,notcite]{showkeys}

%\renewcommand{\sectionautorefname}{\S} %Autoref avec \S

\makeatletter
\renewcommand*{\p@section}{\S\,}
\renewcommand*{\p@subsection}{\S\,}
\renewcommand*{\p@subsubsection}{\S\,}
\makeatother

% For diagrams using Tikz
\usepackage{tikz}
 \usetikzlibrary{arrows}

% equation counter will be reset at the start of each section
\numberwithin{equation}{section}

\newtheorem{theorem}{Theorem}[section]
\newtheorem{lemma}[theorem]{Lemma}
\newtheorem{prop}[theorem]{Proposition}
\newtheorem{cor}[theorem]{Corollary}
\theoremstyle{definition}
\newtheorem{defi}[theorem]{Definition}

\theoremstyle{remark}
\newtheorem{remark}[theorem]{Remark}

\numberwithin{equation}{section}

\newcommand{\la}{\label}
\newcommand{\GL}{\mathtt{GL}}

\newcommand{\Ind}{\mathrm{Ind}}
\newcommand{\Mat}{\mathrm{Mat}}

\newcommand{\End}{\mathtt{End}}

\newcommand{\ov}{\overline}
\newcommand{\id}{\mathrm{id}}

\def\c{\mathbb{C}}
\def\R{\mathbb{R}}

\def\Z{\mathbb{Z}}

\def\N{\mathbb{N}}

\def\D{\mathcal{D}}
\def\tr{\mathrm{tr}}
\def\VV{\widehat V}
\def\vv{\mathbf{v}}
\def\ww{\mathbf{w}}
\def\uu{\mathbf{u}}
\def\WW{W_a}
\def\Wh{\widehat W}
\def\Ah{A_\hbar}
\def\aalpha{\widetilde\alpha}
\def\HH{\widehat{\mathfrak H}}
\def\RR{R}
\def\RH{\widehat{R}}
\def\TH{\mathbb{T}}
\def\YH{\widehat{Y}}
\def\Ra{R_a}

\newcommand\dpr[1]{\langle #1 \rangle} 
\def\ss{\mathfrak{s}}

\begin{document}
%
%\begin{flushright}
%\textbf{Preliminary Version}
%\end{flushright}
%
%\vspace{1cm}
%
\title{Quantum Lax pairs via Dunkl and Cherednik operators}

\author{Oleg Chalykh}
\address{School of Mathematics, University of Leeds, Leeds LS2 9JT, UK}
\email{o.chalykh@leeds.ac.uk}
%\date{\today}
%

%
\begin{abstract}
We establish a direct link between Dunkl operators and quantum Lax matrices $\mathcal L$ for the Calogero--Moser systems associated to an arbitrary Weyl group $W$ (or an arbitrary finite reflection group in the rational case). This interpretation also provides a companion matrix $\mathcal A$ so that $\mathcal L, \mathcal A$ form a quantum Lax pair. Moreover, such an $\mathcal A$ can  be associated to any of the higher commuting quantum Hamiltonians of the system, so we obtain a family of quantum Lax pairs. These Lax pairs can be of various sizes, matching the sizes of orbits in the reflection representation of $W$, and in the elliptic case they contain a spectral parameter. This way we reproduce universal classical Lax pairs by D'Hoker--Phong and Bordner--Corrigan--Sasaki, and complement them with quantum Lax pairs in all cases (including the elliptic case, where they were not previously known). The same method, with the Dunkl operators replaced by the Cherednik operators, produces quantum Lax pairs for the generalised Ruijsenaars systems for arbitrary root systems. As one of the main applications, we calculate a Lax matrix for the elliptic $BC_n$ case with nine coupling constants (van Diejen system), thus providing an answer to a long-standing open problem. 
\end{abstract}

\maketitle

\section{Introduction}
%\subsection{}
The notion of a Lax pair has for a long time been  instrumental in both finite- and infinite-dimensional integrable systems, with the earliest examples given by P.~Lax and H.~Flaschka \cite{L, F}. Another famous example is the Lax pair found by J.~Moser \cite{Mo} for the classical rational Calogero--Moser system \cite{Ca}, which is a system of $n$ interacting particles on the line with coordinates $x_1,\dots, x_n$, described by the Hamiltonian
\begin{equation}\la{cm}
H=\frac12 \sum_{k=1}^n p_k^2+g^2\sum_{k<l}^n(x_k-x_l)^{-2}\,. 
\end{equation} 
Here $g$ is a coupling constant.  
The Lax presentation for this system involves two matrices $L$ and $A$ of size $n$ of the following form:
\begin{equation}\la{lp}
L_{kl}=
\begin{cases}
\mathrm{i}g(x_k-x_l)^{-1}\quad&\text{for}\ k\ne l\\
p_k\quad&\text{for}\ k=l\,,
\end{cases}
\qquad\quad
A_{kl}=
\begin{cases}
\mathrm{i}g(x_k-x_l)^{-2}\quad&\text{for}\ k\ne l\\
-\mathrm{i}g\sum_{j\ne k}^n(x_j-x_k)^{-2}\quad&\text{for}\ k=l\,.
\end{cases}
\end{equation} 
This allows presenting the equations of motion in the form
\begin{equation}\la{leq}
\frac{dL}{dt}=[A, L]\,.
\end{equation}
One immediate corollary is that $H_k=\tr\, L^k$, $k\in \N$ are conserved quantities (integrals of motion).

\medskip
%\subsection{}

The quantum Calogero--Moser system is described by a Schr\"odinger operator
\begin{equation}\la{qcm}
\widehat H=\frac12 \sum_{k=1}^n \hat{p}_k^2+ g(g-\hbar)\sum_{k<l}^{n} (x_k-x_l)^{-2}\,,\qquad \hat{p}_k=- \mathrm{i}\hbar\frac{\partial}{\partial x_k}\,. 
\end{equation}  
Quantum analogues $\mathcal L, \mathcal A$ of the above $L,A$ were introduced
in \cite{UHW} (see also \cite{BGHP, SS}). They are obtained by replacing $p_k$ by $\hat{p}_k$ and multiplying $A$ by $\mathrm{i}\hbar$:
\begin{equation}\la{qlp}
\mathcal L_{kl}=
\begin{cases}
\mathrm{i}g(x_k-x_l)^{-1}\quad&\text{for}\ k\ne l\\
\hat{p}_k\quad&\text{for}\ k=l\,,
\end{cases}
\qquad\quad
\mathcal A=\mathrm{i}\hbar A\,.
\end{equation}
These are matrices of size $n$ whose entries are partial differential operators. To write down the quantum Lax equation, introduce a diagonal matrix $\mathcal H=\widehat H\,\mathbb{1}_n$. The following can then be confirmed by a direct calculation:
\begin{equation}\la{qleq}
[\mathcal L, \mathcal H]=[\mathcal A, \mathcal L]\,.
\end{equation}
To see why this is indeed analogous to \eqref{leq}, note that \eqref{qleq} can be written as 
\begin{equation}\la{cla1}
(\mathrm{i}\hbar)^{-1}(\mathcal L_{kl} \widehat H- \widehat H\mathcal L_{kl})=[A, \mathcal L]_{kl}\,,\qquad k,l=1,\dots, n\,. 
\end{equation}
In the classical limit $\hbar\to 0$, $\mathcal L$ and $\widehat H$ reduce to $L$ and $H$, respectively, and the left-hand side reduces to the Poisson bracket $\{L_{kl}, H\}$. Thus,  
\begin{equation}\la{cla2}
\frac{d L_{kl}}{dt}=\{L_{kl}, H\}=[A,L]_{kl}\,,   
\end{equation}
which is \eqref{leq}.

\medskip
%\subsection{}

Similarly to the classical case, the above quantum Lax matrix can be used to produce first integrals for the Hamiltonian \eqref{qcm}. Namely, following \cite{UHW, SS, BGHP}, consider a pair $\vv, \ww$ of $n$-component column and row vectors
\begin{equation}\la{vw}
\vv=(1,\dots, 1)^T\,,\ \ww=(1,\dots, 1)\,. 
\end{equation}
Then we have the following easily verified properties of the matrix $\mathcal A$ in \eqref{lp}, \eqref{qlp}:
\begin{equation}\la{van}  
\ww\mathcal A=0\,,\quad \mathcal A\vv=0\,.
\end{equation}
Now define 
\begin{equation}\la{fi}
\widehat H_k=\ww\mathcal L^k\vv\,,\quad k\in\N\,.
\end{equation} 
(Note that $\widehat H_k$ is the sum of all entries of $\mathcal L^k$.) Then 
\begin{equation*}
[\widehat H, \widehat H_k]=\ww[\widehat H\,\mathbb{1}_n, \mathcal L^k]\vv=\ww[\mathcal H, \mathcal L^k]\vv=\ww[\mathcal L^k, \mathcal A]\vv=\ww\mathcal L\mathcal A \vv-\ww\mathcal A\mathcal L\vv=0\,,
\end{equation*}
where we used \eqref{van} and the relation $[\mathcal L^k, \mathcal H+\mathcal A]=0$. Therefore, $\widehat H_k$ are quantum integrals. 

\medskip

The original papers \cite{Mo, UHW} simply present the above Lax pairs but do not explain how they were found (cf. \cite{Ca1} where this was related to solving certain functional equations, leading to a Lax pair in the elliptic case). More conceptual ways of actually {\it deriving} Moser's Lax matrix have been subsequently discovered in \cite{KKS} in the framework of symplectic reduction, and in \cite{Kr2} in connection with the KP hierarchy. There is by now a vast literature devoted to various further generalisations and development of those ideas. However, the quantum Lax pairs lacked such an interpretation, although many authors remark on a similarity between Moser's Lax matrix and the Dunkl operators \cite{D} which played a pivotal role in the theory of Calogero--Moser systems since the works \cite{He1, He2, Ch, Op}.
The present paper fills that gap: as we explain, there is a direct link between the Dunkl operators and quantum Lax pairs. Our approach is inspired by an observation due to Etingof and Ginzburg, who in \cite{EG} derived the classical Moser's Lax matrix from the representation theory of Cherednik algebras. The main difference is that we work at the quantum level (and in a more general situation), so the classical Lax pairs are obtained by letting $\hbar\to 0$.
In the elliptic case we use elliptic Dunkl operators \cite{BFV, EM1} together with some important ideas from \cite{EFMV}. In this case the Lax matrices which we construct contain a spectral parameter (intrinsically linked to the theory of Dunkl operators). This way we reproduce, in a much simpler and more conceptual way, the previously known Lax pairs from the papers \cite{DHP, BCS, BMS, KPS}, as well as find some new ones (for instance, quantum Lax pairs were not known in the elliptic case). Also, our method allows us to associate a Lax partner to each of the commuting Hamiltonians of the Calogero--Moser problem, so we get a family of compatible Lax pairs. As a corollary, this gives a simple uniform proof of the fact that the classical Lax matrix $L$ remains isospectral under all of the commuting flows, implying that the functions $\tr\, L^k$ are in involution. Such a property is well-known in type $A$ \cite{Mo, P} and it is, of course, to be expected in other cases, but no general proof of that fact was available. Note that it is customary to study an integrable system first at the classical level and to use its classical Lax matrix to get an insight into a possibility of a quantisation. By contrast, we derive non-trivial properties of the classical Lax matrix by studying its quantum counterpart. It seems rather surprising that the Lax pairs are actually easier to understand at the quantum level. 

Perhaps more importantly, our construction works for the systems of Ruijsenaars--Schneider type (also referred to as \emph{relativistic Calogero--Moser systems}). The usual Ruijsenaars--Schneider system \cite{RS} corresponds to $R=A_n$; its quantum version was introduced by Ruijsenaars \cite{R87}, who also proved its complete integrability. A classical Lax pair for this system is well-known \cite{R87, BC87, KZ95}, see also \cite{Ha} where a quantum Lax matrix was introduced. However, for the models related to other root systems the question remained open for a long time (for instance, it was raised already by Inozemtsev in \cite{I}). The best result in that direction has been obtained recently by G\"orbe and Pusztai, who constructed in \cite{GoPu} a Lax pair for a two-parameter subfamily of the Koornwinder--van Diejen system (see also \cite{Pu} where a Lax matrix was found for the three-parameter rational case). The Koornwinder--van Diejen system \cite{Ko, vD} is a $BC_n$ version of the trigonometric Ruijsenaars--Schneider system, depending on five coupling parameters. There is also an elliptic version with nine parameters, introduced by van Diejen in \cite{vD1}, whose integrability was shown in \cite{KH97}. However, even in the trigonometric case with five parameters a Lax matrix remained unknown, let alone the nine-parameter elliptic version. Thus, it is rather pleasing that within our approach we are able to calculate it explicitly without much effort. 

We also give a general construction of Lax pairs for the generalised Ruijsenaars systems related to arbitrary root systems. Note that in the relativistic case instead of the Dunkl operators  one needs to use their $q$-analogues, known as Cherednik operators \cite{C1, C2}. The theory of Cherednik operators is well established in the trigonometric case, where they are intimately related to the theory of double affine Hecke algebras and Macdonald polynomials \cite{C3, M03}. Their elliptic analogues were introduced by Komori and Hikami \cite{KH98} following the ideas of Cherednik \cite{C5}, but some features of the trigonometric case seemed missing (or looked puzzlingly different) in the elliptic case. An issue here is that in the elliptic case one has braid (or Yang--Baxter) relations but no quadratic Hecke relations. As a result, elliptic Cherednik operators are only defined up to scaling (which in addition may depend on the dynamical variables), and so a correct way of defining them is not immediately obvious. We observe that a particularly well-behaved choice is the one associated with {\it unitary} $R$-matrices. It is this choice which allows us to draw a parallel with the results of \cite{EFMV} and construct Lax pairs for the generalised elliptic Ruijsenaars systems for all root systems. Calculating these Lax pairs explicitly is not easy in general (or even impractical: for instance, the smallest Lax matrix in the $E_8$ case has size $240$). We carry out such a calculation in two important cases: for the standard elliptic Ruijsenaars system and for the elliptic van Diejen system, i.e. for the $A_n$ and $BC_n$ cases of the theory. A crucial realization that such a calculation was possible came to us after seeing a paper of Nazarov and Sklyanin \cite{NS17} in which they calculated, rather nicely, a quantum Lax matrix for the trigonometric Ruijsenaars system. Similarly to us, they derive their Lax matrix directly from Cherednik operators, and although they do not consider quantum Lax pairs, some of their considerations are very close to ours. However, our approach is more general, in particular it extends to the $BC_n$ case, including the elliptic version. A nice special feature of the relativistic $A_n$ and $BC_n$ cases is that we can construct a Lax pair for each of the commuting Hamiltonians. In the $BC_n$ case this relies on results of Rains, who recently developed a geometric approach to elliptic DAHAs \cite{Rains}. For other root systems we are able to construct Lax pairs only for the Hamiltonians corresponding to minuscule and quasi-minuscule coweights. These Hamiltonians are the Macdonald operators \cite{M87} and their elliptic analogues \cite{KH98}. Since every root system has a (quasi)-minuscule coweight, we obtain at least one Lax pair for each root system. 

Let us remark that there exist various geometric approaches to Calogero--Moser and Ruijsenaars--Schneider systems, see \cite{GN, Ne, KZ95, FR, HM, Kr3, Kr4, KrS, FeK, BZN, LOSZ, FeM, Ko17} (where also many further references can be found). It would be interesting to see whether our quantum Lax pairs admit a geometric interpretation within any of those approaches. We also would like to mention that our interest in this problem was triggered by a paper by Sergeev and Veselov \cite{SV1} in which they construct quantum Lax pairs for certain deformed Calogero--Moser systems. We expect that the methods of the present paper can be adapted to give a conceptual approach to quantum Lax pairs for other deformed systems \cite{SV04, SV09, Fe, FeS}. Our results in the $BC_n$ case are also of crucial importance for constructing action-angle coordinates for the classical Koornwinder--van Diejen system (in the $A_n$ case this was done in \cite{R88}). Also, they give valuable hints towards a possible description of the center of the double affine Hecke algebra of type $CC^\vee_n$ (cf. \cite{Ob} in the $A_n$ case). This is also an important ingredient for describing the moduli space of ideals of the Askey--Wilson algebra. Some of these problems will be a subject of future work.  

The structure of the paper is as follows. In Section \ref{doql} we present the main construction in the rational case and illustrate it by deriving the quantum Lax pair \eqref{qlp}. Section \ref{trigch} generalises this to the trigonometric relativistic case, where the Dunkl operators are replaced by the Cherednik operators. In Section \ref{cc} we apply the results of Section \ref{trigch} for calculating a Lax matrix for the Koornwinder--van Diejen system. Section \ref{ecase} deals with the elliptic Calogero--Moser systems. Here we use elliptic Dunkl operators \cite{BFV}, and while the main idea remains the same, the construction of a Lax pair is more involved and uses ideas from \cite{EFMV}. 
Section \ref{eqcase} is devoted to the elliptic difference case, related to elliptic Cherednik and Macdonald--Ruijsenaars operators from \cite{KH98}. Our main effort here is to establish the existence of a Lax pair for any root system (Proposition \ref{elclq} and Theorem \ref{mainthm}). Subsections \ref{ecc}--\ref{calvd} are devoted to the $BC_n$ case; a Lax matrix for the elliptic van Diejen system is calculated in Subsection \ref{calvd}. 

The structure of the paper reflects how it developed over time: the main constructions and results in Sections \ref{doql}, \ref{trigch} go back to 2015, while the calculations in \ref{aq} and \ref{clax} were inspired by the work \cite{NS17}. The result of Proposition \ref{intq} is also a later addition, prompted by \cite[Corollary 2.6]{NS17}. Sections \ref{ecase}, \ref{eqcase} are more recent.  Note that some of the results in Sections \ref{doql}, \ref{trigch} can be obtained from the elliptic case as a limit. However, we decided to have them derived independently, mainly because some interesting features (for instance, Proposition \ref{intq}) do not seem to have an analogue in the elliptic case.

{\it Acknowledgement.} I would like to thank Yu.~Berest, F.~Calogero, P.~Etingof, L.~Feh{\'e}r, M.~Feigin, T.~G{\"o}rbe, A.~N.~Kirillov, M.~Nazarov, V.~Pasquier, E.~Rains, S.~Ruijsenaars, E.~Sklyanin, A.~Silantyev, A.~Veselov for stimulating dicussions and useful comments. I am especially grateful to Pavel Etingof for his help with proving Proposition \ref{elcl}. This work was partially supported by EPSRC under grant EP/K004999/1.

\section{Dunkl operators and quantum Lax pairs}\la{doql}

\subsection{}\la{2.1} Let us first recall the well-known link between Dunkl operators and rational Calogero--Moser systems \cite{D, He1}, cf. \cite{Po, BHV}. For simplicity, we restrict ourselves to the case of real Coxeter groups, but everything applies with minimal changes to any complex reflection group by replacing the Dunkl operators by their complex analogues \cite{DO}. 

Let $W$ be a finite Coxeter group and $V_\R$ be its reflection representation. We will work over $\c$, so $V:=V_\R\otimes_{\R}\,\c$ will be an $n$-dimensional complex vector space with a fixed $W$-invariant scalar product $\dpr{-, -}$. 
Let $R=R_+\sqcup\,-R_+$ be the root system of $W$ (not necessarily crystallographic). For each $\alpha\in R$ we have the orthogonal reflection $s_\alpha\in W$ acting on $V$ by the formula $s_\alpha(x)=x-2\frac{\dpr{\alpha,x}}{\dpr{\alpha,\alpha}}\alpha\,$, and these reflections generate the group $W$. We assume that the set $R$ is $W$-invariant. 
Below we will always identify $V$ with its dual by using the scalar product $\dpr{-,-}$, and hence equip $V\times V$ with a symplectic form transferred from $V\times V^*=T^*V$.  

Denote by $ \c(V) $ and $ \D(V) $
the rings of meromorphic functions and differential operators on $V$ with meromorphic coefficients, respectively. The group $ W $ acts naturally on $ \c(V) $ and $ \D(V) $, so we form the crossed products
$ \c(V)*W $ and $\D(V)*W$. As an algebra,
$\D(V)*W$ is generated by the elements
$w\in W $, $f\in \c(V) \,$, and derivations $\,
\partial_\xi \,$, $\, \xi \in V $, subject to the relations 
\begin{equation*}
w\,\partial_\xi=\partial_{w\xi}\, w\,,\quad w\, f=f^w\, w\,,\quad\text{where }\, f^w(x)=f(w^{-1}x)\,.
\end{equation*}
Any $a\in \D(V)*W$, admits a unique presentation 
\begin{equation}\la{form}
a=\sum_{w\in W} a_w w\quad\text{with}\quad a_w\in \D(V)\,. 
\end{equation}

Let us fix parameters $t\ne 0$ and a $W$-invariant function $c\,:\, R\to\c$, with $c(\alpha)$ abbreviated to $c_\alpha$. 
Introduce {\it Dunkl operators}
as the following elements of  $\D(V)*W$:
\begin{equation}
\label{du} y_\xi :=
t\partial_\xi+\sum_{\alpha\in R_+}
\frac{\dpr{\alpha,\xi}}{\dpr{\alpha, x}}c_\alpha s_\alpha\ , \quad \xi \in V\ .
\end{equation}
The two main properties of the Dunkl operators are their commutativity and equivariance: for all $\,\xi, \eta \in V\,$ and $ w \in W $,
\begin{equation}\la{duprop}
 y_{\xi}\,y_{\eta} = y_{\eta}\,y_{\xi}\,,\qquad\qquad \,w\,y_\xi w^{-1}=
y_{w(\xi)}\,.
\end{equation}
Therefore, the assignment $\,\xi \mapsto y_\xi\,$
extends to a $W$-equivariant injective algebra map
\begin{equation}
\la{hom}
SV=\bigoplus_{i\ge 0} S^iV \to \D(V)*W \,.
\end{equation}
The image of $q\in SV$ under this map is denoted by $q(y)$.
Let $\partial_i=\partial_{\xi_i}$ and $y_i=y_{\xi_i}$, where $\{\xi_i\,|\, i=1 \dots n\}$ is an orthonormal basis in $V$. Writing $\dpr{y, y}:=y_1^2+\dots +y_n^2$, we have
by \cite{D}:  
\begin{equation}\la{dcm}
\dpr{y, y}=t^2\Delta_V-\sum_{\alpha\in R_+} \dpr{\alpha, \alpha}\dpr{\alpha, x}^{-2}c_\alpha(c_\alpha+ts_\alpha)\,, \qquad \Delta_V=\sum_{i=1}^n \partial_{i}^2\,.
\end{equation} 
Let 
\begin{equation}\la{ew}
e=\frac{1}{|W|}\sum_{w\in W} w
\end{equation}
be the symmetrizing idempotent in the group algebra $\c W$. 
For any $W$-invariant element $q\in (SV)^W$, we have 
\begin{equation}\la{dq}
q(y)e={L}_qe\,,\quad {L}_q\in\D(V)^W 
\end{equation}
for some uniquely defined $W$-invariant differential operator ${L}_q$. Explicitly, if $q(y)$ is presented in the form \eqref{form}, $q(y)=\sum_{w\in W} a_ww$, then ${L}_q=\sum_{w\in W} a_w$. In particular, for $\xi^2:=\xi_1^2+\dots +\xi_n^2$ we find from \eqref{dcm} that 
\begin{equation}\label{cmo}
{L}_{\xi^2}=t^2\Delta_V-\sum_{\alpha\in R_+} \frac{c_\alpha(c_\alpha+t)\dpr{\alpha,\alpha}}{\dpr{\alpha,x}^2}\,.
\end{equation}
Substituting $t=-\mathrm{i}\hbar$ and $c_\alpha=\mathrm{i}g_\alpha$ gives the quantum Calogero--Moser Hamiltonian associated to the group $W$ \cite{OP1, OP2}:  
\begin{equation}\label{cmoq}
\widehat H=\sum_{k=1}^n \hat p_k^2 + \sum_{\alpha\in R_+} \frac{g_\alpha(g_\alpha-\hbar)\dpr{\alpha,\alpha}}{\dpr{\alpha,x}^2}\,, \qquad \hat p_k=-\mathrm{i}\hbar\,{\partial_{\xi_k}}\,.
\end{equation}
From the commutativity of the Dunkl operators it follows that the operators ${L}_q$, $q\in (SV)^W$ pairwise commute \cite{He1}. Since $(SV)^W$ is a free polynomial algebra on $n=\dim V$ generators, this proves that the quantum Hamiltonian \eqref{cmoq} is completely integrable. 

\subsection{} \la{qdl}
Let us now explain how to construct a quantum Lax pair for the Hamiltonian \eqref{cmoq}. For that we will work in a special representation of $\D(V)*W$. Namely, let us view $\c(V)$ as a left $\D(V)$-module with the usual action by differential operators, and consider the induced module 
\begin{equation*}
M=\Ind_{\D(V)}^{\D(V)*W}\,\c(V)\,.
\end{equation*}
We can write elements of $M$ as $f=\sum_{w\in W} w f_w$ with $f_w\in\c(V)$, thus identifiying $M$ and $\c W\otimes \c(V)$ (as a vector spaces). The algebra $\End_\c(M)$ then is identified with $\End_\c(\c W)\otimes \End_\c(\c(V))$, i.e. with \emph{operator-valued} matrices of size $|W|$. As a result, the (left) action of $\D(V)*W$ on $M$ gives a faithful representation
\begin{equation}\la{rep0}
\D(V)*W\to \Mat(|W|, \D(V))\,.
\end{equation}
For a $W$-invariant $a\in\D(V)$ we have:
%\begin{equation}\la{acac}
$a\left(\sum_{w\in W} w f_w\right)=\sum_{w\in W} w (af_w)$.  
%\end{equation}
Therefore, in the above representation such $a$ acts as $a\mathbb{1}$.

\medskip 
Now pick a Dunkl operator $y_\xi$; obviously, it commutes with $\dpr{y, y}$.
From \eqref{dcm} we have: 
\begin{equation}\la{acal}
\frac12 \dpr{y, y}=\widehat H+\widehat A\,,\quad\text{where}\quad \widehat H:=\frac 12 L_{\xi^2}\,,\ \ \widehat A := \frac{t}{2} \sum_{\alpha\in R_+} c_\alpha\dpr{\alpha, \alpha}\dpr{\alpha, x}^{-2}(1-s_\alpha)\,.
\end{equation}
 As a result, if we set $\mathcal L$, $\mathcal H$, $\mathcal A$  to be the matrices representing under \eqref{rep0} the action of $y_\xi$, $\widehat H$ and $\widehat A$, respectively, we obtain   
\begin{equation}\la{leqg}
[\mathcal L, \mathcal H+ \mathcal A]=0\,,
\end{equation}
which is \eqref{qleq}. Since $\widehat H$ is $W$-invariant, the matrix $\mathcal H$ is $\widehat H\mathbb{1}$.
Therefore, we have obtained a quantum Lax pair $\mathcal L, \mathcal A$ of matrices of size $|W|$.

In fact, using this approach one can associate a suitable $\mathcal A$ to any of the commuting quantum Hamiltonians ${L}_q$, $q\in (SV)^W$. Indeed, suppose $q(y)=\sum_{w\in W} a_ww$ with $a_w\in\D(V)$. Then we have
\begin{equation}\la{acalg}
{q}(y)={L}_q+\widehat A\,,\quad\text{where}\quad {L}_{q}=\sum_{w\in W} a_w\,,\ \ \widehat A := \sum_{w\in W} a_w(w-1)\,,
\end{equation}
so the above construction gives a Lax pair with the same $\mathcal L$ but with different $\mathcal H, \mathcal A$. 

\begin{remark}
In the above construction one can replace the ring $\D(V)$ with a smaller ring $\D(V_{\mathrm{reg}})$ of algebraic differential operators on $V_{\mathrm{reg}}$, the complement to the reflection hyperplanes. Furthermore, when constructing the module $M$, one can induce from any $\D(V_{\mathrm{reg}})$-module, e.g. space of analytic functions on a small neighbourhood of a point in $V_{\mathrm{reg}}$. Therefore, one can allow elements of $M$ to be multivalued, with branching along the reflection hyperplanes in $V$.    
\end{remark}

\subsection{}\la{cld}
The classical limit corresponds to taking $t\to 0$. More precisely, we set $t=-\mathrm{i}\hbar$ and view the Dunkl operators as elements of the algebra
\begin{equation*}
\Ah*W=\c(V)[\hat p_1, \dots, \hat p_n][[\hbar]]*W\,,
\end{equation*}
where the quantum momenta $\hat p_k=-\mathrm{i}\hbar\partial_{\xi_k}$ satisfy the standard relations $[\hat p_k, f]=-\mathrm{i}\hbar\,\partial_{\xi_k}f$ for $f\in\c(V)$. We have an algebra isomorphism
\begin{equation*}
\eta_0:\ \Ah*W/\hbar\Ah*W\to A_0*W\,,\qquad f\mapsto f\,,\ \ \hat p_k\mapsto p_k\,,\ \ w\mapsto w\,,
\end{equation*}
where $A_0=\c(V)[p_1, \dots, p_n]$ is the classical version of $\Ah$.%, with $[p_k, f]=0$ for $f\in\c(V)$. 
Therefore, $\Ah$ (resp. $\Ah*W$) is a formal deformation of $A_0$ (resp. $A_0*W$), see, e.g., \cite[3.1]{Et}. 
Note that $A_0$ is commutative, with the standard Poisson bracket satisfying $[a,b]=\mathrm{i}\hbar\{\eta_0(a), \eta_0(b)\}+o(\hbar)$ for $a,b\in \Ah$. For any $a\in\Ah*W$, we call $\eta_0(a)$ the \emph{classical limit} of $a$. For example, the classical limit of \eqref{du} is
\begin{equation*}
y_\xi^{c}=p_\xi+\sum_{\alpha\in R_+}
\frac{\dpr{\alpha,\xi}}{\dpr{\alpha, x}}c_\alpha s_\alpha\,,
\end{equation*}
which is called a classical Dunkl operator, see \cite[6.30]{Et}. Here $p_\xi$ is the classical momentum in direction $\xi$. The operators $y_\xi^{c}$ are commuting elements of $A_0*W$, so we have a classical variant of the map \eqref{hom}:
\begin{equation*}
SV=\bigoplus_{i\ge 0} S^iV \to A_0*W \ ,\quad q \mapsto q(y^{c})\,.
\end{equation*} 
The classical limits of \eqref{dq} and \eqref{acalg} can be obtained by replacing Dunkl operators $y_\xi$ by their classical counterparts $y_\xi^{c}$. We will use the following important fact.
\begin{lemma}[Lemma 2.2, \cite{EFMV}]\la{le}
For any $q\in (SV)^W$, when writing $q(y^{c})=\sum_{w\in W} a_ww$ with $a_w\in A_0$, we have $a_w=0$ for all $w\ne \id$. 
\end{lemma}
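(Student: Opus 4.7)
First, I would reduce the lemma to a commutator identity. Writing $q(y^c) = \sum_w a_w w$ with $a_w \in A_0$, the semidirect-product relation $w\eta = \eta^w w$ combined with commutativity of $A_0$ yield, for any $\eta \in V \subset A_0$,
\[
[q(y^c), \eta] = \sum_{w \in W} a_w(\eta^w - \eta)\, w.
\]
If this commutator vanishes for every $\eta \in V$, then $a_w(\eta^w - \eta) = 0$ in the integral domain $A_0 = \c(V)[p_1, \ldots, p_n]$; for any $w \ne 1$ one picks $\eta \in V$ with $\eta^w \ne \eta$ (possible since the reflection representation is faithful), forcing $a_w = 0$. So the task reduces to showing $[q(y^c), \eta] = 0$ for every $\eta \in V$.

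\textbf{Commutator identity.} Next, a direct computation in $A_0 * W$ gives
\[
[y_\xi^c, \eta] = -2\sum_{\alpha \in R_+} \frac{c_\alpha \dpr{\alpha, \xi}\dpr{\alpha, \eta}}{\dpr{\alpha, \alpha}}\, s_\alpha.
\]
For a monomial $q = \xi_{i_1}\cdots \xi_{i_k} \in SV$, I would apply the Leibniz rule and then move each resulting $s_\alpha$ past the Dunkl operators on its right via equivariance $s_\alpha y_\zeta^c = y_{s_\alpha \zeta}^c s_\alpha$, using commutativity of the $y^c$'s to recognize the product as the image under $q \mapsto q(y^c)$ of a polynomial in $SV$. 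The outcome will be
\[
[q(y^c), \eta] = -2\sum_{\alpha \in R_+} \frac{c_\alpha \dpr{\alpha, \eta}}{\dpr{\alpha, \alpha}}\, \tilde{q}_\alpha(y^c)\, s_\alpha,
\]
where $\tilde{q}_\alpha := \sum_{j=1}^k \dpr{\alpha, \xi_{i_j}}\, \xi_{i_1}\cdots \xi_{i_{j-1}}\, s_\alpha(\xi_{i_{j+1}}) \cdots s_\alpha(\xi_{i_k}) \in SV$.

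\textbf{Telescoping and conclusion.} Finally, a telescoping sum in $SV$ shows that
\[
\sum_{j=1}^k (\xi_{i_j} - s_\alpha \xi_{i_j})\, \xi_{i_1}\cdots \xi_{i_{j-1}}\, s_\alpha(\xi_{i_{j+1}}) \cdots s_\alpha(\xi_{i_k}) = q - s_\alpha q,
\]
and the substitution $\xi - s_\alpha \xi = (2\dpr{\alpha, \xi}/\dpr{\alpha, \alpha})\alpha$ rewrites this as $\alpha\, \tilde{q}_\alpha = \tfrac{1}{2}\dpr{\alpha,\alpha}(q - s_\alpha q)$ in $SV$; extending by linearity in $q$, the relation holds for every $q \in SV$. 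For $q \in (SV)^W$ we have $s_\alpha q = q$, so $\alpha\,\tilde{q}_\alpha = 0$; since $SV$ is an integral domain and $\alpha \ne 0$, this forces $\tilde q_\alpha = 0$ for every $\alpha \in R_+$, hence $[q(y^c), \eta] = 0$, and the reduction step then yields the lemma. This approach is uniform in $W$ and requires no case analysis on fundamental invariants; the main technical point is the telescoping identity, which is a direct bookkeeping calculation in the polynomial ring $SV$.
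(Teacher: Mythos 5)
Your argument is correct, and it is genuinely self-contained, whereas the paper simply cites \cite{EFMV} for this statement. The proof in \cite[Lemma 2.2]{EFMV} (and the analogous trigonometric lemma in \S\,\ref{climq} of this paper) runs the same reduction step as yours — if $q(y^c)$ commutes with the linear coordinate functions then $a_w(\eta^w-\eta)=0$ forces $a_w=0$ for $w\ne\id$ by faithfulness of the reflection representation — but then supplies the required commutativity by quoting the Etingof--Ginzburg theorem that $(SV)^W$ evaluated on Dunkl operators lies in the \emph{center} of the rational Cherednik algebra at $t=0$. You instead verify $[q(y^c),\eta]=0$ by hand: the classical commutator $[y^c_\xi,\eta]=-2\sum_{\alpha\in R_+}c_\alpha\dpr{\alpha,\xi}\dpr{\alpha,\eta}\dpr{\alpha,\alpha}^{-1}s_\alpha$ (which is correct, since $p_\xi$ and $\eta$ commute in the commutative algebra $A_0$), Leibniz, equivariance to push $s_\alpha$ to the right, and the telescoping identity $\alpha\,\tilde q_\alpha=\tfrac12\dpr{\alpha,\alpha}(q-s_\alpha q)$ in the domain $SV$. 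This is in effect a direct re-derivation of the special case of the centrality result that is actually needed; what it buys is independence from the Cherednik-algebra machinery, at the cost of not transferring verbatim to the trigonometric/difference setting, where the paper's route via centrality (using \cite[Lemma 5.1]{Ob}) is the one that generalises. One small point of hygiene: $\tilde q_\alpha$ as you define it depends a priori on the monomial presentation of $q$; this is harmless because the identity $\alpha\,\tilde q_\alpha=\tfrac12\dpr{\alpha,\alpha}(q-s_\alpha q)$ together with $SV$ being a domain pins $\tilde q_\alpha$ down uniquely (and, for the conclusion, you only ever need the linear combination of the $\tilde q_\alpha$'s coming from the monomials of a fixed $W$-invariant $q$ to vanish, which your divisibility argument gives directly).
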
   
This tells us that the classical limit of $\widehat A$ in \eqref{acalg} is zero, in other words, the classical limit of $\hbar^{-1}\widehat A$ is well-defined. 
As a result, we have well-defined classical limits $L, H, A$ of $\mathcal L$, $\widehat H$ and $(\mathrm{i}\hbar)^{-1}\mathcal A$, respectively, after which the classical Lax equation follows in the same way as in \eqref{cla1}, \eqref{cla2}.

\subsection{}\la{small}
In \cite{BCS,BMS}, classical and quantum Lax pairs of various sizes were constructed, so let us explain how they arise within our approach. We start again by picking $y_\xi$ and $q(y)$ with $\xi\in V$, $q\in (SV)^W$ and writing $q(y)={L}_q+\widehat A$, as in \eqref{acalg}. To get a Lax pair of a smaller size, we choose $\xi$ with non-trivial stabiliser, writing
\begin{equation*} 
W'=\{w\in W\, \mid w\xi=\xi\}\,,\qquad e'=\frac{1}{|W'|}\sum_{w\in W'} w\,.
\end{equation*}
%Let $e'=\frac{1}{|W'|}\sum_{w\in W'} w$ be the symmetrizing idempotent for $W'$. 
Obviously, $y_\xi e'=e'y_\xi$; also,  $q(y)e'=e'q(y)$, ${L}_qe'=e'{L}_q$ by their $W$-invariance.
As a result, the operators $y_\xi$, $q(y)$, ${L}_q$ and $\widehat A$ preserve the subspace
\begin{equation*}
M'=e'M \cong e'\c W\otimes \c(V)\,.  
\end{equation*}
The left $W$-module $e'\c W$ has dimension equal to $|W/W'|$, i.e. to the size of the orbit $W(\xi)$. Therefore, restricting $\mathcal L$, $\mathcal H$, $\mathcal A$ onto $M'$ produces a quantum Lax pair of size $|W/W'|$, with the smaller sizes achieved when $\xi$ is a fundamental weight. This agrees with the Lax pairs in \cite{BCS, BMS, KPS}.   

\subsection{} \la{2.5}
We can also explain why \eqref{van} holds in general. Namely, pick representatives $w_i$ for the cosets in $W'\backslash W$. Elements of $M'$ are linear combinations of $e'w_if_i$ with $f_i\in\c(V)$. Now, from \eqref{dq} and \eqref{acalg} we have $\widehat Ae=0$. Similarly, $eq(y)=e{L}_q$, so $e\widehat A=0$. Multiplication by $e$ acts on $M'$, and it is easy to see that the associated matrix is $\frac{1}{|W/W'|}\vv\ww$, where $\vv, \ww$ are the column/row vectors as in \eqref{vw} (of size $|W/W'|$). The relations $\widehat Ae=e\widehat A=0$ easily imply \eqref{van}. Therefore, the formula \eqref{fi} always produces quantum integrals; for instance, it works for any complex reflection group $W$. 

An alternative explanation of \eqref{fi} is as follows. Consider the operators $ey_\xi^ke$ with $k\in\N$. The symmetrizer $e$ acts on $M'$ by $\frac{1}{|W/W'|}\vv\ww$, and $y_\xi$ acts by $\mathcal L$. Therefore, up to a constant factor, $ey_\xi^ke$ acts as $\vv\ww\mathcal L^k\vv\ww=(\ww\mathcal L^k\vv)\vv\ww=|W/W'|(\ww\mathcal L^k\vv)e$. On the other hand, $ey_\xi^ke=\frac{1}{|W|}\left(\sum_{w\in W} y_{w\xi}^k\right)e$, so it commutes with any of the operators $eq(y)e={L}_qe$, $q\in (SV)^W$. %The latter operator acts on $M'$ as ${L}_qe$, so 
As a result, $\ww\mathcal L^k\vv$ and ${L}_q$ commute.        

%\begin{remark}
In Proposition \ref{intq} below we generalise this result to the relativistic case. 
%\end{remark}

\subsection{}\la{typea}
As an illustration, let us derive the quantum Lax pair \eqref{qlp}. % (cf. a similar calculation in \cite[Section 10.7]{Et} and \cite[Section 11]{EG}). 
We consider $W=\mathfrak{S}_n$ acting on $V=\c^n$ by permuting the basis vectors; it is generated by permutations $\ss_{ij}$, $i\ne j$.  The ring $\c(V)=\c(x)$ is the ring of functions of $n$ variables $x_1,\dots, x_n$. We have $n$ commuting Dunkl operators,
\begin{equation*}
y_i=t\partial_i+\sum_{j\ne i}c(x_i-x_j)^{-1}\ss_{ij}\,,\qquad i=1\dots n\,.
\end{equation*}
Choose $y_1$, so $\xi=(1,0,\dots, 0)$ and $W'=\mathfrak{S}_{n-1}$ is the permutation group on $\{2,\dots, n\}$, with 
\begin{equation*}
e'=\frac{1}{(n-1)!}\sum_{w\in \mathfrak{S}_{n-1}}w\,.
\end{equation*} 
We pick $\ss_{1j}$, $i=1\dots n$ as representatives for the cosets in $W'\backslash W$ (with $\mathfrak{s}_{11}:=\id$), and write elements of $M'=e' M$ as 
\begin{equation*}
f=\sum_{i=1}^n e'\mathfrak{s}_{1i} f_i\,,\quad\text{with}\ f_i\in\c(x)\,.
\end{equation*}
To find the matrix representing the action of $y_1$ on $M'$, it will be useful to work in a greater generality.
\begin{lemma}\la{alem}  Suppose that we have an element 
%\begin{equation}
$Z=\sum_{i=1}^n Z_i\ss_{1i}$ with $Z_i\in\D(V)$,
%\end{equation} 
whose action preserves $M'=e'M$.

(1) $Z_1$ is symmetric in $x_2, \dots, x_n$ and $Z_i$ for $i>1$ is symmetric in $x_j$ with $j\ne 1, i$. Also, we have $Z_i^{\ss_{ij}}=Z_j$ for any $i, j>1$.

(2) For each $1\le i,j \le n$, we have
\begin{equation*}
e'\ss_{1i}\ss_{1j}=e'\ss_{1k}\,,\qquad\text{with}\quad k=k(i,j)=\begin{cases}
i&\text{for}\ i\ne 1, j\\
j&\text{for}\ i=1\\
1\quad&\text{for}\ i=j\,.
\end{cases}
\end{equation*} 

(3) For any $i, j$ and $k=k(i, j)$ as above, the $(k, j)$-th entry the matrix representing the action of $Z$ on $M'$ is calculated as $(Z_i)^{\ss_{1j}\ss_{1i}}$.  
\end{lemma}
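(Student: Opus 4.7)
Plan. Part (2) is a finite-group calculation. Since $W'$ is the stabiliser of $1$ in $W$, a right coset $W'g$ is determined by the value $g^{-1}(1)$, so it suffices to check case-by-case that $(\ss_{1i}\ss_{1j})^{-1}(1)$ equals the prescribed value $k(i,j)$. This is immediate in each of the three cases $i=1$, $i=j$, and $i\ne 1,j$.

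For part (1), I will use that $M' = e'M$ is the $W'$-fixed subspace of $M$, and that the action of $\D(V)*W$ on $M$ is faithful. Hence $Z$ preserves $M'$ iff $wZe' = Ze'$ in $\D(V)*W$ for every $w\in W'$. The key manipulation is
\[
w\ss_{1i} \;=\; \ss_{1,w(i)}\,\tilde w_i,\qquad \tilde w_i := \ss_{1,w(i)}\, w\, \ss_{1i},
\]
where $\tilde w_i(1)=1$, so $\tilde w_i\in W'$ and $\tilde w_i e' = e'$; therefore $w\ss_{1i}e' = \ss_{1,w(i)}e'$. Substituting gives the identity $\sum_i Z_i^w \ss_{1,w(i)}e' = \sum_{i'} Z_{i'}\ss_{1,i'}e'$, and matching coefficients against the $\c$-linearly independent family $\{\ss_{1,m}e'\}_{m=1}^n$ (whose summands lie in distinct right cosets of $W'$) produces the symmetry law
\[
Z_i^w = Z_{w(i)}\qquad \text{for all } w\in W',\ i=1,\dots,n.
\]
The three assertions of part (1) are then immediate specialisations: $i=1$ gives the $W'$-invariance of $Z_1$; restricting to $w\in W'\cap W'_i$ (fixing both $1$ and $i$) gives the symmetry of $Z_i$ in the remaining variables; and $w=\ss_{ij}$ with $i,j>1$ gives $Z_i^{\ss_{ij}}=Z_j$.

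For part (3), I will identify $M \cong \bigoplus_{u\in W}\c(V)_u$, on which $w'\in W$ permutes slots by $(w'F)_u = F_{(w')^{-1}u}$ and $D\in \D(V)$ acts in slot $u$ as $D^{u^{-1}}$. Then $e'\ss_{1j}\otimes f_j$ is the characteristic function of the right coset $W'\ss_{1j}$ with value $f_j/|W'|$. Since $u\in\ss_{1i}W'\ss_{1j}$ iff $u(j)=i$ (exactly one $i$ contributes at each $u$), a short computation yields
\[
\bigl(Z\cdot (e'\ss_{1j}\otimes f_j)\bigr)_u \;=\; Z_{u(j)}^{u^{-1}}\, f_j\,/\,|W'|.
\]
Part (1) guarantees this expression is constant as $u$ varies over a right coset $W'u_0$, via the identity $Z_{v(i)}^{u^{-1}v^{-1}} = Z_i^{u^{-1}}$ for $v\in W'$, which is immediate from $(Z_i^v)^b = Z_i^{bv}$ combined with $Z_i^v = Z_{v(i)}$. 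Hence the output lies in $M'$, and reading off its value at $u=\ss_{1k}$ extracts the $(k,j)$ matrix entry as $Z_{\ss_{1k}(j)}^{\ss_{1k}}$. Finally, I will match this with the stated form $(Z_i)^{\ss_{1j}\ss_{1i}}$ for $k=k(i,j)$ by case analysis: the cases $k=1$ and $k=j$ are trivial, and the non-degenerate case $k=i\ne 1,j$ reduces to the symmetric-group identity $\ss_{1j}\ss_{1i} = \ss_{1i}\ss_{ij}$ together with the transfer law $Z_i^{\ss_{ij}}=Z_j$ supplied by part (1).

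The main hurdle is bookkeeping: keeping left vs.\ right cosets, the conjugation rule $(f^a)^b = f^{ba}$, and the piecewise definition of $k(i,j)$ all consistent. In particular, showing that my natural formula $Z_{\ss_{1k}(j)}^{\ss_{1k}}$ coincides with the paper's $(Z_i)^{\ss_{1j}\ss_{1i}}$ in the non-degenerate case requires invoking both the transposition identity above and the transfer law from part (1); everything else reduces to finite-group manipulations.
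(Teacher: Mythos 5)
Your argument is correct and is in substance the same as the paper's: the paper simply normal-orders $e'Z_i\ss_{1i}\cdot\ss_{1j}f_j=e'\ss_{1i}\ss_{1j}(Z_i)^{\ss_{1j}\ss_{1i}}f_j$ inside $\D(V)*W$ and then invokes part (2), while your component-wise computation $(ZF)_u=Z_{u(j)}^{u^{-1}}f_j/|W'|$ on the slot model of $M$ is the same calculation read off fibrewise; likewise your derivation of $Z_i^w=Z_{w(i)}$ from $wZe'=Ze'$ is exactly what the paper means by ``Part (1) follows from $\ss_{ij}Ze'=Ze'$''. One small loose end: in reconciling your uniform formula $Z_{\ss_{1k}(j)}^{\ss_{1k}}$ with the stated $(Z_i)^{\ss_{1j}\ss_{1i}}$, your treatment of the case $k=i\ne 1,j$ relies on the identity $\ss_{1j}\ss_{1i}=\ss_{1i}\ss_{ij}$ and on the transfer law $Z_i^{\ss_{ij}}=Z_j$, both of which require $i,j>1$; the sub-case $j=1$, $i>1$ (which also falls under $k=i\ne 1,j$) is not covered by that reduction. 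It is, however, immediate there, since $\ss_{1j}=\id$ and $\ss_{1i}(1)=i$ make both expressions equal to $Z_i^{\ss_{1i}}$ — just add that line to close the case analysis.
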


\begin{proof} Parts $(1)$ follows from $\ss_{ij}Ze'=Ze'$ for $i,j>1$. Part $(2)$ is straightforward. For part $(3)$, %follows from 
\begin{equation*}
Z\sum_{j=1}^n e'\mathfrak{s}_{1j} f_j=\sum_{j=1}^n e' Z\mathfrak{s}_{1j} f_j=%e'\sum_{i,j=1}^n Z_i\ss_{1i}\ss_{1j} f_j=
\sum_{i,j=1}^n e'\ss_{1i}\ss_{1j} Z_i^{ \ss_{1j}\ss_{1i}}f_j=\sum_{i,j=1}^n e'\ss_{1k} Z_i^{ \ss_{1j}\ss_{1i}}f_j\,.
\end{equation*}
\end{proof}

Applying the lemma to $y_1=t\partial_1\ss_{11}+\sum_{i\ne 1}c(x_1-x_i)^{-1}\ss_{1i}$, we find that it is represented by a matrix $\mathcal L$ with
\begin{equation*}
\mathcal L_{kl}=
\begin{cases}
c(x_k-x_l)^{-1}\quad&\text{for}\ k\ne l\\
t\partial_k\quad&\text{for}\ k=l\,.
\end{cases}
\end{equation*}
To calculate a Lax partner $\mathcal A$, we need to consider the action of $\widehat A$ \eqref{acal}. Note that $(1-\mathfrak{s}_{ij})e'=0$ if $i,j>1$; as a result, the action of $\widehat A$ can be replaced by
%\begin{equation*}
%\widehat A'=
$ct\sum_{i\ne 1} (x_1-x_i)^{-2}(1-\mathfrak{s}_{1i})$\,. %e'=e'(-2kt\sum_{i\ne 1} (x_1-x_i)^{-2}(\mathfrak{s}_{1i}-1))\,. 
%\end{equation*}
This element is $W'$-invariant, and its action on $M'$ is again calculated from the above lemma. The result is: %After a straighforward calculation, we find that $\mathcal A$ is given by 
\begin{equation*}
\mathcal A_{kl}=
\begin{cases}
-ct(x_k-x_l)^{-2}\quad&\text{for}\ k\ne l\\
ct\sum_{j\ne k}^n(x_j-x_k)^{-2}\quad&\text{for}\ k=l\,.
\end{cases}
\end{equation*}
Upon a substitution $t=-\mathrm{i}\hbar$, $c=\mathrm{i}g$, these $\mathcal L, \mathcal A$ coincide with the Lax pair \eqref{qlp}.

\medskip

\begin{remark}
By analogy with \cite{EG}, we can also consider the action of $x_1$ on $M'$. The corresponding matrix is $\mathcal X=\mathrm{diag}(x_1, \dots, x_n)$. 
We then have $\mathcal X\mathcal L-\mathcal L\mathcal X+\mathrm{i}(g-\hbar)\mathbb 1_n=\mathrm{i}g\vv\ww$. This is a quantum version of the well-known relation from \cite{KKS}.  
\end{remark}

\begin{remark}
In a similar manner one can calculate Lax matrices for the Calogero--Moser systems associated to complex reflection groups, e.g., for the case $W=G(m, p, n)$. %Calculating Lax partners is more difficult in these cases because in general there is no quadratic $q\in SV^W$. 
Note that classical Lax matrices for $W=G(m, 1, n)$ (generalised symmetric group) were found by a different method in \cite{CS}.  
\end{remark}

\section{Cherednik operators and Lax pairs for relativistic Calogero--Moser systems} \la{trigch}

We start by outlining how the relativistic Calogero--Moser systems of Ruijsenaars--Schneider type can be constructed using affine Hecke algebras and Cherednik operators. This construction is due to Cherednik \cite{C1, C2}, and in the $\GL_n$ case it reproduces the quantum Ruijsenaars system \cite{R87}. For other root systems some of the commuting Hamiltonians are expressed by Macdonald difference operators appearing in the theory of Macdonald polynomials \cite{M87}. In the $C^\vee C_n$ case, the simplest Hamiltonian is the Koornwinder operator \cite{Ko}, and higher commuting Hamiltonians were constructed by van Diejen \cite{vD}. We therefore will refer to this case as Koornwinder--van Diejen system. Its interpretation in the framework of affine Hecke algebras can be found in \cite{No, Sa, St1}. 

We will be largely following Macdonald's book \cite{M03}, see also \cite{C3, Ki95}. Our setting is not the most general (it corresponds to the case \cite[(1.4.1)]{M03}), but the method is exactly the same in all other cases. The $C^\vee C_n$ case is treated separately in Section \ref{cc}.

\subsection{}\la{3.1}
Let $R$ be a reduced, irreducible root system in a (complexified) Euclidean vector space $V$ with an inner product denoted as $\dpr{-,-}$, and $W$ be the Weyl group of $R$, generated by the orthogonal reflections $s_\alpha$, $\alpha\in R$. We write $R^\vee=\{\alpha^\vee\}$ for the dual system formed by the coroots $\alpha^\vee=2\alpha/\dpr{\alpha, \alpha}$. Let $a_1, \dots, a_n$ be a fixed basis of simple roots in $R$, associated with a decomposition $R=R_+\sqcup R_-$. We have the coroot and coweight lattices: $Q^\vee=\sum_{i=1}^n \Z a_i^\vee$ and $P^\vee=\sum_{i=1}^n \Z b_i$, where the fundamental coweights $b_i\in V$ are defined by $\dpr{a_i, b_j}=\delta_{ij}$. We write $P^\vee_+:=\sum_{i=1}^n\Z_{\ge 0} b_i$ for the cone of {\it dominant} coweights, .

The \emph{affine Weyl group} is defined as $\WW=W\ltimes t(Q^\vee)$, where $t(Q^\vee)$ denotes the group of translations $t(\lambda)$, $\lambda\in Q^\vee$ acting on $V$ by 
\begin{equation}\la{tl}
t(\lambda)x=x-c\lambda\,,
\end{equation}
where $c$ is a fixed parameter. The \emph{extended} affine Weyl group is $\Wh:=W\ltimes t(P^\vee)$. The group $\Wh$ acts natrually on the ring of meromorphic functions $\c(V)$ by 
\begin{equation}\la{fa}
\hat wf(x)=f(\hat w^{-1}x)\qquad\text{for}\  f(x)\in\c(V)\,,\ \hat w\in\Wh\,.
\end{equation}
 In particular, a translation $t(\lambda)$, $\lambda\in P^\vee$ acts on functions by
\begin{equation}\la{tact}
t(\lambda)f(x)=f(x+c\lambda)\,.
\end{equation} 
Wrting $q=e^c$, we have $t(\lambda)=q^{\partial_\lambda}$. We form a crossed product $\c(V)*\Wh$ which we view as a subalgebra of $\End_\c(\c(V))$, with $\c(V)$ acting on itself by multiplication. Inside $\c(V)*\Wh$ we have an algebra ${\D}_q$ generated by $\c(V)$ and $t(P^\vee)$; this is the algebra of difference operators on $V$. Clearly, $\c(V)*\Wh\cong {\D}_q*W$, with every element admitting a unique presentation as
\begin{equation}\la{genf}
a=\sum_{w\in W}a_ww\,,\qquad a_w\in{\D}_q\,.
\end{equation}  

Let $\VV$ denote the space of affine-linear functions on $V$. We identify $\VV$ with $V\oplus\c\delta$, where vectors in $V$ are considered as linear functionals on $V$ via the scalar product $\dpr{-,-}$ and where $\delta\equiv c$ on $V$ (so $e^\delta=e^c=q$).
%We extend the scalar product $(-,-)$ to $\VV$ by setting $(\delta, \delta)=0$ and $(\delta, v)=0$ for $v\in V$. . We choose $c=-i\hbar \gamma$ and 
Let
\begin{equation}\la{rrelq}
\Ra=\{\aalpha=\alpha+k\delta\,, k\in\Z\,, \alpha\in R\}\subset \VV
\end{equation} 
be the affine root system associated with $R$. The action of $\Wh$ on $\VV\subset \c(V)$ permutes affine roots. For any $\aalpha=\alpha+k\delta$ we have the orthogonal reflection with respect to the hyperplane $\aalpha(x)=0$ in $V$,
\begin{equation*}
s_{\aalpha}(x)=x-\aalpha(x)\alpha^\vee\,,\quad x\in V\,.
\end{equation*}
We extend the set of simple roots $a_i$ to a basis in $\Ra$ by adding $a_0=\delta-\varphi$, where $\varphi$ is the highest root in $R_+$. 
Then the reflections $s_{i}=s_{a_i}$, $i=0,\dots, n$ generate the group $\WW$, and the {\it length} $l(w)$ of $w\in\WW$ is defined as the length $l$ of a reduced decomposition 
\begin{equation}\la{red}
w=s_{i_1}\dots s_{i_l}\,,\quad\text{with}\ 0\le i_k\le n\,.
\end{equation}
Let $\Omega$ be the subgroup of the elements $\pi\in\Wh$ which map the basis $a_0, \dots, a_n$ to itself. It is known that $\Omega$ is an abelian group, isomorphic to $P^\vee/Q^\vee$, and the extended affine Weyl group is isomorphic to $\WW\rtimes\Omega$. Each $w\in\Wh$ admits a unique presentation as $w=\tilde w \pi$ with $\tilde w\in\WW$ and $\pi\in\Omega$. We use this to extend the notion of the length from $\WW$ to $\Wh$ by setting $l(\tilde w\pi)=l(\tilde w)$, so $l(\pi)=0$ for all $\pi\in\Omega$. 

The \emph{braid group} $\mathfrak B$ of $\Wh$ is the group with generators $T_w$, $w\in\Wh$, and relations 
\begin{equation*}
T_vT_w=T_{vw}\quad\text{if}\quad l(v)+l(w)=l(vw)\,.
\end{equation*} 
Write $T_i:=T_{s_i}$ for $i=0, \dots, n$. Then for any reduced decomposition $w=s_{i_1}\dots s_{i_l}\pi$ we have
$T_w=T_{i_1}\dots T_{i_l}T_\pi$ . It follows that $\mathfrak B$ is generated by $T_i$, $i=0, \dots, n$ and $T_\pi$, $\pi\in\Omega$, subject to the following relations \cite[(3.1.6)]{M03}:
\begin{align}\la{braid}
&T_{i}T_{j}\dots=T_{j}T_{i}\dots \qquad\text{for $i\ne j$, with $m_{ij}$ factors on either side}\,,\\\la{braid1}
&T_\pi T_{\pi'}=T_{\pi \pi'}\quad\text{for}\ \pi, \pi'\in\Omega\,,\\\la{braid2}
&T_\pi T_iT_\pi^{-1}=T_j\qquad\text{if}\ \pi s_i\pi^{-1}=s_j\,.
\end{align}
Here $m_{ij}=2, 3, 4, 6$ is the order of $s_is_j\in \WW$.

The braid group $\mathfrak B$ contains an abelian subgroup $\{Y^\lambda\,|\,\lambda\in P^\vee\}$ \cite[3.2]{M03}. Namely, for dominant $\lambda$ we define $Y^\lambda=T_{t(\lambda)}$ and then extend this definition to all $\lambda\in P^\vee$ by setting $Y^\lambda=Y^\mu(Y^\nu)^{-1}$ whenever $\lambda=\mu-\nu$ with dominant $\mu, \nu$. 

Choose nonzero parameters $\tau_i$, $i=0, \dots, n$ such that $\tau_i=\tau_j$ if $s_i$ and $s_j$ are conjugated in $\Wh$. The (extended) \emph{affine Hecke algebra} $\HH$ is the quotient of the group algebra $\c\mathfrak B$ by relations 
\begin{equation}\la{hecke}  
(T_i-\tau_i)(T_i+\tau_i^{-1})=0\,,\qquad i=0,\dots n\,.
\end{equation}
The image of $T_i$ (resp. $T_w$, $Y^\lambda$) in $\HH$ will be denoted by the same symbol $T_i$ (resp. $T_w$, $Y^\lambda$). By \cite[(4.1.3)]{M03},  the elements $T_w$, $w\in\Wh$ form a $\c$-basis of $\HH$.

\subsection{}
The algebra $\widehat{\mathfrak H}$ can be realized by difference-reflection operators, as was observed by Cherednik. This is an injective algebra map 
\begin{equation*}
\beta:\, \widehat{\mathfrak H}\,\to\, {\D}_q*W\,,
\end{equation*}
called the \emph{basic representation} \cite[(4.3.10)]{M03}. To describe it, let us extend the set of parameters $\tau_i$ to $\tau_\alpha$, $\alpha\in\Ra$ so that $\tau_\alpha=\tau_{w(\alpha)}$ for $w\in\Wh$, and introduce functions $\mathbf{c}_\alpha$ as follows:
\begin{equation}\la{cal}
\mathbf{c}_\alpha=\frac{\tau_\alpha^{-1}-\tau_\alpha e^{\alpha}}{1-e^{\alpha}}\,,\qquad\alpha\in\Ra\,.
\end{equation}
\begin{theorem}[cf. (4.3.10), (4.3.12) \cite{M03}]\la{beta} The extended affine Hecke algebra admits a faithful representation $\beta: \widehat{\mathfrak H} \to \D_q*W$ such that   
\begin{align}\la{br}
&\beta:\,T_i\mapsto \tau_i+\mathbf{c}_i(x)(s_{i}-1)\,, \qquad \mathbf{c}_i=\mathbf{c}_{a_i} \qquad (i=0, \dots, n)\,,\\
&\beta:\,T_\pi\mapsto \pi\qquad\text{for all}\quad \pi\in\Omega\,.
\end{align}
\end{theorem}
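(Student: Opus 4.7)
The plan is to verify that the proposed assignment extends to a well-defined algebra map $\beta : \HH \to \D_q * W$ by checking the defining relations of $\HH$ on the generators, and then to establish injectivity via a leading-term filtration by length.

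The crux of the relation check is a single identity. A short computation from \eqref{cal}, using that $s_i$ inverts $e^{a_i}$, should give
\[\mathbf{c}_i + \mathbf{c}_i^{s_i} = \tau_i + \tau_i^{-1}\,,\]
where $\mathbf{c}_i^{s_i}$ denotes the pull-back of $\mathbf{c}_i$ by $s_i$. Expanding $T_i^2$ from $T_i = \tau_i - \mathbf{c}_i + \mathbf{c}_i s_i$ and moving $\mathbf{c}_i$ past $s_i$ via $s_i \mathbf{c}_i = \mathbf{c}_i^{s_i} s_i$, this identity immediately yields the quadratic Hecke relation $(T_i - \tau_i)(T_i + \tau_i^{-1}) = 0$. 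The $\Omega$-relations are essentially automatic: \eqref{braid1} holds because $T_\pi \mapsto \pi$ and $\Omega \subset \Wh$ is a subgroup, while for \eqref{braid2} one uses that $\pi \in \Omega$ permutes the simple affine roots $a_0, \ldots, a_n$, so whenever $\pi s_i \pi^{-1} = s_j$ one has $\pi \cdot a_i = a_j$ and hence $\pi \mathbf{c}_i \pi^{-1} = \mathbf{c}_j$ (using also that $\tau_i = \tau_j$), which gives $\pi T_i \pi^{-1} = T_j$ on the nose. Among the braid relations \eqref{braid}, the case $m_{ij} = 2$ is trivial because then $s_i, s_j$ commute and the supports of $\mathbf{c}_i, \mathbf{c}_j$ lie in orthogonal rank-$1$ sub-root systems. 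Each remaining case $m_{ij} \in \{3, 4, 6\}$ lives inside a rank-$2$ sub-root system, so it suffices to verify these as operator identities on $\c(V)$ restricted to the appropriate rank-$2$ subspaces; this reduces to standard rational-function identities of Baxterised Demazure--Lusztig type.

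For injectivity, I will filter by length. Given any $\hat w \in \Wh$ with reduced decomposition $\hat w = s_{i_1} \cdots s_{i_l} \pi$ and thus $T_{\hat w} = T_{i_1} \cdots T_{i_l} T_\pi$, expanding
\[\beta(T_{\hat w}) = \bigl(\tau_{i_1} + \mathbf{c}_{i_1}(s_{i_1} - 1)\bigr) \cdots \bigl(\tau_{i_l} + \mathbf{c}_{i_l}(s_{i_l} - 1)\bigr)\,\pi\]
in $\c(V) * \Wh$ produces a leading term $\tau_{i_1} \cdots \tau_{i_l}\, \hat w$, while every other contribution is supported on elements of $\Wh$ of strictly smaller length. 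Since the elements of $\Wh$ are $\c(V)$-linearly independent in $\c(V) * \Wh$, and since $\{T_{\hat w}\}_{\hat w \in \Wh}$ is a $\c$-basis of $\HH$, triangularity implies that $\beta$ is injective.

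The main obstacle is the braid relation \eqref{braid} in the $m_{ij} = 6$ case (which occurs for affine type $G_2^{(1)}$), where the identity to be checked has six factors on each side. One approach is to verify it directly as an equality of rational functions after applying both sides to a constant test function; alternatively one can appeal to the corresponding Baxterised form that is by now well-known in the theory of double affine Hecke algebras, so that the verification reduces to a uniform Yang--Baxter argument rather than a case-by-case computation.
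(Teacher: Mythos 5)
The paper offers no proof of this statement: it is quoted directly from Macdonald's book ((4.3.10), (4.3.12) of \cite{M03}), so your outline has to be measured against the standard argument there, which it essentially reproduces. The pieces you actually carry out are correct. The identity $\mathbf{c}_i+\mathbf{c}_i^{s_i}=\tau_i+\tau_i^{-1}$ does follow from \eqref{cal}, and it gives the quadratic relation since $T_i+\tau_i^{-1}=\mathbf{c}_i^{s_i}+\mathbf{c}_is_i$ and $(s_i-1)(\mathbf{c}_i^{s_i}+\mathbf{c}_is_i)=0$; the $\Omega$-relations are immediate from $\pi(a_i)=a_j$ and $\tau_i=\tau_j$; and the $m_{ij}=2$ case of \eqref{braid} is trivial. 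The one outright error is in the injectivity step: expanding $\beta(T_{\hat w})$ in $\c(V)*\Wh$, the coefficient of $\hat w$ is not $\tau_{i_1}\cdots\tau_{i_l}$ but the twisted product $\mathbf{c}_{i_1}\,\mathbf{c}_{i_2}^{s_{i_1}}\cdots\,\mathbf{c}_{i_l}^{s_{i_1}\cdots s_{i_{l-1}}}$ (one only gets $\prod_k\tau_{i_k}$ after degenerating $\mathbf{c}_i\to\tau_i$). The triangularity argument survives, because this product is still a nonzero element of $\c(V)$ and the subword property guarantees that every other group element occurring has length strictly less than $l(\hat w)$, but the leading coefficient should be stated correctly.

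The more substantive issue is that the braid relations for $m_{ij}\in\{3,4,6\}$ --- which constitute essentially all of the content of the theorem --- are not proved but deferred to ``standard rational-function identities'' or to a known Baxterised form. The reduction to rank two is legitimate (the affine roots in the $\langle s_i,s_j\rangle$-orbit of $\{a_i,a_j\}$ form a finite dihedral root system inside $\VV$, and the relevant $\mathbf{c}$-functions depend only on $e^{a_i},e^{a_j}$), but one must then actually verify the three-, four- and six-fold product identities, or else cite Lusztig/Macdonald --- which is exactly what the paper itself does by referring to \cite{M03}. So your proposal is a correct skeleton of the standard proof, with one fixable misstatement in the injectivity argument and with its computational core outsourced rather than established.
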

%See \cite[(4.2.2), (4.3.9), (4.3.12)]{M03} for the formulas in the general case. %Here $a_i$ is viewed as a linear function on $V$ via the inner product, so $e^{a_i}$ stands for $e^{(a_i, x)}$.
Let us identify $\widehat{\mathfrak H}$ with its image under $\beta$, so the affine Hecke algebra from now on will be viewed as a subalgebra of ${\D}_q*W$. The \emph{Cherednik operators}, by definition, are the images of $Y^\lambda$ under $\beta$. They form a commutative family of difference-reflection operators, and should be viewed as $q$-analogues of the Dunkl operators. In comparison, they are rather complicated. For example, for a dominant $\lambda$,  one obtains $Y^\lambda$ by first finding a reduced decomposition $t(\lambda)=s_{i_1}\dots s_{i_l}\pi$ and then writing the product $Y^\lambda=T_{i_1}\dots T_{i_l}T_\pi$ in the basic representation. Below we will often write $Y^\lambda$ in terms of the elements $\RR(\alpha)$ (``$R$-matrices'') defined by
\begin{equation}\la{rdef}
\RR(\alpha)=\tau_{\alpha} s_{\alpha} +\mathbf{c}_{\alpha}(x)(1-s_{\alpha})\,,\quad \alpha\in \Ra\,. 
\end{equation} 
The following property of these elements is important: 
\begin{equation*}
w\RR(\alpha)w^{-1}=\RR(w(\alpha))\,,\qquad\text{for any}\  w\in\WW\,. 
\end{equation*}
Using this and the fact that $\RR(a_i)=T_is_i$ for $i=0, \dots, n$, it is straightforward to rewrite $Y^\lambda$ in terms of $\RR(\alpha)$ instead of $T_i$. %(cf. Definition \ref{rwdef} in the elliptic case below).

The commutative subalgebra generated by the Cherednik operators will be denoted as $\c[Y]$, so elements $f(Y)\in\c[Y]$ are arbitrary linear combinations of $Y^\lambda$, $\lambda\in P^\vee$. Inside $\c[Y]$ we have a subalgebra $\c[Y]^W$, spanned by the orbitsums $f=\sum_{\mu\in W\lambda}Y^{\mu}$. 

\subsection{}
The (finite) Hecke algebra $\mathfrak H$ of $W$ is a subalgebra of $\HH$, generated by $T_i$, $i=1, \dots, n$. 
That is, $\mathfrak H$ is generated by $T_1, \dots, T_n$ which satisfy the relations \eqref{braid}, \eqref{hecke}. 
We have an isomorphism $\HH\cong \mathfrak H\otimes \c[Y]$, as vector spaces. The cross-relations between $T_i$ and $Y^\lambda$ are the so-called Lusztig relations \cite[(4.2.4)]{M03}. They imply the following property, cf. \cite[(4.2.8)]{M03}:
\begin{equation}\la{dl}
T_if(Y)=f(Y)T_i\quad \text{for all $f\in\c[Y]$ such that $s_{i}.f=f$}\,, 
\end{equation}
where the action of $s_{i}$ on $f\in \c[Y]$ is by $s_{i}.Y^{\lambda}=Y^{s_{i}(\lambda)}$.

For any $w\in W$ with a reduced decomposition $w=s_{{i_1}}\dots s_{{i_r}}$, we define $\tau_w=\tau_{i_1}\dots \tau_{i_l}$. The following is a Hecke-algebra analogue of the symmetrizer \eqref{ew}:
\begin{equation}\la{etau}
e_\tau=\frac{1}{\sum_{w\in W}\tau_w^2}\sum_{w\in W} \tau_wT_w\,.
\end{equation}
By \cite[5.5.17]{M03}, 
\begin{equation}\la{tet}
T_ie_\tau=e_\tau T_i=\tau_i e_\tau\,,\qquad e_\tau^2=e_\tau\,.
\end{equation}
Let us also define parabolic symmetrizers. Given $I\subset \{1,\dots, n\}$, we define $W_I\subset W$ as the subgroup generated by $s_{i}$ with $i\in I$. It is known that $W_I$ is isomorphic to a Weyl group with Dynkin diagram obtained by removing the vertices $j\notin I$ from the diagram of $W$. Similarly, define a parabolic subalgebra $\mathfrak H_{I}\subset \mathfrak H$ as the subalgebra generated by $T_i$ with $i\in I$. By \cite[4.4.7]{GeP}, $\mathfrak H_I$ is spanned by $T_w$ with $w\in W_I$ and it is isomorphic to the Hecke algebra of $W_I$ with parameters $\tau_i$, $i\in I$. As a result, if we define 
\begin{equation}\la{ewj}
e_{\tau, I}=\frac{1}{\sum_{w\in W_I}\tau_w^2}\sum_{w\in W_I} \tau_wT_w\,,
\end{equation}
then 
\begin{equation*}
T_ie_{\tau, I}=e_{\tau, I} T_i=\tau_i e_{\tau, I}\quad\forall i\in I\,,\qquad e_{\tau, I}^2=e_{\tau, I}\,.
\end{equation*}

\subsection{}\la{mrh} The Hamiltonians of the quantum relativistic Calogero--Moser system based on a root system $R$ are given by certain difference operators. The following construction for them was given by Cherednik. We start by picking an arbitrary $f(Y)\in\c[Y]^W$; then there is a unique difference operator ${L}_f\in{\D}_q$ such that $f(Y)e={L}_fe$, where $e$ is the symmetrizer \eqref{ew}. By \eqref{dl}, $(T_i-\tau_i)f(Y)=f(Y)(T_i-\tau_i)$. From \eqref{br} we have $T_i-\tau_i=\mathbf{c}_i(x)(s_{i}-1)$, therefore,
\begin{equation}\la{sym}
\mathbf{c}_i(x)(s_{i}-1){L}_fe=(T_i-\tau_i)f(Y)e=f(Y)(T_i-\tau_i)e=f(Y)\mathbf{c}_i(x)(s_{i}-1)e=0\,.
\end{equation}  
As a result, $(s_{i}-1){L}_fe=0$, from which ${L}_f^{s_{i}}={L}_f$ for all $i$, i.e. ${L}_f$ is a $W$-invariant difference operator, with $e{L}_fe=e{L}_f={L}_fe=f(Y)e=ef(Y)e$. (Warning: $f(Y)e\ne ef(Y)$!) These operators pairwise commute, for if ${L}_f, L_g\in{\D}_q^W$ are constructed from $f, g\in \c[Y]^W$ then
\begin{equation*}
{L}_fL_ge={L}_feL_ge=f(Y)eL_ge=f(Y)L_ge=f(Y)g(Y)e\,,
\end{equation*} 
which is invariant under exchanging $f$ and $g$. This gives an algebra map
\begin{equation}\la{rcm}
\c[Y]^W\to {\D}_q^W\,,\quad f\mapsto {L}_f\,,
\end{equation}
hence a quantum integrable system with commuting Hamiltonians ${L}_f$, $f\in\c[Y]^W$.
In the case $R=A_{n-1}$, this is the trigonometric Ruijsenaars system \cite{R87}. In general, the difference operators ${L}_f$, $f\in\c[Y]^W$ are complicated; some of them are known explicitly \cite{M87, Ko, vD, vDI, vDE}. 

\subsection{} Quantum Lax pairs are now constructed in the same way as before. First, the algebra ${\D}_q$ of difference operators acts naturally on $\c(V)$, with translations $t(\lambda)$, $\lambda\in P^\vee$ acting by \eqref{tact}. Consider the induced module 
\begin{equation}\la{mod}
M=\Ind_{{\D}_q}^{{\D}_q*W}\,\c(V)\,.
\end{equation}
We have $M\cong \c W\otimes \c(V)$ as vector spaces, so $\End_\c(M)$ can be identified with \emph{operator-valued} matrices of size $|W|$. As a result, the (left) action of ${\D}_q*W$ on $M$ gives a representation
\begin{equation}\la{rep}
{\D}_q*W\to \Mat(|W|, {\D}_q)\,.
\end{equation}
Again, any $a\in{\D}_q^W$ is represented by $a\mathbb{1}$.
%\begin{equation*}
%a\left(\sum_{w\in W} wf_w\right)=\sum_{w\in W} w a^w(f_w)\,.  
%\end{equation*}
%In particular, 
%\begin{equation}\la{eeq}
%\pi_q(a)=a\mathbb{1}\,,\qquad\forall\,a\in{\D}_q^W\,.  
%\end{equation}
 
\medskip

Now pick a Cherednik operator $Y^{\lambda}$, $\lambda\in P^\vee$ and choose an element $f(Y)$, $f\in\c[Y]^W$, writing it as $f(Y)=\sum_{w\in W} a_ww$ with $a_w\in{\D}_q$. Then
\begin{equation}\la{acalg1}
f(Y)={L}_f+\widehat A\,,\quad\text{where}\quad {L}_{f}=\sum_{w\in W} a_w\,,\ \ \widehat A := \sum_{w\in W} a_w(w-1)\,.
\end{equation}
Note that ${L}_f$ is one of the Hamiltonians \eqref{rcm}, so ${L}_f\in{\D}_q^W$. We now set $\mathcal L$, $\mathcal H$, $\mathcal A$ to be the images of $Y^\lambda$, $L_f$ and $\widehat A$, respectively, under \eqref{rep}. Since $Y^\lambda$ and $f(Y)=L_f+\widehat A$ commute, we obtain \eqref{leq} with $\mathcal H={L}_f\, \mathbb{1}$, that is, a quantum Lax pair of size $|W|$.

\medskip

\subsection{}\la{climq}
The classical limit corresponds to $q=e^c\to 1$, and the procedure is similar to \ref{cld}. Namely, we set $c=-\mathrm{i}\hbar\beta$, with some fixed $\beta$, and consider the algebra
\begin{equation*}
\Ah*W=\c(V)[t_1^{\pm 1},  \dots , t_n^{\pm 1}][[\hbar]]*W\,,\qquad t_k:=t(b_k)\,,
\end{equation*}
where $b_k$ are the fundamental coweights. We have 
\begin{equation*}
[t_k, f]=\sum_{l=1}^\infty (-\mathrm{i}\hbar\beta)^l\partial_{k}^l(f) t_k\,,\qquad \forall\  f\in\c(V)\,,
\end{equation*}
where we denote $\partial_k:=\partial_{b_k}$.

Consider a commutative algebra $A_0=\c(V)[P^\vee]$. Elements of $\c[P^\vee]$ are linear combinations with coefficients in $\c(V)$ of $e^\lambda$, $\lambda\in P^\vee$. We will view $e^\lambda$ as a function of the classical momenta $p\in V$ by setting $e^\lambda:=e^{\beta p_\lambda}$, where $p_\lambda$ is the momentum in direction $\lambda$. Writing $p_k:=p_{b_k}$, %$ the momentum in direction $b_k$, so $e^{b_k}=e^{\beta p_k}$.
we have an algebra isomorphism
\begin{equation*}
\eta_0:\ \Ah*W/(\hbar\Ah*W)\to A_0*W\,,\qquad f\mapsto f\,,\ \ t_k\mapsto e^{\beta p_{k}}\,,\ \ w\mapsto w\,.
\end{equation*}
%where $A_0=\c(V)[\gamma_1^{\pm 1}, \dots, \gamma_n^{\pm 1}]$ is the classical version of $\Ah$, with $[\gamma_k, f]=0$ for $f\in\c(V)$. 
We may view $\Ah$ (resp. $\Ah*W$) as a formal deformation of $A_0$ (resp. $A_0*W$). The algebra $A_0$ is commutative, with the Poisson bracket determined by
$[a,b]=\mathrm{i}\hbar\{\eta_0(a), \eta_0(b)\}+o(\hbar)$ for $a, b\in \Ah$. Explicitly, $\{e^{\beta p_k}, f\}=-\beta\partial_k(f)e^{\beta p_k}$. %, corresponding to the fact that $\gamma_k=e^{\beta p_k}$ where $p_k$ is the classical momentum in direction $b_k$. 
For $a\in\Ah*W$, we call $\eta_0(a)$ the \emph{classical limit} of $a$.

The classical Cherednik operators  $Y^{\lambda}_{c}:=\eta_0(Y^\lambda)$ can be computed by using the classical version of the basic representation, cf. \cite[Section 5]{Ob}; let us write $f(Y)_{c}$ for the classical version of $f(Y)\in\c[Y]$. We then have the following analogue of Lemma \ref{le}.   
\begin{lemma}
For any $f(Y)\in\c[Y]^W$, when writing $f(Y)_{c}=\sum_{w\in W} a_ww$ with $a_w\in A_0$, we have $a_w=0$ for $w\ne \id$.  
\end{lemma}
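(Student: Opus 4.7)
I plan to adapt to the relativistic setting the proof of Lemma \ref{le} from \cite{EFMV}. By linearity, it suffices to treat the case where $f = m_\lambda := \sum_{\mu\in W\lambda} Y^\mu$ is the orbit sum over a dominant coweight $\lambda \in P^\vee_+$, since these orbit sums span $\c[Y]^W$ as a vector space.

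The first ingredient is the Hecke commutation identity \eqref{dl}: $T_i f(Y) = f(Y) T_i$ for $i = 1,\ldots,n$. Since the image of each $T_i$ under the basic representation is $\tau_i + \mathbf{c}_i(x)(s_i-1)$, which lies in $\c(V)*W$ and has no translation component, its classical limit is itself. The commutation identity therefore descends unchanged to
\[
\mathbf{c}_i(x)(s_i - 1)\,f(Y)_c \,=\, f(Y)_c\,\mathbf{c}_i(x)(s_i - 1) \quad \text{in } A_0 * W, \qquad i = 1,\ldots,n.
\]

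The second ingredient is the explicit form of $Y^\lambda_c$. Choosing a reduced decomposition $t(\lambda) = s_{i_1}\cdots s_{i_l}\pi$, the classical Cherednik operator $Y^\lambda_c$ is the product of the classical-limit basic-representation images of the $T_{i_k}$ and of $T_\pi$; the latter contributes a multiplication by $e^{\beta p_\nu}$ for some $\nu \in P^\vee$. Equipping $A_0 * W$ with the filtration by total degree in the generators $t_k = e^{\beta p_k}$, the top-filtration symbol of $f(Y)_c = \sum_{\mu \in W\lambda} Y^\mu_c$ is automatically a $W$-invariant polynomial in the $t_k$'s with coefficients in $\c(V)$, and hence lies in $A_0$. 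The intertwining relations from the first ingredient then allow one to propagate this property to the lower-order terms of $f(Y)_c$ inductively, forcing $f(Y)_c \in A_0$.

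The main obstacle I anticipate is carrying out this inductive cancellation rigorously. Each relation $\mathbf{c}_i(x)(s_i - 1) f(Y)_c = f(Y)_c \mathbf{c}_i(x)(s_i - 1)$ mixes the identity component of $f(Y)_c$ with its reflection components, and it is nontrivial to conclude that \emph{all} coefficients $a_w$ with $w\ne\id$ vanish simultaneously. A convenient way to formalise the argument, mirroring the rational case of EFMV, is to combine the intertwining relations with the commutativity $[f(Y)_c, g(Y)_c] = 0$ for all $g \in \c[Y]$ (the classical limit of the commutativity of Cherednik operators), and to exploit the fact that the classical Cherednik subalgebra is isomorphic to $\c[P^\vee]$: a $W$-invariant element in this subalgebra must descend to $\c[P^\vee]^W \subset A_0$. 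Keeping track of the nontrivial $\mathbf{c}_\alpha(x)$ factors and of the combinatorics of reduced decompositions in $\Wh$ is the technical heart of the argument.
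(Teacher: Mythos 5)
There is a genuine gap: you identify the hard step (``carrying out this inductive cancellation rigorously'') but never supply the idea that actually closes it, and the ingredients you do invoke are not sufficient. Commutation with the $T_i$'s via \eqref{dl} and commutativity with the other Cherednik operators both hold already at generic $q$, where the conclusion is false (the individual $Y^\lambda$, and even $W$-invariant combinations before symmetrisation, genuinely contain reflection terms). So no argument built only on those two relations can prove the vanishing of the $a_w$; something special to $q=1$ must enter. Likewise, the assertion that ``a $W$-invariant element in this subalgebra must descend to $\c[P^\vee]^W\subset A_0$'' presupposes that the classical Cherednik subalgebra sits inside $A_0$, which is essentially the statement to be proved.

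The missing ingredient, and the one the paper uses, is commutation with the \emph{multiplication operators} $g(x)\in\c(V)$: by \cite[Lemma 5.1]{Ob}, at the classical level $q=1$ the algebra $\c[Y]^W$ lies in the center of the double affine Hecke algebra, hence $f(Y)_c$ commutes in particular with every $g\in\c(V)$. Once this is known, the conclusion is immediate and requires no filtration or induction: writing $f(Y)_c=\sum_w a_w w$ with $a_w\in A_0$ and using that $A_0$ is commutative, the relation $g\,f(Y)_c=f(Y)_c\,g$ gives $a_w\,(g^{w}-g)=0$ for every $w$, and choosing $g$ with $g^{w}\ne g$ forces $a_w=0$ for all $w\ne\id$. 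This is exactly the mechanism of \cite[Lemma 2.2]{EFMV} that you set out to adapt; the adaptation consists of replacing the $t=0$ centrality statement for the rational Cherednik algebra by Oblomkov's $q=1$ centrality statement, not of redoing the cancellation by hand through reduced decompositions.
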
   
\begin{proof} Indeed, by \cite[Lemma 5.1]{Ob} the algebra $\c[Y]^W$ belongs to the center of the double affine Hecke algebra at the classical level $q=1$. Therefore, the same proof as in \cite[Lemma 2.2]{EFMV} applies.
\end{proof}
This tells us that the classical limit of $\widehat A$ in \eqref{acalg1} is zero. Therefore, $\hbar^{-1}\widehat A$ has a well-defined classical limit, and so the classical Lax pair is obtained in the same way as in
\ref{cld}.

\subsection{}\la{3.6}
To get a Lax pair of a smaller size, we take $\lambda$ to be on the boundary of the Weyl chamber, i.e. $\dpr{\lambda, a_i}\ge 0$ for $i=1, \dots, n$, with at least one $i$ such that $\dpr{\lambda, a_i}=0$. Set $I=\{i\,|\, \dpr{a_i, \lambda}=0\}$; then the stabiliser $W'=W_I$ of $\lambda$ is generated by $s_{i}$ with $i\in I$. Write
\begin{equation}\la{es} 
e'=\frac{1}{|W'|}\sum_{w\in W'} w\,, \qquad M'=e'M\cong e'\c W\otimes \c(V)\,.
\end{equation}
%Let $e'=\frac{1}{|W'|}\sum_{w\in W'} w$ be the symmetrizing idempotent for $W'$. 
By \eqref{dl}, we have $T_iY^\lambda =Y^\lambda T_i$ for all $i\in I$. Then, similarly to \eqref{sym}, we have $(s_{i}-1)Y^\lambda e'=0$ for $i\in I$, implying $e'Y^\lambda e'=Y^\lambda e'$. This tells us that the action of $Y^\lambda$ preserves $M'=e'M$. Similarly, $f(Y)$ for any $f\in\c[Y]^{W}$ preserves $M'$. Therefore, the operators $Y^\lambda$, $f(Y)={L}_f+\widehat A$, as well as ${L}_f$, can be restricted onto $M'$, resulting in a quantum Lax pair $\mathcal L$, $\mathcal A$ of size $|W/W'|$.

\subsection{}
There is a $q$-analogue of the formula \eqref{fi} for constructing first integrals. It generalises a formula discovered in the $\GL_n$-case by Nazarov and Sklyanin \cite{NS17}, see \ref{ns} below. As before, we choose a dominant $\lambda\in P^\vee$ and write $W'=W_I$ for the stabiliser of $\lambda$ and $e_{\tau, I}$, $e'$ for the symmetrizers \eqref{ewj}, \eqref{es}. Denote by $R'\subset R$ the root system of $W'$, and choose coset representatives $w_i$ for $W'\backslash W$. We write elements of $M'=e'M$ as linear combinations of $e'w_if_i$ with $f_i\in \c(V)$. 
%, thus identifying $M'$ with $\c(x)^{\oplus l}$. With this identification, we have a quantum Lax matrix 
Let $\mathcal L$ be a matrix of size $|W/W'|$ which represents the action of $Y^\lambda$ on $M'$.

Recall the functions $\mathbf{c}_\alpha (x)$ \eqref{cal}.
Introduce a pair of row/column vectors with $l:=|W/W'|$ components as follows:  
\begin{equation}\la{uva}
\vv=(1,\dots, 1)^T\,,\quad \uu=(\phi_1, \dots, \phi_l)\,,\quad \phi_i(x):=\phi(w_ix)\,,\quad\text{where}\ \phi(x)=\prod_{\alpha\in R_+\setminus R'_+} \mathbf{c}_\alpha(-x)\,. 
\end{equation}

\begin{prop}\la{intq} 
The difference operators $\widehat H_k=\uu\mathcal L^k \vv$, $k\in\Z$ belong to the algebra of quantum Hamiltonians ${L}_f$, $f\in \c[Y]^W$. 
\end{prop}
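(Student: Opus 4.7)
The strategy is to reproduce, in the relativistic setting, the alternative argument given at the end of Subsection~\ref{2.5} for the formula~\eqref{fi}. The role of the standard symmetrizer $e$ is now played by the Hecke symmetrizer $e_\tau$ (together with its parabolic analogue $e_{\tau,I}$ for the stabilizer $W'=W_I$ of $\lambda$); the main task is to exhibit $\uu\mathcal L^k\vv$ as essentially the only non-trivial matrix component of an element of $\c[Y]\cdot e_\tau$ acting on $M'$, and to show that this element already lies in $\c[Y]^W\cdot e_\tau$, so that by~\eqref{rcm} the result is a commuting Hamiltonian $L_{f_k}$.

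Concretely, the starting point is the bilateral Hecke-symmetrization $e_\tau\cdot(Y^\lambda)^k\cdot e_\tau=e_\tau Y^{k\lambda}e_\tau$ inside $\HH$. Since $\mathfrak H e_\tau=\c e_\tau$ by~\eqref{tet} and $\HH=\mathfrak H\cdot\c[Y]=\c[Y]\cdot\mathfrak H$, one has $e_\tau\HH e_\tau=e_\tau\c[Y]e_\tau$, and the analogue of the argument in~\ref{mrh} with $e_\tau$ in place of $e$ (using \eqref{dl} to commute $T_i$ past any $W$-invariant element of $\c[Y]$) shows that this subalgebra lies in $\c[Y]^W\cdot e_\tau$. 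Consequently, $e_\tau Y^{k\lambda}e_\tau=f_k(Y)\cdot e_\tau$ for a uniquely determined $f_k\in\c[Y]^W$, which in the basic representation acts as $L_{f_k}\cdot e_\tau$.

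The second step, which is the main technical part, is to compute the action of $e_\tau$ on $M'=e'M$. Using the coset decomposition $W=\bigsqcup_i w_iW_I$ by minimal-length representatives $w_i$ and the factorization $e_\tau\propto\bigl(\sum_i\tau_{w_i}T_{w_i}\bigr)\cdot e_{\tau,I}$, one inductively evaluates $T_{w_i}$ along a reduced decomposition via~\eqref{br}: at each step a factor $\mathbf{c}_\alpha(\cdot)$ is picked up for the positive root $\alpha$ that $w_i^{-1}$ sends to a negative root, while the vanishing $(s_j-1)e'=0$ for $j\in I$ eliminates precisely the contributions of roots in $R'_+$. Collecting the surviving factors produces the explicit product $\phi(w_ix)=\prod_{\alpha\in R_+\setminus R'_+}\mathbf{c}_\alpha(-w_ix)$ from~\eqref{uva}, while $e_{\tau,I}$ contributes the column $\vv=(1,\dots,1)^T$. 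Thus the matrix of $e_\tau$ on $M'$ is, up to a nonzero scalar, the rank-one operator $\vv\uu$.

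Combining the two computations: on $M'$, the operator $e_\tau Y^{k\lambda}e_\tau$ equals $L_{f_k}$ times the matrix of $e_\tau$, i.e.\ a scalar multiple of $L_{f_k}\cdot\vv\uu$; a direct matrix calculation, using $\vv\uu\cdot\mathcal L^k\cdot\vv\uu=(\uu\mathcal L^k\vv)\cdot\vv\uu$, gives a scalar multiple of $(\uu\mathcal L^k\vv)\cdot\vv\uu$. Equating the two identifies $\widehat H_k=\uu\mathcal L^k\vv$ with $L_{f_k}$, up to a nonzero constant that can be absorbed into $f_k$. This covers all $k\in\Z$ since $Y^{k\lambda}\in\c[Y]$ is defined for every integer $k$ and preserves $M'$ by the same argument as in~\ref{3.6}. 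The principal obstacle is the combinatorial identification of $\uu$: the asymmetry of~\eqref{br}, which places $\mathbf{c}_i(x)$ only on the left of $(s_i-1)$, is what forces the evaluation of $\mathbf{c}_\alpha$ at $-w_ix$ rather than $+w_ix$, and one must carefully track how these factors accumulate along a reduced decomposition of $w_i$ and collapse to exactly the clean product over $R_+\setminus R'_+$ appearing in~\eqref{uva}.
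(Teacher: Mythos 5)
Your architecture is close to the paper's in its key computation but diverges in the endgame. The central matrix fact --- that $e_\tau$ acts on $M'$ as a nonzero multiple of the rank-one matrix $\vv\uu$ with $\uu$ built from $\phi=\prod_{\alpha\in R_+\setminus R'_+}\mathbf{c}_\alpha(-x)$ --- is exactly what the paper uses; the paper obtains it in one line from Macdonald's identities $e_\tau=e\,\mathbf{c}_+$ and $e_{\tau,I}=e'\,\mathbf{c}'_+$ \cite[(5.5.14), (5.5.15)]{M03} together with $e_\tau e_{\tau,I}=e_\tau$, whereas you propose to rederive it by induction along reduced words, which is more labour for the same output (and is only sketched). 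Where you genuinely differ is afterwards: you invoke the spherical (Satake) identification $e_\tau\HH e_\tau=\c[Y]^W e_\tau$ to write $e_\tau Y^{k\lambda}e_\tau=f_k(Y)e_\tau$ and so identify $\uu\mathcal L^k\vv$ with a constant multiple of a specific Hamiltonian $L_{f_k}$. The paper instead sandwiches with plain $e$ on the right, proves only the elementary commutator identity $[e_\tau Y^\lambda e,\,ef(Y)e]=0$ (using $ee_\tau=e_\tau$ and \eqref{dl}), concludes that $\uu\mathcal L^k\vv$ is a $W$-invariant difference operator commuting with every $L_f$, and then quotes \cite[Theorem 3.15]{LS} to get membership in the algebra of Hamiltonians. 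Your route, once completed, is more explicit and dispenses with the Letzter--Stokman input; the paper's route avoids the Satake statement altogether.

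The step you should not pass over is precisely the inclusion $e_\tau\c[Y]e_\tau\subseteq\c[Y]^W e_\tau$. The relation \eqref{dl} only moves $T_i$ past elements of $\c[Y]$ that are already $s_i$-invariant, so it gives the easy inclusion $\c[Y]^W e_\tau\subseteq e_\tau\HH e_\tau$ and says nothing about $e_\tau Y^{k\lambda}e_\tau$ for the non-invariant monomial $Y^{k\lambda}$; ``the analogue of the argument in \ref{mrh}'' does not run on \eqref{dl} alone. It can be repaired: since $\mathfrak H e_\tau=\c e_\tau$ one has $e_\tau Y^{k\lambda}e_\tau=g(Y)e_\tau$ for some $g\in\c[Y]$; applying $(T_i-\tau_i)$ on the left kills the left-hand side, while the full Lusztig cross-relations \cite[(4.2.4)]{M03} turn $(T_i-\tau_i)g(Y)e_\tau$ into $(s_i.g-g)(Y)\cdot u_i(Y)\,e_\tau$ with $u_i(Y)=(\tau_i^{-1}-\tau_iY^{-a_i^\vee})/(1-Y^{-a_i^\vee})\ne 0$, forcing $s_i.g=g$ for all $i$ and hence $g\in\c[Y]^W$. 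With that argument (or a citation of the spherical subalgebra theorem) supplied, and the reduced-word induction for $\uu$ actually carried out or replaced by Macdonald's formula, your proof closes.
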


\begin{proof} First, we claim that for any $f\in\c[Y]^W$ the elements $e_\tau Y^\lambda e$ and $ef(Y)e={L}_fe=e{L}_f$
commute:
\begin{equation}\la{co}
[e_\tau Y^\lambda e, ef(Y)e]=0\,.
\end{equation}
Indeed, by \eqref{tet} and \eqref{br} we have $0=(T_i-\tau_i)e_\tau=\mathbf{c}_i(x)(s_{i}-1)e_\tau$. Thus, $s_{i}e_\tau=e_\tau$ for all $i$, implying 
\begin{equation}\la{etw} 
we_\tau=e_\tau\quad\forall  w\in W\,.
\end{equation}
Therefore, $ee_\tau=e_\tau$. Next, recall that by \eqref{dl} any $f(Y)\in \c[Y]^W$ commutes with all $T_i$, so $e_\tau f(Y)=f(Y)e_\tau$. Now we have
\begin{equation*}
(e_\tau Y^\lambda e) (e {L}_f e)= e_\tau Y^\lambda {L}_fe =e_\tau Y^\lambda f(Y)e\,,
\end{equation*}
and similarly,
\begin{equation*}
(e{L}_f e)(e_\tau Y^\lambda e)=f(Y)e e_\tau Y^\lambda e=f(Y)e_\tau Y^\lambda e=e_\tau f(Y) Y^\lambda e\,,
\end{equation*}
which implies \eqref{co}.

Next, by \cite[(5.5.14), (5.5.15)]{M03} we have $e_\tau=e\,\mathbf{c}_+$ where $\mathbf{c}_+(x)=\prod_{\alpha\in R_+} \mathbf{c}_\alpha(-x)$ (up to a constant factor). Similarly,      
$e_{\tau, I}=e'\,\mathbf{c}'_+$ with $\mathbf{c}'_+(x)=\prod_{\alpha\in R'_+} \mathbf{c}_\alpha(-x)$. As a result, the action of the symmetrizer $e_\tau$ on $M'=e'M$ is calculated as follows:  
\begin{equation}\la{eta}
e_\tau e'= e_\tau e_{\tau, I}/\mathbf{c}'_+=e_\tau /\mathbf{c}'_+=e\mathbf{c}_+/\mathbf{c}'_+=e\phi\,,\qquad \phi(x)=\prod_{\alpha\in R_+\setminus R'_+} \mathbf{c}_\alpha(-x)\,.
\end{equation}
The function $\phi$ is $W'$-invariant. If $f=\sum_{i} e' w_i f_i$ is an element of $M'$, then $\phi f=\sum_{i} e' w_i (\phi)^{w_i^{-1}}f_i$, where $\phi^{w_i^{-1}}(x)=\phi(w_i x)$.
It follows that, up to a constant factor, $e_\tau=e\phi$ acts on $M'$ by a matrix $\vv\uu$ as given in \eqref{uva}, and $e$ acts as $\vv\ww$, where $\ww$ is a row of ones. 
We conclude that, up to a constant factor, $e_\tau Y^{k\lambda} e$ acts as $\vv \uu\mathcal L^k \vv\ww=(\uu\mathcal L^k \vv)e$, and it is clear from the construction that $\uu\mathcal L^k \vv$ is $W$-invariant. Meanwhile, $ef(Y)e$ acts on $M'$ as ${L}_fe$. Hence, $\uu\mathcal L^k \vv$ and ${L}_f$ commute by \eqref{co}.

This proves that each of $\widehat H_k=\uu\mathcal L^k \vv$ is a $W$-invariant difference operator commuting with all ${L}_f$, $f\in\c[Y]^W$. By \cite[Theorem 3.15]{LS}, $\widehat H_k$ must belong to the algebra of the Hamiltonians ${L}_f$, $f\in\c[Y]^W$. In particular, $\widehat H_k$ pairwise commute. 
\end{proof}

\subsection{}\la{aq}
Let us calculate a quantum Lax pair for the trigonometric Ruijsenaars system. This corresponds to the case of a root system $R=A_{n-1}$ (or more precisely, to the $\GL_n$-case). This will provide an alternative derivation of some of the results of Nazarov and Sklyanin \cite{NS17}. Let us first describe the Cherednik operators in this case. 

We take $V=\c^n$, with the orthonormal basis $\epsilon_1, \dots, \epsilon_n$ and the associated coordinates $x_1, \dots, x_n$. As in \ref{typea}, the group $W=\mathfrak{S}_n$ acts on $V$ by permuting the basis vectors, and we write $W'=\mathfrak{S}_{n-1}$ for the subgroup fixing $\epsilon_1$. As before, $e, e'$ are the corresponding symmetrizers. %We choose $a_i=\epsilon_i-\epsilon_{i+1}$, $i=1, \dots, n-1$ as simple roots for $R$. 
The roots in $R$ are $\alpha=\epsilon_i-\epsilon_j$ with $i\ne j$.
The role of $P^\vee$ in the $\GL_n$-case is played by the lattice $\Lambda=\sum_{i=1}^n \Z\epsilon_i$. There is only one coupling parameter $\tau$, so  $\tau_\alpha=\tau$ for all $\alpha$.
Fix $q=e^c$ and consider the algebra of difference operators ${\D}_q=\c(x)\ltimes t(\Lambda)$, generated by $\c(x)$ and the shift operators $t(\epsilon_k)=q^{\partial_k}$.   For $i\ne j$ we set $\RR_{ij}:=\RR(\epsilon_i-\epsilon_j)$ according to \eqref{rdef}. Explicitly,  
\begin{equation}\la{rij}
\RR_{ij}=a_{ij}+b_{ij}\ss_{ij}\,,\qquad a_{ij}=a({x_i-x_j})\,,\quad b_{ij}=b({x_i-x_j})\,,
\end{equation}
where  
\begin{equation}\la{ab}
a(z)=\frac{\tau^{-1}-\tau e^z}{1-e^z}\,,\quad b(z)=\frac{\tau-\tau^{-1}}{1-e^z}\,.
\end{equation}
%It is easy to check that 
%\begin{equation*}
%\RR_{ij}^{-1}=a_{ji}-b_{ij}\ss_{ij}\,.
%\end{equation*}
The Cherednik operators are the following commuting elements of ${\D}_q*W$ \cite{C1, BGHP}, cf. \cite[(2.7)]{NS17}:
\begin{equation}\la{yi}
Y_i=\RR_{i, i+1}\RR_{i, i+2}\dots \RR_{i, n}\,t(\epsilon_i)\,\RR_{1i}^{-1}\dots \RR_{i-1, i}^{-1}\quad (i=1, \dots, n)\,.
\end{equation}  
The operators ${L}_f$, $f\in\c[Y_1^{\pm 1}, \dots, Y_n^{\pm 1}]^{\mathfrak{S}_n}$ are obtained according to \ref{mrh}. They can be given explicitly \cite{R87, M87}; the simplest one is
\begin{equation}\la{mr1}
{L}_f=\sum_{i=1}^n \left(\prod_{l\ne i} a_{il}\right)\,t(\epsilon_i)\,,\qquad f=Y_1+\dots +Y_n\,.
\end{equation}  
%\subsection{}\la{y1l}
A Lax matrix will be constructed from $Y_1=Y^{\epsilon_1}$:
\begin{equation}\la{y1}
Y_1=\RR_{12}\RR_{13}\dots \RR_{1n}\,t(\epsilon_1)\,.
\end{equation}
As explained in \ref{3.6}, 
%we have
%\begin{equation}\la{ijy}
%\ss_{ij}Y_1e'=Y_1e'\quad\text{for}\   i,j>1\,,
%\end{equation} 
%so 
$Y_1$ acts on $M'=e'M$. 
%as a consequence.    
We have the following result, proved by Nazarov and Sklyanin (cf. a similar statement in the elliptic case in \cite[Lemma 4.4]{KH97}).

\begin{lemma}[\cite{NS17}, Proposition 2.4]\la{ns}
Upon restricting onto ${M'}$, $Y_1$ coincides with 
\begin{equation*}
(A+\sum_{i\ne 1}^n B_i\ss_{1i})\,t(\epsilon_1)\,,\quad\text{where}\quad A=\prod_{l\ne 1}^n a_{1l}\,,\quad B_i=b_{1i}\prod_{l\ne 1, i} a_{il}\,.
\end{equation*}
\end{lemma}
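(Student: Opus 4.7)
The plan is to reduce the claim to the identity
\[
Z\cdot e' \;=\; \Bigl(A + \sum_{i=2}^n B_i\,\ss_{1i}\Bigr)\cdot e'\qquad\text{in }\c(V)*W, \qquad Z:=R_{12}R_{13}\cdots R_{1n}.
\]
Once this is established, multiplying on the right by $t(\epsilon_1)$ and using $\ss_{1i}\,t(\epsilon_1)=t(\epsilon_i)\,\ss_{1i}$ (which follows from $\ss_{1i}t(\epsilon_1)\ss_{1i}^{-1}=t(\epsilon_i)$) presents the action of $Y_1 = Z\,t(\epsilon_1)$ on $M'=e'M$ in the claimed form. The reduction itself uses only that $\mathfrak{S}_{n-1}$ fixes $\epsilon_1$, so that $t(\epsilon_1)$ commutes with $e'$; by faithfulness of the representation on $M$, the above identity in $\c(V)*W$ is equivalent to the asserted operator identity on $M'$.

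To prove the identity, I expand $Z$ as a sum indexed by subsets $S=\{k_1<\cdots<k_r\}\subseteq\{2,\ldots,n\}$, where $\mathrm{term}_S$ is the product obtained by choosing $a_{1k}$ for $k\notin S$ and $b_{1k}\ss_{1k}$ for $k\in S$ in each factor. Commuting every permutation to the right via $\ss_{1k}f=f^{\ss_{1k}}\ss_{1k}$ puts $\mathrm{term}_S$ in the form $c_S(x)\cdot\pi_S$, with $\pi_S:=\ss_{1k_1}\cdots\ss_{1k_r}$ and $c_S$ an explicit product of $a$'s and $b$'s at shifted arguments. A direct check gives $\pi_S(1)=k_r=\max S$, and from the coset decomposition $W=\bigsqcup_i \ss_{1i}\mathfrak{S}_{n-1}$ (which gives $w\cdot e'=\ss_{1,w(1)}\cdot e'$) we obtain $\pi_S\cdot e'=\ss_{1,\max S}\cdot e'$, with the convention $\max\emptyset:=1$. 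Grouping by $i:=\max S$, the $S=\emptyset$ term gives $A\cdot e'$ as required, and for each $i\ge 2$ the contributions to $\ss_{1i}e'$ factor as $\prod_{l>i}a_{il}\cdot\Sigma_i$, where $\Sigma_i$ is an explicit sum over $S'\subseteq\{2,\ldots,i-1\}$ (with $S=S'\sqcup\{i\}$).

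The final and technically central step is to show $\Sigma_i=b_{1i}\prod_{l=2}^{i-1}a_{il}$, which would give the $\ss_{1i}e'$ coefficient as $B_i$. The key input is the three-term identity
\[
a(x_i-x_j)\,b(x_i-x_k)+b(x_i-x_j)\,b(x_j-x_k)\;=\;b(x_i-x_k)\,a(x_k-x_j)\qquad(i,j,k\ \text{distinct}),
\]
verified directly from \eqref{ab}; this is the unitarity relation underlying the fact that $R_{ij}$ satisfies the Hecke relation $(T-\tau)(T+\tau^{-1})=0$. I would prove the formula for $\Sigma_i$ by induction on $i$, splitting the sum according to whether $i-1\in S'$ and applying the three-term identity to consolidate pairs of summands differing only by the inclusion of $i-1$; this reduces $\Sigma_i$ to $\Sigma_{i-1}$ after absorbing one factor of $a_{i,i-1}$ into the $\prod_{l>i-1}a_{i-1,l}$ prefactor.

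I expect the bookkeeping in the inductive step to be the main obstacle: pushing $\pi_{S'}$ through the $a$- and $b$-factors changes the "effective first index" in a way depending globally on $S'\cap\{2,\ldots,k-1\}$, so one must carefully track these effective indices at every slot to verify that two paired summands $(S',S'\cup\{i-1\})$ really combine cleanly via the three-term identity. With this under control, the displayed identity $Ze'=(A+\sum B_i\ss_{1i})e'$ follows and the lemma is proved.
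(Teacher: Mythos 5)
Your proposal is correct in outline but takes a genuinely different route from the paper. Both proofs start identically: reduce to the identity $\RR_{12}\cdots\RR_{1n}e'=(A+\sum_i B_i\ss_{1i})e'$, expand over subsets $S\subseteq\{2,\dots,n\}$, and use $\pi_S e'=\ss_{1,\max S}e'$ to read off $A$ from $S=\emptyset$. They diverge at the computation of the $B_i$. The paper only ever computes $B_2$, which requires no summation at all since $S=\{2\}$ is the \emph{unique} subset with $\max S=2$; it then obtains every other $B_i$ as $B_2^{\ss_{2i}}$ from the $W'$-equivariance of $Z|_{M'}$ (Lemma \ref{alem}(1), applicable because $\RR_{12}\cdots\RR_{1n}=Y_1t(\epsilon_1)^{-1}$ preserves $M'$). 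You instead attack the full sum $\Sigma_i$ over the $2^{i-2}$ subsets with $\max S=i$ by induction, using the three-term identity $a(x_i-x_j)b(x_i-x_k)+b(x_i-x_j)b(x_j-x_k)=b(x_i-x_k)a(x_k-x_j)$ — which I have checked is correct, and which is exactly the Hecke/unitarity relation for $\RR_{ij}$. This is essentially the ``cleverly organised induction'' of Nazarov--Sklyanin that the paper explicitly set out to avoid. Your induction does go through: the bookkeeping worry you raise is not a real obstacle, because the ``effective first index'' at slot $m$ is simply $\max(\{1\}\cup(S\cap\{2,\dots,m-1\}))$, so the two terms in a pair $(S',S'\cup\{i-1\})$ share all factors except at slots $i-1$ and $i$, and the three-term identity consolidates them with a clean factor of $a_{i,i-1}$; e.g.\ for $n=i=4$ one gets $a_{12}a_{13}b_{14}+a_{12}b_{13}b_{34}+b_{12}a_{23}b_{24}+b_{12}b_{23}b_{34}=(a_{12}b_{14}+b_{12}b_{24})a_{43}=b_{14}a_{42}a_{43}$. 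The trade-off: your argument is self-contained and makes the underlying functional identity explicit (which is instructive, since that identity is what the Hecke relation encodes), but it is considerably longer; the paper's symmetry argument replaces the entire induction by one conjugation, at the cost of invoking the $W'$-invariance of the restricted operator. To make your proof complete you would still need to write out the inductive step in the form just indicated.
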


In \cite{NS17} this is proved by a cleverly organised induction. We give a simpler proof using symmetry arguments. 
\begin{proof}
Since $t(\epsilon_1)$ commutes with $e'$, it is enough to show that
\begin{equation}\la{ide}
\RR_{12}\RR_{13}\dots \RR_{1n}e'=(A+\sum_{i\ne 1}^n B_i\ss_{1i})e'\,,
\end{equation}
with $A$, $B_i$ as stated.
Using the definition of $\RR_{1i}$, we expand the product:
\begin{equation*}
\RR_{12}\RR_{13}\dots \RR_{1n}=\sum_{2\le i_1<\dots <i_k\le n} g_{i_1\dots i_k} \ss_{1i_1}\dots \ss_{1i_k}\,,\quad\text{with some}\  g_{i_1\dots i_k}\in\c(x)\,. 
\end{equation*}
It is easy to see that 
$\ss_{1i_1}\dots \ss_{1i_k}\,e'$ equals $\ss_{1i_k}e'$ (or $e'$ if the set $\{i_1, \dots, i_k\}$ is empty). It follows that in \eqref{ide} one has $A=\prod_{l\ne 1}^n a_{1l}$.
%Thus, multiplying by $e'$ and collecting the terms we obtain 
%\eqref{ide} with some coefficients $A$ and $B_i$. In doing this, the term $e'$ appears only when selecting $a_{1i}$ in every factor $\RR_{1i}$. Also, the only way to get $\ss_{12}e'$ is by selecting $b_{12}\ss_{12}$ in $\RR_{12}$ and $a_{1i}$ in other $\RR_{1i}$ with $i>2$. This gives $A$ exactly as stated in the proposition, while 
Similarly, $B_2$ is determined from
\begin{equation*}
B_2 \ss_{12}=(b_{12}\ss_{12})a_{13}\dots a_{1n}=b_{12}\left(\prod_{l\ne 1,2}a_{2l}\right)\ss_{12}\,.
\end{equation*}
Finally, using Lemma \ref{alem}$(1)$ we find that $B_i=B_2^{\ss_{2i}}=b_{1i}\prod_{l\ne 1, i} a_{il}$, as needed.
%To calculate the remaining coefficients $B_i$, we note that \eqref{ijy} implies that 
%\begin{equation*}
%\ss_{ij}(A+\sum_{i=2}^n B_i \ss_{1i})e'=(A+\sum_{i=2}^n B_i\ss_{1i})e'\quad\text{for}\   i,j>1\,.
%\end{equation*} 
%This means that 
%the expression $A+\sum_{i=2}^n B_i\ss_{1i}$ should be invariant under conjugation by $\ss_{ij}$, implying $B_i^{\ss_{ij}}=B_{j}$. Thus, 
\end{proof}

Now a Lax matrix $\mathcal L$ can be calculated using Lemma \ref{alem}. This gives 
\begin{equation*}
\mathcal L_{ij}=
\begin{cases}
\left(\prod_{l\ne j} a_{jl}\right) t(\epsilon_j)\quad&\text{for}\ i=j\\
\left(\prod_{l\ne i,j}a_{jl}\right)b_{ij} t(\epsilon_j)\quad&\text{for}\ i\ne j\,.
\end{cases}
\end{equation*}
This is the quantum Lax matrix obtained in \cite{NS17} (where it is denoted as $Z$), see also \cite{Ha}. Here $t(\epsilon_j)=q^{\partial_j}=e^{\beta\widehat p_j}$, assuming $q=e^{-\mathrm{i}\hbar\beta}$.
The classical Lax matrix is obtained by replacing $\hat p_j$ with the classical momentum $p_j$: 
\begin{equation*}
L_{ij}=
\begin{cases}
\left(\prod_{l\ne j} a_{jl}\right) e^{\beta p_j}\quad&\text{for}\ i=j\\
\left(\prod_{l\ne i,j}a_{jl}\right)b_{ij} e^{\beta p_j}\quad&\text{for}\ i\ne j\,.
\end{cases}
\end{equation*}
It is equivalent to the Lax matrix \cite[(3.19)]{R87} for the trigonometric Ruijsenaars--Schneider system.    

\subsection{}\la{ns}
We can now apply Proposition \ref{intq}. In our case we have $R_+=\{\epsilon_i-\epsilon_j\,|\, 1\le i<j\le n\}$ and $R'_+=\{\epsilon_i-\epsilon_j\,|\, 2\le i<j\le n\}$. The function $\mathbf{c}_\alpha(-x)$ for $\alpha=\epsilon_i-\epsilon_j$ coincides with $a_{ji}$ as given in \eqref{ab}. We can write down the function $\phi$ and the row/column vectors $\uu, \vv$ \eqref{uva}:
\begin{equation*}
\phi=\prod_{l\ne 1}^n a_{l1}\,,\qquad  \phi_i=\phi^{\sigma_{1i}}=\prod_{l\ne i}^n a_{li}\,, \quad \uu=(\phi_1, \dots, \phi_n)\,,\quad\vv=(1, \dots, 1)^T\,.
\end{equation*}  
According to Proposition \ref{intq}, the difference operators $\uu\mathcal L^j\, \vv$, $j\in\Z$ belong to the commutative algebra of the Macdonald--Ruijsenaars operators ${L}_f$, $f\in\c[Y_1^{\pm 1}, \dots, Y_n^{\pm 1}]^{\mathfrak{S}_n}$. 

We can compare this with one of the results of Nazarov and Sklyanin. Namely, \cite[Corollary 2.6]{NS17} claims that the quantities ${U}{Z}^jE$ belong to the algebra of the Macdonald--Ruijsenaars operators. Here $Z$ and $E$ coincide with our $\mathcal L$ and $\vv$, but the row vector ${U}=(U_1, \dots, U_n)$ is different from our $\uu$ (in particular, the expressions for $U_i$ contain the shift operators $t(\epsilon_i)$, see 
\cite[(2.19)]{NS17}). These two formulas may seem different, however, it can be checked that up to a constant factor, ${U}=\uu\mathcal L$. Therefore, $U{Z}^jE=\uu \mathcal L^{j+1} \vv$, so both results agree.   %Note that it is possible to identify precisely an $f\in\c[Y_1^{\pm 1}, \dots, Y_n^{\pm 1}]^{\mathfrak{S}_n}$ such that $\ww\mathcal L^j \vv={L}_f$, see \cite[Corollary 2.6]{NS17}.   

\subsection{}
A Lax partner $\mathcal A$ for the Hamiltonian ${L}_f$ \eqref{mr1} corresponding to $f=Y_1+\dots +Y_n$ can be calculated by a similar method. We skip the details (see a similar calculation for the elliptic case in \ref{arelel}), and will only state the result:
\begin{equation*}
\mathcal A_{ij}=\left(\prod_{l\ne i,j}a_{jl}\right)(b_{ij} t(\epsilon_j)-t(\epsilon_j)b_{ij})\quad\text{for}\ i\ne j\,,\qquad \mathcal A_{ii}=-\sum_{j\ne i}\mathcal A_{ij}\,.
\end{equation*}
To calculate its classical counterpart, we use that $t(\epsilon_j)=e^{-\mathrm{i}\hbar \beta\partial_j}$ and
\begin{equation*}
b_{ij} t(\epsilon_j)-t(\epsilon_j)b_{ij}=\mathrm{i}\hbar \beta \frac{\partial b_{ij}}{\partial x_j}t(\epsilon_j)+o(\hbar)\,.
\end{equation*}
Therefore, the classical limit $A$ of $(\mathrm{i}\hbar)^{-1}\mathcal A$ is given  by
\begin{equation*}
A_{ij}=\left(\prod_{l\ne i,j}a_{jl}\right)\beta \frac{\partial b_{ij}}{\partial x_j} e^{\beta p_j} \quad\text{for}\ i\ne j\,,\qquad A_{ii}=-\sum_{j\ne i}A_{ij}\,.
\end{equation*}
%Here $p_j$ is the classical momentum canonically conjugated to $x_j$.

\section{Lax matrix for the Koornwinder--van Diejen system}\la{cc}

\subsection{} Let us describe the Cherednik operators in the case $C^\vee C_n$, corresponding to the Koornwinder--van Diejen system \cite{Ko, vD}. 
We will follow \cite{St1} fairly closely, so the reader should consult that paper for further details. Let $V=\c^n$ with the standard orthonormal basis $\{\epsilon_i\}_{i=1}^{n}$ and the associated coordinates $x_i$. Let  $R$ be the root system of type $C_n$,
\begin{equation}\la{cn}
R=\{\pm 2\epsilon_i\,|\,1\le i\le n\}\cup \{\pm \epsilon_i\pm \epsilon_j\,|\, 1\le i<j\le n\}\,.
\end{equation}
 %We choose a basis of simple roots in $\Sigma$ as
%\begin{equation*}
%\alpha_i=\epsilon_i-\epsiolon_{i+1}\,,\quad i=1,\dots, n-1\,,\qquad \alpha_n=\epsilon_n\,.
%\end{equation*}
The Weyl group $W=\mathfrak{S}_n\ltimes \{\pm 1\}^n$ of $R$ consists of the transformations that permute the basis vectors $\epsilon_i$ and change their signs arbitrarily. As in \ref{3.1}, we write $\VV=V\oplus\c\delta$ for the space of affine-linear functions on $V$, with $\delta\equiv c$ on $V$. 
%We choose $c=-i\hbar \gamma$ and identify $\VV$ with $V\oplus\c\delta$, where vectors in $V$ are considered as linear functionals on $V$ via the scalar product $\dpr{-,-}$ and where $\delta$ is the function identically equal to $c$ on $V$. %We extend the scalar product $(-,-)$ to $\VV$ by setting $(\delta, \delta)=0$ and $(\delta, v)=0$ for $v\in V$. 
Let $\Ra$ be the affine root system associated with $R$ \eqref{rrelq}. We choose a basis of simple roots in $\Ra$,
\begin{equation*}
a_0=\delta-2\epsilon_1\,,\qquad a_i=\epsilon_i-\epsilon_{i+1}\quad (i=1,\dots, n-1)\,,\qquad \alpha_n=2\epsilon_n\,.
\end{equation*}
%For any $\aalpha=\alpha+k\delta$ we have the orthogonal reflection with respect to the hyperplane $\aalpha(x)=0$ in $V$,
%\begin{equation*}
%s_{\aalpha}(x)=x-\aalpha(x)\alpha^\vee\,,\quad x\in V\,.
%\end{equation*} 
The afiine Weyl group $\WW$ of $R$ is generated by $s_{i}=s_{a_i}$, $i=0,\dots, n$. The action of the generators in coordinates $x_i$ on $V$ looks as follows:
\begin{align}\la{wwact}
s_{0}\,(x_1,\dots, x_n)&=(c-x_1, x_2, \dots, x_n)\,,\nonumber\\
s_{i}\,(x_1,\dots, x_n)&=(x_1,\dots, x_{i-1}, x_{i+1}, x_i, \dots, x_n)\quad (i=1, \dots, n-1)\,,\\ 
s_{n}(x_1, \dots, x_n)&=(x_1, \dots, x_{n-1}, -x_n)\,.\nonumber
\end{align}
We have $\WW\cong W\ltimes \Lambda$, where $\Lambda=\sum_{i=1}^n \Z\epsilon_i$ is the coroot lattice of $R$, acting on $V$ by \eqref{tl}. 
We consider the associated action \eqref{fa}, \eqref{tact} of $\WW$ on $\c(V)$, and form the algebra $\c(V)*\WW\cong {\D}_q * W$ of reflection-difference operators on $V$.   

The affine Hecke algebra $\HH$ associated with $\WW$ is generated by $T_0, \dots, T_n$ subject to the following relations:
\begin{align}
&T_iT_{i+1}T_iT_{i+1}=T_{i+1}T_iT_{i+1}T_i\quad (i=0, i=n-1)\,,\la{b1}\\
&T_iT_{i+1}T_i=T_{i+1}T_iT_{i+1}\quad (i=1, \dots, n-2)\,,\la{b2}\\
&T_iT_{j}=T_jT_{i}\,,\quad |i-j|\ge 2\,,\la{b3}\\
&(T_i-\tau_i)(T_i+\tau_i^{-1})=0\quad(i=0,\dots, n)\,,
\end{align}
where $\tau_i$ are deformation parameters, with $\tau_1=\dots =\tau_{n-1}=\tau$.

The basic representation $\beta: \HH \to {\D}_q*W$ is due to Noumi \cite{No}. To describe it, we choose two additional parameters, $\tau_0^\vee, \tau_n^\vee$. It will be convenient to introduce parameters $\tau_{\alpha}$ and functions $\mathbf{c}_{\alpha}$ for $\alpha\in \Ra$ as follows (cf. \cite[(4.2.2), (4.3.9)]{M03}):
\begin{align}\label{vb1}
\tau_{\alpha} &=\tau\,,\quad \mathbf{c}_{\alpha} =\frac{\tau^{-1}-\tau e^{\alpha}}{1-e^{\alpha}}\,\quad \text{for\ }\alpha=k\delta\pm\epsilon_i\pm\epsilon_j\quad(k\in\Z,\ i\ne j)\,, \\\la{vb2}
\tau_{\alpha} &=\tau_0\,, \quad \mathbf{c}_{\alpha} =\tau_0^{-1}\frac{(1-\tau_0\tau_{0}^\vee e^{\alpha/2})(1+\tau_{0}(\tau_{0}^\vee)^{-1} e^{\alpha/2})}{(1-e^{\alpha})}\,\quad \text{for\ }\alpha=(2k+1)\delta\pm 2\epsilon_i\quad(k\in\Z)\,,\\
\tau_{\alpha} &=\tau_n\,,\quad \mathbf{c}_{\alpha} =\tau_n^{-1}\frac{(1-\tau_n\tau_{n}^\vee e^{\alpha/2})(1+\tau_{n}(\tau_{n}^\vee)^{-1} e^{\alpha/2})}{(1-e^{\alpha})}\,\quad \text{for\ }\alpha=2k\delta\pm 2\epsilon_i\quad(k\in\Z)\,.\la{vb3}
\end{align}
%Set $v_i:=v_{a_i}$, $i=0, \dots, n$. 
With this notation, we define $\beta$ by setting 
\begin{equation*}
\beta:\, T_i\mapsto \tau_i+\mathbf{c}_{a_i} (s_{i}-1)\,, \quad i=0, \dots, n\,,
\end{equation*}
and extend it to the whole of $\HH$ by multiplicativity, see \cite[Theorem 3.2]{St1}. This defines a subalgebra of ${\D}_q*W$, depending on five parameters $\tau_0, \tau_0^\vee, \tau_n, \tau_n^\vee, \tau$.

By \cite[(3.5)]{St1}, inside $\HH$ we have a commutative subalgebra $\c[Y]$ generated by $Y_i^{\pm 1}$, where 
\begin{equation*}
Y_i=T_i\dots T_{n-1}T_nT_{n-1}\dots T_1T_0T_1^{-1}T_2^{-1}\dots T_{i-1}^{-1}\,,\quad i=1,\dots, n\,.
\end{equation*}
The Hamiltonians of the Koornwinder--van Diejen system can be obtained as in \ref{mrh}, by taking symmetric combinations of $Y_i^{\pm 1}$. For $f(Y)=\sum_{i=1}^n (Y_i+Y_i^{-1})$ this reproduces the Koornwinder operator \cite{Ko}; explicit formulas for the higher Hamiltonians can be found in \cite{vD}.   

%and, via $\beta$, a commutative subalgebra of difference-reflection operators.  

\subsection{}\la{clax} Our goal to calculate a quantum Lax matrix correspoding to $Y_1$. %As a first step, we will break $Y_1$ into simpler pieces and calculate them separately. 
%Recall that the Weyl group $W$ is the group of signed permutations of $x_1, \dots, x_n$. 
We will use the following notation for the reflections in $W$: ${\ss}_{ij}$ is an elementary transposition of $x_i$ and $x_j$, ${\ss}_i$ is a sign reversal in the $i$th direction, and %${\ss}_{ij}^+$ is their combination,
${\ss}_{ij}^+={\ss}_{ij}{\ss}_i{\ss}_j$, which acts by $x_i\mapsto -x_j$, $x_j\mapsto -x_i$. Comparison with \eqref{wwact} gives $s_{i}={\ss}_{i, i+1}$ for $i=1, \dots, n-1$, $s_{n}={\ss}_n$, $s_{0}={\ss}_1 t(\epsilon_1)$.

We have $Y_1=Y^{\epsilon_1}$, so following \ref{3.6} we consider the stabiliser $W'$ of $\lambda=\epsilon_1$. This is the subgroup of signed permutations of $x_2, \dots, x_n$, and the Lax matrix will be of size $|W|/ |W'|=2n$. To calculate it, we need to determine the action of $Y_1$ on $M'=e'M$. % We begin by calculating $Y_1e'$ explicitly. 
By \cite[(4.2)]{St1},
\begin{equation}\la{y1}
Y_1=\RR (\epsilon_1-\epsilon_2)\RR (\epsilon_1-\epsilon_3)\dots \RR (\epsilon_1-\epsilon_n)\RR (2\epsilon_1)\RR (\epsilon_1+\epsilon_n)\dots \RR (\epsilon_1+\epsilon_2)\RR (\delta+2\epsilon_1)t(\epsilon_1)\,.
\end{equation}
Here $\RR(\alpha)$ are defined by \eqref{rdef} together with %with the extended definitions 
\eqref{vb1}--\eqref{vb3}. %for $\tau_{\alpha}, \mathbf{c}_{\alpha}$ . %We have $\RR(a_i)=T_is_i$ for $i=0, \dots, n$. Note the following important property of the elements $\RR(\alpha)$: for any $w\in\WW$, $w\RR(\alpha)w^{-1}=\RR(w(\alpha))$.

Let us introduce some shorthand notation. In addition to $a_{ij}$, $b_{ij}$, $\RR_{ij}$ \eqref{rij}, \eqref{ab}, we define
$a_{ij}^{+}=a({x_i+x_j})$, $b_{ij}^+=b({x_i+x_j})$, $a_{ij}^{-}=a({-x_i-x_j})$, $b_{ij}^-=b({-x_i-x_j})$, $\RR_{ij}^+=a_{ij}^++b_{ij}^+{\ss}_{ij}^+$.
%\begin{equation*}\la{ab+}
%a_{ij}^{+}=a({x_i+x_j})\,,\quad b_{ij}^+=b({x_i+x_j})\,,\quad a_{ij}^{-}=a({-x_i-x_j})\,,\quad b_{ij}^-=b({-x_i-x_j})\,,\quad \RR_{ij}^+=a_{ij}^++b_{ij}^+{\ss}_{ij}^+\,.
%\end{equation*}
Let us also introduce functions $u, v, \widetilde u, \widetilde v$ as follows:
\begin{align}
u(z)&=\tau_n^{-1}\frac{(1-\tau_n\tau_n^\vee e^z)(1+\tau_n(\tau_n^\vee)^{-1}e^z)}{1-e^{2z}}\,,& v(z)&=\tau_n-u(z)\,,\la{uv1}\\
\widetilde u(z)&=\tau_0^{-1}\frac{(1-\tau_0\tau_0^\vee q^{1/2}e^z)(1+\tau_0(\tau_0^\vee)^{-1}q^{1/2}e^z)}{1-qe^{2z}}\,,&\widetilde v(z)&=\tau_0-\widetilde u(z)\,.\la{uv2}
\end{align}
Here $q=e^c$. Below we will use $u_i, u_i^-$ to denote $u_i=u({x_i})$ and $u({-x_i})$, respectively, and similarly for $v, \widetilde u, \widetilde v$. 
With this notation, we have:
%$Y_1$ becomes 
\begin{gather*}
Y_1=\RR_{12}\RR_{13}\dots \RR_{1n}\,\RR(2\epsilon_1) \RR_{1n}^+\dots \RR_{12}^+\, \RR(\delta+2\epsilon_1) \,t(\epsilon_1)\,,
%\end{equation*}
%\begin{equation*}
%\intertext{where}
\\
\RR(2\epsilon_1)=u_1+v_1{\ss}_1\,, \qquad \RR(\delta+2\epsilon_1)\,t(\epsilon_1)=\widetilde u_1\,t(\epsilon_1)+\widetilde v_1{\ss}_1\,.
\end{gather*}
%The braid relations for $T_i$ become Yang--Baxter-type relations when written in terms of $\RR(\alpha)$ \cite{C5}. Namely, the following are corollaries of \eqref{b2}:      
%\begin{equation}\la{ybct}
%\RR_{ij}\RR_{ik}\RR_{jk}=\RR_{jk}\RR_{ik}\RR_{ij}\,,\qquad \RR_{ij}\RR_{ik}^+\RR_{jk}^+=\RR_{jk}^+\RR_{ik}^+\RR_{ij}\qquad(i\ne j\ne k)\,.
%\end{equation}
%Using $T_j=\RR_{j, j+1}{\ss}_{j, j+1}$, we deduce that
%\begin{equation}\la{trr}
%T_j\RR_{1j}\RR_{1, j+1}=\RR_{1j}\RR_{1, j+1}T_j\,,\qquad T_j\RR_{1,j+1}^+\RR_{1, j}^+=\RR_{1,j+1}^+\RR_{1, j}^+T_j\quad(1<j<n)\,,
%\end{equation} 

%\medskip

\medskip
\begin{lemma} Let 
%\begin{equation*}
$e''=\frac{1}{n!}\sum_{{w} \in \mathfrak{S}_{n-1}} {w}$\,,
%\end{equation*} 
where $\mathfrak{S}_{n-1}$ is the group of permutations on $\{2, \dots, n\}$, viewed as a subgroup of $W=\mathfrak{S}_n\ltimes \Z_2^n$. 
The operators 
$\mathcal R=\RR_{12}\dots \RR_{1n}$ and $\mathcal R^+=\RR_{1n}^+\dots \RR_{12}^+$ preserve the subspace $M''=e''M$. Their restriction onto $M''$ is calculated as follows: 
\begin{gather*}
\mathcal R |_{M''}=U+\sum_{i\ne 1}^n V_i{\ss}_{1i}\,,\qquad \mathcal R^+ |_{M''}=U^++\sum_{i\ne 1}^n V_i^+{\ss}_{1i}^+\,, \quad\text{where}
%\intertext{where}
\\
U=\prod_{l\ne 1}^n a_{1l}\,,\quad V_i=b_{1i}\prod_{l\ne 1, i} a_{il}\,,\quad U^+=\prod_{l\ne 1}^n a_{1l}^+\,,\quad V_i^+=b_{1i}^+\prod_{l\ne 1, i} a_{li}\,. 
\end{gather*}
\end{lemma}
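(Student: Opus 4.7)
The first formula for $\mathcal R|_{M''}$ reduces to Lemma \ref{ns}: the symmetrizer $e''$ is exactly the $e'$ appearing there, the generators $\RR_{ij}$ have the same form, and the argument there is insensitive to the ambient group ($W$ here versus $\mathfrak S_n$ there); the proof transfers without change.

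For $\mathcal R^+$, the plan is to mirror this strategy using a ``$+$-version'' of the collapse identity. The key input will be
\begin{equation*}
\ss_{1j}^+\,\ss_{1k}^+ = \ss_{1k}^+\,\ss_{jk},\qquad j\ne k\in\{2,\ldots,n\},
\end{equation*}
together with the conjugation rule $\tau\,\ss_{1i}^+\,\tau^{-1}=\ss_{1,\tau(i)}^+$ for $\tau\in\mathfrak S_{n-1}$, both of which I will verify by inspection on the basis $\epsilon_1,\ldots,\epsilon_n$. An easy induction then yields $\ss_{1j_1}^+\cdots\ss_{1j_m}^+\,e''=\ss_{1j_m}^+\,e''$ for distinct $j_l$'s. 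Expanding $\mathcal R^+=\RR_{1n}^+\cdots\RR_{12}^+$ by choosing from each factor $\RR_{1k}^+$ either $a_{1k}^+$ or $b_{1k}^+\ss_{1k}^+$, the terms are indexed by subsets $S\subseteq\{2,\ldots,n\}$. Because the factors appear with $k$ decreasing, the selected reflections arise with index decreasing, so by the above they collapse on $e''$ to $\ss_{1,\min S}^+$, and each term contributes either to $U^+e''$ (from $S=\emptyset$) or to the coefficient of $\ss_{1i}^+e''$ for $i=\min S$.

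The case $S=\emptyset$ immediately produces $U^+=\prod_{l\ne 1}a_{1l}^+$. The case $S=\{n\}$, being the unique subset with $\min S=n$, gives $V_n^+$ from a single direct calculation: commuting $\ss_{1n}^+$ rightwards through the remaining $a_{1l}^+$'s and using $a_{1l}^+(\ss_{1n}^+x)=a(x_l-x_n)=a_{ln}$ for $l\ne 1,n$ yields $V_n^+=b_{1n}^+\prod_{l\ne 1,n}a_{ln}$, which matches the claim. For $2\le i<n$, instead of summing the $2^{n-i}$ subsets with $\min S=i$ directly, I will invoke a $+$-analog of Lemma \ref{alem}(1): any $Z=Z_0+\sum_{l\ge 2}Z_l\,\ss_{1l}^+$ whose action preserves $M''$ must satisfy $Z_i^{\ss_{ij}}=Z_j$ for $i,j\ge 2$. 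Its proof is identical to that of Lemma \ref{alem}(1), using $\tau\,\ss_{1i}^+\,\tau^{-1}=\ss_{1,\tau(i)}^+$ in place of the corresponding rule for $\ss_{1i}$. Applied to $\mathcal R^+$, this delivers $V_i^+=(V_n^+)^{\ss_{in}}=b_{1i}^+\prod_{l\ne 1,i}a_{li}$, as required.

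The main obstacle is justifying that $\mathcal R^+$ preserves $M''$, a hypothesis of the symmetry lemma. My plan is to deduce this from the product decomposition of $Y_1$ recorded just above: the operator $Y_1$ preserves $M''$ by the argument of \S\ref{3.6} via \eqref{dl}, while each of $\mathcal R$, $\RR(2\epsilon_1)$ and $\RR(\delta+2\epsilon_1)t(\epsilon_1)$ preserves $M''$ as well — the first by the part of the lemma already established, and the remaining two because they involve only $x_1$-dependent functions together with $\ss_1$ and $t(\epsilon_1)$, all of which commute with $\mathfrak S_{n-1}$. Generically in the parameters these factors are invertible in $\D_q*W$, and solving for $\mathcal R^+$ expresses it as a composition of operators each preserving $M''$; hence so does $\mathcal R^+$, which completes the argument.
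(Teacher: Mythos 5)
Your expansion-and-collapse computation for $\mathcal R^+$ is correct in all its particulars: the identity $\ss_{1j}^+\ss_{1k}^+=\ss_{1k}^+\ss_{jk}$, the collapse of $\ss_{1k_1}^+\cdots\ss_{1k_m}^+e''$ to $\ss_{1,\min S}^+\,e''$, the evaluations of $U^+$ and $V_n^+$, and the equivariance step $V_i^+=(V_n^+)^{\ss_{in}}$ all check out and reproduce the stated formulas. This is a genuinely different route from the paper, which disposes of $\mathcal R^+$ in one line: $\mathcal R^+=\omega\,\mathcal R\,\omega^{-1}$ with $\omega$ as in \eqref{omeg}. Since $\omega$ normalises $\mathfrak S_{n-1}$ and hence commutes with $e''$, both the preservation of $M''$ and the formula for $\mathcal R^+|_{M''}$ are instant consequences of the already-established statement for $\mathcal R$; your route buys nothing over this and costs a full page of computation plus the difficulty discussed next.

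The gap is precisely at the point your argument cannot do without, namely that $\mathcal R^+$ preserves $M''$ (the hypothesis of your $+$-analogue of Lemma \ref{alem}(1), without which $V_i^+$ for $2\le i<n$ is a sum over $2^{n-i}$ subsets rather than a single conjugate of $V_n^+$). Solving for $\mathcal R^+$ in the factorisation of $Y_1$ gives $\mathcal R^+=\RR(2\epsilon_1)^{-1}\,\mathcal R^{-1}\,Y_1\,[\RR(\delta+2\epsilon_1)t(\epsilon_1)]^{-1}$, and this composition involves $\mathcal R^{-1}$, not $\mathcal R$. What you have established is that $\mathcal R$ preserves $M''$; that its \emph{inverse} does is not a formal consequence (multiplication by $t$ on $\c[t,t^{-1}]$ is invertible and preserves $\c[t]$, while its inverse does not), and $\mathcal R$ is the one factor for which preservation does not simply follow from commuting with $\mathfrak S_{n-1}$. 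The claim is true and repairable: for instance, $\mathcal R e''=Pe''$ with $P=U+\sum_{i\ne 1}V_i\ss_{1i}$, which commutes with $e''$ by the equivariance of the $V_i$ and is generically invertible in $\c(V)*W$ (it equals $1$ at $\tau=1$), whence $\mathcal R^{-1}e''=e''P^{-1}$ and $(1-e'')\mathcal R^{-1}e''=0$. Alternatively one can run a Yang--Baxter argument as in Lemma \ref{inv12} for the product $\RR_{1n}^+\cdots\RR_{12}^+$ directly, or simply adopt the $\omega$-conjugation, which settles preservation and the formula simultaneously.
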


\begin{proof} 
For $\mathcal R$ this has been shown in Lemma \ref{ns}. The statement for $\mathcal R^+$ follows from the fact that $\mathcal R^+=\mathcal R^{\omega}$, where $\omega\in W'$ is given by
\begin{equation}\la{omeg}
\omega: V\to V\,,\qquad (x_1, x_2, \dots, x_n)\mapsto (x_1, -x_n, \dots, -x_2)\,. 
\end{equation}

\end{proof}

%\subsection{}
%The next step is to 
Now let us restrict $Y_1$ further onto $M'=e'M\subset e''M$. From the above, $Y_1$ can be replaced with 
\begin{equation}\la{split} 
%Y_1|_{M'}=
(U+\sum_{i\ne 1}^n V_i{\ss}_{1i})R(2\epsilon_1)
%(u_1+v_1{\ss}_1)
(U^++\sum_{i\ne 1}^n V_i^+{\ss}_{1i}^+)
R(\delta+2\epsilon_1)
%(\widetilde u_1\,t(\epsilon_1)+\widetilde v_1{\ss}_1)\,.
\end{equation}
%It will be easier first to 
Let us first work out the action of the product of the first three factors, 
\begin{equation}\la{split1} 
(U+\sum_{i\ne 1}^n V_i{\ss}_{1i})(u_1+v_1{\ss}_1)(U^++\sum_{i\ne 1}^n V_i^+{\ss}_{1i}^+)\,.
\end{equation}

\medskip
\begin{lemma}\la{abcd}
The operator \eqref{split1} 
preserves the subspace $M'=e'M$, and its restriction onto $M'$ is given by $A+B{\ss}_1+\sum_{i\ne 1} (C_i{\ss}_{1i}+D_i{\ss}_{1i}^+)$
with 
 \begin{gather*}
A=u_1\prod_{l\ne 1}^n a_{1l}a_{1l}^+\,,\quad 
B=\tau^{2n-2}\tau_n-A-\sum_{i\ne 1} (C_i+D_i)\,, 
\\
C_i=u_ib_{1i}a_{1i}^+\prod_{l\ne 1, i} a_{il}a_{il}^+\,,\quad D_i=u_i^-b_{1i}^+a_{1i}\prod_{l\ne 1, i} a_{li}^-a_{li}\,. 
\end{gather*}
\end{lemma}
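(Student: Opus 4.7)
The plan is to verify the claim by classifying contributions to \eqref{split1} according to left cosets of $W'$ in $W$, and then pinning down $B$ by a global sum trick, in the spirit of the proof of Lemma \ref{ns}. First, I would check that \eqref{split1} preserves $M' = e'M$: the outer factors $\mathcal R$ and $\mathcal R^+$ already preserve the larger subspace $M''\supset M'$ by the previous lemma, and the middle factor $u_1+v_1{\ss}_1$ commutes with $W' = \mathfrak{S}_{n-1}\ltimes\{\pm 1\}^{n-1}$ because every element of $W'$ acts trivially on $x_1$; combined with the $W'$-equivariance of the outer sums, the whole product is $W'$-equivariant and its action restricts to $M'$.

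Next I would expand \eqref{split1} into the $2n^{2}$ monomials obtained by choosing one summand from each bracket: one of $\{\id\}\cup\{{\ss}_{1i}\}$, one of $\{\id,{\ss}_1\}$, and one of $\{\id\}\cup\{{\ss}_{1j}^+\}$. Every such monomial's group element lies in exactly one of the $2n$ left cosets of $W'$, labelled by the image of $\epsilon_1$ under that element, and the four families of cosets are represented by $\id, {\ss}_1, {\ss}_{1i}, {\ss}_{1i}^+$ and correspond to the coefficients $A, B, C_i, D_i$. The formula $A = u_1 \prod_{l \ne 1} a_{1l} a_{1l}^+$ is read off immediately because $(\id, \id, \id)$ is the only monomial landing in $W'$. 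The formula for $C_i$ is extracted from the reference monomial $({\ss}_{1i}, \id, \id)$ after moving functions through ${\ss}_{1i}$, and that for $D_i$ from $(\id, \id, {\ss}_{1i}^+)$; any other monomials falling into the same coset are reorganised, exactly as in the proof of Lemma \ref{ns}, as $W'$-conjugates of the reference monomial, so that the sum over each coset telescopes to the single stated product.

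Finally, $B$ is pinned down by applying the operator to the constant function $1$. Each $\RR_{ij}$ sends $1$ to $a_{ij}+b_{ij}=\tau$ and the middle factor sends $1$ to $u_1+v_1=\tau_n$, so applying \eqref{split1} to $1$ gives $\tau^{n-1}\cdot\tau_n\cdot\tau^{n-1}=\tau^{2n-2}\tau_n$, while the expression $A + B{\ss}_1 + \sum_i (C_i {\ss}_{1i} + D_i {\ss}_{1i}^+)$ applied to $1$ gives $A+B+\sum_i(C_i+D_i)$; equating them yields $B = \tau^{2n-2}\tau_n - A - \sum_i(C_i+D_i)$. The main obstacle I anticipate is the combinatorial bookkeeping in the second step: several monomials fall into each of the $C_i$ and $D_i$ cosets, and their sum must be shown to collapse to the stated product of $u$'s, $a$'s and $b$'s by means of the identities $a+b=\tau$, $u+v=\tau_n$ and the basic commutation relations among the $\RR$-factors.
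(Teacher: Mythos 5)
Your overall strategy (coset bookkeeping plus the sum rule for $B$) is the right one and matches the paper for $A$, $C_i$ and $B$, but the step producing $D_i$ has a genuine gap. The left cosets $\id\cdot W'$ and $\ss_{1i}W'$ each receive exactly one monomial from the expansion of \eqref{split1}, namely $(\id,\id,\id)$ and $(\ss_{1i},\id,\id)$, which is why $A$ and $C_i$ can be read off; but the coset $\ss_{1i}^+W'$ receives $2n-1$ monomials: $(\id,w_2,\ss_{1i}^+)$ and $(\ss_{1j},w_2,\ss_{1i}^+)$ for $j\ne 1,i$, with $w_2\in\{\id,\ss_1\}$, together with $(\ss_{1i},\ss_1,\id)$. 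These are not $W'$-conjugates of your reference monomial $(\id,\id,\ss_{1i}^+)$ --- conjugating that monomial by $\ss_i\in W'$ turns $\ss_{1i}^+$ into $\ss_{1i}$ and so moves it to a different coset --- and their sum does not collapse using only $a+b=\tau$ and $u+v=\tau_n$. Already for $n=2$ the reference monomial contributes $a_{12}u_1b_{12}^+$, whereas $D_2=a_{12}u_2^-b_{12}^+$, and the full sum over the three contributing monomials requires the nontrivial functional identity $u_1b_{12}^{+}+v_1b_{21}+v_2b_{12}=u_2^{-}b_{12}^{+}$, which depends on the specific form of $u$ and $b$ and which you have not established. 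The paper's proof avoids this computation entirely: since the restriction of \eqref{split1} to $M'$ is $W'$-invariant and $\ss_i\ss_{1i}\ss_i=\ss_{1i}^+$, conjugating by the sign flip $\ss_i$ forces $D_i=(C_i)^{\ss_i}$, which is exactly the stated formula. That symmetry step is the missing idea.

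A secondary problem is your justification that \eqref{split1} preserves $M'$. The outer sums $U+\sum_{i}V_i\ss_{1i}$ and $U^++\sum_{i}V_i^+\ss_{1i}^+$ are only $\mathfrak{S}_{n-1}$-equivariant, not $W'$-equivariant: conjugating the term $V_j\ss_{1j}$ by the sign flip $\ss_j\in W'$ produces an object of ``plus'' type, so the factor-by-factor equivariance you invoke is not available. The paper instead notes that on $M''$ the product \eqref{split1} coincides with $Y_1\cdot\bigl(\RR(\delta+2\epsilon_1)t(\epsilon_1)\bigr)^{-1}$, and each of these two operators preserves $M'$ (the first by the general theory of \ref{3.6}, the second because it is built from $x_1$, $t(\epsilon_1)$ and $\ss_1$ only, hence commutes with $W'$). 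Your determination of $B$ by evaluating on constants is correct and is the same sum rule used in the paper.
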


\begin{proof} The operator \eqref{split1} is composed of $Y_1$ and (the inverse of) $\RR (\delta+2\epsilon_1)t(\epsilon_1)$. Both of these preserve $M'$, and so does \eqref{split1}. Next, for any element in $W$, its action on $M'$ can be replaced with one of the elements $w=\id, {\ss}_1, {\ss}_{1i}, {\ss}_{1i}^+$, since these elements represent the cosets in $W/W'$. Thus, restricting \eqref{split1} to $M'$ we obtain an expression of the form  $A+B{\ss}_1+\sum_{i\ne 1} (C_i{\ss}_{1i}+D_i{\ss}_{1i}^+)$. And since the operator \eqref{split1} preserves $M'$, the resulting expression will be $W'$-invariant, in particular, $D_i=(C_i)^{{\ss}_i}$. 

Let us now expand the product \eqref{split1}, moving the group elements to the right. We can pick either $w=\id$ or ${\ss}_{1i}$ from the first factor, $w=\id$ or ${\ss}_1$ from the second, and $w=\id$ or ${\ss}_{1j}^+$ from the third. It is easy to check that the product of three elements picked this way will represent the trivial coset $\id\cdot W'$ if and only if $w=\id$ is chosen from each factor. Therefore,  $A=Uu_1U^+$. Also, the only way to obtain the representative ${\ss}_{1i}$ is by picking ${\ss}_{1i}$, $\id$, and $\id$, respectively, from each of the factors. Therefore, $C_i{\ss}_{1i}=V_i{\ss}_{1i} u_1U^+$. This gives the required expressions for $A$ and $C_i$,  after which we find $D_i$ as $D_i=(C_i)^{{\ss}_i}$.  

To determine $B$, we use that $\RR_{1i}e=\RR_{1i}^+e=\tau e$, $\RR(2\epsilon_1)e=\tau_n e$. As a result,
\begin{equation*}
\RR_{12}\RR_{13}\dots \RR_{1n}\,\RR(2\epsilon_1) \RR_{1n}^+\dots \RR_{12}^+e=(\tau^{2n-2}\tau_n)e\,,
\end{equation*}
from which $A+B+\sum_{i\ne 1}(C_i+D_i)=\tau^{2n-2}\tau_n$. Lemma is proven.
\end{proof}

\subsection{}\la{ccn}
Now we translate the obtained results into a matrix form. We choose $2n$ elements representing the (right and left) $W'$-cosets in $W$: $r_i={\ss}_{1i}$, $r_{n+i}={\ss}_{1i}^+$, where $1\le i\le n$ and $r_1={\ss}_{11}:=\id$, $r_{n+1}={\ss}_{11}^+:={\ss}_1$. Suppose we have $Z=\sum_{i=1}^{2n} Z_ir_i$, with $Z_i\in\D_q(V)$. Assume that $Z$ is $W'$-invariant, $wZ=Zw$ for all $w\in W'$, hence the multiplication by $Z$ preserves $M'=e'M$. We write elements of $M'$ as $\sum_{j=1}^{2n} e'r_j\,f_j$, with $f_j\in\c(x)$. Then we have
\begin{equation}\la{zact}
Z\,(e'\sum_{j=1}^{2n} r_j\,f_j)=e' Z\sum_{j=1}^{2n} r_j\,f_j=\sum_{i,j=1}^{2n} e' r_ir_j(Z_i)^{r_jr_i}\,f_j\,.
\end{equation}
For each individual term in this sum we have: $e' r_ir_j(Z_i)^{r_jr_i}\,f_j=e'r_k(Z_i)^{r_jr_i}f_j$, where $k$ is a uniquely defined index $k=k(i,j)$ such that $e'r_ir_j=e'r_k$. This means that this term represents the $(k, j)$-th entry of the matrix of $Z$. For the reader's convenience, here is an explicit description:
\begin{equation*}\la{kij}
e'r_ir_j=e'r_k\,,\qquad k=k(i,j)=\begin{cases}
i&\text{for}\ i\notin \{1, n+1, j, j\pm n\}\\
j&\text{for}\ i=1\\
j+n&\text{for}\ i=n+1\,, 1\le j\le n\\
j-n&\text{for}\ i=n+1\,, n+1\le j\le 2n\\
1&\text{for}\ i=j\\
n+1&\text{for}\ i=j\pm n\,.
\end{cases}
\end{equation*}
Using this, we easily find all the entries of the matrix of $Z$. For example, for $1<i\ne j\le n$ we have $e'r_ir_j=e'r_i$, and so the $(i,j)$-th matrix entry is $(Z_i)^{r_jr_i}$.    

Applying this procedure successively to $Z=A+B{\ss}_1+\sum_{i\ne 1} (C_i{\ss}_{1i}+D_i{\ss}_{1i}^+)$ and to $Z=R(\delta+2\epsilon_1)=\widetilde u_1\,t(\epsilon_1)+\widetilde v_1{\ss}_1$, we calculate the corresponding  matrices. Let us state the result, leaving its (routine) verification to the reader.

\begin{prop}\la{lmct} 
Denote by $\mathcal P, \mathcal Q$ the $2n\times 2n$ matrices representing the action on $M'$ of $A+B{\ss}_1+\sum_{i\ne 1} (C_i{\ss}_{1i}+D_i{\ss}_{1i}^+)$ and $\widetilde u_1\,t(\epsilon_1)+\widetilde v_1{\ss}_1$, respectively. Let us extend the set of vectors $\epsilon_i$ and variables $x_i$ to the range $1\le i \le 2n$ by setting $\epsilon_{i+n}=-\epsilon_i$ and $x_{n+i}=-x_i$ for $1\le i\le n$. We also extend the definitions of $a_{ij}, a^+_{ij}$, etc., accordingly. For instance, $a_{n+i,j}^+=a(-x_i+x_j)$, $b_{i, n+j}=b(x_i+x_j)$, $u_{n+j}=u(-x_j)$ if $1\le i,j\le n$. With this notation, we have:
\begin{align*}
\mathcal P_{ij}&=u_jb_{ij}a_{ij}^+\prod_{l=1}^{2n}{\vphantom{\prod}}'
a_{jl}\qquad(i-j\ne 0, \pm n)\,,\qquad
\mathcal P_{ii}=u_i\prod_{l=1}^{2n}{\vphantom{\prod}}' a_{il}\,,\\
\mathcal P_{ij}&=\tau^{2n-2}\tau_n-\sum_{l\ne j}^{2n}\mathcal P_{il}\qquad (i-j=\pm n)\,,\\
\mathcal Q_{ij}&=\widetilde u_j t(\epsilon_j)\qquad (i=j)\,,\qquad \mathcal Q_{ij}=\widetilde v_i \qquad (i-j=\pm n)\,,\qquad
\mathcal Q_{ij}=0\qquad (i-j\ne 0, \pm n)\,.
\end{align*}
%In these formulas $a, b, u, v, \widetilde u, \widetilde v$ are the functions \eqref{ab}, \eqref{uv1}--\eqref{uv2}, and 
Here the symbol $\prod{\vphantom{\prod}}'$ in the formula for $\mathcal P_{ij}$ indicates that we exclude those values of $l$ where either $l-i$ or $l-j$ equals $0, \pm n$ (e.g., two values are excluded if $i=j$).
\end{prop}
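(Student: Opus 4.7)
The plan is to apply the matrix-computation recipe \eqref{zact} together with the case-table for $k(i,j)$ displayed immediately after it, which together reduce the proposition to a mostly mechanical case analysis. First I would check that both operators preserve $M' = e'M$: for the first this is part of Lemma \ref{abcd}, and for the second $\widetilde u_1\, t(\epsilon_1) + \widetilde v_1 \ss_1$ is manifestly $W'$-invariant since every $w \in W'$ fixes $x_1$. With preservation established, the entry at row $k(i,j)$ and column $j$ equals $(Z_i)^{r_j r_i}$, where $Z = \sum_i Z_i r_i$ is the decomposition of the operator in the coset representatives $r_1 = \id$, $r_i = \ss_{1i}$ for $2 \le i \le n$, $r_{n+1} = \ss_1$, $r_{n+i} = \ss_{1i}^+$ for $2 \le i \le n$.

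For $\mathcal Q$ the decomposition is $Z_1 = \widetilde u_1\, t(\epsilon_1)$, $Z_{n+1} = \widetilde v_1$, with all other $Z_i = 0$. Hence only contributions from $i = 1$ and $i = n+1$ survive. The former goes into rows $k(1,j) = j$ and gives $(\widetilde u_1\, t(\epsilon_1))^{r_j}$, which in the extended notation evaluates to $\widetilde u_j\, t(\epsilon_j)$. The latter goes into rows $k(n+1,j) = j \pm n$ and gives $(\widetilde v_1)^{r_j \ss_1}$, which evaluates to $\widetilde v_{k(n+1,j)}$. All other entries vanish.

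For $\mathcal P$ the decomposition has $Z_1 = A$, $Z_{n+1} = B$, $Z_i = C_i$ and $Z_{n+i} = D_i$ for $2 \le i \le n$. The case-table gives: entries with $k = j$ come from $i = 1$ and equal $A^{r_j}$; entries with $k = 1$ or $k \notin \{1, n+1, j, j\pm n\}$ come from $i = j$ or $i = k$ respectively, producing a conjugate of $C_{i}$ or $D_{i}$; entries with $k = j \pm n$ come from $i = n+1$ and equal $B^{r_j \ss_1}$. Substituting the explicit expressions from Lemma \ref{abcd} and performing the swap induced by $r_j r_{i'}$ shows that, in the extended notation, every non-$(j\pm n, j)$ entry collapses into the single uniform expression $u_j b_{ij} a_{ij}^+ \prod'_l a_{jl}$ (with the diagonal case understood as $b_{jj} a_{jj}^+ = 1$). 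For the remaining entries with $i - j = \pm n$, rather than evaluate $B^{r_j \ss_1}$ directly, I would exploit the identity $A + B + \sum_{i \ne 1}(C_i + D_i) = \tau^{2n-2} \tau_n$ from Lemma \ref{abcd}: since $\RR_{1l}\, e = \tau e$ and $\RR(2\epsilon_1)\, e = \tau_n e$, the product $\RR_{12} \cdots \RR_{1n}\, \RR(2\epsilon_1)\, \RR_{1n}^+ \cdots \RR_{12}^+$ acts on $e$ as the scalar $\tau^{2n-2}\tau_n$; and as in \S\ref{2.5} the symmetriser $e$ on $M'$ is, up to a constant, the column of ones. Hence each row of $\mathcal P$ sums to $\tau^{2n-2}\tau_n$, which determines the missing entry by subtraction.

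The main obstacle is the careful bookkeeping of the coordinate swap induced by $r_j r_{i'}$ in each of the several cases and the verification that the various conjugates all collapse into the stated uniform formula. Working throughout in the extended notation, in which $\ss_{1i}$ becomes the double transposition $1 \leftrightarrow i$, $n+1 \leftrightarrow n+i$ and $\ss_{1i}^+$ becomes $1 \leftrightarrow n+i$, $i \leftrightarrow n+1$, makes this essentially mechanical; the row-sum trick is what saves the most labour, avoiding an unpleasant direct expansion of $B^{r_j \ss_1}$.
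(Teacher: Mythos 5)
Your proposal is correct and follows essentially the same route as the paper, which explicitly sets up the recipe \eqref{zact} with the $k(i,j)$ table in \S\,\ref{ccn} and then states the matrices ``leaving the (routine) verification to the reader''; your case analysis is exactly that verification. Your row-sum observation for the $i-j=\pm n$ entries is the same device the paper uses to determine $B$ in Lemma \ref{abcd} (via $\RR(\aalpha)e=\tau_\alpha e$), lifted to the matrix level, and it matches how the proposition itself phrases those entries.
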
 

\medskip

\begin{cor}
The quantum Lax matrix $\mathcal L$ for the Koornwinder--van Diejen system is $\mathcal L=\mathcal P\mathcal Q$, with $\mathcal P, \mathcal Q$ given above. The classical Lax matrix is found as $L=PQ$ where $P=\mathcal P$, while $Q$ is obtained by setting $q=1$ in the definitions of $\widetilde u, \widetilde v$ and by replacing $t(\epsilon_j)$ with $e^{\beta p_j}$ (with $p_{n+i}=-p_i$ for $1\le i\le n$).  
\end{cor}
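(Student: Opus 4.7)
The plan is to assemble the statement directly from the previous factorisation of $Y_1$ and the matrix data of Proposition \ref{lmct}. First I would recall the factorisation \eqref{split}, namely
\[
Y_1 = \bigl(U+\textstyle\sum_{i\ne 1}V_i{\ss}_{1i}\bigr)\,\bigl(u_1+v_1{\ss}_1\bigr)\,\bigl(U^++\textstyle\sum_{i\ne 1}V_i^+{\ss}_{1i}^+\bigr)\,\RR(\delta+2\epsilon_1)\,t(\epsilon_1)
\]
on $M''$, and observe that the product of the first three factors is precisely the operator covered by Lemma \ref{abcd}, which descends to $M'$ in the reduced form $A+B{\ss}_1+\sum_{i\ne 1}(C_i{\ss}_{1i}+D_i{\ss}_{1i}^+)$. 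The last factor $\RR(\delta+2\epsilon_1)t(\epsilon_1)=\widetilde u_1\,t(\epsilon_1)+\widetilde v_1{\ss}_1$ is already in a form that preserves $M'$ (since it is built from $Y_1$ and the first three factors, both preserving $M'$). Thus on $M'$ we obtain a factorisation $Y_1|_{M'} = \Phi_{\mathcal P}\circ\Phi_{\mathcal Q}$ of two $W'$-invariant operators, where $\Phi_{\mathcal P}$ and $\Phi_{\mathcal Q}$ are exactly the operators whose matrices are described in Proposition \ref{lmct}.

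Next I would invoke Proposition \ref{lmct} to identify these two factors with the matrices $\mathcal P$ and $\mathcal Q$, respectively, acting under the representation \eqref{rep} on the subspace $M'$. Since matrix multiplication corresponds to composition of operators, the matrix representing the action of $Y_1$ on $M'$ is the product $\mathcal P\mathcal Q$. By construction $\mathcal L$ is precisely this matrix (see \ref{3.6} applied to $\lambda=\epsilon_1$), which proves $\mathcal L = \mathcal P\mathcal Q$.

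For the classical counterpart I would apply the general procedure of \ref{climq}. Setting $c=-\mathrm i\hbar\beta$ and taking $\hbar\to 0$, we have $q=e^c\to 1$, so $\widetilde u, \widetilde v$ reduce to their $q=1$ specialisations, while $t(\epsilon_j)=q^{\partial_j}$ passes under $\eta_0$ to $e^{\beta p_j}$, with the sign convention $p_{n+i}=-p_i$ consistent with $\epsilon_{n+i}=-\epsilon_i$. The factor $\mathcal P$ contains no shift operators and is therefore unchanged by $\eta_0$, i.e.\ $P=\mathcal P$. Combining, $L=PQ$ is the classical Lax matrix, and the full isospectral flow \eqref{cla2} follows via the general discussion of \ref{climq}.

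The only genuinely non-trivial content has already been absorbed into Lemmas \ref{ns}, \ref{abcd} and Proposition \ref{lmct}; the main care needed here is bookkeeping, namely checking that the composition of operators $\Phi_{\mathcal P}\circ\Phi_{\mathcal Q}$ really matches $\mathcal P\mathcal Q$ under the index conventions $\epsilon_{n+i}=-\epsilon_i$, $x_{n+i}=-x_i$, and verifying that no extra cross-terms appear in the $q\to 1$ limit of $\mathcal Q$ (this uses that $\widetilde v_1$ has no pole at $q=1$ and that $t(\epsilon_j)$ commutes through $\mathcal P$ up to shifts that become Poisson-trivial at leading order). This is the only step that could conceivably cause trouble, and it is purely a matter of checking conventions.
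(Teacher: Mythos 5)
Your proposal is correct and follows the same route the paper takes (the paper leaves the corollary unproved precisely because it is the immediate assembly of Lemma \ref{abcd} and Proposition \ref{lmct} that you describe): the representation \eqref{rep} is an algebra map, so the matrix of the product $Y_1=\bigl(A+B{\ss}_1+\sum_{i\ne 1}(C_i{\ss}_{1i}+D_i{\ss}_{1i}^+)\bigr)\cdot\bigl(\widetilde u_1 t(\epsilon_1)+\widetilde v_1{\ss}_1\bigr)$ restricted to $M'$ is $\mathcal P\mathcal Q$, and the classical limit is the procedure of \ref{climq}. The only cosmetic remark is that $\widetilde u_1 t(\epsilon_1)+\widetilde v_1{\ss}_1$ preserves $M'$ simply because it visibly commutes with all signed permutations of $x_2,\dots,x_n$ -- no need to deduce this from $Y_1$.
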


 \subsection{}
Finally, let us write down explicitly the analogue of Proposition \ref{intq} in the $C^\vee C_n$ case. We have 
\begin{equation*}
R_+=\{2\epsilon_i\,|\,1\le i\le n\}\cup \{\epsilon_i\pm \epsilon_j\,|\, 1\le i<j\le n\}\,,\qquad R_+\setminus R'_+=\{2\epsilon_1\}\cup \{\epsilon_1\pm \epsilon_i\,|\, 2\le i\le n\}\,.
\end{equation*}
The functions $\mathbf{c}_\alpha$ are defined in \eqref{vb1}--\eqref{vb3}. Substituting them into \eqref{uva} gives 
\begin{equation*}
\phi= u_1^-\prod_{l\ne 1}^n a_{l1}a_{l1}^-\,,\quad\uu=(\phi_1, \dots, \phi_{2n}) \,,\quad \vv=(1, \dots, 1)^T\,,
\end{equation*}
where
\begin{equation*}
\phi_i=\phi^{{\ss}_{1i}}=u_i^-\prod_{l\ne i}^n a_{li}a_{li}^-\,,\qquad \phi_{n+i}=\phi^{{\ss}_{1i}^+}=u_i\prod_{l\ne i}^n a_{li}^+a_{il}\qquad(1\le i\le n)\,.
\end{equation*}

%,\qquad  u_i=\phi^{{\ss}_{1i}}=\prod_{l\ne i}^n a_{li}\,, \quad v_i=1\quad(i=1, \dots, n)\,.
%\end{equation*}  
\begin{prop} The difference operators $\uu\,\mathcal L^k\, \vv$, $k\in\Z$ with the above $\uu$, $\vv$ belong to the commutative algebra of the quantum integrals ${L}_f$, $f\in\c[Y_1^{\pm 1}, \dots, Y_n^{\pm 1}]^{\mathfrak{S}_n}$ of the Koornwinder--van Diejen system. 
\end{prop}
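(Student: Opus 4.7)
The proof would follow the template of Proposition \ref{intq}, with the key task being to verify that the Hecke-algebraic identities used there still hold in the $C^\vee C_n$ setting, where the Hecke algebra has five independent parameters and the functions $\mathbf{c}_\alpha$ take three different forms \eqref{vb1}--\eqref{vb3}. Concretely, I would first introduce the affine Hecke symmetrizer
\[
e_\tau=\frac{1}{\sum_{w\in W}\tau_w^2}\sum_{w\in W}\tau_w T_w
\]
together with the parabolic analogue $e_{\tau,I}$ for $I=\{2,\dots,n\}$ (stabilizer of $\epsilon_1$), and check that the relations $T_i e_\tau=\tau_i e_\tau$ and $T_i e_{\tau,I}=\tau_i e_{\tau,I}$ for $i\in I$ continue to hold in the multi-parameter $C^\vee C_n$ Hecke algebra of Sahi--Stokman. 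Together with Noumi's basic representation \eqref{vb1}--\eqref{vb3}, this immediately gives $we_\tau=e_\tau$ for all $w\in W$, and hence the central commutation relation $[e_\tau Y^{\epsilon_1}e,\,e f(Y)e]=0$ for every $f\in\c[Y]^W$, by the same three-line computation as in the proof of Proposition \ref{intq}.

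Second, I would establish the product formulas $e_\tau=e\,\mathbf{c}_+$ and $e_{\tau,I}=e'\,\mathbf{c}'_+$ (up to scalar), where
\[
\mathbf{c}_+(x)=\prod_{\alpha\in R_+}\mathbf{c}_\alpha(-x),\qquad \mathbf{c}'_+(x)=\prod_{\alpha\in R'_+}\mathbf{c}_\alpha(-x),
\]
using Macdonald's identity \cite[(5.5.14)--(5.5.15)]{M03} in the version appropriate for $C^\vee C_n$. Dividing, the action of $e_\tau$ on $M'=e'M$ is then given by multiplication by $e\phi$, where
\[
\phi(x)=\prod_{\alpha\in R_+\setminus R'_+}\mathbf{c}_\alpha(-x)=u_1^-\prod_{l\neq 1}^n a_{l1}\,a_{l1}^-,
\]
matching the formula in the statement. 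Writing a generic element of $M'$ as $\sum_i e' r_i f_i$ with the coset representatives $r_i\in\{\ss_{1i},\ss_{1i}^+\}$ chosen in \ref{ccn}, the $W'$-invariance of $\phi$ yields $\phi\cdot e'r_if_i=e'r_i\phi^{r_i^{-1}}f_i$. Thus, up to a global scalar, $e_\tau$ acts on $M'$ as the rank-one operator $\vv\uu$ with $\uu=(\phi_1,\dots,\phi_{2n})$ as given in the statement, while $e$ acts as $\vv\ww$ with $\ww$ the row of ones.

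Combining these two ingredients, the operator $e_\tau Y^{k\epsilon_1}e$ acts on $M'$ as $\vv\uu\,\mathcal L^k\,\vv\ww=(\uu\mathcal L^k\vv)\,\vv\ww$, proportional to $(\uu\mathcal L^k\vv)\,e$; since $\uu\mathcal L^k\vv$ is $W$-invariant (being the image of a central-like element), it is a $W$-invariant difference operator. The commutation relation $[e_\tau Y^{k\epsilon_1}e,\,ef(Y)e]=0$ then translates to $[\uu\mathcal L^k\vv,\,{L}_f]=0$ for all $f\in\c[Y]^W$. Finally, I would invoke the Koornwinder-type commutant theorem (the $C^\vee C_n$ analogue of \cite[Theorem 3.15]{LS}, essentially Macdonald's characterisation of the commutant of the Koornwinder operator) to conclude that any $W$-invariant difference operator commuting with the family $\{L_f\}$ must belong to that algebra.

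The main obstacle I expect is precisely the last step and the verification of the product formula $e_\tau=e\,\mathbf{c}_+$ in the $C^\vee C_n$ case: the three different forms of $\mathbf{c}_\alpha$ attached to short, long and ``doubled'' affine roots make the combinatorics of the Poincaré-type identity more delicate than in the reduced case of \ref{mrh}, and one must carefully track the distinction between $\tau_0,\tau_0^\vee,\tau_n,\tau_n^\vee,\tau$ to ensure the correct cancellations. Once this is in place, everything else is formally identical to the proof of Proposition \ref{intq}.
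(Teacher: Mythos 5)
Your proposal is correct and follows exactly the route the paper takes: the paper's own proof of this proposition consists of the single remark that it is identical to that of Proposition \ref{intq}, and your write-up is precisely that argument transported to the $C^\vee C_n$ setting (the symmetrizer identities, the factorisation $e_\tau=e\,\mathbf{c}_+$ giving the rank-one action $\vv\uu$ on $M'$, the commutation $[e_\tau Y^{\epsilon_1}e, ef(Y)e]=0$, and the appeal to a commutant theorem in the spirit of \cite[Theorem 3.15]{LS}). The delicate points you flag --- the multi-parameter Poincar\'e-type identity behind $e_\tau=e\,\mathbf{c}_+$ and the $C^\vee C_n$ commutant statement --- are exactly the places where the paper's ``identical'' claim implicitly relies on the known $C^\vee C_n$ versions of Macdonald's formulas, so your assessment of where the real work lies is accurate.
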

The proof is identical to that of Proposition \ref{intq}. By passing to the classical limit, the same result holds for the classical system. Of course, in the classical case one can also produce first integrals as $h_k=\tr (L^k)$.  

\section{Lax pairs for the elliptic Calogero--Moser systems}\la{ecase}

In this section we explain how our approach extends to the elliptic Calogero--Moser systems associated to any root system, including the $BC_n$ case (Inozemtsev system). This will lead to Lax matrices with a spectral parameter, reproducing, in particular, the classical Lax pairs \cite{Kr2, BMS, DHP}. For the Inozemtsev system we obtain a Lax pair of size $2n$, different from \cite{I, DHP}. We will employ the theory of elliptic Dunkl operators, developed in \cite{BFV, C4, EM1, EFMV}.
%the previously known classical Lax pairs \cite{Kr2, R87, BC87, I, BMS, DHP}. 
Note that in the elliptic case no quantum Lax pairs were known previously, although a quantum Lax operator in type $A_n$ was considered in \cite{Ha}. %Our results here are less complete than in the previous sections; our main goal will be to find a Lax matrix for the elliptic $C^\vee C_n$-case (van Diejen system). 
%We will use the theory of elliptic Dunkl and Cherednik operators, developed in \cite{BFV, C4, KH98, EM1, EFMV}.

\subsection{} In the setting of Section \ref{2.1}, let $W$ be a Weyl group %(rather than a general Coxeter group) 
with a root system $R\subset V=\c^n$ and a $W$-invariant set of parameters $c_\alpha$, $\alpha\in R$. The Dunkl operators in the elliptic case depend on $t\ne 0$, the elliptic modulus $\tau$, and additional {\it dynamical variables} represented by a vector $\lambda\in V$.  They are as follows \cite{BFV, EM1, EFMV}:
\begin{equation}
\label{edu} y_\xi :=
t\partial_\xi+\sum_{\alpha\in R_+}
c_\alpha \langle\alpha, \xi\rangle \sigma_{\langle\alpha^\vee, \lambda\rangle}(\langle\alpha, x\rangle)s_\alpha\ , \quad \xi \in V\ .
\end{equation}
Here $\alpha^\vee=2\alpha/\langle\alpha,\alpha\rangle$, and 
\begin{equation}\la{sigm}
\sigma_\mu(z) =\frac{\theta_1(z-\mu)\theta_1'(0)}{\theta_1(z)\theta_1(-\mu)}\,, 
\end{equation}
%$\sigma_\mu(z) =\frac{\theta_1(z-\mu)\theta_1'(0)}{\theta_1(z)\theta_1(-\mu)}$, 
where $\theta_1(z)=\theta_1(z |\tau)$ is the odd Jacobi theta function, associated with the elliptic curve $\c/\Z+\Z\tau$. Sometimes we will write $y_\xi(\lambda)$ to emphasize dependence on the dynamical variables. Note that as a function of $\lambda$, $y_\xi$ has poles along the hyperplanes $\dpr{\alpha^\vee, \lambda}=m+n\tau$ with $m,n\in\Z$. Below we will also need a classical limit of $y_\xi$, which
is the following element of $\c(V)[p_1, \dots, p_n]*W$,  cf. \ref{cld}:
 \begin{equation}
\label{educ} y_\xi^{c} :=
p_\xi+\sum_{\alpha\in R_+}
c_\alpha \langle\alpha, \xi\rangle \sigma_{\langle\alpha^\vee, \lambda\rangle}(\langle\alpha, x\rangle)s_\alpha\ , \quad \xi \in V\ .
\end{equation}
%Here $p_\xi=\sum_{i=1}^n\xi_ip_i$ is the momentum in $\xi$-direction.

Again, the two main properties of the Dunkl operators are their commutativity and equivariance \cite{BFV}: for all $\,\xi, \eta \in V\,$ and $ w \in W $,
\begin{equation}\la{eduprop}
 y_{\xi}\,y_{\eta} = y_{\eta}\,y_{\xi}\,,\qquad \,w\,y_\xi(\lambda) =
y_{w\xi}(w\lambda)\,w\,.
\end{equation}
Note that in the second relation the group action changes both $\xi$ and $\lambda$. As before, the assignment $\,\xi \mapsto y_\xi\,$
extends to an algebra map
\begin{equation*}
SV=\bigoplus_{i\ge 0} S^iV \to \D(V)*W \ ,\quad q\mapsto q(y)\,.
\end{equation*}
However, unlike in the rational case, this map is not $W$-equivariant. Despite that, there is a method for constructing commuting $W$-invariant quantum Hamiltonians from $y_\xi$, but it requires a certain regularization procedure \cite{EFMV}. For the quadratic Hamiltonian this is straightforward (this goes back to \cite[Sec. 6]{BFV}). Namely, let $\partial_i=\partial_{\xi_i}$ and $y_i=y_{\xi_i}$, where $\{\xi_i\,|\, i=1 \dots n\}$ is an orthonormal basis in $V$. Let $\dpr{y, y}=y_{1}^2+\dots +y_{n}^2$. Then, by direct calculation,    
\begin{multline*}
\dpr{y, y}=t^2\Delta_V+t\sum_{\alpha\in R_+}c_\alpha\langle\alpha, \alpha\rangle\sigma'_{\langle\alpha^\vee, \lambda\rangle}(\langle\alpha, x\rangle)s_\alpha+
\sum_{\alpha\in R_+} c^2_\alpha\langle\alpha, \alpha\rangle\sigma_{\langle\alpha^\vee, \lambda\rangle}(\langle\alpha, x\rangle)\sigma_{\langle\alpha^\vee, \lambda\rangle}(-\langle\alpha, x\rangle)\,.
\end{multline*} 
Here $\sigma'_\mu(z)=\frac{d}{dz}\sigma_\mu(z)$. Using the identity $\sigma_\mu(z)\sigma_\mu(-z)=\wp(\mu)-\wp(z)$, we rewrite this as 
\begin{gather}
\frac12 \dpr{y, y}-\frac12\sum_{\alpha\in R_+} c^2_\alpha\langle\alpha, \alpha\rangle \wp(\langle\alpha^\vee, \lambda\rangle)=
\widehat H+\widehat A \,,\qquad \label{twos}
\intertext{where}
\widehat H=\frac12 t^2\Delta_V-\frac12 \sum_{\alpha\in R_+} c_\alpha(c_\alpha+t)\langle\alpha, \alpha\rangle \wp(\langle\alpha, x\rangle)\,,\la{elcm}\\
\widehat A=\frac{t}{2}\sum_{\alpha\in R_+}c_\alpha\langle\alpha, \alpha\rangle\left(\wp(\dpr{\alpha, x})+\sigma'_{\langle\alpha^\vee, \lambda\rangle}(\langle\alpha, x\rangle)s_\alpha\right)\,.\la{elcm1}
\end{gather} 
This is an elliptic analog of \eqref{acal}. Note that $\lim_{\mu\to 0}\sigma'_\mu(z)=-\wp(z)-2\eta_1$, where $\eta_1=\zeta(\frac{1}{2})$, see \cite[Prop. 19(v)]{BFV}. Thus, when $\lambda$ approaches the hyperplane $\dpr{\alpha^\vee, \lambda}=0$, the term 
\begin{equation*}
c_\alpha\langle\alpha, \alpha\rangle\left(\wp(\dpr{\alpha, x})+\sigma'_{\langle\alpha^\vee, \lambda\rangle}(\langle\alpha, x\rangle)s_\alpha\right)
\end{equation*}
in $\widehat A$ can be replaced by its limit, i.e., by $c_\alpha\langle\alpha, \alpha\rangle\left(\wp(\dpr{\alpha, x})(1-s_\alpha)-2\eta_1s_\alpha\right)$. This shows that the operator $\dpr{y, y}$ after subtracting a $\lambda$-dependent term becomes regular in the neighbourhood of $\lambda=0$.

Now, we obvioulsy have $[y_\xi, \widehat H+\widehat A]=0$, therefore we can construct a quantum Lax pair $\mathcal L, \mathcal A$ of size $|W|$ in exactly the same way as in \ref{qdl}, but now the matrix $\mathcal A$ will depend on the dynamical variables. Since $\widehat A$ vanishes in the classical limit $t=-\mathrm{i}\hbar\to 0$, the constructed Lax pair admits a classical limit. 

To construct Lax pairs of smaller sizes, we use the same approach as in \ref{small}. We start by choosing $\xi$ with nontrivial stabiliser $W'$. Denote by $R'$ the root system of $W'$. Then $\dpr{\alpha, \xi}=0$ for $\alpha\in R'$, so  
\begin{equation}\la{rdu}    
y_\xi=t\partial_\xi+\sum_{\alpha\in R_+\setminus R'_+}
c_\alpha \langle\alpha, \xi\rangle \sigma(\langle\alpha, x\rangle; \langle\alpha^\vee, \lambda\rangle)s_\alpha\,.
\end{equation}  
Choose $\lambda$ (close to $\lambda=0$) with the same stabiliser as $\xi$: this is possible because the singular terms with $\alpha\in R'$ are no longer present in \eqref{rdu}. We also specialize $\lambda$ in \eqref{twos} (which is possible because the right-hand side is regular near $\lambda=0$). It easily follows from \eqref{eduprop} that under such a specialisation $y_\xi$ and both sides of \eqref{twos} become $W'$-invariant, that is, $wy_\xi=y_\xi w$ and $w(\widehat H+\widehat A)=(\widehat H+\widehat A)w$ for all $w\in W'$. Since $\widehat H$ is $W$-invariant, we obtain that $w\widehat A=\widehat A w$. As a result, both $y_\xi$ and $\widehat A$ preserve the subspace $M'=e'M$. Therefore, the same construction as in \ref{small} applies, producing a Lax pair of size $|W|/|W'|$.  

\subsection{}
Let us illustrate the method in the case $W=\mathfrak{S}_n$. This is a modification of \ref{typea}. We have $n$ commuting Dunkl operators, depending on $c,t$ and the dynamical parameters $\lambda=(\lambda_1, \dots, \lambda_n)$:
\begin{equation*}
y_i=t\partial_i+c\sum_{j\ne i}\sigma_{\lambda_i-\lambda_j}(x_i-x_j)\ss_{ij}\,,\qquad i=1\dots n\,.
\end{equation*}
Choose $y_1$, so $\xi=(1,0,\dots, 0)$ so $W'$ and $e'$ are the same as in \ref{typea}, and 
\begin{equation*}
R_+'=\{\epsilon_i-\epsilon_j\,|\, 2\le i<j\le n\}\,.
\end{equation*} 
Let us specialise $\lambda$ in $y_1$ to $(\mu, 0,\dots, 0)$, where $\mu$ is an arbitrary parameter:  
\begin{equation*}
y_1=t\partial_1+c\sum_{j\ne 1}^n\sigma_{\mu}(x_1-x_j)\ss_{1j}\,.
\end{equation*}
We also specialise $\lambda$ in $\widehat A$, obtaining
\begin{equation*}
\widehat A=ct\sum_{j\ne 1}^n \left(\wp(x_1-x_j)+\sigma'_{\mu}(x_1-x_j)\ss_{1j}\right)+ ct\sum_{2\le i<j\le n}\left(\wp(x_i-x_j)(1-\ss_{ij})-2\eta_1\ss_{ij}\right)\,.
\end{equation*}
Since $\ss_{ij}=\id$ on $M'$, the restriction of $\widehat A$ onto $M'$ can be replaced by 
\begin{equation*}
\widehat A=ct\sum_{j\ne 1}^n \left(\wp(x_1-x_j)+\sigma'_{\mu}(x_1-x_j)\ss_{1j}\right)- ct\eta_1 (n-1)(n-2)\,.
\end{equation*}
By removing an unimportant constant term, we may change $\widehat A$ to
 \begin{equation*}
\widehat A=ct\sum_{j\ne 1}^n \left(\wp(x_1-x_j)+\sigma'_{\mu}(x_1-x_j)\ss_{1j}\right)\,.
\end{equation*}  
The quantum Lax pair $\mathcal L, \mathcal A$ is now calculated in exactly the same way as in \ref{typea}. This gives, after setting $t=-\mathrm{i}\hbar$, $c=\mathrm{i}g$, the following matrices:
\begin{equation}\la{ael}
\mathcal L_{kl}=
\begin{cases}
\mathrm{i}g\sigma_{\mu}(x_k-x_l)\quad&\text{for}\ k\ne l\\
\hat p_k\quad&\text{for}\ k=l\,,
\end{cases}\qquad
\mathcal A_{kl}=
\begin{cases}
g\hbar \,\sigma'_{\mu}(x_k-x_l)\quad&\text{for}\ k\ne l\\
g\hbar \,\sum_{j\ne k}^n \wp(x_j-x_k)\quad&\text{for}\ k=l\,.
\end{cases}
\end{equation}
This is an elliptic generalisation of \eqref{qlp}--\eqref{qleq} with
\begin{equation*}
\widehat H=\frac12\sum_{k=1}^n\hat p_k^2+g(g-\hbar)\sum_{k<l}^n\wp(x_k-x_l)\,.
\end{equation*} 
In the classical limit $\hbar\to 0$ it gives the well-known Krichever's Lax pair with a spectral parameter \cite{Kr2}. 

\subsection{}\la{inoz} 
Let us describe the $BC_n$-case related to the Inozemtsev system \cite{I}. %We give an outline, leaving the verification of the details to the reader. 
This system depends on five coupling constants $c, g_0, g_1, g_2, g_3$. We have $n$ commuting Dunkl operators \cite{EFMV}:
\begin{equation*}
y_i=t\partial_i+v_{\lambda_i}(x_i)\ss_i+c\sum_{j\ne i}\left(\sigma_{\lambda_i-\lambda_j}(x_i-x_j)\ss_{ij}+\sigma_{\lambda_i+\lambda_j}(x_i+x_j)\ss_{ij}^+\right)\,,\qquad i=1\dots n\,.
\end{equation*}
Here $\lambda=(\lambda_1, \dots, \lambda_n)$ are the dynamical parameters, and 
\begin{equation}\la{vmu}
v_\mu (z)=v_\mu(z; g_0, g_1, g_2, g_3)=\sum_{r=0}^3 g_r\sigma^r_{2\mu}(z)\,,\qquad \sigma^r_\mu(z):=\frac{\theta_{r+1}(z-\mu)\theta_1'(0)}{\theta_{r+1}(z)\theta_1(-\mu)}\,,
\end{equation}
where $\theta_r(z)=\theta_r(z|\tau)$, $r=0\dots 3$ are the Jacobi theta functions, with $\theta_4(z):=\theta_0(z)$. Clearly, $\sigma^0_\mu(z)$ coincides with \eqref{sigm}. 
For an account of the properties of $\sigma^r_\mu(z)$, see \cite[Appendix]{KH97}.
Note the following identities:
\begin{equation}\la{vv}
v_{-\mu}(-z)=-v_\mu(z)\,,\qquad v_\mu(z)v_\mu(-z)=\sum_{r=0}^3\left( (g^\vee_r)^2\wp(\mu+\omega_r)-g_r^2\wp(z+\omega_r)\right)\,,
\end{equation}
where $(\omega_0, \omega_1, \omega_2, \omega_3)=(0, \frac12, \frac{1+\tau}{2}, \frac{\tau}{2})$ are half-periods, and
\begin{equation}\la{ghat} 
\left( {
\begin{array}{c}
g^\vee_0\\
g^\vee_1\\ 
g^\vee_2\\
g^\vee_3\\
\end{array}
} \right)
= 
\frac12 \left( {
\begin{array}{rrrr}
1 & 1 & 1& 1\\
1 & 1 & -1 & -1\\
1 & -1 & 1 & -1\\
1 & -1 & -1 & 1\\
\end{array}
} \right)
\left( {
\begin{array}{c}
g_0\\
g_1\\
g_2\\
g_3\\
\end{array}
} \right)\,.
\end{equation}
Another property of $v_\mu(z)$ which will be needed later is the following symmetry between $\mu$ and $z$:
\begin{equation}\la{vsym}
v_\mu(z; g_0, g_1, g_2, g_3)=v_{-z}(-\mu;  g^\vee_0, g^\vee_1, g^\vee_2, g^\vee_3)=-v_z(\mu;  g^\vee_0, g^\vee_1, g^\vee_2, g^\vee_3)\,.
\end{equation}
This can be checked by comparing translation properties and residues in the $z$-variable. 

In the formulas below we will use the abbreviations $x_{ij}:=x_i-x_j$, $x_{ij}^{+}:=x_i+x_j$, and similarly for the $\lambda$-variables. One calculates $\dpr{y, y}=y_1^2+\dots + y_n^2$ to get:
\begin{multline*}
\dpr{y, y}=t^2\Delta+2c^2\sum_{i<j}^n\left(\sigma_{\lambda_{ij}}(x_{ij})\sigma_{\lambda_{ij}}(-x_{ij})+\sigma_{\lambda_{ij}^+}(x_{ij}^+)\sigma_{\lambda_{ij}^+}(-x_{ij}^+)\right)\\
+
2ct\sum_{i<j}^n \left(\sigma_{\lambda_{ij}}(x_{ij})\ss_{ij}+\sigma_{\lambda_{ij}^+}(x_{ij}^+)\ss_{ij}^+\right)
+
\sum_{i=1}^n v_{\lambda_i}(x_i)v_{\lambda_i}(-x_i)+t\sum_{i=1}^n v'_{\lambda_i}(x_i)\ss_i\,.
\end{multline*} 
We can rewrite this as
\begin{gather}\la{bcreg}
\dpr{y, y}-
2c^2\sum_{i<j}^n (\wp(\lambda_{ij})+\wp(\lambda_{ij}^+))-\sum_{i=1}^n\sum_{r=0}^3 (g^\vee_r)^2\wp(\lambda_i+\omega_r)
=
\widehat H+\widehat A \,,
\intertext{where}
\nonumber
\widehat H=t^2\Delta_V-2c(c+t)\sum_{i<j}^n (\wp(x_{ij})+\wp(x_{ij}^+))-\sum_{i=1}^n\sum_{r=0}^3 g_r(g_r+t)\wp(x_i+\omega_r)\,,\\
\nonumber\widehat A=
2ct\sum_{i<j}^n \left(\wp(x_{ij})+\wp(x_{ij}^+)+\sigma'_{\lambda_{ij}}(x_{ij})\ss_{ij} + \sigma'_{\lambda_{ij}^+}(x_{ij}^+)\ss_{ij}^+\right)
+t\sum_{i=1}^n\sum_{r=0}^3 g_r\wp(x_i+\omega_r)+t\sum_{i=1}^n v'_{\lambda_i}(x_i)\ss_i\,.\la{bcreg1}
\end{gather} 

We choose $\xi=(1,0,\dots, 0)$ so $W'$ and $e'$ are the same as in \ref{ccn}, and the elements of $M'=e'M$ have the form $\sum_{i=1}^{2n} e'r_i f_i$, where $r_i=\ss_{1i}$, $r_{n+i}=\ss_{1i}^+$ ($1\le i\le n$). The Lax matrix, therefore, will be of size $2n$. We will construct it from the operator $y_1$, in which we set $\lambda=(\mu, 0,\dots, 0)$:  
\begin{equation*}
y_1=t\partial_1+v_\mu(x_1)\ss_1+c\sum_{j\ne 1}^n(\sigma_{\mu}(x_{1j})\ss_{1j}+\sigma_{\mu}(x_{1j}^+)\ss_{1j}^+)\,.
\end{equation*}
Similarly, we specialise $\lambda$ in $\widehat A$ and obtain (after removing a constant):
\begin{equation*}
\widehat A|_{M'}=2ct\sum_{j\ne 1}^n \left(\wp(x_{1j})+\wp(x_{1j}^+)+\sigma'_{\mu}(x_{1j})\ss_{1j}+\sigma'_\mu(x_{1j}^+)\ss_{1j}^+\right)+t\sum_{r=0}^3g_r\wp(x_1+\omega_r)+tv'_\mu(x_1)\ss_1\,.
\end{equation*}  
The quantum Lax pair $\mathcal L, \mathcal A$ is now calculated in exactly the same way as in \ref{ccn}. To write down the answer in compact form, let us extend the range of the variables $x_i$ to $1\le i \le {2n}$ by setting $x_{n+i}=-x_i$; we also set $\partial_{n+i}=-\partial_i$. Then we obtain the following $2n\times 2n$ matrices: 
\begin{align*}
\mathcal L_{ij}&=
\begin{cases}
c\sigma_{\mu}(x_i-x_j)\quad&\text{for}\ i-j\ne 0, \pm n\\
v_\mu(x_i)\quad&\text{for}\ i-j=\pm n\\
t\partial_i\quad&\text{for}\ i=j\,,
\end{cases}\\
%\qquad
\mathcal A_{ij}&=
\begin{cases}
2ct\sigma'_{\mu}(x_i-x_j)\quad&\text{for}\ i-j\ne 0, \pm n\\
tv'_\mu(x_i)\quad&\text{for}\ i-j=\pm n\\ \displaystyle{
2ct\sum_{l:\,l-i\ne 0, \pm n}^{2n} \wp(x_i-x_l)+t\sum_{r=0}^3 g_r\wp(x_i+\omega_r)}\quad&\text{for}\ i=j\,.
\end{cases}
\end{align*}
The classical Lax pair is obtained in the limit $t=-\mathrm{i}\hbar\to 0$, resulting in
\begin{equation}\la{inolax}
L_{ij}=
\begin{cases}
c\sigma_{\mu}(x_i-x_j)\quad&\text{for}\ i-j\ne 0, \pm n\\
v_\mu(x_i)\quad&\text{for}\ i-j=\pm n\\
p_i\quad&\text{for}\ i=j\,,
\end{cases}
\qquad\qquad A=-t^{-1}\mathcal A\,.
\end{equation} 
%\qquad
Here we keep the same convention, $p_{n+i}=-p_i$. 
Note that the previously known Lax pairs for the classical Inozemtsev system were of a larger size ($3n$ as in \cite{I}, or $2n+1$ as in \cite{DHP}). Probably, they can be brought to the above form by a suitable reduction.

\subsection{}  According to \cite{C4, EFMV}, the elliptic Calogero--Moser Hamiltonian \eqref{elcm} is completely integrable: there is a commutative algebra of quantum Hamiltonians ${L}_q\in\D(V)$, $q\in (SV)^W$, each with the leading term $q(\partial)$. These Hamiltonians are $W$-invariant partial differential operators with {\it elliptic coefficients}, i.e. they are invariant under translations $t(v)$, $v\in P^\vee\oplus \tau P^\vee$. In \cite{EFMV}, a general procedure was given for constructing these higher Hamiltonians from the elliptic Dunkl operators. It consists of the following three steps, see \cite[Theorem 3.1]{EFMV}. First, one substitutes the elliptic Dunkl operators $y_\xi(\lambda)$ as momenta into suitable {\it classical} rational Calogero--Moser Hamiltonians, with the dynamical parameters $\lambda$ as the position variables. Next, one goes to the limit $\lambda=0$ (it is shown in \cite{EFMV} that this limit exists), obtaining a reflection-difference operator. Finally, one restricts this reflection-differential operator to $W$-invariant functions (so it becomes a differential operator). There is also a parallel construction in the classical case \cite[Theorem 3.4]{EFMV}.

For our purposes, we need a modification of that procedure, where the substitution is made into the classical {\it elliptic} Calogero--Moser Hamiltonians (cf. Remark 3.8 in \cite{EFMV}). Namely, consider the following classical Hamiltonian $H^{\vee, c}(\xi, \lambda)$:
\begin{equation}\la{dh}
H^{\vee, c}=\frac12 \dpr{\xi, \xi}-\frac12\sum_{\alpha\in R_+} c^2_\alpha\langle\alpha, \alpha\rangle \wp(\langle\alpha^\vee, \lambda\rangle)\,.
\end{equation}
By the above results of \cite{EFMV}, there is a family of Poisson-commuting Hamiltonians ${L}^{\vee, c}_q(\xi, \lambda)$, $q\in (SV)^W$, with the above $H^{\vee, cl}$ corresponding to $q=\frac12\dpr{\xi, \xi}$. Note that each of these Hamiltonians is elliptic in $\lambda$ with respect to the lattice $P\oplus \tau P$. %In the proposition below we are going to replace the variable $\xi\in V$ by the elliptic Dunkl operators $T_\xi$ or by their classical variant, $T_\xi^{c}$.
Then we have the following result, whose proof was suggested to the author by P.~Etingof.

\begin{prop}\label{elcl} Let $y_\xi(\lambda)$, $y_\xi^{c}(\lambda)$ denote the quantum and classical elliptic Dunkl operators associated to a root system $R$ according to \eqref{edu}, \eqref{educ}. Let 
$L_q^{\vee, c}(\xi, \lambda)$, $q\in (SV)^W$ be the ``dual'' classical higher Hamiltonians associated with %the quadratic Hamiltonian 
\eqref{dh}. Then:

(i) $L_q^{\vee, c}(y_\xi(\lambda), \lambda)$, viewed as an element of $\D(V)*W$ depending on $\lambda$, is regular for $\lambda$ near $\lambda=0$;

(ii)  $L_q^{\vee, c}(y_\xi^{c}(\lambda), \lambda)$, viewed as an element of $\c(V\times V)*W$ depending on $\lambda$, is regular for all $\lambda\in V$;

(iii) $L_q^{\vee, c}(y_\xi^{c}\lambda), \lambda)$ is constant in $\lambda$. Moreover, expanding $L_q^{\vee, c}(y_\xi^{c}(\lambda), \lambda)$ as $\sum_{w\in W} a_w w$ with $a_w\in\c(V\times V)$, we have $a_w=0$ for $w\ne \id$.  
\end{prop}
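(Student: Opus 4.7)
The proposition has three parts, but (ii) is an immediate consequence of (iii) (a constant is automatically regular), so the substantive work concentrates on (i) and (iii). Part (iii) is the deepest assertion, combining constancy in $\lambda$ with the vanishing of all reflection components $a_w$ for $w\neq\mathrm{id}$. The strategy I would pursue is to first establish (iii) by a combination of equivariance arguments and a reduction to the rational case already covered by Lemma \ref{le}, and then to derive (i) by an adaptation of the same pole-cancellation mechanism that underlies (ii).

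For (iii) I would first exploit $W$-equivariance. Because $L_q^{\vee,c}(\xi,\lambda)$ is jointly $W$-invariant under $(\xi,\lambda)\mapsto(w\xi,w\lambda)$ and the classical Dunkl operators satisfy $w\,y^c_\xi(\lambda) = y^c_{w\xi}(w\lambda)\,w$ by \eqref{eduprop}, conjugating by $w$ yields $w\,L_q^{\vee,c}(y^c(\lambda),\lambda)\,w^{-1} = L_q^{\vee,c}(y^c(w\lambda),w\lambda)$. Hence any proof that the expression is $\lambda$-independent will automatically force $W$-invariance in $\c(V\times V)*W$. To prove $\lambda$-independence, the natural tool is to regard $L_q^{\vee,c}(\xi,\lambda)$ as Hamiltonians of the auxiliary classical integrable system on $T^*V_\lambda$ with coordinates $(\lambda,\xi)$, and to exploit a classical KZB-type identity expressing $\partial_{\lambda_k}\,y^c_\xi(\lambda)$ as a Poisson bracket in the $(x,p)$-variables with a suitable local generating function built from the elliptic data. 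Combined with the Hamilton relation $\partial_{\xi_k}L_q^{\vee,c}=\{\lambda_k,L_q^{\vee,c}\}$ and the Poisson-commutativity of the $L_q^{\vee,c}$ among themselves, the chain-rule expansion of $\partial_{\lambda_k}L_q^{\vee,c}(y^c(\lambda),\lambda)$ should telescope to zero.

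With $\lambda$-independence in hand, the vanishing of $a_w$ for $w\neq\mathrm{id}$ follows by specializing $\lambda$ to a convenient limit. In an appropriate degeneration ($\tau\to\mathrm{i}\infty$ together with a rescaling of $\lambda$), the elliptic Dunkl operator $y^c_\xi(\lambda)$ degenerates to a rational classical Dunkl operator and $L_q^{\vee,c}(\xi,\lambda)$ to a classical rational Calogero--Moser polynomial, at which point the claim reduces directly to Lemma \ref{le}. Alternatively, working at generic $\lambda$, the coefficients $a_w(x,p)$ can be pinned down by comparing Laurent expansions in $x$ across the reflection hyperplanes.

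Finally, for (i) I would repeat the pole-cancellation analysis in the quantum algebra $\D(V)*W$, but only locally near $\lambda=0$. The poles of $L_q^{\vee,c}(\xi,\lambda)$ along $\langle\alpha^\vee,\lambda\rangle=0$ are matched term by term against the singular parts of $y_\xi(\lambda)$ on the same hyperplane, just as in the explicit quadratic case \eqref{twos}--\eqref{elcm1}; the non-classical commutator terms produced when $t\partial_\xi$ acts on $\sigma_{\langle\alpha^\vee,\lambda\rangle}(\langle\alpha,x\rangle)$ contribute derivatives $\sigma'_{\langle\alpha^\vee,\lambda\rangle}$, which remain regular at $\lambda=0$ by the same limit $\lim_{\mu\to 0}\sigma'_\mu(z)=-\wp(z)-2\eta_1$ already used in \eqref{elcm1}. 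Thus the classical cancellation survives quantization up to regular corrections, yielding regularity near $\lambda=0$ rather than globally. The main obstacle is Step~2, establishing $\lambda$-independence in (iii): it requires the right classical-integrability identity linking $\lambda$-derivatives of elliptic Dunkl operators to Poisson brackets on $T^*V_x$, which is not visible from the defining formula \eqref{educ} alone and must be extracted from the underlying elliptic geometry.
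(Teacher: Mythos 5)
There is a genuine gap, and it sits exactly where you flag it yourself: the $\lambda$-independence in (iii). Your plan inverts the logical order of the paper's argument — you want to prove (iii) first by a direct dynamical computation and then obtain (ii) as a corollary ("a constant is regular") — but the only mechanism available for proving constancy is Liouville's theorem, and that requires (ii) as an \emph{input}, not an output. The paper's proof runs (i) $\Rightarrow$ (ii) $\Rightarrow$ (iii): first, local regularity near $\lambda=0$ is obtained by the method of the second proof of \cite[Theorem 3.1]{EFMV} (the argument of \cite[5.3]{EFMV} extends verbatim to the elliptic Hamiltonians; note the paper explicitly remarks that the \emph{first} proof in \cite{EFMV} does not extend because it presupposes (iii)). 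Second, global regularity in $\lambda$ follows from the quasi-periodicity $y^c_\xi(\lambda+u+\tau v)=e^{2\pi\mathrm{i}\langle v,x\rangle}\,y^c_\xi(\lambda)\,e^{-2\pi\mathrm{i}\langle v,x\rangle}$ together with the ellipticity of $L_q^{\vee,c}$ in $\lambda$: the conjugation covariance \eqref{trp} transports regularity from the hyperplanes $\langle\alpha^\vee,\lambda\rangle=0$ to all their lattice translates. Third, each coefficient $a_w$ is then an entire, quasi-periodic function of $\lambda$ with respect to the full-rank lattice $P\oplus\tau P$, hence constant; and applying \eqref{trp} once more gives $a_w w=e^{2\pi\mathrm{i}\langle v,x\rangle}a_w w\,e^{-2\pi\mathrm{i}\langle v,x\rangle}=a_w e^{2\pi\mathrm{i}\langle v-wv,x\rangle}w$ for all $v\in P$, forcing $a_w=0$ unless $w=\mathrm{id}$. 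Your proposed substitute for this — a "KZB-type identity" expressing $\partial_{\lambda_k}y^c_\xi(\lambda)$ as a Poisson bracket so that the chain rule telescopes — is not established anywhere and is not visible from \eqref{educ}; as written it is a conjecture, not a proof.

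Two smaller points. Your fallback for deducing $a_w=0$ by degenerating $\tau\to\mathrm{i}\infty$ and invoking Lemma \ref{le} does not work even granting constancy in $\lambda$: the constant still depends on $\tau$, so knowledge of the trigonometric/rational limit says nothing about the value at finite $\tau$. And for (i), the term-by-term pole matching you sketch is exactly what is carried out explicitly for $q=\frac12\langle\xi,\xi\rangle$ in \eqref{twos}--\eqref{elcm1}, but extending it to arbitrary $q\in(SV)^W$ is precisely the nontrivial content delegated to \cite[5.3]{EFMV}; asserting that "the classical cancellation survives quantization up to regular corrections" does not discharge it.
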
     

\begin{proof}
{\it Part (i)}. In the case when $L_q^{\vee, c}$ is replaced by the Hamiltonian of the {\it rational} Calogero--Moser system, the regularity statement is Theorem 3.1 in \cite{EFMV}, and we can use the same method. More precisely, two different proofs of the regularity are given in \cite{EFMV}. The first proof does not extend easily to the elliptic case because it requires establishing (iii) in advance, which we cannot do. However, the second proof as in \cite[5.3]{EFMV} extends to the elliptic case verbatim. %Namely, it relies on checking the statement in rank one, that is, checking that $y^2-c^2\wp(\lambda)$ with $y=\partial-c\sigma_\lambda(x)s$ is regular at $\lambda=0$, which is straightforward. 

{\it Part (ii)}. The operators $y_\xi^{c}(\lambda)$ are the $\hbar=0$ limit of $y_\xi(\lambda)$, thus, the expression $L_q^{\vee, c}(y_\xi^{c}(\lambda), \lambda)$ remains regular near $\lambda=0$ by (i). Other possible singularities are along the hyperplanes $\dpr{\alpha^\vee,\lambda}=m+n\tau$ with $m,n\in\Z$. To rule them out, let us see how $y_\xi^{c}(\lambda)$ changes under translations $\lambda\mapsto \lambda+u+\tau v$ with $u,v\in P$. Note that $\sigma_{\mu+1}(z)=\sigma_\mu(z)$ and $\sigma_{\mu+\tau}(z)=e^{2\pi \mathrm{i} z}\sigma_\mu(z)$, by properties of $\theta_1(z|\tau)$. This gives
\begin{equation*}
y_\xi^{c}(\lambda+u+\tau v)=p_\xi+\sum_{\alpha>0}c_\alpha\dpr{\alpha, \xi} e^{2\pi \mathrm{i}\dpr{\alpha, x}\dpr{\alpha^\vee, v}}\sigma_{\dpr{\alpha^\vee, \lambda}}(\dpr{\alpha, x})s_\alpha=e^{2\pi \mathrm{i}\dpr{v, x}}y_\xi^{c}(\lambda)e^{-2\pi \mathrm{i}\dpr{v,x}}\,.  
\end{equation*}
Each Hamiltonian $L_q^{\vee, c}$ is elliptic in $\lambda$, so the expression $L_q^{\vee, c}(y_\xi^{c}(\lambda), \lambda)$ has the same translation properties:  
\begin{equation}\la{trp}
L_q^{\vee, c}(y_\xi^{c}(\lambda), \lambda)\  \xrightarrow{\lambda\mapsto \lambda+u+\tau v} \ e^{2\pi \mathrm{i}\dpr{v,x}}L_q^{\vee, c}(y_\xi^{c}(\lambda), \lambda)e^{-2\pi \mathrm{i}\dpr{v,x}}\,.
\end{equation}
Since we know that the right-hand side is regular along each of the hyperplane $\dpr{\alpha^\vee, \lambda}=0$, the left-hand side must be regular along the shifted hyperplanes. As a result, it is regular everywhere.

{\it Part (iii)}. Let us expand $L_q^{\vee, c}(y_\xi^{c}(\lambda), \lambda)$ as $\sum_{w\in W} a_w w$. Each coefficient $a_w$ is a function of $x, p, \lambda$, and by (ii) it is globally holomorphic in $\lambda$. From \eqref{trp} we have that, as a function of $\lambda$, $a_w$ is quasi-periodic with respect to the lattice $P\oplus \tau P$. However, a holomorphic quasi-periodic function must be a constant, which proves that each $a_w$ is constant in $\lambda$. Invoking \eqref{trp} once again, we obtain $a_w w=e^{2\pi \mathrm{i}\dpr{v,x}}\,a_w w\, e^{-2\pi \mathrm{i}\dpr{v,x}}$ for all $v\in P$, from which it follows that $a_w=0$ for $w\ne \id$.  
\end{proof}

\medskip

\begin{remark} For $q=\frac12\dpr{\xi, \xi}$, all the statements of the proposition follow directly from the calculations in \eqref{twos}--\eqref{elcm1}. Note that when the quantum Dunkl operators are substituted, $\widehat A$ will contain terms of the form $\sigma'_\mu(z)$, which are not everywhere regular in $\mu$. This shows that the global regularity property (ii) does not hold in the quantum setting (contrary to the claim made in \cite[Remark 3.8]{EFMV}).   
\end{remark}

\begin{remark}
For the $BC_n$-\,case with coupling constants $c, g_0, g_1, g_2, g_3$, the proposition remains true, with the same proof as above. Note that the dual classical Hamiltonians in this case have coupling constants $c$ and $g^\vee_0, g^\vee_1, g^\vee_2, g^\vee_3$ as defined in \eqref{ghat} (cf. Example 3.9 in \cite{EFMV} and \eqref{bcreg} above). A small modification is required for the proof of regularity at $\xi_i=\omega_r$ with $r=1, 2, 3$. For this, one needs to employ shifts $\xi\mapsto \xi+\omega_r (\epsilon_1+\dots+\epsilon_n)$. For such translations there is a formula similar to \eqref{trp}, but now it also involves a permutation of the parameters $g_r$. The rest of the proof does not change. 
\end{remark}

\begin{remark}
Lax pairs for the trigonometric Calogero--Moser system can be obtained by replacing $\sigma_\mu(z)$ and $\wp(z)$ by their trigonometric versions, $\sigma_\mu(z)=\cot z-\cot \mu$ and $\wp(z)=\sin^{-2} z$. %Note that in this case $\widehat A$ \eqref{elcm1} becomes independent of the dynamical variables, therefore, the Lax partner $\mathcal A$ will not contain the spectral parameter. 
In the $BC_n$ case, the function $v_\mu(z)$ should also be replaced by its trigonometric version, $v_\mu(z)=g_0(\cot z-\cot 2\mu)+g_1(-\tan z-\cot 2\mu)$, cf. \cite{FeP, Pu}. It is customary in the trigonometric case to set the spectral parameter to a specific value. For instance, a trigonometric version of the Lax pair \eqref{ael} would have off-diagonal entries $\mathcal L_{kl}=\mathrm{i}g\sigma_\mu(x_k-x_l)=\mathrm{i}g\cot(x_k-x_l)-\cot\mu$, and setting $\mu=\pi/2$ would make it into  $\mathcal L_{kl}=\mathrm{i}g\cot(x_k-x_l)$.      
\end{remark}

\begin{remark}
By the same method, the proposition can be proved for a more general class of {\it crystallographic elliptic Calogero--Moser systems} constructed in \cite{EFMV}, for which the group $W$ is not necessarily a Weyl group. %The only difficulty is with checking statement (1) in the rank-one case, which becomes more involved. 
\end{remark}

\subsection{} We can now construct a Lax partner $\mathcal A$ for any of the higher quantum Hamiltonians. %of the system \eqref{elcm} as follows. 
Namely, consider $L_q^{\vee, c}(y_\xi(\lambda), \lambda)$ with $q\in (SV)^W$. From the $W$-invariance of $L_q^{\vee, c}$ %the classical Hamiltonian 
and $W$-equivariance of $y_\xi(\lambda)$, we have 
\begin{equation}\la{eqq}
w_x\,L_q^{\vee, c}(y_\xi(\lambda), \lambda)\,w_x^{-1}=L_q^{\vee, c}(y_{w\xi}(w\lambda), \lambda)=L_q^{\vee, c}(y_\xi(w\lambda), w^{-1}\lambda)\qquad\text{for all}\  w\in W\,,
\end{equation}
where we use the subscript to indicate that $w_x$ acts in the $x$-variable. In the limit $\lambda=0$ this becomes $W$-invariant, so we have
\begin{equation*}
L_q^{\vee, c}(y_\xi(0), 0)e=\widehat H e\,,\qquad \widehat H\in\D(V)^W\,,
\end{equation*}
where $\widehat H$ is one of the commuting Hamiltonians of the Calogero--Moser system \eqref{elcm}. By Proposition \ref{elcl}, the classical limit %$H(x,p)$ 
of $\widehat H$ can be obtained as   
\begin{equation}\la{d1}
H=L_q^{\vee, c}(y_\xi^{c}(\lambda), \lambda)\,.
\end{equation}
Now write 
\begin{equation}\la{d2}
L_q^{\vee, c}(y_\xi(\lambda), \lambda)=\widehat H+\widehat A\,,
\end{equation} 
for a suitable $\widehat A\in \D(V)*W$ (depending on $\lambda$). %Note that in the limit $\lambda=0$ this definition of $\widehat H, \widehat A$ is equivalent to \eqref{acalg}, but for $\lambda\ne 0$ it is different.

Obviously, $[y_\xi(\lambda), \widehat H +\widehat A]=0$, so a quantum Lax pair of size $|W|$ can be constructed as before. Let us now specialise both $\xi$ and $\lambda$ to have the same stabiliser $W'$. In this case, $y_\xi(\lambda)$ will be $W'$-invariant by \eqref{eduprop}. Also,  $L_q^{\vee, c}(y_\xi(\lambda), \lambda)$ and, therefore, $\widehat A$ are $W'$-invariant by \eqref{eqq}. As a result, $\mathcal L$, $\mathcal H$, $\mathcal A$ can be restricted onto $M'=e'M$, giving a Lax pair of size $|W/W'|$.  

It remains to explain why the constructed Lax pairs have a classical limit. We know that the classical limit of $\widehat H$ is the classical Hamiltonian $H$. Now, comparing \eqref{d1} and \eqref{d2}, we see that the classical limit of $\widehat A$ is zero. Therefore, the classical limit of $(\mathrm{i}\hbar)^{-1}\widehat A$ is well-defined and this produces the classical Lax partner $A$ in the same way as before.   
 
\subsection{} Let us mention some consequences of the above for the classical systems. %In special cases this was known before, but in such generality as below it seems to be new.

\begin{prop} 
Let $\xi=b_i$ be a fundamental (co)weight for a root system $R$, and $W'$ denote the stabiliser of $\xi$ in the Weyl group $W$ of $R$. The classical elliptic Calogero--Moser system for a root system $R$ admits a Lax matrix $L$ of size $|W/W'|$ with a spectral parameter. Each of the commuting Hamiltonian flows of the system induces an isospectral deformation of $L$. The functions $h_k=\tr L^k$, $k\in\N$, form an involutive family, that is, $\{\tr\, L^a, \tr \,L^b\}=0$ for all $a,b$.   
\end{prop}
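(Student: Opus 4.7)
The plan is to combine the isospectral-deformation property established in Section \ref{ecase} with Liouville complete integrability of the elliptic Calogero--Moser system. First, I would obtain $L=L(\lambda)$ as the classical limit of the matrix representing the elliptic Dunkl operator $y_\xi(\lambda)$ on $M'=e'M$, with $\xi=b_i$ and $\lambda$ specialised to share the stabiliser $W'$ with $\xi$. The dependence on $\lambda$ provides the spectral parameter, and the matrix has size $|W/W'|$ by the reduction explained in Section \ref{small} transferred to the elliptic setting.

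Next, for every $q\in(SV)^W$ the construction in Section \ref{ecase} furnishes an element $\widehat A\in\D(V)*W$ with $[y_\xi(\lambda),\widehat H_q+\widehat A]=0$, where $\widehat H_q$ is the higher quantum Hamiltonian associated with $q$. Restricting to $M'$ yields matrices forming a quantum Lax pair; passing to the classical limit $\hbar\to 0$ (well defined because Proposition \ref{elcl}(iii) forces $\widehat A$ to have classical limit zero, so $(\mathrm{i}\hbar)^{-1}\widehat A$ converges) gives a classical Lax equation
\begin{equation*}
\{L, H_q\} = [A_q, L]\,,
\end{equation*}
where $H_q$ denotes the classical counterpart of $\widehat H_q$. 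Hence each commuting Hamiltonian flow acts on $L$ by isospectral deformation, and each trace $h_k=\tr L^k$ satisfies $\{h_k,H_q\}=\tr[A_q,L^k]=0$ for all $q\in(SV)^W$ and all $k\in\N$.

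The final step handles the involutivity of the $h_k$. Since $(SV)^W$ is a polynomial algebra on $n=\dim V$ generators, the family $\{H_q\}$ contains $n$ algebraically (and generically functionally) independent Poisson-commuting functions on the $2n$-dimensional symplectic manifold $T^*V_\mathrm{reg}$. The system is therefore Liouville completely integrable, and the commutative Poisson algebra generated by the $H_q$ is maximal: any smooth function Poisson-commuting with every $H_q$ is, on a dense open subset, a function of the $H_q$. By the previous step each $h_k$ has this property, so every $h_k$ lies in that commutative algebra, and consequently $\{h_a,h_b\}=0$ for all $a,b\in\N$.

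The main obstacle, and the only place requiring genuine input, is the regularity at the second step: one must verify that the substitution of the elliptic Dunkl operators into the dual classical Hamiltonians $L_q^{\vee,c}$ produces an element of $\D(V)*W$ which is regular at the chosen specialisation of $\lambda$, and that the resulting $\widehat A$ is $W'$-invariant so that it descends to $M'$. Regularity is supplied by Proposition \ref{elcl}(i) and $W'$-invariance by the equivariance relation \eqref{eqq}; both have been established in the preceding subsections, so the remainder is the straightforward classical-limit reduction outlined above.
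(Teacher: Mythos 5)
Your proposal is correct and follows essentially the same route as the paper: the Lax matrix comes from $y_\xi(\lambda)$ with $\lambda=\mu b_i$, isospectrality under each flow follows from the Lax partner constructed for every higher Hamiltonian via Proposition \ref{elcl}, and involutivity of the $h_k$ is deduced from the same dimension count (an abelian Poisson subalgebra of $\c(V\times V)$ has at most $n$ functionally independent elements, so each $h_k$ is a function of $H_1,\dots,H_n$). Your extra remarks on where regularity and $W'$-invariance come from match the paper's preceding subsection, so nothing is missing.
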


\begin{proof} The Lax matrix $L$ is constructed from $y_\xi(\lambda)$ by taking $\xi=b_i$ and $\lambda=\mu b_i$, so it has $\mu$ as a spectral parameter. The isospecrality of $L$ is a direct corollary of the existence of a Lax partner for each of the Hamiltonians. Let $H_1, \dots, H_n$ be the full set of the commuting Hamiltonians. From the isospectrality of $L$ we know that each $h_k=\tr L^k$ remains constant under any of the commuting flows. Therefore, $\{h_k, H_i\}=0$ for all $i=1, \dots n$. An abelian Poisson subalgebra in $\c(V\times V)$ cannot have more than $n$ functionally independent elements, therefore, each $h_k$ is a function of $H_1, \dots, H_n$. It follows that $\{h_k, h_l\}=0$, as needed.     
\end{proof}
As a corollary, we can derive the following result. % which was widely believed to be true, but for which there was no proof in the literature (to the best of our knowledge).
\begin{cor}\la{corinoz}
Let $L$ be the classical Lax matrix \eqref{inolax} for the Inosemtsev system. Then the functions $h_{k}=\tr L^{2k}$, $k=1, \dots, n$ form a complete set of first integrals in involution.
\end{cor}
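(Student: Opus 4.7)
The plan is to combine the preceding proposition, which delivers pairwise Poisson--commutativity of every $\tr L^k$, with a leading-momentum-degree analysis showing that the even-power traces $h_k=\tr L^{2k}$, $k=1,\dots,n$, are functionally independent on $T^*V$. First I would apply the proposition to the Lax matrix \eqref{inolax}: the Inozemtsev system is the $BC_n$ specialisation of Subsection~\ref{inoz}, obtained from $\xi$ equal to the first fundamental coweight, whose stabiliser $W'\cong\mathfrak{S}_{n-1}\ltimes\{\pm 1\}^{n-1}$ has order $2^{n-1}(n-1)!$, so $|W/W'|=2n$, matching the size of $L$. The proposition then yields $\{\tr L^a,\tr L^b\}=0$ for all $a,b\in\N$; in particular the $h_k$ pairwise commute, and only functional independence needs to be verified.

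For this I would decompose $L=D+N$, where $D=\mathrm{diag}(p_1,\dots,p_n,-p_1,\dots,-p_n)$ (using the convention $p_{n+i}=-p_i$) collects all the momentum entries and $N$ is purely $x$-dependent. Every term in the expansion of $L^m$ containing a factor of $N$ is polynomial in $p$ of degree strictly less than $m$, hence the top $p$-degree part of $\tr L^m$ equals $\tr D^m=\sum_{i=1}^n(p_i^m+(-p_i)^m)$. This vanishes for odd $m$ --- which explains the restriction to even powers, consistent with the fact that the $BC_n$-invariant Hamiltonians have only even degrees in the momenta --- while for $m=2k$ it equals $2\sum_i p_i^{2k}$. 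Thus
\[
h_k = 2\sum_{i=1}^n p_i^{2k} + (\text{terms of } p\text{-degree } <2k).
\]

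The power sums $\sum_i p_i^{2k}$, $k=1,\dots,n$, in the squared variables $p_i^2$ are algebraically independent (they generate the algebra of symmetric polynomials in $n$ variables), so their differentials are linearly independent at a generic momentum. The lower-order corrections in $h_k$ do not affect this, so $dh_1,\dots,dh_n$ are linearly independent at a generic point of the $2n$-dimensional phase space $T^*V$, giving the required complete involutive family. I do not anticipate a serious obstacle here: involution is supplied for free by the preceding proposition, and the independence is a standard leading-symbol argument. The one point requiring attention is simply confirming that the off-diagonal entries of $L$ in \eqref{inolax} are $p$-free, so that the $p$-degree of $\tr L^m$ is controlled purely by $D$ and the identification of its top symbol goes through cleanly.
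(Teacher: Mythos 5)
Your proposal is correct and follows essentially the same route as the paper: involutivity is quoted from the preceding proposition, and functional independence is read off from the leading momentum terms of $\tr L^{2k}$, which are (up to the harmless factor of $2$ you track more carefully than the paper does) the power sums $\sum_i p_i^{2k}$. The extra detail you supply — the $L=D+N$ decomposition, the vanishing of the odd-power traces, and the check that the off-diagonal entries of \eqref{inolax} are $p$-free — is just an expansion of the paper's one-line independence argument.
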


\begin{proof}
From the structure of $L$ it is clear that each $h_{k}$ is polynomial in momenta, with the leading terms $p_1^{2k}+\dots +p_n^{2k}$. Thus, $h_1, \dots, h_n$ are functionally independent. Their involutivity was shown in the proposition above.   
\end{proof}

\section{Elliptic difference case}\la{eqcase} Here we extend our approach to the elliptic Ruijsenaars--Schneider system and its versions for other root systems.  %Out results here are less complete, but the method is similar. %To simplify the exposition, we will only consider systems of type $A$ and $C^\vee C_n$. Our main goal will be about finding a Lax matrix for the elliptic $C^\vee C_n$-case (van Diejen system). 
The corresponding generalisation of the Cherednik and Macdonald operators to the elliptic case was found by Komori and Hikami in \cite{KH98}, by developing the approach of \cite{C5}. 
We will refer to these systems as \emph{generalised Ruijsenaars systems}. The main tool is elliptic $R$-matrices, first introduced in type $A$ in \cite{SU}. We adjust some of the notation of \cite{KH98} to make it closer to ours. %the notation used above. %In order to avoid cumbersome notation, 
The $C^\vee C_n$ case is related to the elliptic van Diejen system \cite{vD1} and is treated separately in  Subsections \ref{ecc}--\ref{calvd}.  

\subsection{} The setting is the same as in \ref{3.1}: $R\subset V$ is a reduced, irreducible root system with Weyl group $W$, $\Ra\subset\VV=V\oplus\c\delta$ is the associated affine root system, $\WW=W\ltimes t(Q^\vee)$ (resp.  $\Wh=W\ltimes t(P^\vee)$) is the affine (resp. extended affine) Weyl group. 
We choose a basis $a_0, \dots, a_n$ of $\Ra$, writing $s_i$ for the corresponding simple reflections. Recall that $\Wh=\WW\rtimes\Omega$ with $\Omega\cong P^\vee/Q^\vee$, and we have the length function $l(w)$ on $\Wh$.  

As in \ref{3.1}, we choose $q=e^c$ and consider the algebra $\c(V)*\WW\cong {\D}_q * W$ of reflection-difference operators on $V$. Fix a set of $W$-invariant coupling constants ${m}_\alpha$, $\alpha\in R$ (so in particular $m_{-\alpha}=m_\alpha$). For $\aalpha=\alpha+k\delta\in\Ra$, define $R$-matrices $\RR(\aalpha)$ to be the following elements of $\c(V)*\WW$ :
\begin{equation}\la{raa}
\RR(\aalpha)=\sigma_{{m}_\alpha}(\aalpha)-\sigma_{\dpr{\alpha^\vee,\,\xi}}(\aalpha)s_{\aalpha}\,,
\end{equation}
where $\xi\in V$ are the dynamical variables, and $\sigma_{\mu}(z)$ is the function \eqref{sigm}. According to \cite[(4.5)]{KH98}, we have
\begin{equation}\la{uni}
\RR_\alpha \RR_{-\alpha}=\wp({m}_\alpha)-\wp(\dpr{\alpha^\vee, \xi})\,.
\end{equation} 

\begin{defi}\la{rwdef}
Define a set $\{\RR_w\,| w\in\Wh\}$ by taking a reduced decomposition $w=s_{i_1}\dots s_{i_l}\pi$, $\pi\in\Omega$ and setting
\begin{equation}\la{rw}
\RR_w=\RR(\alpha^{1})\dots \RR(\alpha^{l})\,,\quad\text{where}\quad \alpha^{1}=a_{i_1},\ \alpha^{2}=s_{i_1}(a_{i_2})\,,\ \dots,\ \alpha^{l}=s_{i_1}\dots s_{i_{l-1}}(a_{i_l})\,. 
\end{equation}
In particular, we have $\RR_{s_i}=\RR(a_i)$, $i=0 \dots n$, and $\RR_\pi=1$ for $\pi\in\Omega$. {\it Elliptic Cherednik operators} are defined as $Y^b=\RR_{t(b)}\,t(b)$, $b\in P^\vee$. 
\end{defi}

\begin{theorem}[\cite{KH98}, Theorems 3.2 \& 4.3] 
\la{yb}
(i) The elements $\RR_w$ do not depend on the choice of a reduced decomposition for $w$;

(ii) $Y^b$, $b\in P^\vee_+$ are pairwise commuting elements of $\D_q*W$. %Here $P_+^\vee$ denotes the set of dominant coweights, $P_+^\vee=\oplus_{i=1}^n \Z_{\ge 0}b_i$.
\end{theorem}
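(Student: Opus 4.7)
The plan is to prove (i) by Matsumoto's theorem and deduce (ii) as a direct consequence. Any two reduced decompositions of $w\in\Wh$ are related by a sequence of moves realizing the braid relations \eqref{braid}--\eqref{braid2}, so it suffices to show that the product $\RR(\alpha^1)\cdots\RR(\alpha^l)$ in Definition \ref{rwdef} is invariant under each such move.

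For the move \eqref{braid} with $m_{ij}=2$, the simple roots $a_i,a_j$ are orthogonal, the reflections commute, and the $\sigma$-factors depend on independent linear combinations of the $x$-variables; so $\RR(a_i)\RR(a_j)=\RR(a_j)\RR(a_i)$. For $m_{ij}\in\{3,4,6\}$, the relevant identity
\begin{equation*}
\RR(a_i)\RR(s_ia_j)\RR(s_is_ja_i)\cdots=\RR(a_j)\RR(s_ja_i)\RR(s_js_ia_j)\cdots
\end{equation*}
is a quantum Yang--Baxter equation for the elliptic $R$-matrices \eqref{raa} restricted to a rank-two subsystem of type $A_2$, $B_2$ or $G_2$; each such identity can be checked in $\c(V)*\WW$ by reducing to three-term theta-function identities for $\sigma_\mu(z)$, which are in turn verified by comparing poles and quasi-periodicity on $\c/(\Z+\tau\Z)$. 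For the move \eqref{braid2}, one uses $\pi a_i=a_{\pi(i)}$ together with $\pi\,\RR(\aalpha)\,\pi^{-1}=\RR(\pi\aalpha)$ (valid because the dynamical parameter $\xi$ transforms along with the root under $W$, in the same manner as in \eqref{eduprop}), and the fact that $\RR_\pi=1$. This establishes (i), after which $Y^b\in\D_q*W$ is a well-defined element under the identification $\c(V)*\WW\cong\D_q*W$.

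For (ii), fix dominant $b,b'\in P^\vee_+$. Then $l(t(b)t(b'))=l(t(b))+l(t(b'))$, so concatenating reduced decompositions of $t(b)$ and $t(b')$ yields a reduced decomposition of $t(b+b')$. Unpacking \eqref{rw} for this concatenation, and using that $\RR(t(b)\gamma)=t(b)\,\RR(\gamma)\,t(b)^{-1}$ for every $\gamma\in\Ra$ (valid because $t(b)$ shifts only the $\delta$-component of $\gamma$, leaving $m_\alpha$, $\alpha^\vee$ and the constant $\dpr{\alpha^\vee,\xi}$ untouched), one obtains
\begin{equation*}
\RR_{t(b+b')}=\RR_{t(b)}\cdot t(b)\,\RR_{t(b')}\,t(b)^{-1},
\end{equation*}
and therefore $Y^bY^{b'}=\RR_{t(b)}t(b)\RR_{t(b')}t(b')=\RR_{t(b+b')}t(b+b')=Y^{b+b'}$. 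Exchanging $b\leftrightarrow b'$ gives $Y^{b'}Y^b=Y^{b+b'}$, which proves commutativity.

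The main obstacle is the Yang--Baxter step in (i): while the elliptic three-term identities behind it are classical, their systematic verification in the $B_2$ and especially the $G_2$ case is delicate, as one must carefully track how short versus long roots enter the factors $\sigma_{m_\alpha}$ and $\sigma_{\dpr{\alpha^\vee,\xi}}$, and how the couplings $m_\alpha$ combine. Everything else — from handling $\Omega$ to deriving commutativity from additivity of lengths on the dominant cone — is formal once this core braid-Yang--Baxter identity is in hand.
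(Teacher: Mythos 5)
Your proof is correct and follows essentially the route the paper relies on (the paper defers to [KH98], but your reconstruction matches it): part (i) via Matsumoto's theorem reduced to rank-two affine Yang--Baxter identities for the $R$-matrices \eqref{raa}, and part (ii) via additivity of length on the dominant cone, giving $Y^bY^{b'}=Y^{b+b'}$ for $b,b'\in P^\vee_+$ — which is consistent with Remark \ref{ycom}, since that caveat only concerns non-dominant weights. One small inaccuracy: in $\D_q*W$ with $\xi$ a fixed parameter, the equivariance $\pi\,\RR(\aalpha)\,\pi^{-1}=\RR(\pi\aalpha)$ for $\pi\in\Omega$ fails in general (one must also transform $\xi$ by the linear part, as in \eqref{eqv}); this is harmless here because $\pi$ always sits at the end of a reduced word with $\RR_\pi=1$, and the only conjugations your argument actually uses are by translations, where \eqref{rtr} applies with no action on $\xi$. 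For comparison, the paper's own detailed treatment of the unitary variant (Propositions \ref{ass}, \ref{prophat}) instead upgrades the braid relations to a genuine group homomorphism $w\mapsto\TH_w$ using $\TH_i^2=1$, which buys commutativity for all of $P^\vee$ at once; your length-additivity argument is the right substitute for the non-unitary $\RR_w$, where no quadratic relation is available.
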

The proof is based on the fact that $\RR(\aalpha)$ satisfy the affine Yang--Baxter relations as in \cite[(3.1a)--(3.1d)]{KH98}; this idea goes back to Cherednik \cite{C5}. 
The elliptic Macdonald--Ruijsenaars operators are obtained from the operators $Y^b$ in the following way. Introduce the vector
\begin{equation}\la{xi0}
\rho_m=\frac12 \sum_{\alpha\in R_+}m_\alpha\alpha\,.
\end{equation}
It satisfies the following equations, cf. \cite[(4.9)]{KH98}:   
\begin{equation}\la{system}
\dpr{a_i^\vee, \rho_m}={m}_{a_i}\,,\qquad i=1, \dots, n\,.
\end{equation} 

\begin{theorem}[\cite{KH98}, Theorem 4.5]\la{emr} Given $b\in P_+^\vee$, let $L^b\in \D_q$ be the unique difference operator such that $Y^b e= L^b e$. If $\xi=-\rho_m$ then each $L^b$ is $W$-invariant, %, i.e. $wL^b=L^bw$ for all $w\in W$, and 
and the difference operators $L^b$, $b\in P^\vee_+$ form a commutative family.% in $\D_q^W$.
\end{theorem}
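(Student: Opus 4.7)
The plan has two parts: establishing $W$-invariance of each $L^b$ at $\xi = -\rho_m$, then deducing commutativity from it. Given $W$-invariance, commutativity is short. From $L^b e = e L^b$ (which is $W$-invariance expressed in $\D_q * W$), together with Theorem \ref{yb}(ii) ($Y^a Y^b = Y^{a+b}$) and the defining relation $Y^a e = L^a e$, one computes
\[
L^{a+b} e \;=\; Y^{a+b} e \;=\; Y^a Y^b e \;=\; Y^a L^b e \;=\; Y^a e L^b \;=\; L^a e L^b \;=\; L^a L^b e,
\]
and by the symmetric calculation $L^{a+b} e = L^b L^a e$. Since $D \mapsto D e$ is injective on $\D_q$ (the coefficients of the $t(\lambda)$ in $D$ are recovered from $D e$), one concludes $L^a L^b = L^b L^a$.

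The crux is the $W$-invariance, equivalent to $(s_i - 1) Y^b e = 0$ for every finite simple reflection $s_i$, $i = 1,\dots,n$. The key input is the elliptic Hecke-like degeneracy at $\xi = -\rho_m$: since \eqref{system} gives $\langle a_i^\vee, -\rho_m\rangle = -m_{a_i}$, the formula \eqref{raa} combined with the basic identity $\sigma_{-\mu}(-z) = -\sigma_\mu(z)$ produces
\[
\RR(-a_i) \;=\; \sigma_{m_{a_i}}(-a_i)\,(1 - s_i),
\]
and consequently $\RR(-a_i)\,e = 0$ (consistent with \eqref{uni} and the evenness of $\wp$). For strictly dominant $b$ (i.e.\ $\langle a_i, b\rangle > 0$ for all $i$), the affine root $t(b)(a_i) = a_i - \langle a_i, b\rangle \delta$ is negative, so $s_i$ is a right descent of $t(b)$, and there is a reduced decomposition $t(b) = v \cdot s_i$ giving, by Definition \ref{rwdef}, $\RR_{t(b)} = \RR_v \cdot \RR(k\delta - a_i)$ with $k = \langle a_i, b\rangle$. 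The analogous factorization $\RR(k\delta - a_i) = \sigma_{m_{a_i}}(k\delta - a_i)(1 - t(-k a_i^\vee) s_i)$ at $\xi = -\rho_m$, together with $s_{k\delta - a_i} = t(-k a_i^\vee) s_i$ and $s_i t(b) = t(b - k a_i^\vee) s_i$, allows one to commute $(1 - s_i)$ across $\RR_v \cdot t(b)$ in a controlled manner, ultimately yielding $(s_i - 1) Y^b e = 0$.

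The case $\langle a_j, b\rangle = 0$ (when $b$ lies on a wall and $s_j$ is not a right descent of $t(b)$) is handled by a shifting trick. Fix any strictly dominant $b_0 \in P^\vee_+$; then $b + k b_0$ is strictly dominant for $k \ge 1$, so $L^{b + k b_0}$ is $W$-invariant by the previous step. A variant of the commutativity calculation, requiring only the $W$-invariance of $L^{k b_0}$ and not of $L^b$, yields the factorization $L^{b + k b_0} = L^b L^{k b_0}$. Then $s_j \cdot L^{b + k b_0} = L^{b + k b_0}$ and $s_j \cdot L^{k b_0} = L^{k b_0}$ imply $(s_j \cdot L^b - L^b)\,L^{k b_0} = 0$, and since $\D_q = \c(V) * \c[P^\vee]$ is an Ore domain and $L^{k b_0} \ne 0$, we conclude $s_j \cdot L^b = L^b$. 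The main obstacle is the explicit commutation step in the strictly-dominant case: one must track the simultaneous interaction of $(1 - s_i)$ with the translation part of $t(b)$ and with the $\xi$-dependence of $\RR_v$, complicated by the fact that $\xi = -\rho_m$ is not $s_i$-invariant (indeed $s_i(-\rho_m) = -\rho_m + m_{a_i} a_i$), so naive conjugation arguments shift the dynamical parameter; the degeneracy $\RR(-a_i) = \sigma_{m_{a_i}}(-a_i)(1 - s_i)$ is precisely what is needed to bypass this difficulty.
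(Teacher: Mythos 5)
A preliminary remark: the paper does not prove this theorem itself --- it is imported as Theorem 4.5 of \cite{KH98} --- so your proposal can only be measured against the ingredients the paper does supply (the trigonometric prototype in \ref{mrh}, Remark \ref{ycom}, and the proofs of Lemma \ref{il} and Proposition \ref{elclq}(iv)). Your easy half (commutativity from $W$-invariance) is the argument of \ref{mrh} in outline, but it rests on a false identity: you attribute $Y^aY^b=Y^{a+b}$ to Theorem \ref{yb}(ii), which only asserts pairwise commutativity, and Remark \ref{ycom} states explicitly that $Y^\lambda Y^\mu=Y^{\lambda+\mu}$ \emph{fails} in the elliptic case --- it holds only up to the $\xi$-dependent factor $G_aG_b/G_{a+b}$ of Lemma \ref{rrrh}. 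Your chain of equalities survives if you delete $L^{a+b}e$ from it and argue directly from $Y^aY^be=Y^bY^ae$; likewise the factorization $L^{b+kb_0}=L^bL^{kb_0}$ in your wall-case trick must be corrected by a (generically nonzero) constant, after which the Ore-domain step still goes through. These defects are repairable.

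The genuine gap is in the $W$-invariance. You correctly isolate the degeneracy $\RR(-a_i)=\sigma_{m_{a_i}}(-a_i)(1-s_i)$ at $\xi=-\rho_m$, whence $\RR(-a_i)e=0$; this is exactly the identity used in Lemma \ref{il} and Proposition \ref{elclq}(iv). But the mechanism you propose for bringing it to bear is wrong. Under the paper's conventions $t(b)$ sends $a_i$ to $a_i+\dpr{a_i,b}\delta$, a \emph{positive} affine root for dominant $b$, so $s_i$ with $1\le i\le n$ is not a right descent of $t(b)$; the factors of $\RR_{t(b)}$ are all of the form $\RR(\alpha+k\delta)$ with $\alpha\in R_+$ and $0\le k\le\dpr{\alpha,b}-1$ (stated explicitly in the remark on the trigonometric limit, and visible in \eqref{y12} and \eqref{yicc}, whose rightmost factors are $\RR(\varphi+(\dpr{\varphi,b}-1)\delta)$, never $\RR(k\delta-a_i)$). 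Moreover, even if a factor $\RR(k\delta-a_i)=\sigma_{m_{a_i}}(k\delta-a_i)(1-s_{k\delta-a_i})$ with $k\ge 1$ were present, it would not annihilate $e$: one has $s_{k\delta-a_i}=t(-ka_i^\vee)s_i$, which contains a translation, so $(1-s_{k\delta-a_i})e\ne 0$. The step that actually carries the proof --- and which you yourself call ``the main obstacle'' and leave undone --- is the Yang--Baxter commutation moving $\RR(-a_i)$ from the left of $Y^b$ to the right, namely $\RR(-a_i)\,Y^b=(s_i^\vee\otimes s_i)\,Y^{s_ib}\,(s_i^\vee\otimes s_i)\,\RR(-a_i)$, the analogue of \cite[(4.7)]{KH98} obtained in the paper from $\TH_i\YH^{b}=\YH^{s_ib}\TH_i$ (Lemma \ref{23}); only then does $\RR(-a_i)e=0$ yield $(1-s_i)Y^be=0$, and the conjugation acting simultaneously on $\xi$ is precisely what absorbs the non-$s_i$-invariance of $\xi=-\rho_m$ that you flag. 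Without this relation the argument is incomplete, and the route you sketch in its place does not lead to it.
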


%\begin{remark}\la{xi0}
%An explicit formula for $\xi_0$ is $\xi_0=-\rho_m$, where . This follows from the fact that $s_i(\rho_m)=\rho_m-m_ia_i$ for $1\le i\le n$.   
%\end{remark}

\begin{remark}
%The operators $Y^b$ commute for all $b\in P^\vee$, not only for dominant $b$. However, $Y^be=0$ if $\xi=-\rho_m$ and $b$ is not dominant. See Remark \ref{ycom} and the proof of Proposition \ref{elclq} below. 
In \cite{KH98} the operators $Y^b$ with antidominant $b\in P^\vee_-$ are considered, but in fact they are the same as ours with $b\in P^\vee_+$. This is because the translations $t(v)$ are defined in \cite[(2.10)]{KH98} with the opposite sign compared to our conventions. 
\end{remark}

The operators $L^b$ are complicated in general, but admit an explicit description when $b$ is minuscule or quasi-minuscule. 

\begin{theorem}[\cite{KH98}, Theorems 6.1\,\& 6.5]\la{lb}  (i) Let $b$ be a minuscule coweight, so that $\dpr{\alpha, b}$ is either $0$ or $1$ for any $\alpha\in R_+$. Then we have
\begin{equation}
L^b=\sum_{\pi\in Wb} \,A_\pi t(\pi)\,,\qquad A_\pi=\prod_{\genfrac{}{}{0pt}{}{\alpha\in R}{\dpr{\pi, \alpha}>0}}\, \sigma_{{m}_\alpha}(\alpha)\,.
\end{equation}

(ii) Let $b$ be a quasi-minuscule coweight of the form $b=\varphi^\vee$, with $\varphi\in R_+$ the highest root. In this case, $\dpr{\alpha, b}\in\{0, 1\}$ for any $\alpha\in R_+\setminus\{b\}$. Then
\begin{align}\la{lbq1}
L^{b}&=\sum_{\pi\in Wb} \, (A_\pi t(\pi)-B_\pi)\,,\qquad A_\pi=\sigma_{{m}_{\varphi}}(\pi^\vee+\delta)
\prod_{\genfrac{}{}{0pt}{}{\alpha\in R}{\dpr{\pi, \alpha}>0}}\, \sigma_{{m}_\alpha}(\alpha)\,,
%\prod_{\genfrac{}{}{0pt}{}{\alpha\in R}{\dpr{\pi,\alpha}=2}}\sigma_{{m}_{\alpha}}(\alpha+\delta)\,,
%\sigma_{{m}_{\pi^\vee}}(\pi^\vee)\sigma_{{m}_{\pi^\vee}}(\pi^\vee+\delta)\,,
\\\la{lbq2}
B_\pi&=\sigma_{\dpr{\varphi^\vee, -\rho_m}}(\pi^\vee+\delta)
\prod_{\genfrac{}{}{0pt}{}{\alpha\in R}{\dpr{\pi, \alpha}>0}}\, \sigma_{{m}_\alpha}(\alpha)\,.
%\prod_{\genfrac{}{}{0pt}{}{\alpha\in R}{\dpr{\pi,\alpha}=2}}\sigma_{\dpr{\varphi^\vee, \xi_0}}(\alpha+\delta)\,.
%\sigma_{{m}_{\pi^\vee}}(\pi^\vee)\sigma_{\dpr{\pi, \xi_0}}(\pi^\vee+\delta)\,.
\end{align}
In these formulas the roots are viewed as affine-linear functions, so, for example,  $\sigma_{{m}_\alpha}(\alpha+\delta)=\sigma_{{m}_\alpha}(\dpr{\alpha, x}+c)$.
\end{theorem}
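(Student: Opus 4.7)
The plan is to compute $Y^b e$ directly by expanding the product of $R$-matrices that defines $Y^b = \RR_{t(b)} t(b)$, and then extract $L^b$ via its defining property $L^b e = Y^b e$. Fix a reduced decomposition $t(b) = s_{i_1} \cdots s_{i_l} \pi_0$ with $\pi_0 \in \Omega$, so by Definition \ref{rwdef}, $\RR_{t(b)} = \RR(\alpha^1) \cdots \RR(\alpha^l)$ where $\alpha^k = s_{i_1} \cdots s_{i_{k-1}}(a_{i_k})$; these $\alpha^k$ exhaust the positive affine roots made negative by $t(b)^{-1}$. By \eqref{raa} each factor splits as $\RR(\alpha^k) = \sigma_{m_{\alpha^k}}(\alpha^k) - \sigma_{\dpr{(\alpha^k)^\vee, \xi}}(\alpha^k)\, s_{\alpha^k}$, so the full product expands into $2^l$ summands indexed by subsets $S \subseteq \{1, \dots, l\}$. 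Multiplying on the right by $t(b)$ and using $w \cdot t(b) \cdot e = t(wb)\, w e = t(wb) e$ for $w \in W$, every summand produces a term of the form $(\text{scalar})\cdot t(\pi) e$ for some $\pi$ determined by $S$ and the products of reflections $s_{\alpha^k}$.

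For the minuscule case, the restriction $\dpr{\alpha, b} \in \{0, \pm 1\}$ for $\alpha \in R$ forces the ``finite parts'' of the translations produced by subsets of the reflections $s_{\alpha^k}$ to lie in the orbit $Wb$; any other candidate $\pi$ is precluded by the combinatorics of reduced expressions for translations by minuscule coweights. Using Theorem \ref{yb}(i) to choose, for each $\pi \in Wb$, a reduced decomposition adapted to $\pi$, the scalar factors that survive for that $\pi$ collapse via the unitarity relation \eqref{uni} into $A_\pi = \prod_{\dpr{\pi, \alpha}>0} \sigma_{m_\alpha}(\alpha)$. All $\xi$-dependence must cancel: this is forced by Theorem \ref{emr} (which, after substituting $\xi = -\rho_m$ and invoking \eqref{system}, guarantees $L^b \in \D_q^W$ is independent of $\xi$), so checking that $A_\pi$ and not some other expression is produced reduces to verifying the leading-order (in $S = \{1, \dots, l\}$) contribution.

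For the quasi-minuscule case $b = \varphi^\vee$, the key new feature is $\dpr{\varphi, b} = 2$, so the reduced decomposition of $t(b)$ must use the affine root $\delta - \varphi$ essentially. Repeating the expansion, most terms assemble as in the minuscule case into contributions $A_\pi t(\pi)$, with the extra prefactor $\sigma_{m_\varphi}(\pi^\vee + \delta)$ tracking the contribution of the step along $\delta - \varphi$. However, there is an additional family of terms where the reflection summand of that distinguished $R$-factor is selected but its product with the remaining reflections produces no net shift; these collectively contribute $-B_\pi$ with $B_\pi$ as in \eqref{lbq2}, the dynamical factor $\sigma_{\dpr{\varphi^\vee,\xi}}(\pi^\vee + \delta)$ surviving precisely because of the doubled pairing, and specialising to $\sigma_{\dpr{\varphi^\vee,-\rho_m}}(\pi^\vee + \delta)$ under $\xi = -\rho_m$.

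The principal obstacle is the combinatorial bookkeeping in the minuscule case: verifying that the $2^l$ terms of the expansion reorganise cleanly as claimed, with all $\xi$-dependent factors cancelling on the nose. The cleanest path is to exploit both the decomposition independence of $\RR_{t(b)}$ (Theorem \ref{yb}(i)) and the unitarity \eqref{uni}, which effectively allows swapping between the two summands of adjacent $R$-matrix factors until only the desired contribution remains. The quasi-minuscule case then reduces to isolating the contribution of the single non-minuscule affine root $\delta - \varphi$, which is the exclusive source of the correction term $B_\pi$.
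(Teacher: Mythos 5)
This statement is imported verbatim from Komori--Hikami \cite{KH98} (their Theorems 6.1 and 6.5); the paper offers no proof of it, only the citation, so there is no internal argument to compare yours against. Your sketch does follow the strategy one would use (and that \cite{KH98} essentially uses): expand $\RR_{t(b)}=\RR(\alpha^1)\cdots\RR(\alpha^l)$ into $2^l$ terms, push the affine reflections through $t(b)$, kill the residual Weyl-group elements against $e$, and read off the coefficients of the surviving translations at the specialisation $\xi=-\rho_m$. As written, however, it is a plan rather than a proof: the two assertions that constitute the entire content of the theorem are left as claims. (a) That after right-multiplication by $e$ only $t(\pi)$ with $\pi\in Wb$ survive (plus a constant in the quasi-minuscule case) is exactly what ``precluded by the combinatorics of reduced expressions'' would need to establish. (b) That the coefficient of $t(\pi)$ is $A_\pi$ is not delivered by ``collapse via the unitarity relation \eqref{uni}'': \eqref{uni} is the quadratic relation $\RR(\aalpha)\RR(-\aalpha)=\wp(m_\alpha)-\wp(\dpr{\alpha^\vee,\xi})$, not unitarity (that holds only for the renormalised $\RH$ of \S\,\ref{regu1}, which do not enter $Y^b$), and you never explain how the dynamical factors $\sigma_{\dpr{\alpha^\vee,\xi}}(\aalpha)$ leave the answer --- at $\xi=-\rho_m$ one only has $\dpr{a_i^\vee,\xi}=-m_{a_i}$ for \emph{simple} $a_i$ by \eqref{system}, so this is not a pointwise substitution. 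A further slip: for $b=\varphi^\vee$ the distinguished affine roots occurring in $\RR_{t(b)}$ are $\varphi$ and $\varphi+\delta$ (the direction $\varphi$ is crossed twice), not $\delta-\varphi=a_0$, which has $\dpr{-\varphi,b}<0$ and does not separate $C_a$ from $t(b)C_a$.

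There is a cleaner organisation that you half-invoke but do not exploit, and which closes most of these gaps. Theorem \ref{emr} already guarantees that $L^b$ is $W$-invariant, so it suffices to compute the coefficient of the single dominant translation $t(b)$ and to bound the support. For dominant $b$ the separating affine roots are $\{\alpha+k\delta:\alpha\in R_+,\ 0\le k<\dpr{\alpha,b}\}$ (see the geometric description in \S\,\ref{regu}); hence the coefficient of $t(b)$ comes only from the term with no reflections chosen and is free of $\xi$: it equals $\prod_{\alpha\in R_+,\,\dpr{\alpha,b}=1}\sigma_{m_\alpha}(\alpha)=A_b$ in the minuscule case, and acquires exactly the extra factor $\sigma_{m_\varphi}(\varphi+\delta)$ in the quasi-minuscule case because $\varphi$ contributes both $k=0$ and $k=1$. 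Then $A_{wb}=w(A_b)$ by $W$-invariance, and the support claim follows since every translation appearing lies in $b+Q^\vee$ and weakly below $b$, for which the only dominant representatives are $b$ (minuscule) or $b$ and $0$ (quasi-minuscule). Only the constant term $-\sum_\pi B_\pi$ then requires the direct bookkeeping you describe, and there the evaluation $\sigma_{\dpr{\varphi^\vee,-\rho_m}}(\pi^\vee+\delta)$ genuinely survives because $\varphi$ is not simple. Without either this reduction or a full account of the cancellations in your $2^l$-term expansion, the proposal does not yet prove the theorem.
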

 %Note that the second product in the formulas for $A_\pi, B_\pi$ in \eqref{lbq1}, \eqref{lbq2} contains only one factor, with $\alpha=\pi^\vee$.

%The operators $L^b$ are the elliptic Macdonald--Ruijsenaars operators.

%\medskip

\begin{remark} To connect the operators $Y^b$ to the trigonometric Cherednik operators from \ref{trigch}, we rescale $x\mapsto x/2\pi \mathrm{i}$ and let $\tau\to+\mathrm{i}\infty$. In this limit
\eqref{raa} becomes
\begin{equation}\la{rdeftr}
\RR(\aalpha)=\frac{\sinh((\aalpha-{m}_\alpha)/2)}{\sinh(\aalpha/2)\,\sinh(-{m}_\alpha/2)}-\frac{\sinh(\aalpha-\dpr{\alpha^\vee, \xi}/2)}{\sinh(\aalpha/2)\,\sinh(-\dpr{\alpha^\vee,\,\xi}/2)}s_{\aalpha}\,.
\end{equation}
Let us set $\tau_{\aalpha}^2=e^{-{m}_\alpha}$\,, $\eta_\alpha^2=e^{\dpr{\alpha^\vee, \xi}}$ and rescale $\RR(\aalpha)$ by multiplying it by $\sinh(-{m}_\alpha/2)$. This gives
\begin{equation*}
\RR(\aalpha)=\frac{\tau_{\aalpha}^{-1}-\tau_{\aalpha} e^{\aalpha}}{1-e^{\aalpha}}+\frac{(\tau_{\aalpha}-\tau_{\aalpha}^{-1})(\eta_\alpha-\eta_\alpha^{-1}e^{\aalpha})}{(\eta_\alpha-\eta_\alpha^{-1})(1-e^{\aalpha})}s_{\aalpha}\,.
\end{equation*}
If we take $\xi\to\infty$ deep inside the positive Weyl cone, then $\eta_\alpha\to\infty$ for $\alpha>0$, which turns the above formua into \eqref{rdef} (assuming $\alpha>0$). It is known that for $w=t(b)$ with $b\in P^\vee_+$ all the $R$-matrices that appear in the decomposition \eqref{rw} will be of the form $R(\alpha+k\delta)$ with $\alpha\in R_+$, $k\ge 0$. Thus, the elliptic operators $Y^b$, $b\in P^\vee_+$ in the trigonometric limit coincide with the operators from \ref{trigch}, up to an overall factor.  
\end{remark}

\begin{remark} In the $\GL_n$ case, the operators $L^b$ from Theorem \ref{lb} are equivalent to the commuting Hamiltonians of the elliptic Ruijsenaars system \cite{R87}. For other root systems, the explicit operators $L^b$ can be viewed as elliptic generalisations of the Macdonald difference operators \cite{M87}. %We, therefore, refer to the operators $L^b$ with (quasi-)minuscule $b$ as \emph{elliptic Macdonald--Ruijsenaars operators}.  
\end{remark}

%\begin{remark} In analogy with the trigonometric case, we could extend the definition of $Y^b$ to all $b\in P^\vee$ by setting $Y^{\mu-\nu}=Y^\mu(Y^\nu)^{-1}$. However, before doing this it is better to rescale the elements $Y^b$ appropriately, see \ref{regu1}. One also needs to be aware that $Y^b$ are not invertible for certain values of the dynamical variables. Indeed, $\RR_\alpha$ is not invertible if $\dpr{\alpha^\vee, \xi}=\pm {m}_\alpha$, see \eqref{uni}. Note that this happens in Theorem \ref{emr}: the value $\xi=\xi_0$ is exactly such that $\RR(a_i)$ are not invertible. This means that in that case operators $Y^{wb}$, $w\in W$ would not be well-defined. Thus, the method for constructing $L^b$ in Theorem \ref{emr} seems rather different from the trigonometric case. We will further clarify this in \ref{regu}.  
%\end{remark}

\subsection{}\la{dura} For later purposes we also need a ``dual'' version of the Cherednik operators for the affine root system $(\Ra)^\vee$. To any affine root $\aalpha=\alpha+k\delta$ we associate a coroot $\aalpha^\vee$ by the formula 
\begin{equation*}
(\alpha+k\delta)^\vee=\frac{2}{\dpr{\alpha, \alpha}}(\alpha+k\delta)\,.
\end{equation*} 
The coroots $\aalpha^\vee$ with $\aalpha\in \Ra$ form a dual affine root system $(\Ra)^\vee\subset \VV$. The hyperplanes $\aalpha=0$ and $\aalpha^\vee=0$ are the same, so $s_{\aalpha}=s_{\aalpha^\vee}$ and both $\Ra$ and $(\Ra)^\vee$ share the same affine Weyl group $\WW$. The group $\Wh=W\rtimes P^\vee=\WW\rtimes\Omega$ permutes both coroots and roots, so we view it as an extended affine Weyl group for both systems. 
We can take $a_0^\vee, \dots, a_n^\vee$ as a basis of $(\Ra)^\vee$; note that $\Omega$ acts on this basis by permutations.
For $\aalpha=\alpha+k\delta\in \Ra$ we define  
\begin{equation}\la{raad}
\RR(\aalpha^\vee)=\sigma_{{m}_\alpha}(\aalpha^\vee)-\sigma_{\dpr{\alpha,\,\zeta}}(\aalpha^\vee)s_{\aalpha}\,,
\end{equation}
where the dynamical variable is $\zeta\in V$. We also define a set of elements $\{\RR_w^\vee\,|\, w\in\Wh\}$ by ``dualising'' \eqref{rw}:
\begin{equation*}
\RR_w^\vee=\RR(\alpha^{1})\dots \RR(\alpha^{l})\,,\quad\text{where}\quad \alpha^{1}=a_{i_1}^\vee,\ \alpha^{2}=s_{i_1}(a_{i_2}^\vee)\,,\ \dots,\ \alpha^{l}=s_{i_1}\dots s_{i_{l-1}}(a_{i_l}^\vee)\,. 
\end{equation*}
In particular, $\RR_{s_i}^\vee=\RR(a_i^\vee)$, $i=0 \dots n$, and $\RR_\pi^\vee=1$ for $\pi\in\Omega$.
Then the same arguments as in \cite{KH98} (see also Proposition \ref{prophat} below) prove the following results.
\begin{theorem} The elements $\RR_w^\vee$ do not depend on the choice of a reduced decomposition for $w\in\Wh$. The elements $Y^{b,\vee}:=\RR_{t(b)}^\vee\,t(b)$, $b\in P^\vee_+$ pairwise commute. %ing elements of $\D_q*W$. %Here $P_+^\vee$ denotes the set of dominant coweights, $P_+^\vee=\oplus_{i=1}^n \Z_{\ge 0}b_i$.
\end{theorem}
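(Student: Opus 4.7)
The argument parallels the proof of Theorem~\ref{yb} given in \cite[\S3--\S4]{KH98}, with coroots replacing roots throughout; both assertions rest on a single input, namely that the dual $R$-matrices $\RR(\aalpha^\vee)$ satisfy affine Yang--Baxter equations analogous to \cite[(3.1a)--(3.1d)]{KH98}. Once this input is secured, well-definedness of $\RR_w^\vee$ and commutativity of the $Y^{b,\vee}$ will follow by formal manipulations in the braid group of $\Wh$ that are indifferent to the root/coroot switch.

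First I would verify the Yang--Baxter identities for $\RR(\aalpha^\vee)$ on every rank-$2$ affine subsystem of $(\Ra)^\vee$: the $A_2$-type braid relation $\RR(\aalpha^\vee)\RR(\aalpha^\vee+\widetilde\beta^\vee)\RR(\widetilde\beta^\vee)=\RR(\widetilde\beta^\vee)\RR(\aalpha^\vee+\widetilde\beta^\vee)\RR(\aalpha^\vee)$, its $B_2$ and $G_2$ analogues, and the unitarity relation $\RR(\aalpha^\vee)\RR(-\aalpha^\vee)=\wp({m}_\alpha)-\wp(\dpr{\alpha,\zeta})$. These reduce to functional identities on $\sigma_\mu$ that are intrinsic to $\sigma_\mu$ and do not see whether its argument happens to be a root or a coroot; only the combinatorics of the rank-$2$ configuration, read off from the coroot structure constants, enters. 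I would also record, directly from \eqref{raad}, the $W$-equivariance $w\RR(\aalpha^\vee;\zeta)w^{-1}=\RR((w\aalpha)^\vee;w\zeta)$ and the translation-covariance $t(b)\RR(\aalpha^\vee;\zeta)t(b)^{-1}=\RR(t(b)\aalpha^\vee;\zeta)$ for $b\in P^\vee$.

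With these relations in hand, part (i) will follow from Matsumoto's theorem: any two reduced decompositions of $w\in\Wh$ are linked by a sequence of braid moves, each of which preserves $\RR_w^\vee$, while factors of $\pi\in\Omega$ create no ambiguity since they permute $\{a_0^\vee,\dots,a_n^\vee\}$ and merely relabel the $R$-matrices. For part (ii), dominance of $b,b'\in P^\vee_+$ yields $l(t(b)t(b'))=l(t(b))+l(t(b'))$, so concatenating reduced decompositions of $t(b)$ and $t(b')$ produces one for $t(b+b')$. Combined with the translation-covariance above, this gives
\begin{equation*}
\RR_{t(b+b')}^\vee=\RR_{t(b)}^\vee\cdot\bigl(t(b)\,\RR_{t(b')}^\vee\,t(b)^{-1}\bigr),
\end{equation*}
from which $Y^{b,\vee}\,Y^{b',\vee}=\RR_{t(b+b')}^\vee\,t(b+b')=Y^{b+b',\vee}$; by the evident symmetry in $b$ and $b'$, this also equals $Y^{b',\vee}\,Y^{b,\vee}$.

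The hardest part will be the Yang--Baxter verification in the non-simply-laced rank-$2$ subsystems $B_2$ and $G_2$: the longer braid relations involve more factors, and the long/short partition of roots in $(\Ra)^\vee$ is swapped relative to that in $\Ra$, so the identities must be revisited rather than quoted verbatim. Nevertheless, since $(\Ra)^\vee$ is itself an affine root system for the same Weyl group $\WW$, the identities of \cite{KH98} transfer line by line once each root argument is replaced by the corresponding coroot argument; the forthcoming Proposition~\ref{prophat} is expected to package these verifications in a form directly applicable here.
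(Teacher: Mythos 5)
Your proposal is correct and follows essentially the same route as the paper, which simply invokes ``the same arguments as in \cite{KH98}'': affine Yang--Baxter relations checked on rank-two subsystems plus Matsumoto's theorem for independence of the reduced decomposition, and length-additivity of dominant translations together with translation-covariance to get $Y^{b,\vee}Y^{b',\vee}=Y^{b+b',\vee}$ and hence commutativity. The only cosmetic difference is that the paper also points to Proposition~\ref{prophat}, where the same facts are repackaged via the unitary normalisation of the $R$-matrices and the homomorphism $w\mapsto\TH_w$, but that is an equivalent formulation of your argument.
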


\begin{theorem}\la{dham} Assume $\zeta=-\rho_m^\vee$ with $\rho_m^\vee=\frac12 \sum_{\alpha\in R_+}m_\alpha\alpha^\vee$. 
%satisfies $\dpr{a_i, \zeta_0}=-{m}_i$ for $i=1, \dots, n$. 
Given $b\in P_+^\vee$, define $L^{b,\vee}\in \D_q$ as the unique difference operator such that $Y^{b,\vee} e= L^{b,\vee} e$. Then each $L^{b,\vee}$ is $W$-invariant, %i.e. $wL^b=L^bw$ for all $w\in W$, 
and $L^{b,\vee}$ with $b\in P^\vee_+$ form a commutative family of difference operators.
\end{theorem}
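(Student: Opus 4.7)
The plan is to adapt Komori--Hikami's proof of Theorem~\ref{emr} to the dual setting, where the dual affine root system $(\Ra)^\vee$ replaces $\Ra$, the coroots $\aalpha^\vee$ replace the roots $\aalpha$, and the dynamical parameter $\zeta\in V$ replaces $\xi\in V$. The first assertion of the preceding theorem (that the $Y^{b,\vee}$ pairwise commute) depends only on the fact that the family $\{\RR(\aalpha^\vee)\}$ satisfies the affine Yang--Baxter/braid relations established in \cite[(3.1a)--(3.1d)]{KH98}; since those identities involve only the underlying affine Weyl group $\WW$ (shared between $\Ra$ and $(\Ra)^\vee$) and a symmetric dependence on $m_\alpha$ and the dynamical coupling, they hold verbatim in the dual case. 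Thus independence of $\RR^\vee_w$ of the reduced decomposition and commutativity of the $Y^{b,\vee}$'s follow as in Theorem~\ref{yb}.

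It remains to establish $W$-invariance of $L^{b,\vee}$ at $\zeta=-\rho_m^\vee$. Granted this, commutativity of the family $\{L^{b,\vee}\}_{b\in P^\vee_+}$ is immediate from the standard chain
\begin{equation*}
L^{b,\vee}L^{b',\vee}e \,=\, L^{b,\vee}eL^{b',\vee} \,=\, Y^{b,\vee}eL^{b',\vee} \,=\, Y^{b,\vee}L^{b',\vee}e \,=\, Y^{b,\vee}Y^{b',\vee}e,
\end{equation*}
which is symmetric in $b,b'$ by commutativity of the $Y^{b,\vee}$'s. For the $W$-invariance itself, it suffices (as in the lines leading to \eqref{rcm}) to show $(s_j-1)\,Y^{b,\vee}e=0$ for each simple reflection $s_j$, $j=1,\dots,n$. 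The key preliminary is the dual analogue of \eqref{system}: a short pairing argument on $R_+\setminus\{a_j\}$ using the involution $\alpha\mapsto s_{a_j}\alpha$ (under which $m_\alpha$ is preserved and $\dpr{a_j,\alpha^\vee}$ changes sign) yields $\dpr{a_j,\rho_m^\vee}=m_{a_j}$. Consequently, at $\zeta=-\rho_m^\vee$ the dual $R$-matrix at the simple affine coroot $a_j^\vee$ specialises to
\begin{equation*}
\RR(a_j^\vee)=\sigma_{m_{a_j}}(a_j^\vee)-\sigma_{-m_{a_j}}(a_j^\vee)\,s_j,
\end{equation*}
so that $\RR(a_j^\vee)e$ reduces to a scalar function times $e$ (because $s_je=e$).

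The principal obstacle is the same combinatorial manipulation as in \cite[Theorem~4.5]{KH98}: one must propagate the factor $(s_j-1)$ across the reduced factorisation of $\RR^\vee_{t(b)}\,t(b)$ until it collides with an occurrence of $\RR(a_j^\vee)$ adjacent to $e$, whereupon the special form above and $s_je=e$ force vanishing. This depends on braid-independence of $\RR^\vee_w$ (the preceding theorem) together with a careful choice of reduced expressions for $t(b)$ adapted to $s_j$; when $\dpr{a_j,b}>0$ a reduced word for $t(b)$ can be made to start with $s_j$, while the case $\dpr{a_j,b}=0$ uses $s_jt(b)=t(b)s_j$ to push $s_j$ to the right. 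Because the length function on $\WW$ and the Coxeter combinatorics are identical to the non-dual case, the whole argument of \cite[Sections~4.3--4.5]{KH98} transports to the dual setting without essential change.
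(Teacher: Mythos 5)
Your proposal is correct and takes essentially the same route as the paper, which simply asserts that ``the same arguments as in \cite{KH98} (see also Proposition \ref{prophat}) prove the following results''; you supply the two ingredients that actually need checking in the dual setting, namely the identity $\dpr{a_j,\rho_m^\vee}=m_{a_j}$ (the dual of \eqref{system}) and the resulting factorisation of $\RR(\pm a_j^\vee)$ through $(1-s_j)$ at $\zeta=-\rho_m^\vee$, after which the Komori--Hikami argument and the standard chain $L^{b,\vee}L^{b',\vee}e=Y^{b,\vee}Y^{b',\vee}e$ go through verbatim.
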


\begin{theorem}\la{dhamm} (i) Let $b$ be a minuscule coweight. Then we have
\begin{equation}
L^{b,\vee}=\sum_{\pi\in Wb} \,A_\pi^\vee t(\pi)\,,\qquad A_\pi^\vee=\prod_{\genfrac{}{}{0pt}{}{\alpha\in R}{\dpr{\pi, \alpha}>0}}\, \sigma_{{m}_\alpha}(\alpha^\vee)\,.
\end{equation}

(ii) Let $b$ be a quasi-minuscule coweight of the form $b=\varphi^\vee$, with $\varphi\in R_+$ the highest root. Then
\begin{align}\la{ch1}
L^{b, \vee}&=\sum_{\pi\in Wb} \, (A_\pi^\vee t(\pi)-B_\pi^\vee)\,,\qquad 
A_\pi^\vee=\sigma_{{m}_\varphi}\left(\pi+2\delta/\dpr{\pi, \pi}\right)
%\left(\pi+\frac{2\delta}{\dpr{\pi, \pi}}\right)
\prod_{\genfrac{}{}{0pt}{}{\alpha\in R}{\dpr{\pi, \alpha}>0}}\, \sigma_{{m}_\alpha}(\alpha^\vee)\,,
%\prod_{\genfrac{}{}{0pt}{}{\alpha\in R}{\dpr{\pi,\alpha}=2}}\sigma_{{m}_\alpha}((\alpha+\delta)^\vee)\,,
\\
\la{ch2}
B_\pi^\vee&=\sigma_{\dpr{\varphi , -\rho_m^\vee}}\left(\pi+2\delta/\dpr{\pi, \pi}\right)
\prod_{\genfrac{}{}{0pt}{}{\alpha\in R}{\dpr{\pi, \alpha}>0}}\, \sigma_{{m}_\alpha}(\alpha^\vee)\,.
%\prod_{\genfrac{}{}{0pt}{}{\alpha\in R}{\dpr{\pi,\alpha}=2}}\sigma_{\dpr{\varphi , \zeta_0}}((\alpha+\delta)^\vee)\,.
\end{align}
\end{theorem}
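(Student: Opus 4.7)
The plan is to mirror, step by step, the proof of Theorem \ref{lb} as given in \cite[Section 6]{KH98}, exploiting the fact that the dual construction of \ref{dura} is formally parallel to the original one: the coroots $\aalpha^\vee$ replace the roots $\aalpha$ inside the $R$-matrices, the braid/Yang--Baxter relations hold in exactly the same form (Theorem \ref{dham} already establishes commutativity and well-definedness of $Y^{b,\vee}$), and the unitarity relation \eqref{uni} carries over verbatim to the dual $R$-matrices as $\RR(\aalpha^\vee)\RR(-\aalpha^\vee)=\wp({m}_\alpha)-\wp(\dpr{\alpha,\zeta})$.

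The first step is to pick a reduced decomposition $t(b)=s_{i_1}\dots s_{i_l}\pi_b$ and expand $\RR^\vee_{t(b)}$ as a product $\prod_j \RR(\aalpha_j^\vee)$ according to Definition \ref{rwdef}. The affine roots $\aalpha_j=\alpha^{(j)}+k_j\delta$ appearing here are precisely those satisfying $\dpr{b,\alpha^{(j)}}>0$ (with $k_j$ determined by the decomposition), and in the (quasi-)minuscule case $\dpr{b,\alpha^{(j)}}\in\{1,2\}$. Each factor decomposes as $\RR(\aalpha^\vee)=\sigma_{{m}_\alpha}(\aalpha^\vee)-\sigma_{\dpr{\alpha,\zeta}}(\aalpha^\vee)s_{\aalpha}$, so the product expands into $2^l$ terms, each carrying a product of Weyl group reflections. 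After multiplying on the right by the symmetrizer $e$ and using $we=e$ for $w\in W$, the terms collapse onto the translation part $t(b)$ multiplied by an element of $W$, which itself disappears after another application of $we=e$. Tracking which pattern of ``identity vs.\ reflection'' choices produces translation by a given $\pi\in Wb$ and assembling the surviving coefficients is a direct combinatorial exercise identical to the one carried out in \cite{KH98} for the non-dual case. The specialization $\zeta=-\rho_m^\vee$ from Theorem \ref{dham} is then used to convert the dynamical factors $\sigma_{\dpr{\alpha,\zeta}}(\alpha^\vee)$ into constants that can be absorbed into the $\sigma_{{m}_\alpha}(\alpha^\vee)$ in $A_\pi^\vee$ via the relation \eqref{system} and the duality $\dpr{\alpha,\rho_m^\vee}=\frac12\sum_{\beta>0}m_\beta\dpr{\alpha,\beta^\vee}$.

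For part (ii), the quasi-minuscule case $b=\varphi^\vee$, the novelty is the single root $\varphi$ with $\dpr{b,\varphi}=2$. In the reduced decomposition of $t(b)$ this causes a pair of dual $R$-matrices $\RR(\aalpha^\vee)\RR(-\aalpha^\vee)$ (associated to $\varphi$ and its shift by $\delta$) to appear, and the unitarity identity $\wp(\mu)-\wp(z)=\sigma_\mu(z)\sigma_\mu(-z)$ converts their product into the difference $\sigma_{m_\varphi}(\aalpha^\vee)\sigma_{m_\varphi}(-\aalpha^\vee)-\sigma_{\dpr{\varphi,\zeta}}(\aalpha^\vee)\sigma_{\dpr{\varphi,\zeta}}(-\aalpha^\vee)$. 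The first of these two terms produces the coefficient $A_\pi^\vee$ of $t(\pi)$ in \eqref{ch1}; the second, independent of the shift operators, contributes the ``constant'' correction $-B_\pi^\vee$ in \eqref{ch2}. The evaluation $\dpr{\varphi,\zeta}=\dpr{\varphi,-\rho_m^\vee}$ at the special value of the dynamical parameter, together with the identification $\pi+2\delta/\dpr{\pi,\pi}=\pi^\vee+\delta$ translated into the dual root system, yields precisely the stated arguments of the $\sigma$-functions.

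The main obstacle is purely combinatorial: one has to verify that, under the expansion of $\RR^\vee_{t(b)}$ along a reduced decomposition, the signs, the partition of the product of scalar factors among the $|Wb|$ orbit elements, and the identifications of affine roots $\aalpha_j^\vee$ with $\pi+k\delta$ or $\pi^\vee+\delta$ come out as stated. This is an almost mechanical transcription of Komori--Hikami's argument with the symbol $\alpha$ replaced by $\alpha^\vee$ throughout; the only genuinely new bit to check is that the specialization $\zeta=-\rho_m^\vee$ plays the same $W$-symmetrization role as $\xi=-\rho_m$ did in the proof of Theorem \ref{lb}, which is immediate from the dual version of \eqref{system}, namely $\dpr{a_i,\rho_m^\vee}=m_{a_i^\vee}$ for all simple roots $a_i$. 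A more conceptual shortcut, once $W$-invariance is in hand from Theorem \ref{dham}, is to compare the two sides of the claimed equality by evaluating the coefficient of $t(\pi)$ for each $\pi\in Wb$: on the right this is prescribed, and on the left it can be extracted by tracking only those terms in the expansion that produce the shift $t(\pi)$, which is a strictly simpler calculation than the full expansion.
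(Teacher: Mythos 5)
Your proposal matches the paper's treatment: the paper offers no independent proof of Theorem \ref{dhamm}, stating only that ``the same arguments as in \cite{KH98}'' (the proofs of Theorems 6.1 and 6.5 there, transposed from roots $\aalpha$ to coroots $\aalpha^\vee$ with $\xi=-\rho_m$ replaced by $\zeta=-\rho_m^\vee$) establish it, which is precisely the transcription you describe. One small inaccuracy: the $-B_\pi^\vee$ term does not come from a literal product $\RR(\aalpha^\vee)\RR(-\aalpha^\vee)$ collapsed by unitarity --- the two factors attached to $\varphi$ in the reduced decomposition of $t(\varphi^\vee)$ are $\RR(\varphi^\vee)$ and its $\delta$-shift, not negatives of each other, and indeed $B_\pi^\vee$ mixes the subscripts $m_\varphi$ and $\dpr{\varphi,-\rho_m^\vee}$ rather than squaring either --- but your fallback mechanism (expand each factor into its identity and reflection parts, multiply by $e$, and read off the coefficient of $t(\pi)$ and of the identity, the latter arising because $s_{\aalpha}t(\pi)$ lands in $W$) is exactly what the cited argument does, so the proof goes through as you outline.
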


\subsection{}\la{arelel}
Before proceeding to the general construction of quantum Lax pairs, it will be instructive to discuss the $\GL_{n}$-case. Our setting will be similar to \ref{aq}: we take $V=\c^n$, with an orthonormal basis $\epsilon_1, \dots, \epsilon_n$ and the associated coordinates $x_1, \dots, x_n$, and with the standard action of $W=\mathfrak{S}_n$ on $V$. We set $\Lambda=\sum_{i=1}^n \Z\epsilon_i$, and consider the algebra of difference operators ${\D}_q=\c(x)\ltimes t(\Lambda)$, generated by $\c(x)$ and the shift operators $t(\epsilon_i)=q^{\partial_i}$, where $q=e^c$ with fixed $c$. As before, we will view reflection-difference operators in $\D_q*W$ acting on the module $M$ \eqref{mod}, and identify $M\cong \c W\otimes\c(x)$. As a result, we represent elements of $\D_q*W$ as operator-valued matrices of size $|W|$ \eqref{rep}.

We have one coupling constant ${m}_\alpha={\mu}$ for all $\alpha\in R$. For $\alpha=\epsilon_i-\epsilon_j$, $i\ne j$, the $R$-matrices \eqref{rdef} take the form \cite{SU}: 
\begin{equation}\la{rije}
\RR_{ij}=\sigma_{{\mu}}(x_i-x_j)-\sigma_{\xi_i-\xi_j}(x_i-x_j)\ss_{ij}\,.
\end{equation}
They satisfy the Yang--Baxter relations,
$\RR_{ij}\RR_{ik}\RR_{jk}=\RR_{jk}\RR_{ik}\RR_{ij}$. %, as well as $[\RR_{ij}, \RR_{kl}]=0$ if $\{i, j\}\cap \{k,l\}=\emptyset$.
The elliptic Cherednik operators $Y_i:=Y^{\epsilon_i}$ can be calculated from Definition \ref{rwdef} (cf. \eqref{uni}):
\begin{equation}\la{yie}
Y_i=\RR_{i, i+1}\RR_{i, i+2}\dots \RR_{i, n}\,t(\epsilon_i)\,\RR_{i1}\dots \RR_{i, i-1}\quad (i=1, \dots, n)\,.
\end{equation}  
%This differs from the expression in \eqref{yi} by a $\xi$-depending factor, cf. \eqref{uni}. 
The elliptic Ruijsenaars operator $\widehat H=L^{\epsilon_1}$ is obtained from $Y_1=Y^{\epsilon_1}$ by Theorems \ref{emr}, \ref{lb}:
\begin{equation}\la{emr1}
\widehat H=\sum_{i=1}^n \prod_{j\ne i}^n \sigma_{\mu}(x_i-x_j)\,t(\epsilon_i)\,.
\end{equation}  
Up to a gauge transformation, this is the quantum Hamiltonian from \cite{R87}.
  
A quantum Lax pair will be constructed using $Y_1$ and $Y_2$, which are: 
\begin{equation}\la{y12}
Y_1=\RR_{12}\dots \RR_{1n}\,t(\epsilon_1)\,,\qquad Y_2=\RR_{23}\dots \RR_{2n}\,t(\epsilon_2)\RR_{21}\,.
\end{equation}

\begin{lemma}\la{inv12}
Write $W'=\mathfrak{S}_{n-1}$ for the subgroup fixing $\epsilon_1$, and $e'$ for the corresponding symmetrizer. If $\xi_i-\xi_{i+1}=-{\mu}$ for all $1<i<n$, then $Y_1$ and $Y_2$ preserve the subspace $M'=e'M$.
\end{lemma}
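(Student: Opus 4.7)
The plan is to reduce $W'$-invariance of $Y_k e'$ to the single identity $\RR_{i+1,i}\,Y_k\,e' = 0$ for each generator $\ss_{i,i+1}$ of $W' = \mathfrak{S}_{n-1}$ (with $2 \le i \le n-1$) and each $k \in \{1,2\}$. The crucial preliminary observation is that the specialization $\xi_i - \xi_{i+1} = -\mu$ is precisely the condition $\dpr{a_i^\vee, \xi} = -m_{a_i}$ for the root $a_i = \epsilon_i - \epsilon_{i+1}$. Under this specialization, the elementary identity $\sigma_{-\mu}(z) = -\sigma_\mu(-z)$, which follows from $\theta_1(-z) = -\theta_1(z)$, collapses \eqref{raa} to
\[
\RR_{i+1,i} \;=\; \sigma_\mu(x_{i+1} - x_i)\,(1 - \ss_{i,i+1}).
\]
Hence $\RR_{i+1,i}\,e' = 0$, and conversely $\RR_{i+1,i}\,f = 0$ is equivalent to $\ss_{i,i+1}\,f = f$ after dividing by the nowhere-vanishing meromorphic scalar $\sigma_\mu(x_{i+1}-x_i)$. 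So it suffices to prove $\RR_{i+1,i}\,Y_k\,e' = 0$.

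For $Y_1 = \RR_{12}\RR_{13}\cdots\RR_{1n}\,t(\epsilon_1)$, since $\{i, i+1\} \cap \{1, k\} = \emptyset$ for all $k \notin \{i, i+1\}$ (using $i \ge 2$), the operator $\RR_{i+1,i}$ commutes with every $\RR_{1k}$ in the product save for $\RR_{1i}$ and $\RR_{1,i+1}$, and it also commutes with $t(\epsilon_1)$. The Yang--Baxter relation
\[
\RR_{i+1,i}\,\RR_{1i}\,\RR_{1,i+1} \;=\; \RR_{1,i+1}\,\RR_{1i}\,\RR_{i+1,i}
\]
(a direct consequence of the elliptic Yang--Baxter equation from \cite{KH98} that underlies Theorem~\ref{yb}(i), applied to the two reduced expressions of the longest element of $\mathfrak{S}_3$ acting on $\{1, i, i+1\}$) then allows me to slide $\RR_{i+1,i}$ from the far left of $Y_1$ all the way to the right. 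Hitting $e'$ at the end kills the expression.

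For $Y_2 = \RR_{23}\cdots\RR_{2n}\,t(\epsilon_2)\,\RR_{21}$, the same slide works whenever $i \ge 3$: the set $\{i, i+1\}$ is then disjoint from $\{1, 2\}$, so $\RR_{i+1,i}$ commutes with both $\RR_{21}$ and $t(\epsilon_2)$, and Yang--Baxter handles the pair $\RR_{2i}\RR_{2,i+1}$ just as before. The remaining case $i = 2$ is even simpler: the unitarity relation \eqref{uni}, specialized to $\xi_2 - \xi_3 = -\mu$, reads $\RR_{32}\RR_{23} = \wp(\mu) - \wp(-\mu) = 0$. Since $\RR_{23}$ is the leftmost factor of $Y_2$, this gives $\RR_{32}\,Y_2 = 0$ identically, even before restricting to $e'M$.

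The main obstacle is mild: once one identifies $\RR(-a_i)\,e' = 0$ as the correct detector of $\ss_{i,i+1}$-invariance, the rest of the proof reduces to straightforward index bookkeeping plus a single application of Yang--Baxter. The only case requiring separate treatment is $i = 2$ for $Y_2$, where the sliding argument would collide with $\RR_{23}$ and $\RR_{21}$, but precisely here the unitarity relation delivers the conclusion directly.
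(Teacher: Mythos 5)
Your proof is correct and follows essentially the same route as the paper: specialize $\xi$ so that $\RR_{i+1,i}=\sigma_\mu(x_{i+1}-x_i)(1-\ss_{i,i+1})$, slide $\RR_{i+1,i}$ through $Y_1$ (and through $Y_2$ for $i\ge 3$) via the Yang--Baxter relation until it annihilates $e'$, and handle $i=2$ for $Y_2$ by $\RR_{32}\RR_{23}=0$. The only cosmetic difference is that you obtain $\RR_{32}\RR_{23}=0$ from the unitarity relation \eqref{uni} and the evenness of $\wp$, whereas the paper reads it off the factorized forms of $\RR_{i,i+1}$ and $\RR_{i+1,i}$.
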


\begin{proof} This can be proved similarly to \cite[Theorem 4.5]{KH98}, but for the reader's convenience we give a self-contained proof. From the assumptions on $\xi$, for any $1<i<n$ we have 
\begin{equation*}
\RR_{i, i+1}=(1+\ss_{i, i+1})\sigma_{\mu}(x_i-x_{i+1})\,,\quad \RR_{i+1, i}=\sigma_{\mu}(x_{i+1}-x_i)(1-\ss_{i, i+1})\,,\quad \RR_{i+1, i}\RR_{i, i+1}=0\,.
\end{equation*}
Next, the Yang--Baxter relations imply that
\begin{equation*}
\RR_{i+1, i}Y_1=\RR_{12}\dots \RR_{1, i-1}\RR_{1, i+1}\RR_{1, i}\RR_{1, i+2}\dots \RR_{1n}\,t(\epsilon_1)\RR_{i+1, i}\,.
\end{equation*}
Assuming $i>1$, and multiplying by $e'$ from the right, we obtain
\begin{equation}\la{ybc}
(1-\ss_{i, i+1})Y_1e'=0\quad\text{for}\   i>1\,,
\end{equation} 
so $Y_1$ acts on $M'=e'M$ as a consequence.
For $Y_2$, we first notice that $\RR_{32}Y_2=0$ since $\RR_{32}\RR_{23}=0$, and so $(1-\ss_{23})Y_2=0$ as a consequence. Also, we have 
%\begin{equation*}
$(1-\ss_{i, i+1})Y_2e'=0$ for $i>2$; 
%\quad\text{for}\   i>2\,.
%\end{equation*} 
this follows from the Yang--Baxter relations in the same way as \eqref{ybc}. Putting  this together, we conclude that $(1-w)Y_2e'=0$ for all $w\in W'$, hence $Y_2(M')\subset M'$.       
\end{proof}

\begin{lemma}\la{nsel} Assume that $\xi_i-\xi_{i+1}=-{\mu}$ for all $1<i<n$. We have:
\begin{gather*}
Y_1\,|_{M'}=(A+\sum_{i\ne 1}^n B_i\ss_{1i})\,t(\epsilon_1)\,,\qquad Y_2\,|_{M'}=E+\sum_{i\ne 1}^n F_i\ss_{1i}\,, \quad\text{ where}\\ 
A=\prod_{l\ne 1}^n \sigma_{\mu}(x_1-x_l)\,,\qquad B_i=-\sigma_{\xi_1-\xi_2}(x_1-x_i)\prod_{l\ne 1, i} \sigma_{\mu}(x_i-x_l)\,,\\
E=\sum_{i>1}\sigma_{\mu}(x_i-x_1+c)\prod_{l\ne 1,i}\sigma_{\mu}(x_i-x_l)t(\epsilon_i)\,,\qquad F_i=\sigma_{\xi_1-\xi_2}(x_1-x_i-c)\prod_{l\ne 1, i}\sigma_{\mu}(x_i-x_l)\,t(\epsilon_i)\,.
\end{gather*}
\end{lemma}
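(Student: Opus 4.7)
The plan is to compute $Y_1|_{M'}$ and $Y_2|_{M'}$ by expanding the $\RR$-matrix products and collapsing the resulting sums via an elliptic three-term identity. The key identity is
\begin{equation*}
\sigma_a(u)\sigma_b(v) - \sigma_{a-b}(u)\sigma_b(u+v) = \sigma_a(u+v)\sigma_{b-a}(v)\,,
\end{equation*}
which follows from Fay's trisecant identity for $\theta_1$; it can be verified by noting that the difference is entire in $u$, has quasi-periodicity $e^{2\pi\mathrm{i}a}$ under $u\mapsto u+\tau$, and vanishes at $u=a$. The constraint $\xi_i-\xi_{i+1}=-\mu$ for $1<i<n$ gives $\xi_1-\xi_j=(\xi_1-\xi_2)-(j-2)\mu$, which is exactly what makes iterated applications of this identity telescope to a formula depending only on $\xi_1-\xi_2$.

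For $Y_1=\RR_{12}\cdots\RR_{1n}t(\epsilon_1)$: by Lemma \ref{inv12}, $Y_1$ preserves $M'$, so its restriction has a unique expression $(A+\sum_{i\ne 1}B_i\ss_{1i})t(\epsilon_1)$. Writing $\RR_{1j}=a_{1j}+b_{1j}\ss_{1j}$ with $a_{1j}=\sigma_\mu(x_1-x_j)$ and $b_{1j}=-\sigma_{\xi_1-\xi_j}(x_1-x_j)$, expand $\prod_{j=2}^n\RR_{1j}$ as a sum over subsets $I\subseteq\{2,\ldots,n\}$. A direct check shows that if $I=\{i_1<\cdots<i_k\}$ then $\ss_{1i_1}\cdots\ss_{1i_k}$ sends $1\mapsto i_k$, so it lies in the coset $\ss_{1i_k}W'$. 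Hence only $I=\emptyset$ contributes to $A$, yielding $A=\prod_{l\ne 1}\sigma_\mu(x_1-x_l)$. For $B_i$, the contributions from positions $l>i$ always give $\sigma_\mu(x_i-x_l)$ (after applying the cumulative permutation sending $1\mapsto i$), so the factor $\prod_{l>i}\sigma_\mu(x_i-x_l)$ pulls out of the inner sum. The remaining sum over $I\cap\{2,\ldots,i\}$ is then evaluated inductively on $i$: at each step the three-term identity collapses a pair of terms differing by one reflection, and after $i-2$ applications the whole sum telescopes to $-\sigma_{\xi_1-\xi_2}(x_1-x_i)\prod_{l=2}^{i-1}\sigma_\mu(x_i-x_l)$, which gives the claimed $B_i$.

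For $Y_2=\RR_{23}\cdots\RR_{2n}t(\epsilon_2)\RR_{21}$: first commute $t(\epsilon_2)$ past $\RR_{21}$ using $t(\epsilon_2)f(x)=f(x+c\epsilon_2)t(\epsilon_2)$ and $t(\epsilon_2)\ss_{12}=\ss_{12}t(\epsilon_1)$. This splits $t(\epsilon_2)\RR_{21}$ into a scalar piece carrying $t(\epsilon_2)$ and a reflection piece carrying $\ss_{12}t(\epsilon_1)$, with arguments shifted by $c$. Next expand $\RR_{23}\cdots\RR_{2n}$ over subsets $I\subseteq\{3,\ldots,n\}$ as before and distribute over the two pieces. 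Tracking cosets modulo $W'$: the scalar piece combined with a chosen subset $I$ with $\max I=i$ contributes to the $\ss_{1i}$-coset (for $i\ne 1,2$) via $\ss_{2i_1}\cdots\ss_{2i_k}\equiv \ss_{1i}\bmod W'$, producing the $E$-term; the reflection piece produces an extra $\ss_{12}$ that, when combined with $\ss_{2i_1}\cdots\ss_{2i_k}$, gives a different coset that accounts for the $F_i\ss_{1i}$ contributions. Each inner sum again reduces to iterated applications of the three-term identity, but now with arguments shifted by $c$ from the commutation of $t(\epsilon_2)$.

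The main obstacle will be the careful bookkeeping in the $Y_2$ case: the $c$-shift from $t(\epsilon_2)$ enters $\RR_{21}$ as well as the combinatorics of how $\ss_{12}$ merges with the $\ss_{2i_l}$'s, and one must verify that the three-term identity still applies at each telescoping step in exactly the form needed to produce $F_i=\sigma_{\xi_1-\xi_2}(x_1-x_i-c)\prod_{l\ne 1,i}\sigma_\mu(x_i-x_l)t(\epsilon_i)$. Once this is handled, reading off the $\sigma_\mu$-part versus the $\sigma_{\xi_1-\xi_2}$-part of each coset coefficient produces $E$ and the $F_i$ simultaneously.
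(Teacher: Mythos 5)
Your strategy---expand the $\RR$-products over subsets of $\{2,\dots,n\}$ and collapse each coset coefficient with the elliptic three-term identity---is essentially the Nazarov--Sklyanin induction that the paper deliberately replaces by a symmetry argument. The identity you quote is correct, and the telescoping for $Y_1$ does work (the first nontrivial case $n=3$, $i=3$ is exactly one application of your identity with $a=\xi_1-\xi_2$, $b=\xi_1-\xi_3$). But the paper obtains the same formulas with no functional identities at all: by Lemma \ref{alem}(1), the coefficients of $Y_1|_{M'}=(A+\sum_i B_i\ss_{1i})t(\epsilon_1)$ satisfy $B_i=B_2^{\ss_{2i}}$, so one only needs $A$ and $B_2$, and each of these is pinned down by a \emph{single} term of the expansion ($I=\emptyset$, resp.\ $I=\{2\}$, since $\max I=2$ forces $I=\{2\}$). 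Likewise $F_i=F_2^{\ss_{2i}}$ with $F_2$ coming from the unique term with $I=\emptyset$ and $w_0=\ss_{12}$, and $E$ is recovered from its $W'$-invariance together with the observation that exactly one term of the expansion carries $t(\epsilon_2)$. Your route trades this two-line equivariance observation for an $(i-2)$-fold telescoping that you have only sketched.

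For $Y_2$ there is, in addition, a concrete error in the coset bookkeeping. You assert that the scalar piece of $t(\epsilon_2)\RR_{21}$ combined with a subset $I=\{i_1<\dots<i_k\}\subseteq\{3,\dots,n\}$ lands in the coset $\ss_{1i_k}W'$ ``via $\ss_{2i_1}\cdots\ss_{2i_k}\equiv\ss_{1i_k}\bmod W'$''. This is false: each $\ss_{2i_l}$ with $i_l\ge 3$ fixes the index $1$, so $\ss_{2i_1}\cdots\ss_{2i_k}\in W'$ and all such terms lie in the \emph{identity} coset --- consistent with their sum being $E$ (which is by definition the identity-coset coefficient), but not for the reason you give. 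It is the reflection piece, with group part $\ss_{2i_1}\cdots\ss_{2i_k}\ss_{12}$ sending $1\mapsto i_k$, that lands in $\ss_{1i_k}W'$ and builds $F_{i_k}$; in particular $F_i$ for $i\ge 3$ is itself a sum of $2^{i-3}$ terms needing collapse, not a single contribution. Since you also explicitly leave the $Y_2$ telescoping unverified (``the main obstacle''), the second half of the lemma is not proved as written. Correcting the coset assignment and then invoking $F_i=F_2^{\ss_{2i}}$ and the $W'$-invariance of $E$, as the paper does, closes the gap with far less computation.
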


\begin{proof} The formula for $Y_1$ is proved in the same way as in Lemma \ref{ns}. For $Y_2$ we can argue similarly. Namely, after 
expanding $Y_2$ \eqref{y12}, we obtain a sum of terms of the form
\begin{equation*}
\ss_{2i_1}\dots \ss_{2i_k} t(\epsilon_2) w_0\,,\quad\text{with}\  3\le i_1<\dots <i_k\le n \ \text{and}\ w_0\in\{\id, \ss_{12}\}\,. 
\end{equation*}
Multiplcation by $e'$ reduces this to $\ss_{1i}e'$ with $1 \le i\le n$, which equals $\ss_{12}e'$ only if the set $\{i_1, \dots, i_k\}$ is empty and $w_0=\ss_{12}$. Therefore, the coefficient $F_2$ is found from
\begin{equation*}
F_2 \ss_{12}=\prod_{l\ne 1,2}\sigma_{\mu}(x_2-x_l)t(\epsilon_2) \sigma_{\xi_1-\xi_2}(x_1-x_2) \ss_{12}\,.
\end{equation*}
This gives $F_2$ as stated in the lemma. The other coefficients $F_i$ are determined by the symmetry, $F_i=(F_2)^{\ss_{2i}}$.
It remains to determine the coefficient $E$. Arguing as above, we have several terms that reduce to $g e'$ with $g\in\D_q$, but only one of them will contain $t(\epsilon_2)$. Namely, this happens if 
$\{i_1, \dots, i_k\}=\emptyset$ and $w_0=\id$, so the corresponding term is 
\begin{equation*}
\prod_{l\ne 1,2}\sigma_{\mu}(x_2-x_l)t(\epsilon_2) \sigma_{{\mu}}(x_2-x_1)\,. 
\end{equation*}
If we combine this with the fact that $E$ is $W'$-invariant, we will arrive at the expression given in the lemma.  
\end{proof}

\medskip

To construct a Lax pair, we set $\widehat A=Y_1+Y_2-\widehat H$, where $\widehat H$ is the Hamiltonian \eqref{emr1}. Then from the above lemma, we have:
\begin{align}\la{ae1}
\widehat A\,|_{M'}&=\sum_{i=1}^n Z_i\ss_{1i}\,,\quad\text{with}\quad 
Z_1=\sum_{i\ne 1}(\sigma_{\mu}(x_{i1}+c)-\sigma_{\mu}(x_{i1}))\prod_{l\ne 1, i} \sigma_{\mu}(x_{il})\,t(\epsilon_i)\,,\\
\la{ae2}
Z_i&=(\sigma_\eta(x_{1i}-c)-\sigma_\eta(x_{1i}))\prod_{l\ne 1, i} \sigma_{\mu}(x_{il})\,t(\epsilon_i)\qquad(i\ne 1)\,.
\end{align}
Here $\eta:=\xi_1-\xi_2$ and $x_{il}:=x_i-x_l$. 

Using Lemma \ref{alem}, we can now calculate the matrices $\mathcal L$, $\mathcal A$ of size $n$ that represent the action of $Y_1$ and $\widehat A$ on $M'$. The result is:
\begin{align*}
\mathcal L_{ij}&=
\begin{cases}
\left(\prod_{l\ne j} \sigma_{\mu}(x_{jl})\right) t(\epsilon_j)\quad&\text{for}\ i=j\\
-\sigma_\eta(x_{ij}) \left(\prod_{l\ne i,j}\sigma_{\mu}(x_{jl})\right) t(\epsilon_j)\quad&\text{for}\ i\ne j\,,
\end{cases}
\\
\mathcal A_{ij}&=
\begin{cases}
\sum_{k\ne j}(\sigma_{\mu}(x_{kj}+c)-\sigma_{\mu}(x_{kj}))\left(\prod_{l\ne j, k} \sigma_{\mu}(x_{kl})\right) t(\epsilon_k)\quad&\text{for}\ i=j\\
(\sigma_\eta(x_{ij}-c)-\sigma_\eta(x_{ij})) \left(\prod_{l\ne i,j}\sigma_{\mu}(x_{jl})\right) t(\epsilon_j)\quad&\text{for}\ i\ne j\,.
\end{cases}
\end{align*}
Since $[Y_1, Y_1+Y_2]=0$, the above $\mathcal L$, $\mathcal A$ and $\mathcal H=\widehat H \mathbb{1}$ satisfy the quantum Lax equation \eqref{qleq}.
Note that $\mathcal L$ is equivalent to the Lax matrix found by Hasegawa using a different approach, cf. \cite[eq. (38)]{Ha}. 

%\medskip

The classical limit corresponds to $c=-\mathrm{i}\hbar\beta\to 0$. In $\mathcal L$ we simply replace $t(\epsilon_j)=e^{c\partial_j}$ with $e^{\beta p_j}$:
\begin{equation*}
L_{ij}=
\begin{cases}
\left(\prod_{l\ne j} \sigma_{\mu}(x_{jl})\right) e^{\beta p_j}\quad&\text{for}\ i=j\\
-\sigma_\eta(x_{ij}) \left(\prod_{l\ne i,j}\sigma_{\mu}(x_{jl})\right) e^{\beta p_j} \quad&\text{for}\ i\ne j\,.
\end{cases}
\end{equation*}
The Lax partner $A$ is then found as the classical limit of $(\mathrm{i}\hbar)^{-1}\mathcal A$, which gives 
\begin{equation*}
A_{ij}=
\begin{cases}
-\sum_{k\ne j}  
\beta\sigma'_{\mu}(x_{kj})\left(\prod_{l\ne j, k} \sigma_{\mu}(x_{kl})\right) e^{\beta p_k}\quad&\text{for}\ i=j\\
\beta\sigma'_\eta(x_{ij})
\left(\prod_{l\ne i,j}\sigma_{\mu}(x_{jl})\right) e^{\beta p_j} \quad&\text{for}\ i\ne j\,.
\end{cases}
\end{equation*}
%Here $\sigma'_{m}(z)=\frac{d}{dz}\sigma_{m}(z)$. 
These matrices are equivalent to those known from \cite{R87, BC87, KZ95}.

\subsection{}\la{regu1} Let us now discuss a method for constructing Lax pairs in general. As before, we will view reflection-difference operators in $\D_q*W$ acting on the module $M$ \eqref{mod}, and identify $M\cong \c W\otimes\c(V)$. As a result, we represent elements of $\D_q*W$ as operator-valued matrices of size $|W|$ \eqref{rep}. Take an elliptic Cherednik operator $Y^b$, $b\in P^\vee_+$ and the corresponding quantum Hamiltonians $L^b\in\D_q^W$, constructed in Theorem \ref{emr}.  Our task is to find a combination of Cherednik operators which has the same classical limit as $L^b$. In the $\GL_n$-case above such a combination was $Y_1+Y_2$, but this does not seem very helpful in regards to the general case. There is, nevertheless, a natural analogue of Proposition \ref{elcl}, but it requires a renormalisation of the operators $\RR(\alpha)$ and $Y^b$.
%\begin{defi} (1) 
Namely, we define the {\it unitary} affine $R$-matrices as follows: %For $\aalpha=\alpha+k\delta\in\Ra$, 
\begin{equation}\la{rh}
\RH(\aalpha)=\sigma_{{m}_\alpha}(\dpr{\alpha^\vee, \xi})^{-1}\RR(\aalpha)=\frac{\sigma_{{m}_\alpha}(\aalpha)}{\sigma_{{m}_\alpha}(\dpr{\alpha^\vee, \xi})}-\frac{\sigma_{\dpr{\alpha^\vee,\,\xi}}(\aalpha)}
{\sigma_{{m}_\alpha}(\dpr{\alpha^\vee, \xi})}s_{\aalpha}\,.
\end{equation}
This can be rewritten as
\begin{equation}\la{rh1}
\RH(\aalpha)=
\frac
           {\theta(\aalpha-{m}_\alpha)\theta(\dpr{\alpha^\vee, \xi})}
{\theta(\aalpha)\theta(\dpr{\alpha^\vee, \xi}-{m}_\alpha)}
-\frac
         {\theta(\aalpha-\dpr{\alpha^\vee, \xi})\theta(-{m}_\alpha)}
{\theta(\aalpha)\theta(\dpr{\alpha^\vee, \xi}-{m}_\alpha)}s_{\aalpha}\,.
\end{equation}
From \eqref{uni} it follows that 
\begin{equation}\la{uni1}
\RH(\aalpha)\RH(-\aalpha)=1\quad\text{for all}\  \aalpha\in\Ra\,.
\end{equation}
We also define $\RH_w$, $w\in\WW$ in the same way as in Definition \ref{rwdef}, but using the unitary $R$-matrices instead, and we set $\YH^b=\RH_{t(b)}\,t(b)$ for $b\in P^\vee_+$. The elements $\RH_w, \YH^b$ differ from $R_w, Y^b$ by a $\xi$-depending factor, and Theorem \ref{yb} remains valid for them.
%We now extend the definition of $\YH^b$ by setting $\YH^\lambda=\YH^{{\mu}}(\YH^\nu)^{-1}$ whenever $\lambda={\mu}-\nu$ with ${\mu}, \nu\in P^\vee_+$. The commutativity of $\YH^b$ makes this definition unambiguous, cf. \cite[3.2]{M03}. 
The classical Cherednik operators are $Y^\lambda_c:=\eta_0(Y^\lambda)$ and $\YH^{\lambda}_{c}:=\eta_0(\YH^\lambda)$, where the classical-limit map $\eta_0$ is defined in \ref{climq}.  
%\end{defi}
We also wirte $L^{b,\vee}_{c}$ for the classical limit of the Hamiltonians $L^{b,\vee}$ from Theorems \ref{dham}, \ref{dhamm}; these are elements of $A_0=\c(V)[P^\vee]$, i.e. linear combinations of the terms $g(x)e^{\beta p_\lambda}$ with $\lambda\in P^\vee$, see \ref{climq} for the notation. Denote by    
$L^{b,\vee}_{c}(\xi, \YH)$ the result of replacing each term $g(x)e^{\beta p_\lambda}$ by $g(\xi)\YH^{\lambda}$. Similarly, we write $L^{b,\vee}_{c}(\xi, \YH_c)$ for the result of substituting the classical operators $\YH^\lambda_c$. %instead of $e^{\beta p_\lambda}$. % instead of  %$\gamma_k^{\pm 1}$  
We then have the following analogue of Proposition \ref{elcl}.

\begin{prop}\label{elclq} %Let $\YH^\lambda$, $\lambda\in P^\vee$ be the Cherednik operators associated to a root system $R$ and dynamical variables $\xi\in V$ as above. 
Let $L^{b,\vee}_{c}$, $b\in P^\vee_+$ be one of the dual classical Hamiltonians. % , and $L^{b,\vee}_{c}(\xi, \YH)$, $L^{b,\vee}_{c}(\xi, \YH_c)$  be the result of substituting the dynamical variables and Cherednik operators. 
Then, assuming that $b$ is (quasi-)minuscule, we have:

(i) $L^{b,\vee}_{c}(\xi, \YH)$, viewed as an element of $\D(V)*W$ depending on $\xi$, is regular for $\xi$ near $\xi=0$;

(ii)  $L^{b,\vee}_{c}(\xi, \YH_{c})$, viewed as an element of $\c(V\times V)*W$ depending on $\xi$, is regular for all $\xi\in V$;

(iii) $L^{b,\vee}_{c}(\xi, \YH_{c})$ is constant in $\xi$. Moreover, expanding $L^{b,\vee}_{c}(\xi, \YH_{c})$ as $\sum_{w\in W} a_w w$ with $a_w\in\c(V\times V)$, we have $a_w=0$ for $w\ne \id$.

(iv) We have %$L^{b,\vee}_{c}(\xi, \YH)e=(L^{b}+\mathrm{const})e$, and 
$L^{b,\vee}_{c}(\xi, \YH_c)=L^{b}_c+\mathrm{const}$.%, i.e., it equals the classical limit of the Hamiltonian $L^b$ described in Theorem \ref{lb}.
\end{prop}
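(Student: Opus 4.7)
The plan is to follow the blueprint of Proposition \ref{elcl}, with the unitarity $\RH(\aalpha)\RH(-\aalpha)=1$ of the renormalized $R$-matrices playing the role previously played by the identity $\sigma_\mu(z)\sigma_\mu(-z)=\wp(\mu)-\wp(z)$. Parts (i)--(iii) should be strictly parallel to the corresponding parts of Proposition \ref{elcl}, while (iv) genuinely requires the explicit formulas of Theorems \ref{lb} and \ref{dhamm}, which is why the hypothesis ``(quasi-)minuscule'' is present.

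For part (i), the potential singularities of $\YH^\lambda$ as $\xi\to 0$ come solely from the denominators $\sigma_{{m}_\alpha}(\dpr{\alpha^\vee,\xi})$ in \eqref{rh}. I would adapt the symmetrization argument of \cite[5.3]{EFMV} (as used in the proof of Proposition \ref{elcl}(i)): the unitarity together with the affine braid relations (which hold for $\RH$ just as for $\RR$) imply that in the combination $L^{b,\vee}_c(\xi,\YH)$ the residues coming from $\YH^\mu$ at $\dpr{\alpha^\vee,\xi}=0$ cancel against the residues at the corresponding hyperplanes of the coefficient functions $A^\vee_\pi$ appearing in the explicit form of $L^{b,\vee}_c$. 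The formulas of Theorem \ref{dhamm} provide precisely the combinatorial input needed for this cancellation in the (quasi-)minuscule case.

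For part (ii), I would determine the transformation law of $\YH^\lambda_c(\xi)$ under $\xi\mapsto\xi+u+\tau v$ with $u,v\in P$. By the quasi-periodicity of $\sigma_\mu(z)$ in $\mu$, each scalar factor of $\RH(\aalpha)$ acquires at most an exponential in $x$; these telescope via unitarity to a single conjugation $\YH^\lambda_c\mapsto e^{2\pi\mathrm{i}\dpr{v,x}}\YH^\lambda_c e^{-2\pi\mathrm{i}\dpr{v,x}}$. Ellipticity of $L^{b,\vee}_c$ in its position variable then shows that $L^{b,\vee}_c(\xi,\YH_c)$ transforms by the same conjugation, and so the regularity proved in (i) propagates to all of $V$. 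Part (iii) is immediate from (ii) exactly as in Proposition \ref{elcl}(iii): each coefficient $a_w$ in the decomposition $\sum_w a_w w$ is entire and quasi-periodic in $\xi$ with respect to $P\oplus\tau P$ hence constant, and the conjugation identity $a_w w = e^{2\pi\mathrm{i}\dpr{v,x}}a_w w\,e^{-2\pi\mathrm{i}\dpr{v,x}}$ forces $a_w=0$ for $w\ne\id$.

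For part (iv) one has reduced to identifying a $\xi$-independent scalar. The cleanest route is via the symmetrizer $e$: specializing $\xi=-\rho_m^\vee$ and using the defining relation $Y^{b,\vee}e=L^{b,\vee}e$ of Theorem \ref{dham}, one can recognise $L^{b,\vee}_c(\xi,\YH_c)e$ as a classical Hamiltonian of the original system, which up to the normalization discrepancy between $\RR$ and $\RH$ coincides with $L^b_c$. Alternatively, in the (quasi-)minuscule case one can compare term by term: writing $L^{b,\vee}_c=\sum_{\pi\in Wb}A^\vee_\pi(\zeta)e^{\beta p_\pi}$ via Theorem \ref{dhamm} and $L^b_c=\sum_{\pi\in Wb}A_\pi(x)e^{\beta p_\pi}$ via Theorem \ref{lb}, the coefficient of $e^{\beta p_\pi}$ on the two sides should match by a product-of-$R$-matrix identity, with the additive constant in the quasi-minuscule case absorbed by the $B^\vee_\pi$ terms of \eqref{ch1}--\eqref{ch2}. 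This identification step is the main obstacle: part (iii) only says that the result is a scalar, and pinning down its equality with $L^b_c$ up to a constant is an action-angle-duality statement that does not follow formally from the earlier parts and appears to genuinely require the explicit descriptions of Theorems \ref{lb} and \ref{dhamm}.
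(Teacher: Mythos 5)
Your overall architecture is right—parts (ii) and (iii) do go exactly as in Proposition \ref{elcl} (quasi-periodicity of $\YH^\lambda_c$ under $\xi\mapsto\xi+u+\tau v$ giving conjugation by $e^{2\pi\mathrm{i}\dpr{v,x}}$, then Liouville)—but there are two genuine gaps. First, in part (i) your cancellation mechanism is not the one that works. The operators $\YH^\pi$ have \emph{no} poles along $\dpr{\alpha^\vee,\xi}=0$ (the normalization \eqref{rh1} makes $\RH(\aalpha)\to -s_{\aalpha}$ there), so there are no ``residues coming from $\YH^\mu$'' to cancel against the coefficients. The actual argument decomposes the orbit $Wb$ into $s_\alpha$-strings: for a length-two string $\{\pi,\pi'=\pi-\alpha^\vee\}$ the coefficients satisfy $A'=A^{s_\alpha}$, so $A+A'$ is regular, and one needs the identity $\YH^{\pi'}=\YH^{\pi}$ \emph{on} the hyperplane $\dpr{\alpha^\vee,\xi}=0$ to combine the two terms into $(A+A')\YH^{\pi}$. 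That identity is Lemma \ref{lemmay} ($\YH^{\alpha^\vee}=1$ when $\dpr{\alpha^\vee,\xi}=0$), which is a nontrivial alcove-walk computation using unitarity and equivariance—it does not follow from the braid relations alone, and your sketch never produces it. In the quasi-minuscule case there is a further layer you omit: length-three strings give \emph{second-order} poles (see \eqref{3term}), and regularity requires both the residue properties \eqref{aab} of the coefficients and the first-order expansion $\YH^{\alpha^\vee}=1+\varepsilon\dpr{\alpha^\vee,\xi}+o(\dpr{\alpha^\vee,\xi})$.

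Second, part (iv) as you describe it does not close, and you have the duality the wrong way round. The relation $Y^{b,\vee}e=L^{b,\vee}e$ at $\zeta=-\rho_m^\vee$ (Theorem \ref{dham}) concerns the \emph{dual} Cherednik operators and is irrelevant here: what is substituted into $L^{b,\vee}_c$ are the \emph{original} operators $\YH^\lambda$, and the correct specialization is $\xi=-\rho_m$, where Theorem \ref{emr} gives $Y^b e=L^b e$. By Lemma \ref{rrrh} the classical coefficients satisfy $A^\vee_\pi\YH^\pi=Y^\pi$, so $L^{b,\vee}_c(\xi,\YH)=\sum_{\pi\in Wb}Y^\pi+c_0$; the missing step—which is the substance of the paper's proof—is that for non-dominant $\pi$ one has $Y^\pi e=0$ at $\xi=-\rho_m$. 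This is shown by writing $Y^\pi=t(\pi)\RR(\beta^1)\cdots\RR(\beta^l)$ and arguing geometrically that $\beta^l=-a_i$ with $i\ne 0$, whence $\RR(-a_i)e=\sigma_{m_{a_i}}(-a_i)(1-s_i)e=0$. Without this, $\sum_\pi Y^\pi e$ cannot be identified with $L^b e$, and indeed you concede the identification is an ``obstacle''; it is exactly this vanishing of the non-dominant terms, not an action-angle duality, that resolves it.
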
   
We will prove the proposition in \ref{regu}. We expect that a similar result is true for any of the classical Hamiltonians $L^{b,\vee}_c$, not only for those associated with (quasi-)minuscule coweights.

\medskip

A quantum Lax pair can now be constructed as in \eqref{d1}--\eqref{d2}, by writing 
\begin{equation}\la{de}
L^{b,\vee}_{c}(\xi, \YH)=L^b+\widehat A\,,
\end{equation}
for a suitable $\widehat A\in\D_q*W$. Pick a Cherednik operator $Y^\lambda$, $\lambda\in P^\vee_+$. Then $Y^\lambda$ and $L^b+\widehat A$ commute, and $L^b$ is $W$-invariant, which leads to a quantum Lax pair of size $|W|$. 
Since $L^{b,\vee}_{c}(\xi, \YH_{c})=L^{b}_{c}+\mathrm{const}$ by Proposition \ref{elclq}(iv), the classical limit of $\widehat A$ is a constant. We can modify $\widehat A$ by subtracting this constant; as a result, the constructed Lax pair will admit a classical limit. Finally, to reduce it to a Lax pair of a smaller size, we use the following lemma whose proof is postponed to \ref{regu}.

\begin{lemma}\la{il}
Let $\lambda=b_{k}$, $1\le k\le n$ be one of the fundamental coweights, $W'$ be its stabiliser and $e'$ be the corresponding symmetrizer. Furthermore, assume that $\xi=-\rho_m+\eta b_k$, with arbitrary $\eta$. %satisfies the equations 
%\begin{equation*}
%\dpr{a_i^\vee, \xi_0}=-{m}_i\qquad\text{for all $i\ne k$ with $1\le i \le n$}\,.
%\end{equation*} 
Then for such $\lambda, \xi$ the action of $Y^\lambda$ and $L^{b,\vee}_c(\xi, \YH)$ on the module $M$ preserve the subspace $M'=e'M$. 
\end{lemma}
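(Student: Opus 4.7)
The plan rests on a degeneration of the simple-root $R$-matrices at the specialized parameter. From \eqref{system} together with $\xi=-\rho_m+\eta b_k$ we have $\dpr{a_i^\vee,\xi}=-m_{a_i}$ for every $i\ne k$; substituting into \eqref{raa} and using the parity identity $\sigma_{-\mu}(z)=-\sigma_\mu(-z)$ gives
\begin{equation*}
\RR(a_i)(\xi)=(1+s_i)\,\sigma_{m_{a_i}}(a_i),\qquad i\ne k,
\end{equation*}
so $\RR(a_i)(\xi)M=(1+s_i)M=M^{s_i}$, the $s_i$-fixed subspace. Since $W'=\langle s_i:i\ne k\rangle$, one has $M'=\bigcap_{i\ne k}M^{s_i}$, and I will reduce the preservation of $M'$ by an operator $A$ to proving the commutation $[A,\RR(a_i)(\xi)]=0$ for every $i\ne k$, via the clean chain
\begin{equation*}
A\,M^{s_i}=A\,\RR(a_i)M=\RR(a_i)\,A\,M\subseteq\RR(a_i)M=M^{s_i}.
\end{equation*}

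For $A=Y^{b_k}$, I plan to establish this commutation \emph{universally} in $\xi$ by a braid argument. Since $s_i(b_k)=b_k$, one has $s_it(b_k)=t(b_k)s_i$ in $\Wh$; and since $t(b_k)^{-1}(a_i)=a_i$ (as $\dpr{a_i,b_k}=0$), the length relation $l(s_it(b_k))=l(t(b_k))+1$ holds. Equating the two reduced decompositions of $s_it(b_k)=t(b_k)s_i$---one prefixed by $s_i$ and one suffixed by $s_i$---through the reduced-expression independence of $\RR_w$ (Theorem \ref{yb}(i)) and the $W$-equivariance $s\,\RR(\alpha)(\xi)\,s^{-1}=\RR(s\alpha)(s\xi)$, I obtain the identity
\begin{equation*}
\RR_{t(b_k)}(\xi)\,\RR(a_i)(\xi)=\RR(a_i)(\xi)\cdot s_i\,\RR_{t(b_k)}(s_i\xi)\,s_i.
\end{equation*}
Right-multiplying by $t(b_k)$ and using that $t(b_k)$ commutes with both $\RR(a_i)$ (since $\dpr{a_i,b_k}=0$) and $s_i$ (since $s_ib_k=b_k$), together with $s_iY^{b_k}(s_i\xi)s_i=Y^{b_k}(\xi)$ by $W$-equivariance of $Y$, will collapse this to $[Y^{b_k}(\xi),\RR(a_i)(\xi)]=0$, after which preservation of $M'$ follows from the first paragraph.

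For $A=L^{b,\vee}_c(\xi,\YH)$, which is regular at our $\xi$ by Proposition \ref{elclq}(i)--(ii), I would decompose $A=L^b+\widehat A(\xi)$, the quantum counterpart of Proposition \ref{elclq}(iv). Since $L^b\in\D_q^W$ preserves $M'$ trivially, the problem reduces to $W'$-invariance of $\widehat A$ at the specialized $\xi$. Via the global $W$-equivariance $s_jL^{b,\vee}_c(\xi,\YH(\xi))s_j^{-1}=L^{b,\vee}_c(s_j\xi,\YH(s_j\xi))$, this amounts to showing $L^{b,\vee}_c(\xi,\YH(\xi))=L^{b,\vee}_c(s_j\xi,\YH(s_j\xi))$ at our $\xi$---a quantum lift of the constancy of Proposition \ref{elclq}(iii). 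The hard part will be precisely this: unlike for $Y^{b_k}$, no term-by-term commutation with $\RR(a_j)$ holds for the orbit sum defining $L^{b,\vee}_c$, because the orbit elements $\pi\in W\cdot b$ need not all be $s_j$-fixed. The required $W'$-invariance must instead be recovered through cancellations across the orbit, orchestrated by the Yang--Baxter structure of the $\YH^\pi$'s together with the simple-root degeneration of the first paragraph; making this rigorous will be the technical heart of the postponed argument in \ref{regu}.
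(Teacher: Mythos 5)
Your strategy for $Y^{b_k}$ --- specialize $\xi$ so that $\RR(a_i)$ degenerates, identify $M^{s_i}=\RR(a_i)M$, and push $Y^{b_k}$ through $\RR(a_i)$ by comparing the two reduced decompositions of $s_it(b_k)=t(b_k)s_i$ --- is sound and close to the paper's, which packages the same braid computation into the unitary elements $\TH_i=\RH(a_i)(s_i^\vee\otimes s_i)$ and Lemma \ref{23}. But your final step is wrong: $s_i\,Y^{b_k}(s_i\xi)\,s_i=Y^{b_k}(\xi)$ does \emph{not} follow from equivariance. Conjugating $\RR_{t(b_k)}=\RR(\alpha^1)\cdots\RR(\alpha^l)$ by $s_i^\vee\otimes s_i$ gives $\RR(s_i\alpha^1)\cdots\RR(s_i\alpha^l)$, and the word $(i,i_1,\dots,i_l,i)$ for $s_it(b_k)s_i=t(b_k)$ is not reduced, so reduced-word independence gives nothing; what your own displayed identity actually shows is $\prod_j\RR(s_i\alpha^j)=\RR(a_i)^{-1}\RR_{t(b_k)}\RR(a_i)$, i.e.\ it is conjugation by $\TH_i$, not by $s_i^\vee\otimes s_i$, that fixes $\YH^{b_k}$. (Concretely, $s_i\rho_m=\rho_m-m_{a_i}a_i$, so $s_i\xi\ne\xi$ at your specialization and the two sides carry different dynamical parameters.) Hence $[Y^{b_k}(\xi),\RR(a_i)(\xi)]=0$ is unjustified. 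Fortunately you do not need it: the intertwining relation you correctly derived already yields
\begin{equation*}
Y^{b_k}M^{s_i}=Y^{b_k}\RR(a_i)M=\RR(a_i)\bigl(s_iY^{b_k}(s_i\xi)s_i\bigr)M\subseteq\RR(a_i)M=M^{s_i}\,,
\end{equation*}
so replacing ``commutation'' by ``intertwining'' in your reduction repairs the first half. (This is exactly the shape of the paper's argument, which uses $\RR(-a_i)Y^\lambda=(s_i^\vee\otimes s_i)Y^\lambda(s_i^\vee\otimes s_i)\RR(-a_i)$ together with $\RR(-a_i)=\sigma_{m_{a_i}}(-a_i)(1-s_i)$ at the special $\xi$ to get $(1-s_i)Y^\lambda e'=0$.)

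The second half is the real gap: you explicitly defer the $W'$-invariance of $\widehat A$ as ``the technical heart'', and the route you sketch rests on a ``global $W$-equivariance'' $s_jL^{b,\vee}_c(\xi,\YH(\xi))s_j^{-1}=L^{b,\vee}_c(s_j\xi,\YH(s_j\xi))$ that fails for the same reason as above: plain $s_j$-conjugation does not send $\YH^\pi$ to $\YH^{s_j\pi}$. The paper's point is that no orbit-wide cancellation argument is needed. By Lemma \ref{23}, $\TH_i\,g(\xi)\YH^{\pi}=g(s_i\xi)\YH^{s_i\pi}\TH_i$, so the $W$-invariance of the classical Hamiltonian (its coefficients satisfy $a_{s_i\pi}(s_i\xi)=a_\pi(\xi)$) makes the entire orbit sum $L^{b,\vee}_c(\xi,\YH)$ commute with every $\TH_i$, $i=1,\dots,n$: the terms are simply permuted. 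From $\TH_iA=A\TH_i$ one gets $\RR(-a_i)A=\bigl((s_i^\vee\otimes s_i)A(s_i^\vee\otimes s_i)\bigr)\RR(-a_i)$, and at the specialized $\xi$ the factor $\RR(-a_i)=\sigma_{m_{a_i}}(-a_i)(1-s_i)$ kills $e'$, giving $(1-s_i)Ae'=0$ directly --- with no decomposition $A=L^b+\widehat A$ and no explicit Yang--Baxter cancellations.
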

This means that the quantum Lax pair constructed from \eqref{de} 
can be restricted to $M'$, giving matrices of size $|W|/|W'|$. Note that the Lax pair depends on a spectral parameter $\eta$. The following theorem summarizes the obtained results.

\begin{theorem}\la{mainthm} For any fundamental coweight $\lambda=b_k\in P^\vee_+$, with the stabiliser $W'$, and any (quasi-)minuscule coweight $b$, there exists a quantum Lax pair $\mathcal L, \mathcal A$ of size $|W|/|W'|$ satisfying the Lax equation \eqref{qleq} with $\mathcal H=\widehat H\mathbb{1}$, $\widehat H=L^b$. This Lax pair depends on a spectral parameter and admits a classical limit.   
\end{theorem}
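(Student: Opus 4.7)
My plan is to assemble the Lax pair by combining the renormalized substitution in \eqref{de} with the parabolic restriction provided by Lemma \ref{il}, and then to matrix-translate using the representation \eqref{rep}.

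First, I would fix $\xi = -\rho_m + \eta b_k$ with $\eta \in \c$ serving as the spectral parameter, and form $\Phi(\xi) := L^{b,\vee}_c(\xi, \YH)$ by substituting the unitary Cherednik operators $\YH^\mu$ for the classical exponentials $e^{\beta p_\mu}$ in the explicit formula for $L^{b,\vee}_c$ supplied by Theorem \ref{dhamm}. Writing $\Phi(\xi) = \sum_{w\in W} a_w w$ with $a_w \in \D_q$, I would set $L^b := \sum_{w\in W} a_w$ and $\widehat A := \sum_{w\in W} a_w(w-1)$, so that $\Phi(\xi) = L^b + \widehat A$. Since the $\YH^\mu$ pairwise commute (the unitary counterpart of Theorem \ref{yb}) and differ from $Y^\mu$ by scalar factors depending only on $\xi$, the Cherednik operator $Y^\lambda$ commutes with $\Phi(\xi)$, hence $[Y^\lambda, L^b + \widehat A] = 0$.

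I would then pass to the faithful representation \eqref{rep} of $\D_q * W$ on $M = \c W \otimes \c(V)$, obtaining $|W| \times |W|$ matrices $\mathcal{L}_0, \mathcal{A}_0$ representing $Y^\lambda$ and $\widehat A$, together with $\mathcal{H}_0 = L^b\mathbb{1}$ by $W$-invariance of $L^b$. This yields the full-size Lax equation $[\mathcal{L}_0, \mathcal{H}_0 + \mathcal{A}_0] = 0$. To cut the size down to $|W|/|W'|$, I would invoke Lemma \ref{il}: both $Y^\lambda$ and $\Phi(\xi)$ preserve $M' = e'M$, and since $L^b \in \D_q^W$ commutes with $e'$, so does $\widehat A$. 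Restricting all three matrices to $M'$ produces $\mathcal{L}, \mathcal{A}$ of the required size satisfying \eqref{qleq} with $\mathcal{H} = L^b\mathbb{1}$, and $\eta$ enters genuinely as a spectral parameter. For the classical limit, Proposition \ref{elclq}(iv) states that $\Phi(\xi)$ tends to $L^b_c + \mathrm{const}$, so $\widehat A$ has constant classical limit which may be absorbed; then $(\mathrm{i}\hbar)^{-1}\widehat A$ admits a well-defined classical limit and produces a classical Lax partner exactly as in \ref{cld} and \ref{climq}.

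The main obstacle I anticipate is verifying that the $W$-trivial reduction $L^b = \sum_w a_w$ of $\Phi(\xi)$ genuinely agrees, for all values of $\eta$, with the Macdonald--Ruijsenaars operator of Theorem \ref{emr}, so that $\mathcal{H}$ is actually independent of the spectral parameter. The cleanest route appears to be a quasi-periodicity argument modelled on the proof of Proposition \ref{elcl}(iii), transported to the difference setting: the $W$-trivial part of $\Phi(\xi)$ should be holomorphic and quasi-periodic in $\xi$, hence $\xi$-independent, and can therefore be evaluated at $\eta = 0$, where it matches $L^b$ by Theorem \ref{emr}. In the quasi-minuscule case one must also check carefully that the subtracted piece $B_\pi^\vee$ of \eqref{ch2} produces exactly the $-B_\pi$ contribution of \eqref{lbq1}--\eqref{lbq2} after reduction, keeping track of the reflection content of the $\sigma_{\dpr{\varphi, -\rho_m^\vee}}(\pi + 2\delta/\dpr{\pi,\pi})$ factor.
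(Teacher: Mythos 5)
Your overall architecture (substitute the unitary Cherednik operators into $L^{b,\vee}_c$, restrict to $M'=e'M$ via Lemma \ref{il} with $\xi=-\rho_m+\eta b_k$, read off the classical limit from Proposition \ref{elclq}(iv)) is the paper's, but the way you split off $\widehat A$ contains a genuine gap. You define $L^b:=\sum_w a_w$ and $\widehat A:=\sum_w a_w(w-1)$ from the expansion $L^{b,\vee}_c(\xi,\YH)=\sum_w a_w w$, in analogy with \eqref{acalg} and \eqref{acalg1}. In the rational and trigonometric cases that decomposition is legitimate because the quadratic Hecke relations force $\sum_w a_w$ to be the $W$-invariant Hamiltonian (see \eqref{sym}). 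In the elliptic difference case there are only braid relations, and for $\xi=-\rho_m+\eta b_k$ with $\eta\ne 0$ the reduction $\sum_w a_w$ is neither $W$-invariant nor equal to $L^b$: already for $\GL_n$, $\bigl(\sum_i Y_i\bigr)e$ involves $\RR_{21}e=(\sigma_\mu(x_{21})-\sigma_{\xi_2-\xi_1}(x_{21}))e$ and therefore genuinely depends on $\eta$. So your $\mathcal H_0$ is not $L^b\mathbb{1}$, and the assertion ``$\mathcal H_0=L^b\mathbb{1}$ by $W$-invariance of $L^b$'' assumes exactly what needs proof.

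You correctly flag this as the main obstacle, but the repair you propose does not work: the holomorphy-plus-quasi-periodicity argument of Proposition \ref{elclq}(ii)--(iii) applies only to the classical substitution $\YH_c$; for the quantum substitution $\YH$ one only has local regularity near $\xi=0$ (part (i)), and the paper explicitly remarks (after Proposition \ref{elcl}) that global regularity fails in the quantum setting, so the $W$-trivial part of $L^{b,\vee}_c(\xi,\YH)$ really does depend on $\xi$. The paper sidesteps the issue: in \eqref{de} it takes $L^b$ to be the fixed $W$-invariant operator of Theorem \ref{emr} and simply \emph{defines} $\widehat A:=L^{b,\vee}_c(\xi,\YH)-L^b$, which is not of the form $\sum_w a_w(w-1)$ and need not annihilate $e$. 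Then $\mathcal H+\mathcal A$ is the matrix of $L^{b,\vee}_c(\xi,\YH)$, the Lax equation follows at once from $[Y^\lambda, L^{b,\vee}_c(\xi,\YH)]=0$, the identity $\mathcal H=L^b\mathbb{1}$ is Theorem \ref{emr}, and Proposition \ref{elclq}(iv) shows the classical limit of $\widehat A$ is a constant that can be subtracted. Your concern about matching $B^\vee_\pi$ with $B_\pi$ likewise evaporates with this definition, since those terms are simply absorbed into $\widehat A$. The remaining ingredients of your argument (commutativity of $Y^\lambda$ with the substituted operator, the restriction via Lemma \ref{il}, the role of $\eta$ as a spectral parameter) are correct and coincide with the paper's.
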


\subsection{}\la{regu} In this subsection we prove Proposition \ref{elclq} and Lemma \ref{il}. For this we need to analyse the operators $\RH_w$ and $\YH^b$ in more detail. First, let us introduce some notation. The $R$-matrices $\RR(\aalpha)$, $\RH(\aalpha)$ depend on $\xi\in V$, so it will be convenient to introduce $\mathbb V=V^\vee\times V$ and $\c(\mathbb V)$, where the first factor $V^\vee\cong V$ represents the $\xi$-variable. We also introduce $\mathbb W=W\times \Wh$, to allow $W$ acting on $\xi$; the subgroup $\id\times \Wh$ will be identified with $\Wh$. This makes $\c(\mathbb V)$ into a $\c\mathbb W$-module, so we form the product $\c(\mathbb V)\rtimes\c\mathbb W$. With each ${w}\in\Wh$ we associate an element $w^\vee\otimes w\in\mathbb W$, where $w^\vee\in W$ is the linear part of $w$; in particular, for $w=t(b)$ we have $w^\vee=1$, so $w^\vee\otimes w=1\otimes t(b)$. %, so $w_0(\xi)=w(\xi)-w(0)$. 

With this notation, the $R$-matrices can be viewed as elements of $\c(\mathbb V)\rtimes \c\mathbb W$. An important property is their equivariance in the following sense:
\begin{equation}\la{eqv}
(w^\vee\otimes w)\RR(\aalpha) (w^\vee\otimes w)^{-1}=\RR(w\aalpha)\,,\qquad\aalpha\in \Ra\,,\ w\in\Wh\,, 
\end{equation}
and the same for $\RH(\aalpha)$. In particular, for $w=t(b)$, $b\in P^\vee$ this implies (cf. \cite[(4.6)]{KH98})
\begin{equation}\la{rtr}
t(b)\RR(\alpha+k\delta) t(-b)=\RR(\alpha+k'\delta)\,,\quad k'=k+\dpr{\alpha, b}\,. 
\end{equation}
Another crucial property  is that $\RR(\aalpha)$ and $\RH(\aalpha)$ satisfy the affine Yang--Baxter relations \cite[(3.1)(a)-(d)]{KH98}. This property can be reformulated by setting 
\begin{equation}\la{thi}
\TH_i=\RH(a_i) (s_i^\vee\otimes s_i)\,,\quad \TH_\pi=\pi^\vee\otimes \pi
\end{equation}
for $i=0, \dots, n$ and $\pi\in\Omega$. Then one can check that the relations \cite[(3.1)(a)-(d)]{KH98} imply that $\TH_i$ and $\TH_\pi$ satisfy the relations of the braid group \eqref{braid}--\eqref{braid2}. Moreover, the unitarity \eqref{uni1} implies that $\TH_i^2=1$ for all $i$. Therefore, we have the following result. 
\begin{prop}\la{ass}
The assignment 
%\begin{equation*}
$s_i\mapsto \TH_i$ ($i=0,\dots, n$), $\pi\mapsto \TH_\pi$ ($\pi\in\Omega$)
%\end{equation*}
extends to a group homomorphism $w\mapsto \TH_w$, $w\in\Wh$. In particular, the elements $\TH_{t(b)}$, $b\in P^\vee$ pairwise commute. 
\end{prop}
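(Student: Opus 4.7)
The strategy is to verify that the elements $\TH_i$ ($i=0,\dots,n$) together with $\TH_\pi$ ($\pi\in\Omega$) satisfy the full set of defining relations of $\Wh=\WW\rtimes\Omega$, namely the braid relations \eqref{braid}, the quadratic relations $\TH_i^2=1$, the multiplicativity \eqref{braid1} on $\Omega$, and the compatibility \eqref{braid2} $\TH_\pi\TH_i\TH_\pi^{-1}=\TH_j$ whenever $\pi s_i\pi^{-1}=s_j$. Once these relations are established, the universal property of $\Wh$ produces the desired group homomorphism $w\mapsto \TH_w$, and the asserted commutativity of $\TH_{t(b)}$ follows at once because $\{t(b):b\in P^\vee\}\subset \Wh$ is an abelian subgroup.

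The quadratic and $\Omega$-relations are the easy ones. Using the equivariance \eqref{eqv} to push the group factor through the $R$-matrix one gets
\[
\TH_i^2=\RH(a_i)(s_i^\vee\!\otimes\! s_i)\RH(a_i)(s_i^\vee\!\otimes\! s_i)=\RH(a_i)\RH(s_ia_i)=\RH(a_i)\RH(-a_i)=1
\]
by the unitarity \eqref{uni1}. The relation $\TH_\pi\TH_{\pi'}=\TH_{\pi\pi'}$ is immediate from the definition \eqref{thi}, and a parallel equivariance computation gives
\[
\TH_\pi\TH_i\TH_\pi^{-1}=\RH(\pi a_i)\bigl((\pi s_i\pi^{-1})^\vee\!\otimes\! \pi s_i\pi^{-1}\bigr)=\TH_j
\]
as soon as $\pi a_i=a_j$.

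The main work lies in the braid relations $\TH_i\TH_j\TH_i\cdots=\TH_j\TH_i\TH_j\cdots$, with $m_{ij}$ factors on each side. The plan is to move all the group factors $(s_k^\vee\otimes s_k)$ to the right using \eqref{eqv}; this yields on either side the common group element $(s_is_js_i\cdots,\,s_is_js_i\cdots)$ preceded by an ordered product of $R$-matrices whose root arguments enumerate the positive roots of the rank-$2$ subsystem spanned by $a_i,a_j$ in the two possible orders. After this reduction the required identity becomes exactly the affine Yang--Baxter relation \cite[(3.1)(a)--(d)]{KH98}, which holds for the $R$-matrices $\RR(\aalpha)$, and which is inherited by the unitary renormalisation $\RH(\aalpha)$ because the two differ only by the scalar factor $\sigma_{m_\alpha}(\dpr{\alpha^\vee,\xi})^{-1}$, and the multiset of roots appearing on the two sides of the braid identity is the same, so the scalar contributions cancel.

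\textbf{Expected obstacle.} The only delicate point is the rank-$2$ bookkeeping in the braid verification: for each value $m_{ij}\in\{2,3,4,6\}$ one must identify the sequence of roots $\alpha^{(k)}$ produced by the equivariance shuffle with the sequence appearing in the corresponding Yang--Baxter identity of Komori--Hikami. Modulo that standard case-by-case check the argument is a routine rewriting using \eqref{eqv} and \eqref{uni1}.
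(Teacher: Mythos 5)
Your proposal is correct and follows essentially the same route as the paper: the paper likewise observes that the affine Yang--Baxter relations of Komori--Hikami give the braid relations \eqref{braid}--\eqref{braid2} for the $\TH_i$, $\TH_\pi$, that unitarity \eqref{uni1} gives $\TH_i^2=1$, and then invokes the presentation of $\Wh=\WW\rtimes\Omega$ to obtain the homomorphism, with the commutativity of the $\TH_{t(b)}$ following from the commutativity of translations in $\Wh$. Your write-up merely makes explicit the equivariance bookkeeping that the paper leaves as "one can check".
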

The elements $\TH_w$ can be rewritten in terms of the affine $R$-matrices, giving the following result.
\begin{prop}\la{prophat}
 For any reduced decomposition of $w$ into $w=s_{i_1}\dots s_{i_l}\pi$ with $\pi\in\Omega$ we have $\TH_w=\RH_w (w^\vee\otimes w)$, where 
\begin{equation}\la{rw1}
\RH_w=\RH(\alpha^{1})\dots \RH(\alpha^{l})\,,\qquad \alpha^{1}=a_{i_1},\ \alpha^{2}=s_{i_1}(a_{i_2})\,,\ \dots,\ \alpha^{l}=s_{i_1}\dots s_{i_{l-1}}(a_{i_l})\,. 
\end{equation}
As a result, $\RH_w$ does not depend on the choice of a decomposition. For $b\in P^\vee$ we have $\TH_{t(b)}=\RH_{t(b)}(1\otimes t(b))$, so it can be identified with $\YH^b$ as defined in \ref{regu1}. 
%and that$\TH_{t(b)}=\YH^b$ for all, not necessarily dominant, $b\in P^\vee$.   
\end{prop}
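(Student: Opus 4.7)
The plan is to compute $\TH_w$ directly using the group homomorphism property from Proposition~\ref{ass} and then collect all factors of the form $w^\vee\otimes w$ to the right using the equivariance \eqref{eqv}. Given a reduced decomposition $w=s_{i_1}\cdots s_{i_l}\pi$, the homomorphism property gives
\begin{equation*}
\TH_w=\TH_{s_{i_1}}\cdots \TH_{s_{i_l}}\TH_\pi=\RH(a_{i_1})(s_{i_1}^\vee\otimes s_{i_1})\cdots \RH(a_{i_l})(s_{i_l}^\vee\otimes s_{i_l})(\pi^\vee\otimes \pi),
\end{equation*}
by the definitions in \eqref{thi}. The right-hand side is an element of $\c(\mathbb V)\rtimes \c\mathbb W$, and I will now move all group elements $s_{i_k}^\vee\otimes s_{i_k}$ and $\pi^\vee\otimes \pi$ past the $R$-matrices to the right.

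The key tool is \eqref{eqv}, which says $(u^\vee\otimes u)\RH(\aalpha)=\RH(u\aalpha)(u^\vee\otimes u)$ for $u\in\Wh$. Applied inductively, this moves $(s_{i_1}^\vee\otimes s_{i_1})$ past $\RH(a_{i_2})$, turning it into $\RH(s_{i_1}a_{i_2})$; then $(s_{i_1}^\vee s_{i_2}^\vee\otimes s_{i_1}s_{i_2})$ past $\RH(a_{i_3})$ turns it into $\RH(s_{i_1}s_{i_2}a_{i_3})$, and so on. After $l$ such steps the group factors collapse to $(s_{i_1}^\vee\cdots s_{i_l}^\vee\pi^\vee\otimes s_{i_1}\cdots s_{i_l}\pi)=(w^\vee\otimes w)$, leaving the product
\begin{equation*}
\RH_w=\RH(a_{i_1})\RH(s_{i_1}a_{i_2})\cdots \RH(s_{i_1}\cdots s_{i_{l-1}}a_{i_l})=\RH(\alpha^1)\cdots \RH(\alpha^l)
\end{equation*}
on the left, exactly as in \eqref{rw1}. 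Therefore $\TH_w=\RH_w\,(w^\vee\otimes w)$.

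Independence of $\RH_w$ on the choice of reduced decomposition is then immediate: $\TH_w$ depends only on $w$ by Proposition~\ref{ass}, and $(w^\vee\otimes w)$ is invertible, so $\RH_w=\TH_w(w^\vee\otimes w)^{-1}$ is intrinsically defined. Finally, for $w=t(b)$ with $b\in P^\vee$ the linear part $w^\vee$ is trivial, so $\TH_{t(b)}=\RH_{t(b)}(1\otimes t(b))$; under the identification of $1\otimes t(b)$ with the translation $t(b)\in \D_q$ this matches $\YH^b=\RH_{t(b)}t(b)$ from \ref{regu1}. The main subtlety is bookkeeping in the equivariance step, particularly keeping track that the linear parts of $s_{i_1}\cdots s_{i_k}$ conjugate $\RH(a_{i_{k+1}})$ correctly; once this is handled, the identification of the translation part with the operator $t(b)$ acting on $\c(V)$ is routine.
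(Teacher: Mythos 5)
Your argument is correct and is exactly the route the paper intends: the paper treats this proposition as an immediate consequence of Proposition~\ref{ass} (the homomorphism property coming from the braid relations and unitarity), and your inductive use of the equivariance \eqref{eqv} to push the group factors $s_{i_k}^\vee\otimes s_{i_k}$ to the right is the standard bookkeeping that produces the conjugated roots $\alpha^k$ of \eqref{rw1}. The independence of $\RH_w$ on the reduced decomposition via invertibility of $w^\vee\otimes w$, and the specialisation to $w=t(b)$ with trivial linear part, are handled correctly.
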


The roots $\alpha^k$ appearing in \eqref{rw1} can be characterised geometrically. For this it will be convenient to work over $\R$, assuming $V\cong \R^n$ and setting $c=1$ in the definition of $t(\lambda)$ \eqref{tl}; then $\delta\equiv 1$ on $V$. Let $C_a$ be the Weyl alcove,
\begin{equation*}
C_a=\{x\in V\,|\,a_i(x)>0\ \text{for}\ i=0,\dots, n\}\,. 
\end{equation*}  
This is a fundamental domain for $\WW$, and each $\pi\in \Omega$ maps $C_a$ to itself. The set of positive roots $\Ra^+$ consists, by definition, of all those $\aalpha$ which take positive values on $C_a$. Then for any $w\in\Wh$, the set $\{\alpha^1, \dots, \alpha^l\}$ as defined in \eqref{rw1} consists of all $\aalpha\in \Ra^+$ for which the hyperplane $\aalpha=0$ separates $C_a$ and $w(C_a)$, see \cite[2.2]{M03} (note that our $w$ corresponds to $w^{-1}$ in \cite{M03}). Furthermore, the sequence of the hyperplanes $\alpha^1=0, \dots, \alpha^l=0$ can be obtained by taking a straight line between two generic points $x\in C_a$, $y\in wC_a$ and by listing the reflection hyperplanes which this line intersects as you go from $x$ to $y$. This determines each $\alpha^k$ up to a sign which can be further fixed by prescribing that $\alpha^k$ decreases as you move from $x$ to $y$ (this is because $\alpha^k$ is $>0$ on $C_a$ and $<0$ on $wC_a$).   %We can now prove the following result.

\begin{lemma}\la{lemmay}
For any $\alpha\in R$, we have $\YH^{\alpha^\vee}=1$ if $\dpr{\alpha^\vee, \xi}=0$.
\end{lemma}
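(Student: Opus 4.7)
The plan is to exploit a striking simplification of the unitary $R$-matrices on the hyperplane $\dpr{\alpha^\vee,\xi}=0$, combined with the homomorphism property of $\TH$ from Proposition \ref{ass}. First I would record the following observation: inspection of formula \eqref{rh1} shows that specializing to $\dpr{\alpha^\vee,\xi}=0$ kills the first term (since $\theta(0)=0$), while the coefficient of $s_\aalpha$ in the second term reduces to $-\theta(\aalpha)\theta(-m_\alpha)/[\theta(\aalpha)\theta(-m_\alpha)]=-1$. Thus for \emph{any} affine root $\aalpha\in\Ra$ with finite part $\alpha$,
\[\RH(\aalpha)\,\big|_{\dpr{\alpha^\vee,\xi}=0}=-s_\aalpha,\]
and despite the apparent singularities of $\sigma_\mu(z)$ at $\mu=0$, this limit is genuinely regular.

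The key combinatorial input is the factorization $t(\alpha^\vee)=s_{\alpha+\delta}\,s_\alpha$ in $\Wh$ (valid for $\alpha\in R_+$ by direct verification; the case $\alpha\in R_-$ follows by passing to inverses via \eqref{uni1}). Crucially, both reflections $s_{\alpha+\delta}$ and $s_\alpha$ have finite part $\alpha$. The group-homomorphism property of $\TH$ then gives $\TH_{t(\alpha^\vee)}=\TH_{s_{\alpha+\delta}}\cdot\TH_{s_\alpha}$.

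The central claim to establish is: for every $\aalpha\in\Ra^+$ with finite part $\alpha$, one has $\TH_{s_\aalpha}\big|_{\dpr{\alpha^\vee,\xi}=0}=-(s_\alpha^\vee\otimes 1)$, which acts as $-1$ on functions on the hyperplane (since $s_\alpha^\vee$ fixes it pointwise). To verify this, I would choose $v\in\Wh$ with $v(a_i)=\aalpha$ for some simple root $a_i$; then $s_\aalpha=v s_i v^{-1}$, and using $\TH_v=\RH_v(v^\vee\otimes v)$ together with the equivariance \eqref{eqv} one derives
\[\TH_{s_\aalpha}=\RH_v\cdot\RH(\aalpha)\,(s_\alpha^\vee\otimes s_\aalpha)\cdot\RH_v^{-1}.\]
On the hyperplane, the opening observation reduces the middle factor to $(-s_\aalpha)(s_\alpha^\vee\otimes s_\aalpha)=-(s_\alpha^\vee\otimes 1)$, and since $s_\alpha^\vee$ acts trivially on functions of $\xi$ on the hyperplane and commutes with every $x$-operator, this element is central in the restricted algebra, so conjugation by $\RH_v$ leaves it invariant.

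Assembling the pieces gives $\TH_{t(\alpha^\vee)}\big|_{\dpr{\alpha^\vee,\xi}=0}=[-(s_\alpha^\vee\otimes 1)]^2=(s_\alpha^\vee)^2\otimes 1=1$, and since Proposition \ref{prophat} identifies $\YH^{\alpha^\vee}$ with $\TH_{t(\alpha^\vee)}$ as an $x$-operator, the conclusion $\YH^{\alpha^\vee}=1$ follows. The main technical obstacle I anticipate is justifying the ``restriction to the hyperplane'' rigorously: one must verify that $\RH_v$ remains well-defined and invertible at generic $\xi$ satisfying $\dpr{\alpha^\vee,\xi}=0$. This holds because the only factors in $\RH_v$ that could develop singularities there are those whose affine roots have finite part $\pm\alpha$---and for precisely those the simplification $\RH(\aalpha)=-s_\aalpha$ applies, giving invertible operators.
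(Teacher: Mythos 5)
Your argument is correct, and it reaches the conclusion by a genuinely different route from the paper's. The paper argues geometrically: it takes the reduced word for $t(\alpha^\vee)$ coming from the alcove walk from a generic $x\in C_a$ to $x-\alpha^\vee$, observes that this walk crosses the hyperplanes $\alpha=0$ and $\alpha+\delta=0$ and that the sequence of crossed hyperplanes is palindromic about each of these two crossings (with the sign analysis $\alpha^{i+r}=-s_\alpha(\alpha^{i-r})$), and then collapses each palindromic block to $-s_\alpha$, resp.\ $-s_{\alpha+\delta}$, using $\RH(\alpha)=-s_\alpha$ on the hyperplane together with equivariance and unitarity; this gives $\RH_{t(\alpha^\vee)}=s_\alpha s_{\alpha+\delta}=t(-\alpha^\vee)$ and hence $\YH^{\alpha^\vee}=1$. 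You instead outsource all of the word combinatorics to Proposition \ref{ass}: the factorization $t(\alpha^\vee)=s_{\alpha+\delta}s_\alpha$ plus the group-homomorphism property reduce everything to the single claim that $\TH_{s_{\aalpha}}$ acts as $-1$ on the hyperplane for any affine root $\aalpha$ with finite part $\alpha$, and that claim follows by conjugating the degenerate $R$-matrix $\RH(\aalpha)=-s_{\aalpha}$. The two arguments rest on the same pillars --- the degeneration of $\RH(\aalpha)$ at $\dpr{\alpha^\vee,\xi}=0$, unitarity, and equivariance --- and the paper's palindromic collapse is essentially your conjugation written out on a concrete reduced word. Your version buys brevity and avoids the sign bookkeeping of the alcove walk; the paper's version never needs an auxiliary conjugating element $v$, whereas in yours the regularity and invertibility of $\RH_v$ along the hyperplane (and the fact that $s_\alpha^\vee\otimes 1$ becomes central only \emph{after} restriction, not as a rational-coefficient operator) is the one delicate point --- which you correctly identify and resolve in your closing paragraph.
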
      
\begin{proof}
We may assume that $\alpha>0$. Consider $w=t(\alpha^\vee)\in \Wh$; it acts on $V$ by $w(x)=x-\alpha^\vee$. We need to show that for $\dpr{\alpha^\vee, \xi}=0$ we have $\RH_w=t(-\alpha^\vee)$. Take a generic $x\in C_a$ and consider a line from $x$ to $y=x-\alpha^\vee\in wC_a$. We have $0<\dpr{\alpha, x}<1$ and $\dpr{\alpha, y}=\dpr{\alpha, x}-2$, therefore the line intersects the hyperplanes $\alpha=0$ and $\alpha+\delta=0$. This tells us that $\alpha^i=\alpha$ and $\alpha^j=\alpha+\delta$ for some $1\le i<j\le l$. Note that $s_\alpha s_{\alpha+\delta}$ is the translation by $\alpha^\vee$, which is $t(-\alpha^\vee)$ under our conventions. 

Let $z=s_\alpha(x)=s_{\alpha+\delta}(y)$; this point divides the interval between $x, y$ in two parts. The interval between $x, z$ is symmetric under $s_\alpha$, therefore the collection of the hyperplanes it intersects with will be symmetric as well, giving $\alpha^{i+r}=\pm s_\alpha(\alpha^{i-r})$ for $0<r<i$. Moreover, the rate of change of each $\alpha^k$ in direction of $\alpha^\vee$ should be positive; this gives $\alpha^{i+r}=-s_\alpha(\alpha^{i-r})$. Now, from the formula \eqref{rh1} we get $\RH(\alpha)=-s_{\alpha}$ %=-\id\otimes s_{\alpha}$ 
when $\dpr{\alpha^\vee, \xi}=0$. Therefore, assuming $\dpr{\alpha^\vee, \xi}=0$, we get
\begin{equation*}
\RH(\alpha^1)\dots \RH(\alpha^{2i-1})=\RH(\alpha^1)\dots \RH(\alpha^{i-1})(-s_\alpha) \RH(-s_\alpha(\alpha^{i-1}))\dots \RH(-s_\alpha(\alpha^1))\,.
\end{equation*}    
The middle factor $s_\alpha$ can be replaced by $s_\alpha^\vee\otimes s_\alpha\in\mathbb W$, since $s_\alpha^\vee$ acts trivially if $\dpr{\alpha^\vee, \xi}=0$. Using the equivariance \eqref{eqv}, we can move $s_\alpha^\vee\otimes s_\alpha$ to the left and obtain
 \begin{equation*}
\RH(\alpha^1)\dots \RH(\alpha^{2i-1})=-\RH(\alpha^1)\dots \RH(\alpha^{i-1}) \RH(-\alpha^{i-1})\dots \RH(-\alpha^1) (s_\alpha^\vee\otimes s_\alpha)=-s_\alpha\,,
\end{equation*}    
by unitarity. By the same arguments applied to the interval from $z$ to $y$, we get $\alpha^{j+r}=-s_{\alpha+\delta}(\alpha^{j-r})$ for all $1\le r\le l-j$ and so the product $\RH(\alpha^{2i})\dots \RH(\alpha^l)$ reduces to $-s_{\alpha+\delta}$. Putting this together gives us $\RH_w=s_{\alpha}s_{\alpha+\delta}=t(-\alpha^\vee)$, as needed.
\end{proof}

Recall that we have previously defined $\RR_{t(b)}$ and $Y^b$ in accordance with definition \ref{rwdef}; let us now compare them to $\RH_{t(b)}$ and $\YH^b$.  
\begin{lemma} \la{rrrh} %For $w\in\Wh$, the elements $\RR_w$, $\RH_w$ defined in accordance with \eqref{rw} and \eqref{ 
For any $b\in P^\vee$, we have $\RR_{t(b)}=G_b\RH_{t(b)}$ and $Y^b=G_b\YH^b$, where
\begin{equation*}
%\displaystyle
{G_b(\xi)=\prod_{\genfrac{}{}{0pt}{}{\alpha\in R}{\dpr{\alpha, b}>0}}\, \left(\sigma_{m_\alpha}(\dpr{\alpha^\vee, \xi})\right)^{\dpr{\alpha, b}}}\,.
\end{equation*} 
\end{lemma}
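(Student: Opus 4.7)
The plan is to reduce the second identity to the first by multiplying on the right by $t(b)$, and to prove the first by comparing the factorisations of $\RR_{t(b)}$ and $\RH_{t(b)}$ through a single reduced decomposition of $t(b)$. By the defining formula \eqref{rh}, $\RR(\aalpha) = \sigma_{m_\alpha}(\dpr{\alpha^\vee, \xi})\,\RH(\aalpha)$ for every $\aalpha = \alpha + k\delta \in \Ra$; crucially, the scalar factor depends only on the linear part $\alpha$ (and on $\xi$), not on $k$ or on $x$. Since within $\c(V) * \WW$ the dynamical variable $\xi$ is treated as a parameter, these scalars are ordinary complex numbers and commute with everything.

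Fix a reduced decomposition $t(b) = s_{i_1}\dots s_{i_l}\pi$ with $\pi \in \Omega$, and set $\alpha^k = s_{i_1}\dots s_{i_{k-1}}(a_{i_k})$, with linear part $\beta_k \in R$. By Proposition \ref{prophat} we have $\RR_{t(b)} = \RR(\alpha^1)\cdots\RR(\alpha^l)$ and $\RH_{t(b)} = \RH(\alpha^1)\cdots\RH(\alpha^l)$. Pulling the scalar factors to the left gives
\begin{equation*}
\RR_{t(b)} = F_b\cdot \RH_{t(b)}\,,\qquad F_b := \prod_{k=1}^{l}\sigma_{m_{\beta_k}}(\dpr{\beta_k^\vee,\,\xi})\,.
\end{equation*}

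It remains to identify $F_b$ with $G_b$. For this I would invoke the standard combinatorial description (already used in the discussion preceding Lemma \ref{lemmay}): $\{\alpha^1,\dots,\alpha^l\}$ coincides with the set of $\aalpha \in \Ra^+$ such that $t(b)^{-1}\aalpha = t(-b)\aalpha \in \Ra^-$. Combining the formula $t(-b)(\alpha + k\delta) = \alpha + (k-\dpr{\alpha,b})\delta$ with the classification of positive affine roots ($\alpha+k\delta \in \Ra^+$ iff $\alpha \in R_+, k\ge 0$, or $\alpha \in R_-, k\ge 1$), a direct count shows that for every $\alpha \in R$ with $\dpr{\alpha,b}\ge 1$ there are exactly $\dpr{\alpha,b}$ integers $k$ with $\alpha + k\delta \in \{\alpha^1,\dots,\alpha^l\}$, while for $\dpr{\alpha,b}\le 0$ no such $k$ occurs. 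Hence the multiset of linear parts $\{\beta_1,\dots,\beta_l\}$ consists of each $\alpha \in R$ with $\dpr{\alpha,b} > 0$, repeated $\dpr{\alpha,b}$ times, so $F_b = G_b$. Multiplying $\RR_{t(b)} = G_b\,\RH_{t(b)}$ on the right by $t(b)$ yields $Y^b = G_b\,\YH^b$.

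There is no serious conceptual obstacle; the main care is in the combinatorial count. In particular, one must keep track of contributions from \emph{negative} roots $\alpha$ with $\dpr{\alpha,b}>0$, which arise precisely when $b \in P^\vee$ is not dominant. This is the reason $G_b$ is written as a product over all $\alpha \in R$ with $\dpr{\alpha,b}>0$ rather than over $R_+$ alone. Once these cases are accounted for, the rest of the argument is a direct bookkeeping of scalar factors.
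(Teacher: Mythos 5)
Your proof is correct and follows essentially the same route as the paper: the paper's proof simply cites the geometric description of the sequence $\alpha^1,\dots,\alpha^l$ and notes that $\dpr{\alpha,b}$ counts the occurrences of roots with linear part $\alpha$, which is exactly the bookkeeping you carry out explicitly. Your expanded count (including the case of negative roots $\alpha$ with $\dpr{\alpha,b}>0$ for non-dominant $b$) is a faithful filling-in of the details the paper leaves implicit.
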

\begin{proof}
This follows directly from the geometric description of the sequence $\alpha^1, \dots, \alpha^l$ given above. The exponent $\dpr{\alpha, b}$ in the formula counts how many roots of the form $\alpha+k\delta$, $k\in\Z$ will appear in that sequence. 
\end{proof}

\begin{remark}\la{ycom}
It follows that the elements $Y^b$ pairwise commute for all $b\in P^\vee$. However, the formula $Y^\lambda Y^\mu=Y^{\lambda+\mu}$ does not hold in general: it is only valid up to a $\xi$-depending factor. 
\end{remark}

We also need to know how the classical operators $Y^b_c$, $\YH^b_c$ behave under the shifts $\xi\mapsto \xi+u+\tau v$ with $u,v\in P$. %Obvioulsy, shifts by $u\in P$ do not affect $\RR(\aalpha)$ and $Y^b$. 
\begin{lemma}\la{try} 
Under $\xi\mapsto \xi+ u+\tau v$, $u, v\in P$, the operator $Y^b_c$ changes to $e^{2\pi \mathrm{i}\dpr{v,x}} Y^b_c e^{-2\pi \mathrm{i}\dpr{v,x}}$.  
\end{lemma}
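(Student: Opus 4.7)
The plan is to reduce to a transformation identity for a single $R$-matrix $\RR(\aalpha)$ and then exploit the commutativity of the classical algebra $A_0=\c(V)[P^\vee]$. For $\aalpha=\alpha+k\delta\in\Ra$, the $\xi$-dependence of the $R$-matrix
$\RR(\aalpha)=\sigma_{m_\alpha}(\aalpha)-\sigma_{\dpr{\alpha^\vee,\xi}}(\aalpha)\,s_{\aalpha}$
enters only through the coefficient of $s_{\aalpha}$. The quasi-periodicity of $\theta_1(z|\tau)$ gives $\sigma_{\mu+1}(z)=\sigma_\mu(z)$ and $\sigma_{\mu+\tau}(z)=e^{2\pi\mathrm{i}z}\sigma_\mu(z)$. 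Since $u,v\in P$ forces $\dpr{\alpha^\vee,u},\,\dpr{\alpha^\vee,v}\in\Z$, this yields
\begin{equation*}
\sigma_{\dpr{\alpha^\vee,\,\xi+u+\tau v}}(\aalpha)=e^{2\pi\mathrm{i}\dpr{\alpha^\vee,v}\aalpha}\,\sigma_{\dpr{\alpha^\vee,\xi}}(\aalpha)\,.
\end{equation*}

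Next, I would show that conjugation by $E:=e^{2\pi\mathrm{i}\dpr{v,x}}$ produces exactly the same modification. Only the $s_{\aalpha}$-term is affected, and since $E$ is a multiplication operator, it suffices to compute $E\,s_{\aalpha}\,E^{-1}$ in the crossed product $\c(V)*\Wh$. From $s_{\aalpha}(x)=s_\alpha(x)-kc\alpha^\vee$ and the standard reflection identity $v-s_\alpha v=\dpr{\alpha^\vee,v}\,\alpha$, a direct calculation gives
\begin{equation*}
E\,s_{\aalpha}\,E^{-1}=e^{2\pi\mathrm{i}\bigl(\dpr{v,x}-\dpr{v,s_{\aalpha}(x)}\bigr)}\,s_{\aalpha}=e^{2\pi\mathrm{i}\dpr{\alpha^\vee,v}\bigl(\dpr{\alpha,x}+kc\bigr)}\,s_{\aalpha}=e^{2\pi\mathrm{i}\dpr{\alpha^\vee,v}\aalpha}\,s_{\aalpha}\,,
\end{equation*}
matching the prefactor above. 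Hence $\RR(\aalpha)(\xi+u+\tau v)=E\,\RR(\aalpha)(\xi)\,E^{-1}$, an identity valid already at the quantum level.

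Since $\RR_{t(b)}$ is, by Definition \ref{rwdef}, a product of such $R$-matrices, and conjugation by $E$ is an algebra automorphism, the identity lifts to $\RR_{t(b)}(\xi+u+\tau v)=E\,\RR_{t(b)}(\xi)\,E^{-1}$. Passing to the classical limit we have $Y^b_c=\RR_{t(b),c}\cdot e^{\beta p_b}$, and the decisive observation is that $e^{\beta p_b}$ lives in the commutative algebra $A_0$, so it commutes with $E\in\c(V)\subset A_0$. Multiplying the previous identity on the right by $e^{\beta p_b}$ then yields $Y^b_c(\xi+u+\tau v)=E\,Y^b_c(\xi)\,E^{-1}$, as required.

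The only point requiring care is the matching of exponential prefactors: the translation $-kc\alpha^\vee$ carried by the affine reflection $s_{\aalpha}$ contributes precisely the $kc\dpr{\alpha^\vee,v}$ correction that promotes $\dpr{\alpha,x}$ to $\aalpha$ in the exponent, aligning exactly with the $\tau$-quasi-periodicity of $\sigma_\mu$ in $\mu$. Classicality is used only in the very last step, where $e^{\beta p_b}$ commuting with $E$ is essential; at the quantum level the additional factor $e^{-2\pi\mathrm{i}c\dpr{v,b}}$ from $E\,t(b)\,E^{-1}$ would break the clean conjugation statement but disappears in the limit $c\to 0$.
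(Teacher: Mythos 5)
Your proof is correct and follows essentially the same route as the paper: quasi-periodicity of $\sigma_\mu$ in $\mu$ gives the prefactor $e^{2\pi \mathrm{i}\dpr{\alpha^\vee, v}\aalpha}$ on each $R$-matrix, conjugation by $e^{2\pi \mathrm{i}\dpr{v,x}}$ reproduces it, and the leftover factor $e^{2\pi \mathrm{i}c\dpr{v,b}}$ from the translation part disappears at $c=0$. The paper states the conjugation identity for $\RR(\aalpha)$ without spelling out the $s_{\aalpha}$ computation, but the content is identical.
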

\begin{proof}
We have $\sigma_{\mu+1}=\sigma_\mu(z)$ and $\sigma_{\mu+\tau}(z)=e^{2\pi \mathrm{i} z}\sigma_\mu(z)$. Hence,
\begin{equation*}
\RR(\aalpha)\  \xrightarrow{\xi\mapsto \xi+\tau v} \sigma_{m_\alpha}(\aalpha)-e^{2\pi \mathrm{i}\dpr{\alpha^\vee, v}\aalpha} \sigma_{\dpr{\alpha^\vee, \xi}}(\aalpha)s_{\aalpha}=e^{2\pi \mathrm{i}\dpr{v,x}}\RR(\aalpha) \, e^{-2\pi \mathrm{i}\dpr{v,x}}\,.
\end{equation*}
Also, $t(b)=e^{2\pi \mathrm{i}c\dpr{v, b}}e^{2\pi \mathrm{i}\dpr{v,x}}\, t(b) \, e^{-2\pi \mathrm{i}\dpr{v,x}}$. Therefore, for $Y^b=R_{t(b)}t(b)$ we obtain
\begin{equation*}
Y^b\  \xrightarrow{\xi\mapsto \xi+\tau v} e^{2\pi \mathrm{i}c\dpr{v, b}}e^{2\pi \mathrm{i}\dpr{v,x}}\, Y^b \, e^{-2\pi \mathrm{i}\dpr{v,x}}\,.
\end{equation*}
In the classical limit we have $c=0$, so the lemma follows.
\end{proof}
%%%%%%%%%%%%%%%%%%%%%%%%%%%%%%%%

%One of the advantages of the unitary $R$-matrices is that their symmetries are more transparent, as the next lemma shows.

\begin{lemma}\la{23}
For a function $g(\xi)$ and $Y^b$, $b\in P^\vee$, we have $\TH_w\,g(\xi)\YH^b=g(w^{-1}\xi)\YH^{wb}\,\TH_w$ for any $w\in W$.
\end{lemma}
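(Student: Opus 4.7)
The plan is to reduce the claim to two much simpler facts: (i) that $\TH_w$ intertwines the function $g(\xi)$ in the expected way, and (ii) that $\TH_w$ intertwines the Cherednik operator $\YH^b$ via the $W$-action on $b$. Both will follow from the homomorphism property of $w\mapsto\TH_w$ (Proposition \ref{ass}) together with Proposition \ref{prophat}, which identifies $\TH_{t(b)}$ with $\YH^b$.

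For (i), write $\TH_w=\RH_w\,(w^\vee\otimes w)$ according to \eqref{thi} and Proposition \ref{prophat}. The second factor $w^\vee\otimes w$ acts on the $\xi$-variable through $w^\vee=w$, so $(w^\vee\otimes w)\,g(\xi)=g(w^{-1}\xi)\,(w^\vee\otimes w)$. The scalar $g(w^{-1}\xi)$ depends only on $\xi$, while $\RH_w$ is a product of $R$-matrices whose $\xi$-dependence is also purely scalar, and whose non-scalar part lies in $\c\Wh$ and acts only on the $x$-variable. Hence $g(w^{-1}\xi)$ commutes with $\RH_w$, and we get
\begin{equation*}
\TH_w\,g(\xi)=\RH_w\,g(w^{-1}\xi)\,(w^\vee\otimes w)=g(w^{-1}\xi)\,\TH_w\,.
\end{equation*}

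For (ii), use Proposition \ref{prophat} to write $\YH^b=\TH_{t(b)}$, and the semidirect-product identity $w\,t(b)=t(wb)\,w$ in $\Wh$. By the homomorphism property of $\TH$,
\begin{equation*}
\TH_w\,\YH^b=\TH_w\,\TH_{t(b)}=\TH_{w\,t(b)}=\TH_{t(wb)\,w}=\TH_{t(wb)}\,\TH_w=\YH^{wb}\,\TH_w\,.
\end{equation*}

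Combining (i) and (ii) gives
\begin{equation*}
\TH_w\,g(\xi)\YH^b=g(w^{-1}\xi)\,\TH_w\,\YH^b=g(w^{-1}\xi)\,\YH^{wb}\,\TH_w\,,
\end{equation*}
which is the asserted identity. There is no real obstacle here; the only point that requires a small check is the commutation of $g(w^{-1}\xi)$ with $\RH_w$, and this is immediate once one notes that $\RH_w\in\c(\mathbb V)\rtimes\c\Wh$ with no shifts in the $\xi$-variable. Everything else is a direct application of Propositions \ref{ass} and \ref{prophat}.
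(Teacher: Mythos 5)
Your proof is correct and uses essentially the same ingredients as the paper's: the homomorphism property $w\mapsto\TH_w$ from Proposition \ref{ass}, the identification $\YH^b=\TH_{t(b)}$ from Proposition \ref{prophat}, and the semidirect-product relation $w\,t(b)=t(wb)\,w$. The only cosmetic difference is that the paper reduces to the generators $s_1,\dots,s_n$ and verifies both commutation relations there, while you argue directly for general $w\in W$ (spelling out, correctly, why $\RH_w$ commutes with functions of $\xi$).
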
  
\begin{proof}
It suffices to check that $\TH_ig(\xi)=g(s_i\xi)\TH_i$ and $\TH_i\YH^b=\YH^{s_ib}\TH_i$ for $i=1, \dots, n$. The first claim is immediate from \eqref{rh}, \eqref{thi}.  Also, since $\YH^b=\TH_{t(b)}$, we have
$\TH_i\YH^b=\TH_{s_it(b)}=\TH_{t(s_ib)s_i}=\YH^{s_ib}\TH_i$,
as needed. 
\end{proof}

For the rest of this subsection, $b\in P^\vee$ is assumed to be dominant and (quasi-)minuscule. 
\begin{lemma}\la{re} The only possible singularities of $L^{b,\vee}_c(\xi, \YH)$ in the $\xi$-variable are along the hyperplanes $\dpr{\alpha^\vee, \xi}=k+l\tau$, with $\alpha\in R$, $k,l\in\Z$.
\end{lemma}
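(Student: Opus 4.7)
The strategy is to reduce the analysis of $L^{b,\vee}_c(\xi,\YH)$ to the non-unitary Cherednik operators $Y^\pi=\RR_{t(\pi)}t(\pi)$, for which the claim is manifest. From the defining formula \eqref{raa}, each $R$-matrix $\RR(\aalpha)$ depends on $\xi$ only through the factor $\sigma_{\dpr{\alpha^\vee,\xi}}(\aalpha)$, whose $\xi$-poles occur precisely at $\dpr{\alpha^\vee,\xi}\in\Z+\Z\tau$ (since $\sigma_\mu(z)$ has $\mu$-poles only at $\mu\in\Z+\Z\tau$). Therefore $Y^\pi$, being a product of such $R$-matrices composed with the shift $t(\pi)$, satisfies the claim of the lemma.

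Using Theorem \ref{dhamm}, I would write
$$L^{b,\vee}_c(\xi,\YH)=\sum_{\pi\in Wb}\bigl[A_\pi^\vee(\xi)\YH^\pi-B_\pi^\vee(\xi)\bigr],$$
with $B_\pi^\vee\equiv 0$ in the minuscule case, where $A_\pi^\vee(\xi), B_\pi^\vee(\xi)$ are obtained from the formulas of Theorem \ref{dhamm} by substituting $x\mapsto\xi$. By Lemma \ref{rrrh} one has $\YH^\pi=Y^\pi/G_\pi(\xi)$, so each summand becomes $\bigl[A_\pi^\vee(\xi)/G_\pi(\xi)\bigr]Y^\pi-B_\pi^\vee(\xi)$. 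For minuscule $b$ all pairings $\dpr{\alpha,\pi}$ with $\pi\in Wb$ lie in $\{0,\pm 1\}$, and a direct comparison of $\sigma$-factors (the exponents $\dpr{\alpha,\pi}$ in $G_\pi$ and the single factors in $A_\pi^\vee$) yields $A_\pi^\vee(\xi)=G_\pi(\xi)$ on the nose. Hence $L^{b,\vee}_c(\xi,\YH)=\sum_{\pi}Y^\pi$ and the lemma follows.

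The main obstacle is the quasi-minuscule case. There the exponent $\dpr{\pi^\vee,\pi}=2$ and the extra factor $\sigma_{m_\varphi}(\pi+2\delta/\dpr{\pi,\pi})$ in $A_\pi^\vee$ lead to a non-trivial ratio
$$\frac{A_\pi^\vee(\xi)}{G_\pi(\xi)}=\frac{\sigma_{m_\varphi}(\dpr{\pi,\xi}+2c/\dpr{\pi,\pi})}{\sigma_{m_\varphi}(\dpr{\pi,\xi})},$$
which introduces potentially spurious singularities at $\dpr{\pi,\xi}\in m_\varphi+\Z+\Z\tau$ (from the denominator) and at $\dpr{\pi,\xi}\in -2c/\dpr{\pi,\pi}+\Z+\Z\tau$ (from the pole of the numerator). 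The correction term $-B_\pi^\vee(\xi)$ carries the factor $\sigma_{\dpr{\varphi,-\rho_m^\vee}}(\pi+2\delta/\dpr{\pi,\pi})$, which has a simple pole at exactly the same shifted location with residue $P_\pi(\xi)$. A residue computation is needed to show that the operator-valued residue of $\bigl[A_\pi^\vee(\xi)/G_\pi(\xi)\bigr]Y^\pi$ at this critical value of $\xi$ matches the scalar residue of $B_\pi^\vee(\xi)$; this follows because at the critical $\xi$ the relevant $R$-matrix factors in $Y^\pi$ degenerate so that $Y^\pi$ reduces to $P_\pi(\xi)\,t(\pi)$, and the unit residue of $\sigma_\mu$ at $z=0$ is independent of $\mu$. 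The analogous cancellation of the spurious pole at $\dpr{\pi,\xi}\in m_\varphi+\Z+\Z\tau$ uses that the order-two pole of $\YH^\pi$ produced by the two $R$-matrix factors $\RH(\pi^\vee+k\delta)$ appearing in the decomposition of $t(\pi)$ is compensated by the simple zero of the corresponding $\sigma$-factor inside $P_\pi(\xi)$, leaving at worst a simple pole that enters $Y^\pi$ and therefore contributes only at hyperplanes of the allowed form $\dpr{\alpha^\vee,\xi}\in\Z+\Z\tau$.
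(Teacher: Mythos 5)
Your argument for the minuscule case is exactly the paper's: by Lemma \ref{rrrh}, $A_\pi^\vee(\xi)\,\YH^\pi=G_\pi(\xi)\,\YH^\pi=Y^\pi$, and $Y^\pi$ only has $\xi$-singularities along $\dpr{\alpha^\vee,\xi}\in\Z+\Z\tau$. That half is fine.

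The quasi-minuscule case, however, contains a genuine gap rooted in a misreading of what $L^{b,\vee}_c$ is. By the definition in \S\,\ref{regu1}, the coefficients substituted into $L^{b,\vee}_c(\xi,\YH)$ are the \emph{classical limits} of the coefficients of Theorem \ref{dhamm}, i.e.\ one sets $c=0$ (so $\delta=0$) before substituting $x\mapsto\xi$. Hence the extra factor $\sigma_{m_\varphi}(\pi+2\delta/\dpr{\pi,\pi})$ in $A_\pi^\vee$ degenerates to $\sigma_{m_\varphi}(\dpr{\pi,\xi})$, which is precisely the second copy of $\sigma_{m_\varphi}(\dpr{\varphi^\vee,\xi})$ needed to account for the exponent $\dpr{\varphi,\varphi^\vee}=2$ in $G_\pi$. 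So $A_\pi^\vee(\xi)=G_\pi(\xi)$ \emph{on the nose} in the quasi-minuscule case too, the ratio you display is identically $1$, and your ``main obstacle'' does not exist: the argument is the same one line as in the minuscule case (the leftover $B_\pi^\vee$ is a scalar whose classical limit has poles only at $\dpr{\alpha^\vee,\xi}\in\Z+\Z\tau$, which are allowed). This is why the paper's proof is so short.

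Moreover, the replacement argument you offer in its place is not actually carried out and is internally inconsistent. The ``residue computation'' is only asserted to work; the quantity $P_\pi(\xi)$ is never defined; and the final sentence claims the cancellation at $\dpr{\pi,\xi}\in m_\varphi+\Z+\Z\tau$ leaves ``at worst a simple pole that enters $Y^\pi$'' which ``contributes only at hyperplanes of the allowed form'' --- but a pole along $\dpr{\pi,\xi}\in m_\varphi+\Z+\Z\tau$ is \emph{not} of the allowed form for generic $m_\varphi$, and $Y^\pi$ is in fact regular there (its $\xi$-dependence enters only through the factors $\sigma_{\dpr{\alpha^\vee,\xi}}(\aalpha)$ of the non-unitary $\RR(\aalpha)$). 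As written, your quasi-minuscule argument would leave an uncancelled disallowed singularity, so it does not establish the lemma; correcting the identification of the coefficients removes the problem entirely.
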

\begin{proof}
The operators $\YH^\pi$ are defined in terms of $\RH(\aalpha)$, which have singularities along hyperplanes $\theta(\dpr{a_i^\vee,\xi_0}-m_\alpha)=0$. However, these are removable due to the coefficients in $L^{b,\vee}_c(\xi, \gamma)$, as can be readily seen from the formulas in Theorem \ref{dhamm}. Indeed, in the classical limit the coefficients $A_\pi^\vee$ coincide with $G_b(\xi)$ from Lemma \ref{rrrh}. Therefore, $A_\pi^\vee\YH^\pi=Y^\pi$, and the latter has singularitites only where $\theta(\dpr{\alpha^\vee, \xi})=0$.
\end{proof}

\begin{lemma} \la{trans} For $u, v\in P$, we have
\begin{equation*}
L^{b,\vee}_c(\xi, \YH_c)\  \xrightarrow{\xi\mapsto \xi+\tau v}  e^{2\pi \mathrm{i}\dpr{v,x}} L^{b,\vee}_c(\xi, \YH_c) e^{-2\pi \mathrm{i}\dpr{v,x}}\,.
\end{equation*} 
\end{lemma}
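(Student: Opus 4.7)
The plan is to decompose $L^{b,\vee}_c(\xi, \YH_c)$ as a sum of classical Cherednik operators $Y^\pi_c$ (whose transformation is handled by Lemma \ref{try}) plus a scalar correction, and then verify that this correction is invariant under $\xi\mapsto\xi+\tau v$. As already observed in the proof of Lemma \ref{re}, in the classical limit the coefficient $A_\pi^\vee(\xi)$ of Theorem \ref{dhamm} equals the factor $G_\pi(\xi)$ from Lemma \ref{rrrh}, so $A_\pi^\vee(\xi)\YH^\pi_c = Y^\pi_c$. Hence
\begin{equation*}
L^{b,\vee}_c(\xi, \YH_c) = \sum_{\pi\in Wb} Y^\pi_c - \sum_{\pi\in Wb} B^\vee_\pi(\xi),
\end{equation*}
with the second sum absent in the minuscule case. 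By Lemma \ref{try}, the first sum transforms by conjugation by $e^{2\pi\mathrm{i}\dpr{v,x}}$. Since each $B^\vee_\pi(\xi)$ is a scalar and commutes with this conjugation, the lemma reduces to showing $B^\vee_\pi(\xi+\tau v) = B^\vee_\pi(\xi)$ for each $\pi\in W\varphi^\vee$.

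Using the quasi-periodicity $\sigma_\mu(z+\tau n) = e^{2\pi\mathrm{i}\mu n}\sigma_\mu(z)$ for $n\in\Z$ (applicable because $\dpr{\pi, v}$ and $\dpr{\alpha^\vee, v}$ are integers for $\pi\in Q^\vee$, $\alpha\in R$, $v\in P$), direct computation shows that $B^\vee_\pi$ picks up a phase with exponent
\[
2\pi\mathrm{i}\bigl[\dpr{\varphi, -\rho_m^\vee}\dpr{\pi, v} + \dpr{\widetilde\chi_\pi, v}\bigr], \qquad \widetilde\chi_\pi := \sum_{\alpha:\,\dpr{\pi,\alpha}>0} m_\alpha\alpha^\vee.
\]
The claim thus reduces to the algebraic identity $\widetilde\chi_\pi = \dpr{\varphi, \rho_m^\vee}\,\pi$, which makes this exponent vanish.

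The main obstacle is establishing this identity. I would prove it by Schur's lemma applied to the linear map $T: V\to V$ defined by $Tv := \sum_{\alpha>0}m_\alpha\dpr{\alpha, v}\alpha^\vee$. A direct check shows $T$ is symmetric and $W$-equivariant (the discrepancy between $T(s_iv)$ and $s_i(Tv)$ comes from the symmetric difference of $R_+$ and $s_iR_+$, which consists of $\pm\alpha_i$ and cancels in the sum). Since the reflection representation $V$ is irreducible for an irreducible root system, $T = c\cdot\id_V$ for some scalar $c$. Evaluating $\dpr{T\varphi^\vee, \varphi} = c\dpr{\varphi^\vee, \varphi} = 2c$ and computing the left-hand side directly yields $c = \dpr{\varphi, \rho_m^\vee}+m_\varphi$. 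On the other hand, separating in $T\varphi^\vee = \sum_{\alpha>0}m_\alpha\dpr{\alpha, \varphi^\vee}\alpha^\vee$ the contribution of $\alpha=\varphi$ (which satisfies $\dpr{\alpha, \varphi^\vee}=2$) gives $T\varphi^\vee = \widetilde\chi_{\varphi^\vee} + m_\varphi\varphi^\vee$. Comparing, $\widetilde\chi_{\varphi^\vee} = \dpr{\varphi, \rho_m^\vee}\,\varphi^\vee$, and by $W$-equivariance of $\widetilde\chi$ (inherited from $W$-invariance of $m$ and the identity $\widetilde\chi_{w\varphi^\vee} = w\,\widetilde\chi_{\varphi^\vee}$) this extends to every $\pi \in W\varphi^\vee$, completing the proof.
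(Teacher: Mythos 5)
Your proof is correct and follows the same route as the paper: decompose $L^{b,\vee}_c(\xi,\YH_c)$ into $\sum_{\pi\in Wb}Y^\pi_c$ plus the scalar term $-\sum_\pi B^\vee_\pi(\xi)$, handle the first sum by Lemma \ref{try}, and reduce the rest to the ellipticity of $B^\vee_\pi$ in $\xi$. The only difference is that the paper dismisses that last point with ``one checks directly from the formula'', whereas you actually carry out the check, reducing it to the identity $\widetilde\chi_\pi=\dpr{\varphi,\rho_m^\vee}\pi$ and proving it via Schur's lemma --- a correct and welcome filling-in of the omitted detail.
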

\begin{proof}
If $b$ is minuscule, then from the proof of the previous lemma we see that $L^{b,\vee}_c(\xi, \YH_c)$ is a sum of the terms $Y^\pi_c$, so the result follows from Lemma \ref{try}. If $b$ is quasi-minuscule, then we have in addition the sum of $B^\vee_\pi$ over $\pi\in Wb$, which is a function of $\xi$ only. One checks directly from the formula that each $B^\vee_\pi$ is elliptic in $\xi$, in both the quantum and classical settings. This implies that the lemma is true in this case as well.
\end{proof}

\begin{cor}
(i) For any $b\in P^\vee$ we have $\TH_iL^{b,\vee}_c(\xi, \YH)=L^{b,\vee}_c(\xi, \YH)\TH_i$ for $i=1, \dots, n$.

(ii) If $\lambda=b_k$ then $\TH_iY^\lambda=Y^\lambda\TH_i$ for $i\ne k$.
\end{cor}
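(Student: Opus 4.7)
The plan is to derive both parts directly from Lemma \ref{23}, using the $W$-invariance of the classical Hamiltonian $L^{b,\vee}_c$ for part (i) and the fact that $b_k$ is fixed by $s_i$ for $i \neq k$ for part (ii).

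For (i), I expand $L^{b,\vee}_c(\xi,\YH)$ using the explicit formulas of Theorem \ref{dhamm} (taken in the classical limit): in the minuscule case it takes the form $\sum_{\pi\in Wb} A^\vee_\pi(\xi)\YH^\pi$, with a further scalar term $-\sum_{\pi\in Wb} B^\vee_\pi(\xi)$ in the quasi-minuscule case. The $W$-invariance of the difference operator $L^{b,\vee}$ (Theorem \ref{dham}) is equivalent to the identities $A^\vee_{w\pi}(wx)=A^\vee_\pi(x)$ for all $w\in W$, $\pi\in Wb$, together with the $W$-invariance of the function $x\mapsto \sum_\pi B^\vee_\pi(x)$. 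Applying $\TH_i$ term by term and using Lemma \ref{23} (with $s_i^{-1}=s_i$), I get
\begin{equation*}
\TH_i L^{b,\vee}_c(\xi,\YH)=\sum_\pi A^\vee_\pi(s_i\xi)\YH^{s_i\pi}\TH_i - \Bigl(\sum_\pi B^\vee_\pi(s_i\xi)\Bigr)\TH_i;
\end{equation*}
reindexing the first sum via $\sigma=s_i\pi$ and invoking the two invariance properties collapses this to $L^{b,\vee}_c(\xi,\YH)\TH_i$.

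For (ii), I factor $Y^\lambda=G_\lambda(\xi)\YH^\lambda$ via Lemma \ref{rrrh} and apply Lemma \ref{23} to obtain $\TH_i Y^\lambda=G_\lambda(s_i\xi)\YH^{s_i\lambda}\TH_i$. Since $\lambda=b_k$ and $\dpr{a_i,b_k}=\delta_{ik}$, for $i\neq k$ one has $s_i\lambda=\lambda$ and in particular $\YH^{s_i\lambda}=\YH^\lambda$. Recalling from Lemma \ref{rrrh} that
\begin{equation*}
G_\lambda(\xi)=\prod_{\alpha\in R_+,\ \dpr{\alpha,\lambda}>0}\sigma_{m_\alpha}\bigl(\dpr{\alpha^\vee,\xi}\bigr)^{\dpr{\alpha,\lambda}},
\end{equation*}
one sees that since $\dpr{a_i,b_k}=0$ the simple root $a_i$ is absent from the index set, hence $s_i$ acts on this set as a bijection (as $s_i$ permutes $R_+\setminus\{a_i\}$), and it preserves the exponents $\dpr{\alpha,\lambda}$ (since $s_i\lambda=\lambda$) and the couplings $m_\alpha$ (by $W$-invariance of $m$). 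Reindexing the product by $\alpha\mapsto s_i\alpha$ delivers the invariance $G_\lambda(s_i\xi)=G_\lambda(\xi)$, completing the proof.

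The argument is essentially bookkeeping; no step is a real obstacle. The only mild subtlety is that in the quasi-minuscule case of (i) one must remember to apply the $W$-invariance of the scalar part $\sum_\pi B^\vee_\pi$ separately from the termwise invariance of the coefficients $A^\vee_\pi$, both being consequences of Theorem \ref{dham}.
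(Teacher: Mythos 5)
Your proof is correct and follows essentially the same route as the paper: both parts are derived from Lemma \ref{23} combined with the $W$-invariance of $L^{b,\vee}_c$ for (i), and with the factorisation $Y^\lambda=G_\lambda\YH^\lambda$ of Lemma \ref{rrrh} plus the $s_i$-invariance of $G_{b_k}$ for (ii) (the paper leaves this last check to the reader; you spell it out, correctly). The only cosmetic difference is that you invoke the explicit (quasi-)minuscule formulas of Theorem \ref{dhamm} in part (i), whereas the termwise reindexing argument works verbatim for any $W$-invariant element $\sum_\pi a_\pi(\xi)\YH^\pi$ of the relevant algebra; this is harmless here since the subsection's standing assumption restricts $b$ to be (quasi-)minuscule anyway.
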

\begin{proof}
The first statement follows from Lemma \ref{23}, since the classical Hamiltonian $L^{b,\vee}_c(\xi, \gamma)$ is $W$-invariant. By the same lemma, $\TH_i\YH^\lambda=\YH^\lambda \TH_i$ if $s_i(\lambda)=\lambda$. The second statement now follows from Lemma \ref{rrrh}, since it is easy to check that the factor $G_b$ in this case is invariant under $s_i$.  
\end{proof}

\medskip

\noindent {\bf Proof of Lemma \ref{il}}. Using the above corollary together with \eqref{thi}, we have $\RH(a_i)(s_i^\vee\otimes s_i)Y^\lambda=Y^\lambda\RH(a_i)(s_i^\vee\otimes s_i)$ for $i\ne k$, from which $\RH(-a_i)Y^\lambda=(s_i^\vee\otimes s_i)Y^\lambda (s_i^\vee\otimes s_i)\,\RH(-a_i)$. Multiplying this by $\sigma_{m_{a_i}}(-\dpr{a_i^\vee, \xi})$, we obtain (cf. \cite[(4.7)]{KH98}): 
\begin{equation*}
\RR(-a_i)Y^\lambda=(s_i^\vee\otimes s_i) Y^\lambda (s_i^\vee\otimes s_i)\,\RR(-a_i)\quad\text{for}\ i\ne k\,.
\end{equation*}
%The operators $Y^\lambda$, $\ow{s_i}Y^\lambda\ow{s_i}$ are built from the $R$-matrices $\RR(\aalpha)$, which are well-defined as long as $\theta(\dpr{\alpha^\vee, \xi})\ne 0$. 
If $\xi=-\rho_m+\eta b_k$ then $\dpr{a_i^\vee, \xi}=-m_{a_i}$ for $i\ne k$. In this case directly from the definitions, $\RR(-a_i)=\sigma_{m_{a_i}}(-a_i)(1-s_i)$. So if we use this in the previous relation, and multiply it by $e'$, we get 
$(1-s_i)Y^\lambda e'=0$. Since this holds for all $i\ne k$, we conclude that $Y^\lambda$ preserves the subspace $M'=e'M$.
The statement about $L^{b,\vee}_c(\xi, \YH)$ is proved in exactly the same way. \qed

\medskip

\noindent {\bf Proof of Proposition \ref{elclq}}. {\it Part (i)}. We pick $\alpha\in R$ and want to show that $L^{b,\vee}_c(\xi, \YH)$ is regular along $\dpr{\alpha, \xi}=0$. Let us first take $b$ to be minuscule.  
The orbit $Wb$ breaks into pairs $\pi,\,\pi'$ with $\pi'=s_\alpha(\pi)$, plus a
number of $s_\alpha$-invariant $\pi$'s. If $s_\alpha(\pi)=\pi$, then the coefficient $A_\pi^\vee$ does not contain the factor $\sigma_{m_\alpha}(\dpr{\alpha^\vee, \xi})$ and so is regular. The other case leads to 
\begin{equation*}
A\YH^\pi+A'\YH^{\pi'}\,,\qquad A=A_\pi^\vee\,,\ A'=A_{\pi'}^\vee\,,\quad  
\pi'=\pi-\alpha^\vee\,.
\end{equation*}
The coefficients $A, A'$ have first order poles along $\dpr{\alpha^\vee, \xi}=0$ and satisfy $A'=A^{s_\alpha}$ due to the $W$-symmetry of $L^{b, \vee}$. As a result, $A+A'$ is regular along the hyperplane $\dpr{\alpha^\vee, \xi}=0$. Now, by Lemma \ref{lemmay} we have $\YH^{\pi'}=\YH^\pi \YH^{\alpha^\vee}=\YH^\pi$ at the hyperplane $\dpr{\alpha^\vee, \xi}=0$. It follows that at this hyperplane $A\YH^\pi+A'\YH^{\pi'}=(A+A')\YH^\pi$, and this expression is regular.

Now consider the case when $b=\varphi^\vee$ is quasi-minuscule. In this case, there is one additional possibility when $\pi'=s_\alpha(\pi)$ with $\pi'=\pi-2\alpha^\vee$; this happens only when $\alpha^\vee\in Wb$. In this case, we are led to consider 
\begin{equation}\la{3term}
A\YH^{\alpha^\vee}+A'\YH^{-\alpha^\vee}+B\,,\qquad A=(A_\pi^\vee)_c\,,\ A'=(A_{\pi'}^\vee)_c\,,\quad  B=-\sum_{\pi\in Wb}(B_\pi^\vee)_c\,.
\end{equation}
Note that in the classical limit we have $\delta=0$, so the formulas \eqref{ch1}--\eqref{ch2} tell us that the coefficients $A, A', B$ will have second order poles along $\dpr{\alpha^\vee, \xi}=0$ (the singularity in $B$ comes from $B_{\alpha}^\vee+B_{-\alpha^\vee}^\vee$). We have the folowing properties of $A, A', B$, first two of which follow from the $W$-symmetry of $L^{b,\vee}$, and the last one can be checked by inspecting the formulas \eqref{ch1}--\eqref{ch2} (in the case $\delta=0$):
\begin{gather}\nonumber
\text{$A'=A^{s_\alpha}$, $B=B^{s_\alpha}$;}
\\\la{aab}
\text{$A+A'$ and $B$ have zero residue at $\dpr{\alpha^\vee, \xi}=0$;}
\\\nonumber
\text{$A+A'+B$ has at most simple pole along $\dpr{\alpha^\vee, \xi}=0$.}
\end{gather}
%\noindent (It follows from (1) and (3) that $A+A'+B$ is regular.)

\noindent In addition, it follows from Lemma \ref{lemmay} that
\begin{equation*}
\YH^{\alpha^\vee}=1+\varepsilon\dpr{\alpha^\vee, \xi}+o(\dpr{\alpha^\vee, \xi})\quad\text{ near}\  \dpr{\alpha^\vee, \xi}=0\,,
\end{equation*}
for some $\varepsilon\in\c(\mathbb V)\rtimes \c \Wh$, which is regular along $\dpr{\alpha^\vee, \xi}=0$. It follows that $\YH^{-\alpha^\vee}=(\YH^{\alpha^\vee})^{-1}$ satisfies
\begin{equation*}
\YH^{-\alpha^\vee}=1-\varepsilon\dpr{\alpha^\vee, \xi}+o(\dpr{\alpha^\vee, \xi})\quad\text{ near}\  \dpr{\alpha^\vee, \xi}=0\,.
\end{equation*} 
Putting this together, one sees that \eqref{3term} is regular at $\dpr{\alpha^\vee, \xi}=0$, which proves part (i).    

{\it Parts (ii), (iii)}. These are proved in the same way as in Proposition \ref{elcl}, using Lemmas \ref{re}, \ref{trans}. 

{\it Part (iv)}. It is sufficient to prove that for $\xi=-\rho_m$, we have $L^{b,\vee}_{c}(\xi, \YH)e=(L^{b}_c+\mathrm{const})e$. Indeed, the statement (iv) is then obtained by passing to the classical limit and by using part (iii).  
We already know that $L^{b,\vee}_{c}(\xi, \YH)=\sum_{\pi\in Wb} Y^\pi+c_0$, where $c_0$ is a function of $\xi$. Therefore, it suffices to prove that for $\xi=-\rho_m$ we have $Y^\pi e=0$ if $\pi$ is not dominant. 
Recall that 
\begin{equation*}
Y^\pi=\RR(\alpha^1)\dots \RR(\alpha^l)\,t(\pi)\,,
\end{equation*} 
where the sequence $\alpha^1, \dots, \alpha^l$ is obtained by going from $x\in C_a$ to $x-\pi$ and lisiting all reflection hyperplanes transversed in that process; in addition, each $\alpha^i$ should be positive at $x$ and negative at $x-\pi$. Using the translation properties \eqref{rtr}, we can rewrite this as 
\begin{equation*}
Y^\pi=t(\pi)\,\RR(\beta^1)\dots \RR(\beta^l)\,,
\end{equation*} 
where the sequence of affine roots $\beta^i$ is obtained similarly by going from $x+\pi$ to $x\in C_a$, listing all transversed hyperplanes. Since $x$ lies inside the Weyl alcove $C_a$, the last root $\beta^l$ should correspond to one of the faces of the alcove; also, we know that $\beta^l(x)<0$. This tells us that $\beta^l=- a_i$ with $0\le i\le n$. Moreover, if we choose $x$ inside $C_a$ sufficiently close to $0$, then we necessarily have $\beta^l=-a_i$ with $i\ne 0$ (otherwise $\pi$ would be dominant). Now, for $\xi=-\rho_m$ we have $\dpr{a_i^\vee, \xi} = -m_{a_i}$, and so $\RR(-a_i)=\sigma_{m_{a_i}}(-a_i)(1-s_i)$ as a result. Therefore, 
$\RR(\beta^l)e=\RR(-a_i)e=0$, implying $Y^\pi e=0$.  \qed

%%%%%%%%%%%%%%%%%%%%%%%%%%%%%%%%%%

\subsection{}\la{ecc}
Let us proceed to the case of the affine root system $C^\vee C_n$. In the setting of \cite{KH98} this corresponds to the case of a reduced root system $R=BC_n$, but for us, as in Section \ref{cc}, $R$ will be a root system \eqref{cn} of type $C_n$, and $\Ra$ will denote the associated affine root system \eqref{rrelq}, with the following basis of simple roots:
\begin{equation*}
a_0=\delta-2\epsilon_1\,,\qquad a_i=\epsilon_i-\epsilon_{i+1}\quad (i=1,\dots, n-1)\,,\qquad \alpha_n=2\epsilon_n\,,
\end{equation*}
where $\delta\equiv c$ on $V$.
We have the Weyl group $W=\mathfrak{S}_n\ltimes \{\pm 1\}^n$ and the group $\Wh=\WW$ generated by $s_{i}=s_{a_i}$, acting on $V$ in accordance with \eqref{wwact}.
Note that $\Wh\cong W\ltimes \Lambda$ with $\Lambda=\sum_{i=1}^n \Z\epsilon_i$. The dominant cone is $\Lambda_+=\{(\lambda_1, \dots, \lambda_n)\in\Lambda\,|\, \lambda_1\ge\dots \ge \lambda_n\ge 0\}$.

The $R$-matrices are elements of the algebra $\c(V)*\Wh\cong {\D}_q * W$ of reflection-difference operators on $V$.  
They depend on dynamical parameters $\xi\in V$, coupling constants $\mu$, $\nu$, $\ov{\nu}$, $g=(g_i)$, $\ov{g}=(\ov{g}_i)$ ($i=0\dots 3$), and are as follows \cite{KH98}:
\begin{align}\label{cel1}
\RR(\aalpha) &=\sigma_\mu(\aalpha)-\sigma_{\dpr{\alpha^\vee, \xi}}(\aalpha)s_{\aalpha}\,\quad &&\text{for\ }\aalpha=k\delta\pm\epsilon_i\pm\epsilon_j\quad(k\in\Z,\ i\ne j)\,, \\\la{cel2}
\RR(\aalpha) &= v_{\nu, g}(\aalpha/2) -v_{\dpr{\alpha^\vee, \xi}, g}(\aalpha/2)s_{\aalpha}\,\quad &&\text{for\ }\aalpha=2k\delta\pm 2\epsilon_i\quad(k\in\Z)\,,\\\la{cel3}
\RR(\aalpha) &= {v}_{\ov{\nu}, \ov{g}}(\aalpha/2) -{v}_{\dpr{\alpha^\vee, \xi}, \ov{g}}(\aalpha/2)s_{\aalpha}\,\quad &&\text{for\ }\aalpha=(2k+1)\delta\pm 2\epsilon_i\quad(k\in\Z)\,.
\end{align}
In these formulas, $v_{\nu, g}(z)=v_\nu(z; g_0, g_1, g_2, g_3)$ is the function \eqref{vmu}. %, and $\ov{v}_{\nu}(z):=v_{\nu}(z; \ov{g}_0, \ov{g}_1, \ov{g}_2, \ov{g}_3)$.

According to \cite[Theorems 4.1, 4.2]{KH98}, the elements $\RR(\aalpha)$ satisfy the affine Yang--Baxter relations. We can now define the elements $\RR_w$ and $Y^b=R_{t(b)}t(b)$, $b\in \Lambda$ in the same way as in \ref{rwdef} (note that the group $\Omega$ is trivial in this case), and Theorem \ref{yb} remains valid in this setting. 
%, so we have commuting elliptic Cherednik operators $Y^b=R_{t(b)}\,t(b)$ for $b\in \Lambda_+$.   

Let us write down the expressions for $Y^{\pm\epsilon_i}$. We have a reduced decomposition (cf. \cite[(3.6)]{St1}) 
\begin{equation*}
t(\epsilon_i)=s_i\dots s_{n-1}s_{n}s_{n-1}\dots s_1s_0s_1\dots s_{i-1}\,,\quad 1\le i\le n\,.
\end{equation*} 
Applying Definition \ref{rwdef}, we calculate
\begin{align}
Y^{\epsilon_i}=&\RR (\epsilon_i-\epsilon_{i+1})\RR (\epsilon_i-\epsilon_{i+2})\dots \RR (\epsilon_i-\epsilon_n)\RR (2\epsilon_i)\nonumber\\
&\times\RR (\epsilon_i+\epsilon_n)\dots \RR (\epsilon_i+\epsilon_{i+1})\RR(\epsilon_i+\epsilon_{i-1})\dots \RR(\epsilon_i+\epsilon_1) \la{yicc}\\
&\times\RR (\delta+2\epsilon_i)t(\epsilon_i)\RR(\epsilon_i-\epsilon_1)\dots \RR(\epsilon_i-\epsilon_{i-1})\,.\nonumber
\end{align}
%Simlarly, 
%\begin{align}
%Y^{-\epsilon_i}=&\RR (\epsilon_i-\epsilon_{i-1})\RR (\epsilon_i-\epsilon_{i-2})\dots \RR (\epsilon_i-\epsilon_1)t(-\epsilon_i)\RR (-\delta-2\epsilon_i)\nonumber\\
%&\times\RR (\epsilon_i+\epsilon_n)\dots \RR (\epsilon_i+\epsilon_{i+1})\RR(\epsilon_i+\epsilon_{i-1})\dots \RR(\epsilon_i+\epsilon_1) \la{yicc-}\\
%&\times\RR (\delta+2\epsilon_i)t(\epsilon_i)\RR(\epsilon_i-\epsilon_1)\dots \RR(\epsilon_i-\epsilon_{i-1})\,.\nonumber
%\end{align}

\begin{remark}
In \cite{KH98}, a slightly different affine root system is considered, with the roots in \eqref{cel2} replaced by $k\delta\pm \epsilon_i$, but the corresponding $R$-matrices and the resulting elements $\RR_w$ are the same.
\end{remark}

For $i=1, \dots, n$ define $m_i$ by $m_1=\dots =m_{n-1}=\mu$, $m_n=\nu$. Let $\xi=\xi_0$ be a solution to the system of equations
\begin{equation}\la{csystem}
\dpr{a_i^\vee, \xi_0}=-{m}_i\,,\qquad i=1, \dots, n\,.
\end{equation} 
Explicitly, we have $\xi_0=(\xi_1,\dots, \xi_n)$ with $\xi_i=-\nu-(n-i)\mu$.

\begin{theorem}[\cite{KH98}, Theorems 4.5\,\& 6.5]\la{cemr} Let $\xi=\xi_0$ as above.

(1) Given $b\in \Lambda_+$, let $L^b\in \D_q$ be the unique difference operator such that $Y^b e= L^b e$. Then each $L^b$ is $W$-invariant, %, i.e. $wL^b=L^bw$ for all $w\in W$, and 
and the difference operators $L^b$, $b\in \Lambda_+$ form a commutative family.

(2) Let $b=\epsilon_1$. Then
\begin{align}
\la{clb1}
L^{b}&=\sum_{\pi\in Wb} \, (A_\pi t(\pi)-B_\pi)\,,\qquad A_\pi=v_{\nu, g}(\pi){v}_{\ov{\nu}, \ov{g}}(\pi+\delta/2)\,
\prod_{\genfrac{}{}{0pt}{}{\alpha\in R}{\dpr{\pi, \alpha}=1}}\, \sigma_{\mu}(\alpha)\,,
\\
\la{clb2}
B_\pi&=v_{\nu, g}(\pi)
{v}_{-\nu-(n-1)\mu, \ov{g}}
(\pi+\delta/2)
\prod_{\genfrac{}{}{0pt}{}{\alpha\in R}{\dpr{\pi, \alpha}=1}}\, \sigma_{\mu}(\alpha)\,.%,\qquad \dpr{b, \xi_0}=-\nu-(n-1)\mu\,.
\end{align}
\end{theorem}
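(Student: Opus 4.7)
The plan is to adapt the methods developed earlier in the paper (in particular the proof of Proposition~\ref{elclq}(iv) and Lemma~\ref{il}) to the $C^\vee C_n$ setting, and then carry out an explicit expansion of \eqref{yicc} for $i=1$ to establish part~(2).

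For part~(1), the commutativity of $\{L^b:b\in\Lambda_+\}$ follows formally from the commutativity of the Cherednik operators $Y^b$ (the $C^\vee C_n$ analogue of Theorem~\ref{yb}): via the isomorphism $eM\cong\c(V)$ on which $L^{b'}\in\D_q$ acts in the same way as $Y^{b'}$, one obtains $L^bL^{b'}e=L^bY^{b'}e=Y^bY^{b'}e$, which is symmetric in $b,b'$, and uniqueness of the representing difference operator forces $L^bL^{b'}=L^{b'}L^b$. For $W$-invariance it suffices to show $(1-s_i)Y^be=0$ for $i=1,\dots,n$ and every dominant $b$, at $\xi=\xi_0$. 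The defining conditions \eqref{csystem} together with \eqref{cel1}--\eqref{cel3} yield
\begin{equation*}
\RR(-a_i)=\sigma_{m_i}(-a_i)(1-s_i)\qquad (i=1,\dots,n),
\end{equation*}
so in particular $\RR(-a_i)e=0$. Using the affine Yang--Baxter relations \cite[Theorems~4.1--4.2]{KH98} and the equivariance \eqref{eqv} adapted to $C^\vee C_n$, one rewrites $\RR(-a_i)Y^b$ as $Y^{\prime}\RR(-a_i)$ for a suitable $Y^{\prime}\in\D_q*W$ (this is the braid-type manipulation used in Proposition~\ref{elclq}(iv)); applying both sides to $e$ and dividing by the non-vanishing factor $\sigma_{m_i}(-a_i)$ gives $(1-s_i)Y^be=0$.

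For part~(2), I expand \eqref{yicc} for $i=1$ and track the action of each summand on $e$. The orbit $W\epsilon_1=\{\pm\epsilon_j:1\le j\le n\}$ provides the $2n$ shifts $t(\pi)$, each appearing exactly once, because from every $R$-matrix one selects either the scalar or the reflection term and the resulting reflections must compose into a Weyl group element sending $\epsilon_1$ to $\pi$. For $\pi=\epsilon_1$ one picks the scalar parts of all $2(n-1)$ short $R$-matrices $\RR(\epsilon_1\pm\epsilon_j)$, producing $\prod_{\langle\epsilon_1,\alpha\rangle=1}\sigma_\mu(\alpha)$, together with $v_{\nu,g}(\epsilon_1)$ from $\RR(2\epsilon_1)$ and $\widetilde v_{\ov\nu,\ov g}(\epsilon_1+\delta/2)$ from $\RR(\delta+2\epsilon_1)$; this is $A_{\epsilon_1}$. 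For the remaining $\pi\in W\epsilon_1$, the $W$-equivariance of the expansion under a coset representative $w$ with $w\epsilon_1=\pi$ produces $A_\pi$ in the stated form. The constant subtraction $-B_\pi$ arises from selecting the reflection term of $\RR(\delta+2\epsilon_1)$ at $\xi=\xi_0$: this substitutes $\widetilde v_{\ov\nu,\ov g}$ with $\widetilde v_{-\nu-(n-1)\mu,\ov g}$, giving precisely \eqref{clb2} after symmetrisation.

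The main obstacle is the combinatorial book-keeping in part~(2): the $C^\vee C_n$ $R$-matrices come in three functional forms \eqref{cel1}--\eqref{cel3}, and identifying which combinations of reflection parts produce a given shift $t(\pi)$ without over- or under-counting requires a careful inductive argument of the kind used in Lemmas~\ref{ns}, \ref{nsel}, \ref{abcd}. On the conceptual side, the most delicate step in part~(1) is the braid-type manipulation that lets one push $\RR(-a_i)$ past $Y^b$ for non-minuscule dominant $b$; this is handled via Proposition~\ref{ass} and Lemma~\ref{rrrh} adapted to $C^\vee C_n$, combined with a reduced-expression analysis of $s_i\,t(b)$, in direct analogy with the proof of Proposition~\ref{elclq}(iv).
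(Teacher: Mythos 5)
The paper does not actually prove Theorem \ref{cemr}: it is imported from Komori--Hikami \cite{KH98} (Theorems 4.5 and 6.5 there), so there is no internal proof to compare against. Your reconstruction of part (1) is sound and follows the same technique the paper uses in the closely analogous places (the symmetrisation argument of \ref{mrh}, Lemma \ref{inv12}, and the end of the proof of Lemma \ref{il}): at $\xi=\xi_0$ one has $\RR(-a_i)=(\text{nonvanishing function})\cdot(1-s_{i})$ for $i=1,\dots,n$, hence $\RR(-a_i)e=0$, and the Yang--Baxter manipulation moving $\RR(-a_i)$ past $Y^b$ yields $(1-s_{i})Y^be=0$; commutativity of the $L^b$ then follows formally from that of the $Y^b$.

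Part (2), however, has a genuine gap. The claim that each shift $t(\pi)$ ``appears exactly once'' in the expansion of \eqref{yicc} is false: for instance, both the reflection term of $\RR(2\epsilon_1)$ alone and the pair of reflection terms taken from $\RR_{1j}$ and $\RR_{1j}^+$ send $\epsilon_1$ to $-\epsilon_1$, so the coefficient of $t(-\epsilon_1)$ is a sum of many monomials. (For $t(\epsilon_1)$ itself the all-scalar selection is indeed the unique contribution, and $W$-invariance from part (1) then recovers every $A_\pi$ from $A_{\epsilon_1}$; but you neither verify that uniqueness nor make this reduction explicit.) More seriously, the constant term $-\sum_{\pi}B_\pi$ collects a contribution from \emph{every} selection containing the affine reflection $s_{\delta+2\epsilon_1}$, not only from the single term you isolate, and $W$-invariance alone does not split this single $W$-invariant function into the $2n$ summands \eqref{clb2}. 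Evaluating that sum is exactly where the work lies: in the paper's own parallel computations the analogous coefficient is pinned down only indirectly --- via the sum rule $A+B+\sum_i(C_i+D_i)=\tau^{2n-2}\tau_n$ in Lemma \ref{abcd}, or via the addition formula \cite[(2.8a)]{KH97} together with an ellipticity argument showing that a residual coefficient is constant in Subsection \ref{calvd}. Some identity of this kind is unavoidable here and constitutes the actual content of \cite[Theorem 6.5]{KH98}; deferring it as ``combinatorial book-keeping'' leaves the heart of part (2) unproven.
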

The operator \eqref{clb1}--\eqref{clb2} contains $11$ parameters $\mu, \nu, \ov{\nu}, g_i, \ov{g}_i$, but multiplying all $g_i$ (or all $\ov{g}_i$) by a constant simply rescales the Hamiltonian. Thus, effectively we have $9$ coupling parameters. This Hamiltonian was first introduced by van Diejen \cite{vD1}, in a different form and under an additional constraint on $\nu, \ov{\nu}$. For general coupling parameters it was introduced by Komori and Hikami in \cite{KH97}, where higher quantum Hamiltonians were also constructed. See \cite[(4.21)]{KH97} for an alternative presentation of $L^b$ which links it to \cite{vD1}. The classical Hamiltonian $L^b_c$ looks as follows:
\begin{align}
\la{cllb1}
L^{b}_c&=\sum_{\pi\in Wb} \, (A_\pi e^{\beta p_\pi}-B_\pi)\,,\qquad A_\pi=v_{\nu, g}(\pi){v}_{\ov{\nu}, \ov{g}}(\pi)\,
\prod_{\genfrac{}{}{0pt}{}{\alpha\in R}{\dpr{\pi, \alpha}=1}}\, \sigma_{\mu}(\alpha)\,,
\\
\la{cllb2}
B_\pi&=v_{\nu, g}(\pi)
{v}_{-\nu-(n-1)\mu, \ov{g}}
(\pi)
\prod_{\genfrac{}{}{0pt}{}{\alpha\in R}{\dpr{\pi, \alpha}=1}}\, \sigma_{\mu}(\alpha)\,.%,\qquad \dpr{b, \xi_0}=-\nu-(n-1)\mu\,.
\end{align}

%\medskip
\subsection{}
Before proceeding to constructing a quantum Lax pair, we need to renormalise the $R$-matrices. The $R$-matrices \eqref{cel1} have the property \eqref{uni}, 
%i.e. $R(\aalpha)R(-\aalpha)=\wp(\mu)-\wp(\aalpha)$. Therefore, 
so we can define $\RH(\aalpha)$ by \eqref{rh}, with $m_\alpha=\mu$. For the $R$-matrices \eqref{cel2}--\eqref{cel3} the procedure is more subtle. 
First, considering \eqref{cel2}, we obtain:
\begin{equation}\la{cuni}
R(\aalpha)R(-\aalpha)=v_{\nu, g}(\aalpha/2)v_{\nu, g}(-\aalpha/2)+v_{\dpr{\alpha^\vee, \xi}, g}(\aalpha/2)v_{-\dpr{\alpha^\vee, \xi}, g}(\aalpha/2)\,.
\end{equation}  
Using \eqref{vv}, this can be rewritten as (cf. \cite[(4.5)]{KH98}) 
\begin{equation*}
R(\aalpha)R(-\aalpha)=\sum_{r=0}^3(g_r^\vee)^2(\wp(\nu+\omega_r)-\wp(\dpr{\alpha^\vee, \xi}+\omega_r))\,, 
\end{equation*}
which is independent of $x$.

\begin{defi}\la{dpa} Let $v_{\nu, g}(z)$ be the function \eqref{vmu} with parameters $\nu$, $g=(g_0, g_1, g_2, g_3)$. The dual parameters $\nu^\vee$, $g^\vee=(g^\vee_0, g^\vee_1, g^\vee_2, g^\vee_3)$ are defined by \eqref{ghat} and the condition $v_{\nu, g}(\nu^\vee)=0$. %We have $(\nu^\vee, g^\vee)^\vee=(\nu, g)$ by \eqref{vsym}.   
\end{defi}

\begin{remark} \la{zpar} The function $v_{\nu, g}(z)$ can be parametrised by its zeros. Following \cite{KH97}, let  $v_{\nu, g}(z)=A\prod_{r=0}^3\sigma^r_{\nu_r}(z)$. Then we have $2\nu=\sum_{r=0}^3 \nu_r$ and $g_r$ can be expressed in terms of $A$ and $\nu_r$, see \cite[Lemma 4.5]{KH97}. If such a parametrisation is used, then $\nu^\vee$ can be taken simply as $\nu_0$ (or any of $\nu_r+\omega_r$, $r=0\dots 3$). However, the parametrisation of $v_{\nu, g}$ by its zeros is inconvenient for writing the $R$-matrices. 
\end{remark}

\begin{lemma}\la{bb}
Let $\aalpha\in \Ra$ and $R(\aalpha)$ be as in \eqref{cel2}. %--\eqref{cel3}, so $\alpha^\vee=\pm\epsilon_i$. 
Then $\RH(\aalpha):=v_{\nu^\vee, g^\vee}(\dpr{\alpha^\vee, \xi})^{-1}R(\aalpha)$ satisfies $\RH(\aalpha)\RH(-\aalpha)=1$. We have $\RH(\aalpha)=-e^{\pi i (\aalpha  - 2\nu^\vee)\beta_r} s_{\aalpha}$ when $\dpr{\alpha^\vee, \xi}=\omega_r$, $r=0\dots 3$. Here $\omega_r$ are the half-periods, and $(\beta_0, \beta_1, \beta_2, \beta_3)=(0,0,1,1)$.  Similarly, for the case \eqref{cel3} we define $\RH(\aalpha):=v_{\ov{\nu}^\vee, \ov{g}^\vee}(\dpr{\alpha^\vee, \xi})^{-1}R(\aalpha)$. Then  
$\RH(\aalpha)=-e^{\pi i (\aalpha  - 2\ov{\nu}^\vee)\beta_r} s_{\aalpha}$ when $\dpr{\alpha^\vee, \xi}=\omega_r$.  
\end{lemma}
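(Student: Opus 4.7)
The three claims will be addressed in turn.

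For the unitarity $\RH(\aalpha)\RH(-\aalpha)=1$: starting from \eqref{cuni} and applying \eqref{vv} termwise, a direct computation gives
\[
R(\aalpha)R(-\aalpha)=\sum_{r=0}^3 (g_r^\vee)^2\bigl(\wp(\nu+\omega_r)-\wp(\dpr{\alpha^\vee,\xi}+\omega_r)\bigr),
\]
where I have used $v_{-\mu}(z)=-v_\mu(-z)$ from \eqref{vv} to rewrite the cross term. On the other hand, applying \eqref{vv} to the function $v_{\nu^\vee,g^\vee}$ (and noting that the matrix in \eqref{ghat} is an involution, so $(g^\vee)^\vee=g$) gives
\[
v_{\nu^\vee,g^\vee}(z)\,v_{\nu^\vee,g^\vee}(-z)=\sum_{r=0}^3\bigl(g_r^2\wp(\nu^\vee+\omega_r)-(g_r^\vee)^2\wp(z+\omega_r)\bigr).
\]
Evaluating the last identity at $z=\nu$ and using the duality $v_{\nu,g}(\nu^\vee)=0$, which via \eqref{vsym} reads $v_{\nu^\vee,g^\vee}(\nu)=0$, forces $\sum_r g_r^2\wp(\nu^\vee+\omega_r)=\sum_r (g_r^\vee)^2\wp(\nu+\omega_r)$. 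Substituting this back and comparing with the expression for $R(\aalpha)R(-\aalpha)$ above gives an exact match. Since the normalising factors are scalars (in $\xi$), they commute with $R(\aalpha)$, and the unitarity follows.

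For the degeneration at $\dpr{\alpha^\vee,\xi}=\omega_r$: the key observation is that in the expansion $v_{\nu^\vee,g^\vee}(z)=\sum_s g_s^\vee\sigma^s_{2\nu^\vee}(z)$, exactly one summand -- the one with $s=r$ -- has a simple pole at $z=\omega_r$, because $\theta_{s+1}(z)$ vanishes at $z=\omega_r$ iff $s=r$. Likewise, as $\mu\to\omega_r$, $v_{\mu,g}(\aalpha/2)$ blows up like $1/(\mu-\omega_r)$ because $\theta_1(-2\mu)$ in the denominator of each $\sigma^s_{2\mu}(\aalpha/2)$ vanishes when $2\mu\in\Lambda$. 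In contrast, $v_{\nu,g}(\aalpha/2)$ stays finite, so the first term $v_{\nu,g}(\aalpha/2)/v_{\nu^\vee,g^\vee}(\omega_r)$ in $\RH(\aalpha)$ goes to zero and the reflection coefficient has a finite, nonzero limit.

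To identify that limit with $-e^{\pi i(\aalpha-2\nu^\vee)\beta_r}$ I would compute the two residues. Writing $\aalpha^\vee,\xi\!=\!\omega_r+\varepsilon$, the pole residues are
\[
\mathop{\mathrm{Res}}_{z=\omega_r}v_{\nu^\vee,g^\vee}(z)=\frac{g_r^\vee\,\theta_{r+1}(\omega_r-2\nu^\vee)\,\theta_1'(0)}{\theta_{r+1}'(\omega_r)\,\theta_1(-2\nu^\vee)}\,,\qquad \varepsilon\, v_{\omega_r+\varepsilon,g}(\aalpha/2)\xrightarrow[\varepsilon\to0]{}\frac{\theta_1'(0)}{C_r}\sum_{s=0}^3 g_s\frac{\theta_{s+1}(\aalpha/2-2\omega_r)}{\theta_{s+1}(\aalpha/2)},
\]
with $C_r:=\lim_{\varepsilon\to 0}\theta_1(-2\omega_r-2\varepsilon)/\varepsilon$ computed from the quasi-periodicity of $\theta_1$. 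The sum over $s$ collapses by the transformation rule $\theta_{s+1}(w-2\omega_r)=\varepsilon_{r,s}\,e^{-\pi i\tau\beta_r+2\pi i w\beta_r}\,\theta_{s+1}(w)$, which is valid with the same exponential factor for every $s$ because $2\omega_r\in\Lambda$ acts uniformly on the four theta-functions (the phase $\beta_r\in\{0,0,1,1\}$ is precisely the $\tau$-coefficient of $2\omega_r$). Collecting the signs and the exponential from this transformation, combined with the analogous rule applied to $\theta_{r+1}(\omega_r-2\nu^\vee)$ in the residue of $v_{\nu^\vee,g^\vee}$, one obtains the stated factor $-e^{\pi i(\aalpha-2\nu^\vee)\beta_r}$.

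The hardest part of the argument will be verifying the sign conventions in the residue computation: several $\varepsilon_{r,s}\in\{\pm 1\}$ arise from quasi-periodic phases of $\theta_{s+1}$, and one must see that they combine consistently across $s$ so that the sum $\sum_s g_s(\ldots)$ factorises into $e^{\pi i\aalpha\beta_r}\sum_s g_s/\text{const}$. A cleaner route is to use the product parametrisation of Remark~\ref{zpar}, $v_{\nu,g}(z)=A\prod_s\sigma^s_{\nu_s}(z)$ with $\nu^\vee=\nu_0$, since then both numerator and denominator in $\RH(\aalpha)$ become explicit products of $\theta$-ratios whose degenerations at half-periods are straightforward. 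The claim for case \eqref{cel3} follows by the identical argument with $(\nu,g)$ replaced by $(\ov\nu,\ov g)$ throughout; no property specific to the parity of $k$ in $\aalpha=(2k+1)\delta\pm 2\epsilon_i$ versus $\aalpha=2k\delta\pm 2\epsilon_i$ is used.
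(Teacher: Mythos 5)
Your unitarity argument is correct and rests on the same input as the paper's, though organised differently. The paper exploits the $x$-independence of $R(\aalpha)R(-\aalpha)$ by evaluating at $\aalpha/2=\nu^\vee$, where $v_{\nu,g}(\nu^\vee)=0$ kills the first summand of \eqref{cuni} and \eqref{vsym} turns the remaining one directly into $v_{\nu^\vee,g^\vee}(\dpr{\alpha^\vee,\xi})\,v_{\nu^\vee,g^\vee}(-\dpr{\alpha^\vee,\xi})$. You instead expand both $R(\aalpha)R(-\aalpha)$ and $v_{\nu^\vee,g^\vee}(z)v_{\nu^\vee,g^\vee}(-z)$ via \eqref{vv} and match them using $\sum_r g_r^2\wp(\nu^\vee+\omega_r)=\sum_r(g_r^\vee)^2\wp(\nu+\omega_r)$, which you correctly extract from $v_{\nu^\vee,g^\vee}(\nu)=0$ and the involutivity of \eqref{ghat}. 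Both routes hinge on the defining vanishing condition for $\nu^\vee$; yours is sound.

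The half-period claim is where you have a genuine gap. Your setup --- the ratio of the residue of $-v_{\mu,g}(\aalpha/2)$ in $\mu$ at $\mu=\omega_r$ to the residue of $v_{\nu^\vee,g^\vee}$ at $\omega_r$ --- is exactly the paper's argument, which the paper only carries out for $r=0$, where $\beta_0=0$ and both residues are proportional to $\sum_s g_s$. For $r\ne 0$ the step you defer as ``the hardest part'' is not mere sign bookkeeping: the signs $\varepsilon_{r,s}$ genuinely depend on $s$ (for $2\omega_1=1$ one has $\theta_1,\theta_2\mapsto -\theta_1,-\theta_2$ but $\theta_3,\theta_4\mapsto +\theta_3,+\theta_4$), so the collapsed sum is the \emph{signed} combination $\sum_s\varepsilon_{r,s}g_s=\pm 2g_r^\vee$, and \emph{not} a multiple of $\sum_s g_s$ as your proposal anticipates. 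It is precisely this appearance of $g_r^\vee$ --- the fact that the half-period sign patterns of $\theta_1,\dots,\theta_4$ realise the matrix \eqref{ghat} --- that cancels the factor $g_r^\vee$ in $\mathrm{Res}_{z=\omega_r}v_{\nu^\vee,g^\vee}(z)$ and lets the computation close; without identifying the signed sum with the dual coupling, the argument does not terminate, and your stated target form of the factorisation is wrong for $r=1,2,3$. Your fallback via the zero-parametrisation of Remark \ref{zpar} (where numerator and denominator degenerate factor by factor) is a legitimate way to finish, but as written the proof of the $r\neq 0$ cases is incomplete.
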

\begin{proof}
Since the expression \eqref{cuni} is independent of $x$, we may assume $\aalpha/2=\nu^\vee$. Then 
\begin{equation*}
R(\aalpha)R(-\aalpha)=v_{\dpr{\alpha^\vee, \xi}, g}(\nu^\vee)v_{-\dpr{\alpha^\vee, \xi}, g}(\nu^\vee)=
v_{\nu^\vee, g^\vee}(\dpr{\alpha^\vee, \xi})v_{\nu^\vee, g^\vee}(-\dpr{\alpha^\vee, \xi})\,,
\end{equation*} 
by \eqref{vsym}. Hence, $\RH(\aalpha)\RH(-\aalpha)=1$. 

From the definition of $R(\aalpha)$ we see that it has a first order pole at the hyperplane $\dpr{\alpha^\vee, \xi}=0$, with the residue equal to $-1/2(\sum_{r=0}^3 g_r) s_{\aalpha}$. On the other hand,  $v_{\nu^\vee, g^\vee}(\dpr{\alpha^\vee, \xi})$ also has a first order pole at $\dpr{\alpha^\vee, \xi}=0$, with the residue equal to $g_0^\vee$, which is $1/2(\sum_{r=0}^3 g_r)$ from \eqref{ghat}. This implies that $\RH(\aalpha)$ tends to $-s_{\aalpha}$ as $\dpr{\alpha^\vee, \xi}$ approaches zero.  This proves the claim for $\omega_r=0$; for other half-periods $\omega_r$ proof is similar.  
\end{proof}
Motivated by the above, for given $\nu, g$ and $\ov{\nu}, \ov{g}$ we choose (and fix) $\nu^\vee, \ov{\nu}^\vee$ and define \emph{unitary $R$-matrices} by 
\begin{equation*}
\RH(\aalpha)=\begin{cases}
\sigma_{\mu}(\dpr{\alpha^\vee, \xi})^{-1}R(\aalpha)&\quad\text{for\ }\aalpha=k\delta\pm\epsilon_i\pm\epsilon_j\quad(k\in\Z,\ i\ne j)\,,\\
\RH(\aalpha)=v_{\nu^\vee, g^\vee}(\dpr{\alpha^\vee, \xi})^{-1}R(\aalpha)&\quad \text{for\ }\aalpha=2k\delta\pm 2\epsilon_i\quad(k\in\Z)\,,\\
\RH(\aalpha)=v_{\ov{\nu}^\vee, \ov{g}^\vee}(\dpr{\alpha^\vee, \xi})^{-1}R(\aalpha)&\quad \text{for\ }\aalpha=(2k+1)\delta\pm 2\epsilon_i\quad(k\in\Z)\,.
\end{cases}
\end{equation*} 
Then we have $\RH(\aalpha)\RH(-\aalpha)=1$ for all $\aalpha\in\Ra$.

\subsection{}

Now let us recall the notation of \ref{regu}, by which we view $R(\aalpha)$ and $\RH(\aalpha)$ as elements of $\c(\mathbb V)\rtimes \c\mathbb W$, where $\mathbb V=V^\vee\times V$ incorporates the dynamical variables. Both $\RR(\aalpha)$ and $\RH(\aalpha)$ satisfy the affine Yang--Baxter relations \cite[(3.1)(a)-(c)]{KH98}. This allows us to introduce
\begin{equation}\la{cthi}
\TH_i=\RH(a_i)(s_i^\vee\otimes s_i)\,,\quad i=0, \dots, n\,.
\end{equation}
Then one checks that the relations \cite[(3.1)(a)-(c)]{KH98} imply that $\TH_i$ satisfy the relations \eqref{b1}--\eqref{b3}, while the unitarity of $\RH(\aalpha)$ implies that $\TH_i^2=1$ for all $i$. Therefore, we have the following result, analogous to Propositions \ref{ass}, \ref{prophat}. 
\begin{prop}
(1) The assignment 
%\begin{equation*}
$s_i\mapsto \TH_i$ ($i=0,\dots, n$)
%\end{equation*}
extends to a group homomorphism $w\mapsto \TH_w$, $w\in\Wh$. In particular, the elements $\TH_{t(b)}$, $b\in \Lambda$ pairwise commute. 

(2) For any reduced decomposition of $w$ into $w=s_{i_1}\dots s_{i_l}$ we have $\TH_w=\RH_w (w^\vee\otimes w)$, where 
\begin{equation*}
\RH_w=\RH(\alpha^{1})\dots \RH(\alpha^{l})\,,\qquad \alpha^{1}=a_{i_1},\ \alpha^{2}=s_{i_1}(a_{i_2})\,,\ \dots,\ \alpha^{l}=s_{i_1}\dots s_{i_{l-1}}(a_{i_l})\,. 
\end{equation*}
As a result, $\RH_w$ does not depend on the choice of a decomposition. For $b\in \Lambda$ we have $\TH_{t(b)}=\RH_{t(b)}(1\otimes t(b))=\YH^b$. This implies the commutativity of $\YH^b$ and, therefore, of $Y^b:=R_{t(b)}t(b)$.   
\end{prop}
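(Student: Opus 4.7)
The plan is to follow exactly the template of Propositions \ref{ass} and \ref{prophat}, adapted to the $C^\vee C_n$ setting where the group $\Omega$ is trivial, so that $\Wh = \WW$ is generated by $s_0, \dots, s_n$ subject to the braid relations \eqref{b1}--\eqref{b3} together with the involutions $s_i^2 = 1$.

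For part (1), I would verify the defining relations on the $\TH_i$. The involution relation is immediate from unitarity:
\[
\TH_i^2 \;=\; \RH(a_i)(s_i^\vee\otimes s_i)\,\RH(a_i)(s_i^\vee\otimes s_i) \;=\; \RH(a_i)\,\RH(s_i a_i) \;=\; \RH(a_i)\RH(-a_i) \;=\; 1,
\]
using the equivariance \eqref{eqv} (which also holds for $\RH$ since the renormalization factors depend only on $\dpr{\alpha^\vee,\xi}$ and hence transform correctly) together with Lemma \ref{bb}. The braid relations \eqref{b1}--\eqref{b3} are translated, by the same push-through of $(s_i^\vee\otimes s_i)$ factors, into the affine Yang--Baxter relations \cite[(3.1)(a)--(c)]{KH98} for $\RH(\aalpha)$. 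These hold for $\RR$ by \cite{KH98}; checking that they survive the passage to $\RH$ amounts to verifying that the scalar factors $\sigma_\mu(\dpr{\alpha^\vee,\xi})^{-1}$, $v_{\nu^\vee,g^\vee}(\dpr{\alpha^\vee,\xi})^{-1}$, $v_{\ov\nu^\vee,\ov g^\vee}(\dpr{\alpha^\vee,\xi})^{-1}$ on the three types of affine roots balance on the two sides of each braid identity. Hence the assignment $s_i\mapsto \TH_i$ extends to $w\mapsto \TH_w$.

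For part (2), I would take a reduced expression $w=s_{i_1}\dots s_{i_l}$ and compute $\TH_w=\TH_{i_1}\cdots\TH_{i_l}$ by repeatedly moving all $(s^\vee\otimes s)$-factors to the right using the equivariance
\[
(s_j^\vee\otimes s_j)\,\RH(\aalpha) \;=\; \RH(s_j\aalpha)\,(s_j^\vee\otimes s_j).
\]
After $l-1$ such pushes the result is exactly $\RH(\alpha^1)\cdots\RH(\alpha^l)\,(w^\vee\otimes w)$ with $\alpha^k=s_{i_1}\cdots s_{i_{k-1}}(a_{i_k})$. Because $\TH_w$ depends on $w$ alone by part (1), and the element $w^\vee\otimes w\in\mathbb W$ is likewise intrinsic to $w$, the prefactor $\RH_w:=\RH(\alpha^1)\cdots\RH(\alpha^l)$ is independent of the chosen reduced decomposition. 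When $w=t(b)$ with $b\in\Lambda$ the linear part $w^\vee$ is the identity, so $w^\vee\otimes w = 1\otimes t(b)$, and thus $\TH_{t(b)} = \RH_{t(b)}\,(1\otimes t(b)) = \YH^b$.

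The commutativity of $\YH^b$ now follows from part (1): $\YH^b\YH^{b'}=\TH_{t(b)}\TH_{t(b')}=\TH_{t(b+b')}=\TH_{t(b')}\TH_{t(b)}=\YH^{b'}\YH^b$. For the unnormalized $Y^b=\RR_{t(b)}t(b)$, I would observe that $\RR(\aalpha)$ and $\RH(\aalpha)$ differ by a scalar function of $\dpr{\alpha^\vee,\xi}$ only, hence $\RR_{t(b)}=f_b(\xi)\,\RH_{t(b)}$ for a scalar $f_b$ depending only on $\xi$; since $\YH^{b'}$ acts only in the $x$-variable (all reflections appearing in $\RH_{t(b')}$ lie in $\id\times\Wh$), the scalar $f_b(\xi)$ commutes with every $\YH^{b'}$ and with every $f_{b'}(\xi)$, and commutativity of the $Y^b$'s transfers from the $\YH^b$'s. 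The main obstacle is the bookkeeping in the verification of the braid relations for $\TH_i$: one must confirm, in each of the four-, three-, and two-string relations, that the scalar factors of potentially different types (associated with short roots, long roots, and their translates by $\delta$) multiply to the same quantity on both sides. This is a finite combinatorial check but the only place where the $C^\vee C_n$ case departs in substance from the untwisted setup of Proposition \ref{ass}.
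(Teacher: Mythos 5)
Your proposal is correct and follows essentially the same route as the paper: unitarity gives $\TH_i^2=1$, the affine Yang--Baxter relations of \cite{KH98} give the braid relations \eqref{b1}--\eqref{b3} (the scalar renormalisation factors balance because the two sides of each braid identity involve the same set of affine roots, so the factors can be pulled out and cancelled), and part (2) plus the commutativity statements follow by pushing the $(s^\vee\otimes s)$-factors to the right exactly as in Propositions \ref{ass} and \ref{prophat}. Your extra remark on transferring commutativity from $\YH^b$ to $Y^b$ via the $\xi$-dependent scalar $f_b(\xi)$ is the same observation the paper makes implicitly (cf.\ Lemma \ref{rrrh}).
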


%%%%%%%%%%%
We also have an analogue of Proposition \ref{elclq}. Let $b=\epsilon_1$ and $L^b_c$ is the classical van Diejen Hamiltonian \eqref{cllb1}--\eqref{cllb2}. By $L^{b, \vee}_c$ we denote the classical operator with the dual coupling parameters $\nu^\vee, \ov{\nu}^\vee$, $g^\vee$, $\ov{g}^\vee$ (and with $\mu^\vee=\mu$), see Definition \ref{dpa}. 

\begin{prop}\label{celclq} %Let $\YH^\lambda$, $\lambda\in P^\vee$ be the Cherednik operators associated to a root system $R$ and dynamical variables $\xi\in V$ as above. 
Let $L^{b,\vee}_{c}(\xi, \YH)$ and $L^{b,\vee}_{c}(\xi, \YH_c)$  be the result of substituting the dynamical variables and Cherednik operators into the dual classical Hamiltonian. Then we have:

(i) $L^{b,\vee}_{c}(\xi, \YH)$, viewed as an element of $\D(V)*W$ depending on $\xi$, is regular for $\xi$ near $\xi=0$;

(ii)  $L^{b,\vee}_{c}(\xi, \YH_{c})$, viewed as an element of $\c(V\times V)*W$ depending on $\xi$, is regular for all $\xi\in V$;

(iii) $L^{b,\vee}_{c}(\xi, \YH_{c})$ is constant in $\xi$. Moreover, expanding $L^{b,\vee}_{c}(\xi, \YH_{c})$ as $\sum_{w\in W} a_w w$ with $a_w\in\c(V\times V)$, we have $a_w=0$ for $w\ne \id$.

(iv) We have %$L^{b,\vee}_{c}(\xi, \YH)e=(L^{b}+\mathrm{const})e$, and 
$L^{b,\vee}_{c}(\xi, \YH_c)=L^{b}_c+\mathrm{const}$.%, i.e., it equals the classical limit of the Hamiltonian $L^b$ described in Theorem \ref{lb}.
\end{prop}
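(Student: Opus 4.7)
The plan is to follow the strategy used for Proposition~\ref{elclq} in subsection~\ref{regu}, establishing first the $C^\vee C_n$-analogues of the auxiliary lemmas \ref{lemmay}--\ref{23} and then deducing the four statements in turn. As before, I shall view $\RH(\aalpha)$ and the associated $\TH_w$, $\YH^b$ as living in $\c(\mathbb V)\rtimes \c\mathbb W$, with equivariance
\[
(w^\vee\otimes w)\RH(\aalpha)(w^\vee\otimes w)^{-1}=\RH(w\aalpha)\qquad (w\in\Wh,\ \aalpha\in\Ra).
\]
The geometric description of the sequence $\alpha^1,\dots,\alpha^l$ in the decomposition \eqref{rw1} is the same as in \ref{regu}, so that the analogue of Lemma~\ref{rrrh} holds with
\[
G_b(\xi)=\prod_{\substack{\alpha\in R\\ \dpr{\alpha,b}=1}} \sigma_\mu(\dpr{\alpha^\vee,\xi})\,\prod_{\substack{\alpha\in R\\ \dpr{\alpha,b}=2}} v_{\nu^\vee,g^\vee}(\dpr{\alpha^\vee,\xi})\,\prod_{\substack{\alpha\in R,\ k\in\Z\\ \dpr{\alpha,b}=2}} v_{\ov{\nu}^\vee,\ov{g}^\vee}(\dpr{\alpha^\vee,\xi}),
\]
i.e.\ $Y^b=G_b\YH^b$ with $G_b$ coming from exactly those affine $R$-matrices in \eqref{rw1} associated to roots $\aalpha=\alpha+k\delta$ positive on $C_a$ and negative on $w(C_a)$. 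The $W$-invariance of $L^{b,\vee}_c$ gives, by the same computation as in Lemma~\ref{23}, the relation $\TH_w\,g(\xi)\YH^b=g(w^{-1}\xi)\YH^{wb}\,\TH_w$ for $w\in W$. Lemma~\ref{try} and its corollary carry over verbatim once the $R$-matrix translation formula \eqref{rtr} is checked in each of the three cases \eqref{cel1}--\eqref{cel3}, which is a short direct calculation using the quasi-periodicities of $\theta_r$.

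For part~(i) I would first write the orbit $W\epsilon_1=\{\pm\epsilon_i\}$ and, for each positive root $\alpha\in R$, pair terms $\pi,\pi'=s_\alpha(\pi)$ in the usual way. The pairs coming from short roots $\alpha=\epsilon_i\pm\epsilon_j$ are handled exactly as in the minuscule case of Proposition~\ref{elclq}: the coefficients $A_\pi^\vee+A_{\pi'}^\vee$ lose their simple pole along $\dpr{\alpha^\vee,\xi}=0$ by the $W$-symmetry of $L^{b,\vee}_c$, while $\YH^{\pi}=\YH^{\pi'}$ on the hyperplane by the $C^\vee C_n$ version of Lemma~\ref{lemmay} (itself a straightforward repeat of the argument there, using the first half of Lemma~\ref{bb} at $\omega_r=0$). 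The genuinely new case is $\alpha=2\epsilon_i$, where $\pi'=\pi-\alpha^\vee\in W\epsilon_1$ and where the coefficients $A_\pi^\vee$, $A_{\pi'}^\vee$, and the sum $B=\sum_\pi B_\pi^\vee$ each have \emph{second-order} poles along the four hyperplanes $\dpr{\alpha^\vee,\xi}=\omega_r$ (one for each half-period $r=0,1,2,3$). Arguing as in \eqref{aab}, the $W$-symmetry of $L^{b,\vee}_c$ forces the three properties (i) $A_{\pi'}^\vee=(A_\pi^\vee)^{s_\alpha}$, (ii) $A_\pi^\vee+A_{\pi'}^\vee$ and $B$ have vanishing residues at each $\dpr{\alpha^\vee,\xi}=\omega_r$, and (iii) $A_\pi^\vee+A_{\pi'}^\vee+B$ has at most simple poles there; this is precisely the structure of \eqref{3term}. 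The second half of Lemma~\ref{bb} gives, near each such hyperplane, $\YH^{\pi'}=\YH^\pi+\varepsilon\,(\dpr{\alpha^\vee,\xi}-\omega_r)+\ldots$ with a regular $\varepsilon$, so the combination $A_\pi^\vee\YH^\pi+A_{\pi'}^\vee\YH^{\pi'}+B$ is regular at each half-period, exactly as in the quasi-minuscule case of Proposition~\ref{elclq}. Parts~(ii) and~(iii) then follow mechanically from the $C^\vee C_n$-analogue of Lemma~\ref{trans} (ellipticity in $\xi$ of all coefficient functions, together with part~(i) to rule out the translates of $\{\dpr{\alpha^\vee,\xi}=0\}$), plus the Liouville-type argument that a globally holomorphic quasi-periodic function is constant.

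For part~(iv), the idea is the one used at the end of \ref{regu}: it suffices to show that for $\xi=\xi_0$ as in \eqref{csystem}, $L^{b,\vee}_c(\xi,\YH)\,e=(L^b_c+\mathrm{const})\,e$, after which parts~(iii) and (iv) tie together by passing to the classical limit. Writing $L^{b,\vee}_c(\xi,\YH)=\sum_{\pi\in W\epsilon_1} A^\vee_\pi \YH^\pi -\sum_\pi B^\vee_\pi$ with $A^\vee_\pi\YH^\pi = G^{-1}_\pi A^\vee_\pi \cdot Y^\pi$, the same geometric analysis of the reduced decomposition $t(\pi)=s_{i_1}\cdots s_{i_l}$ shows that for non-dominant $\pi$ (after conjugating the translation to the right) the last $R$-matrix in the product is $\RR(-a_i)$ for some $i\in\{1,\dots,n\}$; the choice $\xi=\xi_0$ makes $\dpr{a_i^\vee,\xi_0}=-m_i$, so $\RR(-a_i)$ becomes proportional to $\sigma_{m_i}(-a_i)(1-s_i)$ (respectively $v_{\nu,g}(-a_i/2)(1-s_i)$ when $a_i=a_n$), both of which annihilate $e$. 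Thus $Y^\pi e=0$ for non-dominant $\pi$ and the sum reduces to $Y^{\epsilon_1}e+(\text{constant in }x)e=L^b_c e + \mathrm{const}\,e$.

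The main obstacle will be part~(i) at the long-root hyperplanes $\dpr{\alpha^\vee,\xi}=\omega_r$: we have three different unitary $R$-matrices and four half-periods at which $v_{\nu^\vee,g^\vee}$ and $v_{\ov{\nu}^\vee,\ov{g}^\vee}$ acquire simple poles, so we must check that Lemma~\ref{bb} applies uniformly at each $\omega_r$ and that the residue-cancellation pattern \eqref{aab} persists despite the quasi-periodic phase factors $e^{\pi i (\aalpha-2\nu^\vee)\beta_r}$ picked up there. I expect this to amount to a careful bookkeeping of those phase factors using \eqref{vsym} and the $\Z_2\times\Z_2$ symmetry encoded in \eqref{ghat}, but no genuinely new idea beyond the one already used in \eqref{3term}.
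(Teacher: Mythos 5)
Your overall strategy is the paper's: the author likewise proves this result ``in the same way as Proposition \ref{elclq}'', and your reduction of parts (ii)--(iv) to the analogues of Lemmas \ref{try}, \ref{trans} and to the vanishing $Y^\pi e=0$ for non-dominant $\pi$ at $\xi=\xi_0$ is sound. The gap is in part (i), precisely at the point the paper singles out as ``one additional complication'': the behaviour at the hyperplanes $\dpr{\alpha^\vee,\xi}=\omega_r$ for long roots $\alpha=2\epsilon_i$. You assert that near each such hyperplane $\YH^{\pi'}=\YH^{\pi}+\varepsilon\,(\dpr{\alpha^\vee,\xi}-\omega_r)+\dots$, i.e.\ that $\YH^{\alpha^\vee}=1$ on the hyperplane, so that the cancellation runs ``exactly as in the quasi-minuscule case''. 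This is true only for $r=0,1$. For $r=2,3$ the phase $e^{\pi i(\aalpha-2\nu^\vee)\beta_r}$ from Lemma \ref{bb} does not disappear: the paper's Lemma \ref{lemmayc} shows that $\YH^{\alpha^\vee}=e^{-\pi c\beta_r-\lambda_r}$ there, with $\lambda_r$ as in \eqref{lam} a nonzero constant depending on $\nu^\vee,\ov{\nu}^\vee,\mu$. Hence $\YH^{\pi}$ and $\YH^{\pi'}=(\YH^{\pi})^{-1}$ approach the distinct scalars $e^{-\lambda_r}$ and $e^{\lambda_r}$ (classically), and the combination whose double poles must cancel is the \emph{twisted} sum $e^{-\lambda_r}A^\vee_{\pi}+e^{\lambda_r}A^\vee_{\pi'}+B$, not $A^\vee_{\pi}+A^\vee_{\pi'}+B$.

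This matters because the untwisted cancellation pattern \eqref{aab} is a consequence of $W$-symmetry ($A^\vee_{\pi'}=(A^\vee_{\pi})^{s_\alpha}$ plus ellipticity), whereas the twisted conditions \eqref{5res}, \eqref{5resc} are an extra input: for $b=\epsilon_1$ they have to be verified directly from the explicit formula \eqref{clb1}--\eqref{clb2} (the paper notes this is easy but does not claim it is automatic), and for the higher Hamiltonians they are imported from Rains's Theorem \ref{rath}. Your closing remark that the phase factors require ``careful bookkeeping'' correctly flags where the difficulty lies, but the mechanism you propose for the cancellation is the untwisted one and would fail at $\omega_2,\omega_3$; to close the gap you need the statement and proof of Lemma \ref{lemmayc} (tracking the $x$-dependent phases $e^{\pi i(\aalpha-2\nu^\vee)}$ through the reduced decomposition of $t(\alpha^\vee)$, which cancel in $x$ but leave the constant $e^{-\pi c\beta_r-\lambda_r}$), together with an explicit check of the twisted residue identity for the coefficients of $L^{b,\vee}$.
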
   
This is proved in the same way as Proposition \ref{elclq}. One additional complication arises when considering what happens for $\xi_i=\omega_r$ (this is further explained in a more general situation in the next subsection).  
Here is a suitable generalisation of Lemma \ref{lemmay}, which is proved by a similar argument.

\begin{lemma}\la{lemmayc}
For $\alpha=2\epsilon_l$ we have $\YH^{\alpha^\vee}=e^{-\pi c\beta_r-\lambda_r}$ if $\dpr{\alpha^\vee, \xi}=\omega_r$, where 
\begin{equation}\la{lam}
\lambda_r=2\pi i\beta_r(\nu^\vee+\ov{\nu}^\vee+(n-1)\mu)\,,\quad r=0\dots 3\,,
\end{equation} 
and $\beta_r$ are the same as in Lemma \ref{bb}. In particular, for the classical operator $\YH^{\alpha^\vee}_{c}$ we have $\YH^{\alpha^\vee}_c=e^{-\lambda_r}$ if $\dpr{\alpha^\vee, \xi}=\omega_r$. 
\end{lemma}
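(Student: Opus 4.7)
The strategy is to imitate the proof of Lemma \ref{lemmay}, with the role of ``$\RH(\aalpha)=-s_\aalpha$ at $\dpr{\alpha^\vee,\xi}=0$'' now played by the more general statement in Lemma \ref{bb}: at $\dpr{\alpha^\vee,\xi}=\omega_r$ the long-root unitary $R$-matrix $\RH(\aalpha)$ equals an explicit scalar multiple of $s_\aalpha$ rather than exactly $-s_\aalpha$. Concretely, one works with the explicit expression \eqref{yicc} for $Y^{\epsilon_l}$, converted to unitary $R$-matrices via the identities from the previous subsection, so that this product realises $\YH^{\epsilon_l}$ along the reduced decomposition of $t(\epsilon_l)$ used in \eqref{yicc}.

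Geometrically, the factors in \eqref{yicc} record the $2n$ affine-root hyperplanes crossed by a straight-line path from a point $x\in C_a$ close to the origin to $x-c\epsilon_l$. Exactly two of those hyperplanes are of long-root type: $2\epsilon_l=0$ (at $x_l=0$) and $\delta+2\epsilon_l=0$ (at $x_l=-c/2$). They split the path into two halves meeting at $z=s_{2\epsilon_l}(x)=s_{\delta+2\epsilon_l}(x-c\epsilon_l)$, and the first half is symmetric under $s_{2\epsilon_l}$ while the second is symmetric under $s_{\delta+2\epsilon_l}$. In each half, the short-root walls crossed before the long-root midpoint are sent to the negatives of the walls crossed after it, exactly as in the proof of Lemma \ref{lemmay}.

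At $\xi_l=\omega_r$ Lemma \ref{bb} gives $\RH(2\epsilon_l)=C_1(x)\,s_{2\epsilon_l}$ and $\RH(\delta+2\epsilon_l)=C_2(x)\,s_{\delta+2\epsilon_l}$ for explicit scalar factors $C_1,C_2$. Pushing each reflection through the adjacent short-root $R$-matrices via the equivariance $w\RH(\alpha)w^{-1}=\RH(w\alpha)$ turns those short-root factors into pairs $\RH(\beta)\RH(-\beta)$, which collapse by unitarity $\RH(\beta)\RH(-\beta)=1$. What survives is a scalar (coming from $C_1,C_2$ and the conjugation of $C_1$ through the preceding short-root block) multiplying $s_{2\epsilon_l}s_{\delta+2\epsilon_l}\cdot t(\epsilon_l)$. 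Using $s_{\delta+2\epsilon_l}=t(\epsilon_l)s_{2\epsilon_l}$ together with $s_{2\epsilon_l}t(\epsilon_l)s_{2\epsilon_l}=t(-\epsilon_l)$, one checks that this reflective part collapses to the identity, leaving only the scalar. Matching the resulting exponent against the contributions of $C_1$, $C_2$ and the conjugation step gives the stated value $e^{-\pi c\beta_r-\lambda_r}$; the classical limit is then obtained by setting $c=0$, which kills the $-\pi c\beta_r$ term.

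The main obstacle is tracking the scalar $C_1$ (which depends on $x_l$ through $2\epsilon_l$) as it is conjugated through the preceding block of short-root $R$-matrices: this conjugation is not purely multiplicative, because the short-root $R$-matrices contain transpositions $\ss_{lj}$ and sign changes $\ss_l$ that do not commute with $e^{\pi i\beta_r\cdot 2x_l}$. Precisely these corrections are responsible for the $(n-1)\mu$ term inside $\lambda_r$: each of the $n-1$ pairs of short roots $\epsilon_l\pm\epsilon_j$ ($j\ne l$) flanking the long roots contributes one factor of $\mu$ via the argument of $\sigma_\mu$ that survives after cancellation, which is consistent with the $n=1$ case, where there are no short-root pairs and $(n-1)\mu$ vanishes.
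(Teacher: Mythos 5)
Your proposal follows essentially the same route the paper intends: the paper gives no details for Lemma \ref{lemmayc} beyond saying it is ``proved by a similar argument'' to Lemma \ref{lemmay}, and your argument is precisely that adaptation — the straight-line path from $x\in C_a$ to $x-c\epsilon_l$, the two long-root walls $2\epsilon_l=0$ and $\delta+2\epsilon_l=0$ as centres of the two symmetric sub-blocks, Lemma \ref{bb} replacing ``$\RH(\aalpha)=-s_{\aalpha}$'', unitarity collapsing the conjugated pairs, and $s_{2\epsilon_l}s_{\delta+2\epsilon_l}t(\epsilon_l)=1$ killing the reflective part. The structure is sound and the $n=1$ check confirms the exponent. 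One small correction to your bookkeeping: the non-commutativity of $e^{2\pi i\beta_r x_l}$ with the transpositions only produces a conjugation $E^{-1}(\cdots)E$ that telescopes away; the $(n-1)\mu$ actually enters because $s_{2\epsilon_l}$ sends $\RH(\epsilon_l+\epsilon_j)$ to $\RH(\epsilon_j-\epsilon_l)$ only after shifting the dynamical argument by $2\xi_l=2\omega_r$, and the quasi-periodicity of the normalising denominator, $\sigma_\mu(\xi_j-\xi_l+2\omega_r)=e^{2\pi i\mu\beta_r}\sigma_\mu(\xi_j-\xi_l)$, contributes one factor $e^{-2\pi i\mu\beta_r}$ for each of the $n-1$ indices $j\ne l$.
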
      

The same arguments as in Section \ref{regu1} lead to a construction of a Lax pair.
\begin{theorem}\la{lpc} For any fundamental coweight $\lambda$
%$\lambda=\epsilon_1+\dots+\epsilon_k$, $1\le k\le n$, 
with the stabiliser $W'$, there exists a quantum Lax pair $\mathcal L, \mathcal A$ of size $|W|/|W'|$ satisfying the Lax equation \eqref{qleq} with $\mathcal H=\widehat H\mathbb{1}$, $\widehat H=L^{\epsilon_1}$. This Lax pair depends on a spectral parameter and admits a classical limit.   
\end{theorem}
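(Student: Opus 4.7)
The plan is to mirror the proof of Theorem \ref{mainthm} in the $C^\vee C_n$ setting, using Proposition \ref{celclq} in place of Proposition \ref{elclq}. First, I would take the dual classical Hamiltonian $L^{b,\vee}_c$ (with $b=\epsilon_1$ and the dual couplings $\nu^\vee, \ov\nu^\vee, g^\vee, \ov g^\vee$) and substitute the quantum unitary Cherednik operators, writing
\begin{equation*}
L^{b,\vee}_c(\xi, \YH) = L^b + \widehat A
\end{equation*}
for a suitable $\widehat A\in\D_q*W$ depending on $\xi$. Part (i) of Proposition \ref{celclq} guarantees that the left-hand side is regular near $\xi=0$, so $\widehat A$ is a well-defined operator, and parts (iii)--(iv) say that the classical limit of $\widehat A$ is a constant. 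After subtracting that constant we may assume $\widehat A$ vanishes in the classical limit.

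Next, for a fundamental coweight $\lambda$ with stabiliser $W'\subset W$ I would pick the Cherednik operator $Y^\lambda$. Since the elements $Y^\mu$ pairwise commute by Theorem \ref{yb}(ii) (adapted to the $C^\vee C_n$ setting via the proposition preceding Theorem \ref{lpc}), the operator $Y^\lambda$ commutes with $L^{b,\vee}_c(\xi,\YH)$ entry by entry, hence with $L^b+\widehat A$. Representing both operators on $M=\c W\otimes \c(V)$ as in \eqref{rep}, we obtain matrices $\mathcal L, \mathcal H=L^b\mathbb 1, \mathcal A$ of size $|W|$ satisfying \eqref{qleq}. The classical limit exists by construction (after the subtraction above). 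This already gives a Lax pair of size $|W|$.

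To cut the size down to $|W/W'|$, I would specialise $\xi$ in the same spirit as in Lemma \ref{il}: take $\xi=-\rho_m+\eta\lambda$ with a spectral parameter $\eta$, where $\rho_m$ is the appropriate Weyl vector determined by the system \eqref{csystem}. For this choice, $\dpr{a_i^\vee,\xi}=-m_{a_i}$ for every simple coroot $a_i^\vee$ that is orthogonal to $\lambda$, and the corresponding $R$-matrix $\RR(-a_i)$ collapses to a multiple of $(1-s_i)$, exactly as in the proof of Lemma \ref{il}. Using the braid-type commutation relations $\TH_iY^\lambda=Y^\lambda\TH_i$ for generators $s_i$ fixing $\lambda$ (proved via Lemma \ref{23} together with part (2) of the proposition preceding Theorem \ref{lpc}), one rewrites $\RR(-a_i)Y^\lambda=(s_i^\vee\otimes s_i)Y^\lambda (s_i^\vee\otimes s_i)\RR(-a_i)$; multiplying by $e'$ on the right yields $(1-s_i)Y^\lambda e'=0$ for all $i$ with $s_i\in W'$, so $Y^\lambda(M')\subset M'$. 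The same argument with $L^{b,\vee}_c(\xi,\YH)$ in place of $Y^\lambda$ (which is $W$-invariant by part (iii)) shows $\widehat A$ also preserves $M'=e'M$. Restricting $\mathcal L, \mathcal A$ to $M'$ yields the required Lax pair of size $|W/W'|$ with spectral parameter $\eta$.

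The main obstacle will be verifying the analogue of Lemma \ref{il} in the $C^\vee C_n$ setting, since the root system has two root lengths and the simple root $a_n=2\epsilon_n$ corresponds to the $R$-matrix of type \eqref{cel2} rather than \eqref{cel1}. The collapse of $\RR(-a_n)$ at the specialised value $\dpr{a_n^\vee,\xi}=-\nu$ requires the sharpened form of Lemma \ref{bb}, and one must also check that the shift by the spectral parameter $\eta\lambda$ does not ruin these relations when $\lambda$ is not orthogonal to $a_n$. A parallel subtlety appears at the simple affine root $a_0$, but since $a_0$ does not belong to $W'$ this does not affect the argument. Once these specialisations are confirmed compatible with the braid relations for $\TH_i$ from \eqref{cthi}, the rest of the construction is formal.
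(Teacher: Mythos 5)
Your proposal follows the paper's own route: Theorem \ref{lpc} is proved there precisely by rerunning the argument of Section \ref{regu1} with Proposition \ref{celclq} in place of Proposition \ref{elclq} and with the $C^\vee C_n$ analogue of Lemma \ref{il} obtained from the specialisation $\xi=-\rho_m+\eta\lambda$, under which $\RR(-a_i)$ collapses to a multiple of $(1-s_i)$ for every generator of $W'$ --- for the long root $a_n$ this collapse is immediate from \eqref{cel2} rather than from Lemma \ref{bb}, which concerns the half-period specialisations used in the regularity argument. Apart from that minor misattribution, your construction is the one the paper intends.
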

The smallest Lax pair of size $2n$ is obtained for $\lambda=\epsilon_1$. We calculate the corresponding Lax matrix in \ref{calvd}.  

\subsection{}\la{rai}
Let us generalise Proposition \ref{celclq} to any of the higher Hamiltonians of the van Diejen system. A direct proof is problematic since we do not know an explicit formula for these Hamiltonians. Instead, we will use a result of Rains, who in \cite{Rains} developed a geometric approach to elliptic DAHAs. To formulate his result we will need some notation.  
Let us introduce %the notation $v_{\aalpha}$ as follows:
\begin{equation*}
v_{\aalpha}=\begin{cases}
\sigma_{\mu}(\aalpha)&\quad\text{for\ }\aalpha=k\delta\pm\epsilon_i\pm\epsilon_j\quad(k\in\Z,\ i\ne j)\,,\\
v_{\nu, g}(\aalpha/2)&\quad \text{for\ }\aalpha=2k\delta\pm 2\epsilon_i\quad(k\in\Z)\,,\\
v_{\ov{\nu}, \ov{g}}(\aalpha/2)&\quad \text{for\ }\aalpha=(2k+1)\delta\pm 2\epsilon_i\quad(k\in\Z)\,,
\end{cases}
\end{equation*} 
as well as $\widehat{G}_\pi$, $\pi\in\Lambda$:
\begin{equation*}
\widehat{G}_\pi=\prod_{\genfrac{}{}{0pt}{}{\alpha\in R:\, \dpr{\pi, \alpha}>0}{0\le k\le \dpr{\pi, \alpha}}}\, v_{\alpha+k\delta}\qquad (\widehat{G}_0:=1)\,.
\end{equation*}
The coupling parameters $\mu, \nu, \ov{\nu}, g_i, \ov{g}_i$ will be assumed generic. %to be in generic position (to avoid possible cancellations of zeros and poles in $v_{\aalpha}$). 
Next, let 
\begin{equation*}
\Pi=\{\lambda=(\lambda_1,\dots, \lambda_n) \,|\, \lambda_i\in\{-1, 0, 1\}\}\,. 
%\bigcup_{k=0}^n Wb_k\,,\qquad b_k=\epsilon_1+\dots +\epsilon_k\,,\quad\text{(with $b_0=0$)}\,.
\end{equation*}  
%Equivalently, $B$ is the set of vectors with components $0$ or $\pm 1$, with at most convex hull of the orbit $Wb_n$, intersected with $\Lambda$. 
For any $\alpha\in R$, define an $\alpha^\vee$-{\it string} in $\Pi$ as $\{\pi+\Z\alpha^\vee\}\cap \Pi$ where $\pi\in \Pi$; the number of the lattice points on a string will be called its {\it length}. It is easy to see that $\alpha^\vee$-strings in $\Pi$ are of one of the following types:
\begin{align}
\text{length-one:}\quad &\{\pi\}\,,\quad\text{with}\ s_\alpha\pi=\pi\,,\la{st1}\\
\text{length-two:}\quad &\{\pi, \pi'\}\,,\quad\text{with}\ s_\alpha\pi=\pi'\,,\quad \pi'-\pi=\pm \alpha^\vee\,,\la{st2}\\
\text{length-three:}\quad &\{\pi, \pi\pm\alpha^\vee\}\,,\quad\text{with}\ s_\alpha\pi=\pi\,.\la{st3}
\end{align}  

\begin{defi}\la{calv} For given generic $\mu$, $\nu_r, \ov{\nu}_r$, let $\mathcal V$ be the vector space of all difference operators $L\in \D_q(V)$ of the form
\begin{equation}\la{fop}
L=\sum_{\pi\in \Pi}a_\pi t(\pi)
\,, \qquad 
a_\pi\in\c(V)\,,
\end{equation}
satisfying the following conditions: 
(1) $L$ is $W$-invariant;
(2) $a_\pi/{\widehat{G}}_\pi$ is elliptic {w.r.t.} the lattice $\Lambda+\tau\Lambda$;
(3) $a_\pi$ and $a_\pi/\widehat{G}_\pi$ have at most simple poles along the hyperplanes $\aalpha=l+m\tau$ with $\aalpha\in \Ra$ and $l, m\in\Z$, and no other singularities.
\end{defi}

%Such $L$ can be encoded by a function $a: \Pi\to \c(V)$, $\pi\mapsto a_\pi$.

In addition, we are going to impose certain %analytic
``residue conditions'' on the coefficients $a_\pi$ for each $\alpha^\vee$-string in $\Pi$. In what follows, we call a function $f\in \c(V)$ {\it $\alpha$-regular} if it has no singularities along hyperplanes $\dpr{\alpha, x}=\mathrm{const}$. 
For the length-one strings \eqref{st1} the residue condition is simply that 
\begin{equation}\la{1res}
\text{$a_\pi$ is $\alpha$-regular.}
\end{equation}
For the length-two strings \eqref{st2} the conditions are that
\begin{equation}\la{2res}
\text{$\theta(\alpha)a_\pi$ and $\theta(\alpha)a_{\pi'}$ are $\alpha$-regular.}
\end{equation}
Here, as before, we view the roots as affine-linear functions, e.g., $\theta(\alpha)=\theta(
\dpr{\alpha, x})$ for $\alpha\in R$. 
For the length-three strings \eqref{st3}, the conditions are more involved:  
\begin{gather}\la{3res}
\text{$\theta(\alpha)\theta(\alpha+\delta)a_{\pi+\alpha^\vee}$, $\theta(\alpha+\delta)\theta(\alpha-\delta)a_{\pi} $ and 
$\theta(\alpha)\theta(\alpha-\delta)a_{\pi-\alpha^\vee} $ are $\alpha$-regular;}
%\text{$a_{\pi+\alpha^\vee} \theta(\dpr{\alpha, x})\theta(\dpr{\alpha, x}+c)$, $a_{\pi} \theta(\dpr{\alpha, x}+c)\theta(\dpr{\alpha, x}-c)$ and 
%$a_{\pi-\alpha^\vee} \theta(\dpr{\alpha, x})\theta(\dpr{\alpha, x}-c)$ are $\alpha$-regular;}
\\
\la{4res}
\text{$a_{\pi+\alpha^\vee}+a_{\pi}+a_{\pi-\alpha^\vee}$ is regular for $\dpr{\alpha, x}=0, \pm c$.}
\end{gather}
%(In \eqref{3res} we view $\alpha$ as a linear function $\alpha=\dpr{\alpha, -}$.)
Additionally, for $\alpha=2\epsilon_l$ %and $\omega_r$, $r=1, 2, 3$, 
we require that
\begin{equation}\la{5res}
\text{$e^{-\lambda_r}a_{\pi+\alpha^\vee}+a_{\pi}+e^{\lambda_r}a_{\pi-\alpha^\vee}$ is regular for $\dpr{\alpha, x}=2\omega_r, 2\omega_r\pm c$,}
\end{equation}
where $\lambda_r=2\pi i\beta_r(\nu+\ov{\nu}+(n-1)\mu)$, cf. \eqref{lam}. See \cite{RR} where similar residue conditions were considered in the rank-one case.

%Note that the $W$-symmetry implies in that case that $a_\pi+a_{\pi'}$ is regular for $\dpr{\alpha, x}=0$.

%\begin{defi}\la{reco}  An operator $L\in\mathcal V$ is said to satisfy {\it residue conditions}, if for any $\alpha\in R$ and any $\alpha^\vee$-string, the coefficients in the string satisfy the conditions.  
%\end{defi}

The following result can be extracted from \cite{Rains}.

\begin{theorem}[cf. Theorem 7.24 of \cite{Rains}]\la{rath} For each $b_k=\epsilon_1+\dots +\epsilon_k$ ($0\le k\le n$), there exists an operator $L_k\in \mathcal V$ with the leading terms $\sum_{\pi\in Wb_k} \widehat{G}_\pi t(\pi)$, whose coefficients satisfy the residue conditions \eqref{1res}--\eqref{5res} for any $\alpha^\vee$-string, $\alpha\in R$. The operators $L_k$ pairwise commute and admit a classical limit.
\end{theorem}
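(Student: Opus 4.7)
The natural candidates are the operators $L_k:=L^{b_k}$ defined in Theorem~\ref{cemr}(1) from the elliptic Cherednik operators $Y^{b_k}=\RR_{t(b_k)}\,t(b_k)$ via $Y^{b_k}e=L^{b_k}e$. By that theorem they are $W$-invariant difference operators that pairwise commute, and for $k=1$ they reduce to the elliptic van Diejen Hamiltonian \eqref{clb1}--\eqref{clb2}. The plan is to show each $L_k$ belongs to $\mathcal V$, to compute its leading symbol, and to verify the residue conditions \eqref{1res}--\eqref{5res} one $\alpha^\vee$-string at a time.

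\emph{Step 1 (shape and leading term).} Fix a reduced decomposition of $t(b_k)$ and expand $Y^{b_k}$ using \eqref{cel1}--\eqref{cel3} and Definition~\ref{rwdef}; pushing all reflections to the right, every resulting monomial has the form $f(x)\,w\,t(\pi)$ with $w\in W$ and $\pi\in Wb_k$. Applying the symmetrizer $e$ and collecting coefficients yields $L_k=\sum_{\pi\in Wb_k}a_\pi\,t(\pi)$. Selecting the identity summand in every $\RR(\aalpha)$ factor contributes exactly $\widehat{G}_\pi$, which is therefore the leading symbol of $L_k$; all other choices involve at least one reflection $s_\aalpha$ and only affect the subleading corrections.

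\emph{Step 2 (analytic structure).} The $x$-dependence of each $\RR(\aalpha)$ lives in the functions $v_\aalpha$ and $v_{\dpr{\alpha^\vee,\xi},\ldots}$, which have only simple poles along $\aalpha\equiv 0\pmod{\Z+\tau\Z}$. Dividing $a_\pi$ by $\widehat{G}_\pi$ strips off precisely the non-elliptic factors, leaving terms whose $x$-dependence is elliptic with respect to $\Lambda+\tau\Lambda$ by the transformation laws of $\theta_1$. This establishes conditions $(1)$--$(3)$ of Definition~\ref{calv}, placing $L_k$ in $\mathcal V$.

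\emph{Step 3 (residue conditions).} On length-one strings, the fixed-point relation $s_\alpha\pi=\pi$ combined with $W$-invariance gives $a_\pi=a_\pi^{s_\alpha}$, which with Step~2 rules out poles along $\dpr{\alpha,x}=0$ and hence yields \eqref{1res}. On length-two strings, the identity $a_{\pi'}=a_\pi^{s_\alpha}$ plus the explicit residue of a single $\RR(\aalpha)$ gives \eqref{2res}. The length-three conditions \eqref{3res}--\eqref{5res} are subtler and use the regularity arguments of Propositions~\ref{elclq} and \ref{celclq}: the sum $a_{\pi-\alpha^\vee}+a_\pi+a_{\pi+\alpha^\vee}$ arises from restricting $Y^{b_k}$ to the locus $\dpr{\alpha^\vee,\xi}=0$ (respectively $\omega_r$), where the three contributions collapse by unitarity \eqref{uni1} and Lemma~\ref{lemmayc}. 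The phase $e^{\pm\lambda_r}$ in \eqref{5res} is exactly the specialization value $\YH^{\alpha^\vee}_c=e^{-\lambda_r}$ at $\dpr{\alpha^\vee,\xi}=\omega_r$, which is what makes the potential poles at $\dpr{\alpha,x}=2\omega_r,\,2\omega_r\pm c$ cancel.

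\emph{Step 4 (commutativity and classical limit).} Commutativity of the $L_k$ follows from the pairwise commutativity of the $Y^{b_k}$ established via the affine Yang--Baxter relations. Each $\RR(\aalpha)$ has a well-defined classical limit (replacing $t(\lambda)$ by $e^{\beta p_\lambda}$ as in Subsection~\ref{climq}), inducing a Poisson-commuting classical family with the same residue properties.

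\emph{Main obstacle.} The delicate part is Step~3, specifically \eqref{4res} and \eqref{5res}: these are not forced by $W$-invariance alone and genuinely couple three distinct translations in the sum, and \eqref{5res} requires exact bookkeeping of how $Y^{b_k}$ transforms under $\xi\mapsto\xi+\omega_r\alpha^\vee$ including the half-period phases of Lemma~\ref{lemmayc}. This is the arithmetic content linking the $C^\vee C_n$ Cherednik operators to Rains' intrinsic characterization of $\mathcal V$; once it is secured on the spanning set $\{b_k\}$, the full commuting family asserted in Theorem~\ref{rath} is obtained.
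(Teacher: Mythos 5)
There is a genuine gap, and it sits exactly where you flag the ``main obstacle.'' First, a structural point: the paper does not prove Theorem \ref{rath} at all --- it is imported wholesale from Rains' geometric construction of elliptic DAHAs (\cite[Theorem 7.24]{Rains}), and the paper explicitly explains why: ``a direct proof is problematic since we do not know an explicit formula for these Hamiltonians.'' Your proposal identifies $L_k$ with the Cherednik-operator Hamiltonians $L^{b_k}$ of Theorem \ref{cemr}(1), but the paper deliberately avoids this identification --- the Remark after the theorem says only that it is \emph{expected} that Rains' $L_k$ are the van Diejen Hamiltonians, and stresses that the subsequent construction uses neither this nor their commutativity. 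For $k\ge 2$ there is no closed formula for $L^{b_k}$ analogous to \eqref{clb1}--\eqref{clb2}, so the coefficient-by-coefficient verification you propose in Steps 2--3 has nothing concrete to act on. (Already in Step 1 the claim that every monomial of $Y^{b_k}$ carries a translation $\pi\in Wb_k$ is false: pushing the affine reflections $s_{\alpha+m\delta}$ to the right shifts the translation part, which is why even $L^{\epsilon_1}$ contains the constant terms $B_\pi$ supported on $\pi=0$; in general $L^{b_k}$ is supported on all of $\Pi$, not just $Wb_k$.)

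The decisive problem is Step 3 for length-three strings. You propose to derive \eqref{3res}--\eqref{5res} from ``the regularity arguments of Propositions \ref{elclq} and \ref{celclq},'' but in the paper the logical dependence runs the other way: Proposition \ref{celclqh} (the regularity of $L^\vee_k(\xi,\YH_c)$ near $\xi=0$ and at $\xi_i=\omega_r$) is \emph{deduced from} the residue conditions supplied by Theorem \ref{rath} together with Lemmas \ref{lemmay} and \ref{lemmayc}. Using those regularity statements to establish the residue conditions is therefore circular. The conditions \eqref{4res}--\eqref{5res} couple three distinct coefficients $a_{\pi\pm\alpha^\vee}, a_\pi$ and involve the precise phases $e^{\pm\lambda_r}$; they are not consequences of $W$-invariance or of the simple-pole structure of individual $R$-matrices, and no mechanism for proving them directly from the product expansion of $Y^{b_k}$ is given. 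Unless you can either (a) reproduce Rains' construction of operators satisfying Definition \ref{calv} plus \eqref{1res}--\eqref{5res} by independent means, or (b) prove the residue identities for the $C^\vee C_n$ Cherednik-operator Hamiltonians without appealing to the propositions that depend on them, the theorem remains unproved by this route.
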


\begin{remark} The Hamiltonian $L_0$ is trivial, $L_0=1$. It is expected that $L_k$ are the Hamiltonians of the van Diejen system, but the construction below makes no use of this or the commutativity of $L_k$. Note that for the first van Diejen Hamiltonian $L^{\epsilon_1}$ \eqref{clb1}--\eqref{clb2} the residue conditions can be easily checked from the explicit formula.
\end{remark}

\begin{remark}\la{clres}
In the classical limit, the residue conditions for $\alpha^\vee$-strings of length one or two remain the same. For a length-three string, \eqref{3res}--\eqref{4res} are replaced with their $c=0$ limit:
 \begin{gather}\la{3resc}
\text{$a_{\pi+\alpha^\vee} \theta^2(\alpha)$, $a_{\pi} \theta^2(\alpha)$ and 
$a_{\pi-\alpha^\vee} \theta^2(\alpha)$ are $\alpha$-regular;}
%\text{$a_{\pi+\alpha^\vee} \theta(\dpr{\alpha, x})\theta(\dpr{\alpha, x}+c)$, $a_{\pi} \theta(\dpr{\alpha, x}+c)\theta(\dpr{\alpha, x}-c)$ and 
%$a_{\pi-\alpha^\vee} \theta(\dpr{\alpha, x})\theta(\dpr{\alpha, x}-c)$ are $\alpha$-regular;}
\\
\la{4resc}
\text{$a_{\pi+\alpha^\vee}+a_{\pi}+a_{\pi-\alpha^\vee}$ is regular for $\dpr{\alpha, x}=0$.}
\end{gather}
Similarly, for $\alpha=2\epsilon_l$ and $r=1,2,3$ the conditions \eqref{5res} are replaced with:
\begin{equation}\la{5resc}
\text{$e^{-\lambda_r}a_{\pi+\alpha^\vee}+a_{\pi}+e^{\lambda_r}a_{\pi-\alpha^\vee}$ is regular for $\dpr{\alpha, x}=2\omega_r$.}
\end{equation}
This tells us that the second-order poles in this sum must cancel. Also, from $\dpr{\alpha, \pi}=0$ it can be checked that $\widehat G_\pi$ is periodic with respect to translations by $\alpha^\vee$ and $\tau\alpha^\vee$, and so must be $a_\pi$, by the definition of $\mathcal V$. Together with the $s_\alpha$-invariance of $a_\pi$, this gives that $a_\pi$ has zero residue at $\dpr{\alpha, x}=2\omega_r$. As a result,  $e^{-\lambda_r}a_{\pi+\alpha^\vee}+e^{\lambda_r}a_{\pi-\alpha^\vee}$ also have zero residue at $\dpr{\alpha, x}=2\omega_r$ (cf. the properties \eqref{aab}).
\end{remark}

We now have the following analogue of Proposition \eqref{celclq}.
\begin{prop}\label{celclqh} %Let $\YH^\lambda$, $\lambda\in P^\vee$ be the Cherednik operators associated to a root system $R$ and dynamical variables $\xi\in V$ as above. 
Consider the classical limits of the operators $L_k$ from Theorem \ref{rath}, and with the dual parameters $\nu^\vee, g^\vee$, $\ov{\nu}^\vee, \ov{g}^\vee$. Denote these classical Hamiltonians as $L_k^\vee$, $k=0, \dots, n$, with $L_0^\vee=1$.    
Let 
$L^{\vee}_{k}(\xi, \YH)$ and $L^{\vee}_{k}(\xi, \YH_c)$  denote the result of substituting the dynamical variables and Cherednik operators into $L_k^\vee$. Then we have:

(i) $L^{\vee}_{k}(\xi, \YH)$, viewed as an element of $\D(V)*W$ depending on $\xi$, is regular for $\xi$ near $\xi=0$;

(ii)  $L^{\vee}_{k}(\xi, \YH_{c})$, viewed as an element of $\c(V\times V)*W$ depending on $\xi$, is regular for all $\xi\in V$;

(iii) $L^{\vee}_{k}(\xi, \YH_{c})$ is constant in $\xi$. Moreover, expanding $L^{\vee}_{k}(\xi, \YH_{c})$ as $\sum_{w\in W} a_w w$, we have $a_w=0$ for $w\ne \id$.

(iv) We have %$L^{b,\vee}_{c}(\xi, \YH)e=(L^{b}+\mathrm{const})e$, and 
$\displaystyle{L^{\vee}_{k}(\xi, \YH_c)=L^{b_k}_c+\sum_{0\le l<k} a_{kl} L^{b_l}_c}$, with some $a_{kl}\in\c$, where $L^{b_l}_c$ are the higher (classical) van Diejen Hamiltonians for $b_l=\epsilon_1+\dots +\epsilon_l$.%, i.e., it equals the classical limit of the Hamiltonian $L^b$ described in Theorem \ref{lb}.
\end{prop}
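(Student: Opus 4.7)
The plan is to adapt the proof of Proposition \ref{celclq} (which handled $k=1$ by direct inspection of $L^{\epsilon_1}$) by replacing explicit coefficient inspection with the residue conditions \eqref{1res}--\eqref{5res} guaranteed by Theorem \ref{rath}. I address (i)--(iv) in turn.

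For part (i), I fix $\alpha \in R$ and group the terms in $L^\vee_k(\xi, \YH) = \sum_{\pi \in \Pi} a_\pi(\xi) \YH^\pi$ along $\alpha^\vee$-strings, aiming at regularity at $\dpr{\alpha^\vee, \xi}=0$. A length-one string contributes a single term, regular by \eqref{1res}. For a length-two string $\{\pi, s_\alpha\pi\}$ with $s_\alpha \pi = \pi - \alpha^\vee$, Lemma \ref{lemmay} (or Lemma \ref{lemmayc} at $r=0$ when $\alpha = 2\epsilon_l$) gives $\YH^{\pi - \alpha^\vee} = \YH^\pi$ at $\dpr{\alpha^\vee,\xi} = 0$, and combined with the $s_\alpha$-symmetry of $L^\vee_k$ and the residue cancellation \eqref{2res} this makes $a_\pi \YH^\pi + a_{s_\alpha\pi}\YH^{s_\alpha\pi}$ regular. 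For a length-three string $\{\pi - \alpha^\vee, \pi, \pi + \alpha^\vee\}$, the expansion $\YH^{\pm\alpha^\vee} = 1 \pm \varepsilon \dpr{\alpha^\vee,\xi} + O(\dpr{\alpha^\vee,\xi}^2)$ around $\dpr{\alpha^\vee,\xi} = 0$, together with conditions \eqref{3res}--\eqref{4res}, ensures that both the second-order and the simple-pole parts cancel, exactly as in the quasi-minuscule case of Proposition \ref{elclq}(i).

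For part (ii), I combine part (i) with a quasi-periodicity argument. The scalar prefactors relating $\RR(\aalpha)$ to $\RH(\aalpha)$ are elliptic in $\xi$ by Lemmas \ref{rrrh}, \ref{bb}, so $\YH^b_c$ obeys the same conjugation-quasi-periodicity as $Y^b_c$ under $\xi \mapsto \xi + u + \tau v$ (Lemma \ref{try}). The coefficients $a_\pi/\widehat G_\pi$ of $L^\vee_k$ are elliptic in $\xi$ by Definition \ref{calv}(2) applied to the dual system, and the $\widehat G_\pi$ factors absorb into the renormalisation of the Cherednik operators, so $L^\vee_k(\xi, \YH_c)$ transforms under $\xi \mapsto \xi + u + \tau v$ with $u, v \in \Lambda$ by conjugation with $e^{2\pi\mathrm{i}\dpr{v, x}}$. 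Regularity near $\xi = 0$ (from part (i)) then propagates to all hyperplanes $\dpr{\alpha^\vee,\xi} \in \Z + \tau\Z$. For $\alpha = 2\epsilon_l$ there remain potential singularities on $\dpr{\alpha^\vee,\xi} = \omega_r + \Z + \tau\Z$ with $r = 1, 2, 3$; here I repeat the length-three analysis using the full strength of Lemma \ref{lemmayc} ($\YH^{\pm\alpha^\vee}_c \to e^{\mp\lambda_r}$), and the twisted residue condition \eqref{5resc} supplies the necessary cancellation. The main technical point is verifying that the exponential twist $\lambda_r$ produced by Lemma \ref{lemmayc} coincides with the one appearing in \eqref{5resc}; this hinges on Definition \ref{dpa} of the dual parameters $\nu^\vee, \ov\nu^\vee$ and the normalisation chosen for the unitary $R$-matrices, and is the main obstacle of the argument.

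Parts (iii) and (iv) follow the templates of Propositions \ref{elcl} and \ref{elclq}. For (iii), each coefficient $a_w(\xi)$ in the expansion $L^\vee_k(\xi, \YH_c) = \sum_w a_w w$ is globally holomorphic in $\xi$ by part (ii) and satisfies the conjugation-quasi-periodicity, so every matrix entry is a holomorphic quasi-periodic function of $\xi$, hence constant; the conjugation rule then forces $a_w = 0$ for $w \neq \id$ as at the end of Proposition \ref{elcl}. For part (iv), I specialize $\xi = -\rho_m$ and apply the symmetrizer $e$: by the argument at the end of Proposition \ref{elclq}(iv), $Y^\pi e = 0$ for every non-dominant $\pi \in \Pi$. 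The dominant elements of $\Pi$ are exactly $b_0, b_1, \ldots, b_n$, and by Theorem \ref{cemr}(1), $Y^{b_l} e = L^{b_l} e$. Grouping by $W$-orbit, the surviving contributions form a $\c$-linear combination of $L^{b_l}$'s, $0 \le l \le k$; comparing the leading-orbit coefficient against the leading term of $L_k$ from Theorem \ref{rath} identifies the $L^{b_k}$-coefficient as $1$, and passing to the classical limit via (iii) yields (iv).
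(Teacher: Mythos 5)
Your proposal follows exactly the route the paper takes (and mostly leaves to the reader): regularity near $\xi=0$ and at $\xi_i=\omega_r$ from the residue conditions of Theorem \ref{rath} and Remark \ref{clres} organised along $\alpha^\vee$-strings together with Lemmas \ref{lemmay}, \ref{lemmayc}, global regularity from a quasi-periodicity analogue of Lemma \ref{trans} built on Definition \ref{calv}, Liouville-type arguments for (iii), and the $Y^\pi e=0$ specialisation argument of Proposition \ref{elclq}(iv) for (iv). The point you single out as the main obstacle --- matching the twist $\lambda_r$ of Lemma \ref{lemmayc} with the one in \eqref{5resc} via the dual parameters of Definition \ref{dpa} --- is precisely the consistency the paper relies on implicitly, and it does check out.
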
   
This is proved similarly to Proposition \ref{elclq}. Namely, the regularity of $L^{\vee}_{k}(\xi, \YH_{c})$ for $\xi$ close to $0$, as well as for $\xi_i=\omega_r$, follows from the residue conditions on the coefficients of $L_k^\vee$, as specified in Remark \ref{clres} (used together with Lemmas \ref{lemmay}, \ref{lemmayc}). The global regularity then follows from an analogue of Lemma \ref{trans}; to prove such an analogue we do not need an explicit formula for $L_k^\vee$, but make use of the Definition \ref{calv} instead. We leave the details to the reader. \qed 
 
As a consequence, we can perform a construction of a Lax pair for each of the higher van Diejen Hamiltonians, and so Theorem \ref{lpc} remains valid for any of $L^b$ with $b=b_k$, $k=1, \dots, n$.

\subsection{}\la{calvd}
Let us calculate a quantum Lax matrix corresponding to $Y_1=Y^{\epsilon_1}$. The calculation and the notation will be very similar to those in \ref{clax}. %, so we simply outline the steps, leaving their verification to the reader. 
The stabiliser $W'$ of $\lambda=\epsilon_1$ is the subgroup of signed permutations of $x_2, \dots, x_n$. The dynamical variable $\xi$ needs to satisfy the conditions $\dpr{a_i^\vee, \xi}=-m_i$ for $i=2, \dots, n$, cf. \eqref{csystem}, that is,
\begin{equation}\la{spec} 
\xi_1=\eta\,,\quad \xi_i=-\nu-(n-i)\mu \quad (i=2, \dots, n)\,,
\end{equation}
with $\eta$ being a spectral parameter. 
Let us abbreviate $\RR(\epsilon_i-\epsilon_j)$ and $\RR(\epsilon_i+\epsilon_j)$ to $\RR_{ij}$ and $\RR_{ij}^+$, respectively. Using \eqref{yicc}, we get  
\begin{gather*}
Y_1=\RR_{12}\RR_{13}\dots \RR_{1n}\,\RR(2\epsilon_1) \RR_{1n}^+\dots \RR_{12}^+\, \RR(\delta+2\epsilon_1) \,t(\epsilon_1)\,,\quad\text{with}\\
%\intertext{with}  
\RR_{ij}=\sigma_{{\mu}}(x_{ij})-\sigma_{\xi_{ij}}(x_{ij})\ss_{ij}\,,\qquad\RR_{ij}^+=\sigma_{{\mu}}(x^+_{ij})-\sigma_{\xi^+_{ij}}(x^+_{ij})\ss^+_{ij}\,,
\\
\RR(2\epsilon_1)=v_{\nu, g}(x_1)-v_{\xi_1, g}(x_1){\ss}_1\,, \qquad \RR(\delta+2\epsilon_1)\,t(\epsilon_1)={v}_{\ov{\nu}, \ov{g}}(x_1+c/2)\,t(\epsilon_1)- {v}_{\xi_1, \ov{g}}(x_1+c/2){\ss}_1\,.
\end{gather*}
Introducing 
$\mathcal R=\RR_{12}\dots \RR_{1n}$ and $\mathcal R^+=\RR_{1n}^+\dots \RR_{12}^+$, we have the following.

\begin{lemma}[cf. \cite{KH97}, Lemma 4.4] 
$\mathcal R$ and $\mathcal R^+$ preserve the subspace $M''=e''M$, where $e''=\frac{1}{n!}\sum_{{w} \in \mathfrak{S}_{n-1}} {w}$. Their restriction onto $M''$ is calculated as follows: 
\begin{gather*}
\mathcal R |_{M''}=U-\sum_{i\ne 1}^n V_i{\ss}_{1i}\,,\qquad U=\prod_{l\ne 1}^n \sigma_\mu(x_{1l})\,,\quad V_i=\sigma_{\xi_{12}}(x_{1i})\prod_{l\ne 1, i} \sigma_\mu(x_{il})\,,
%\intertext{where}
\\
\mathcal R^+ |_{M''}=U^+-\sum_{i\ne 1}^n V_i^+{\ss}_{1i}^+\,, \qquad
U^+=\prod_{l\ne 1}^n \sigma_\mu(x_{1l}^+)\,,\quad V_i^+=\sigma_{\xi_{1n}^+}(x_{1i}^+)\prod_{l\ne 1, i} \sigma_\mu(x_{li})\,.
\end{gather*}
\end{lemma}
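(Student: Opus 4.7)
The plan is to follow the same two-step pattern used in Lemmas \ref{ns}, \ref{inv12}, \ref{nsel}, and the analogous lemma in Subsection \ref{clax}: first establish that $\mathcal R$ and $\mathcal R^+$ each preserve $M''$, and then compute their restrictions by combining a direct evaluation of a single coefficient with symmetry under $\mathfrak S_{n-1}$.

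For the preservation step, the specialization \eqref{spec} gives $\xi_{i+1}-\xi_i=\mu$ for $2\le i\le n-1$, which via the identity $\sigma_{-\mu}(z)=-\sigma_\mu(-z)$ forces
\[
\RR_{i+1,i}=\sigma_\mu(x_{i+1}-x_i)(1-\ss_{i,i+1}),
\]
and hence $\RR_{i+1,i}\,e''=0$. The affine Yang--Baxter relations of \cite[(3.1)(a)-(d)]{KH98}, applied to the $A_2$-subsystems $\{\epsilon_{i+1}-\epsilon_i,\,\epsilon_1-\epsilon_{i+1},\,\epsilon_1-\epsilon_i\}$ and $\{\epsilon_{i+1}-\epsilon_i,\,\epsilon_1+\epsilon_i,\,\epsilon_1+\epsilon_{i+1}\}$, allow $\RR_{i+1,i}$ to be transported from the left of $\mathcal R$ (respectively, past the adjacent pair $\RR_{1,i+1}^+\RR_{1,i}^+$ inside $\mathcal R^+$) all the way to the right, commuting freely with the remaining factors $\RR_{1,j}$, $\RR_{1,j}^+$ for $j\ne i,i+1$. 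Multiplying by $e''$ on the right collapses the whole expression to $0$, giving $(1-\ss_{i,i+1})\mathcal R\, e''=0$ and $(1-\ss_{i,i+1})\mathcal R^+ e''=0$. Since the $\ss_{i,i+1}$ with $2\le i\le n-1$ generate $\mathfrak S_{n-1}$, both operators preserve $M''=e''M$.

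To compute $\mathcal R|_{M''}$, I would expand $\mathcal R=\prod_{i=2}^n\bigl(\sigma_\mu(x_{1i})-\sigma_{\xi_{1i}}(x_{1i})\ss_{1i}\bigr)$. Each nontrivial expansion term has the shape $f(x)\,\ss_{1i_1}\cdots\ss_{1i_k}$ with $i_1<\cdots<i_k$; as in Lemma \ref{alem}(2), this product sends $1\mapsto i_k$, so it lies in the coset $\ss_{1i_k}\mathfrak S_{n-1}$ and contracts on the right against $e''$ to $f(x)\,\ss_{1i_k}e''$. Hence $\mathcal R e''=\sum_{j=1}^n C_j\ss_{1j}e''$, with $C_1=U$ coming from the empty subset. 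For $C_2$, the only $S$ with $\max S=2$ is $S=\{2\}$, and direct evaluation yields $C_2=-V_2$. The preservation $w\mathcal R e''=\mathcal R e''$ for $w\in\mathfrak S_{n-1}$ then forces $C_j=(C_2)^{\ss_{2j}}$ for $j\ge 3$, which after the simplification $\sigma_\mu(x_{j2})=\sigma_\mu(-x_{2j})$ recovers $-V_j$ exactly as claimed.

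The argument for $\mathcal R^+$ runs in parallel. Since the product defining $\mathcal R^+$ is in decreasing order of indices, each expansion term takes the form $\ss_{1i_1}^+\cdots\ss_{1i_k}^+$ with $i_1>\cdots>i_k$; tracing the image of $\epsilon_1$ through this signed permutation shows that $\epsilon_1\mapsto -\epsilon_{i_k}$, so the term lies in the coset $\ss_{1i_k}^+\mathfrak S_{n-1}$. The unique subset with $\min S=n$ is $\{n\}$, whose contribution evaluates directly to $-V_n^+$, and the remaining coefficients follow by $\mathfrak S_{n-1}$-symmetry through $(-V_n^+)^{\ss_{jn}}=-V_j^+$. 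The only point requiring genuine care --- and the sole real obstacle --- is to select the correct Yang--Baxter triple in the $\mathcal R^+$ case so that $\RR_{i+1,i}$ (rather than $\RR_{i,i+1}$) is the operator being transported, since only the former annihilates $e''$ under the specialization of $\xi$; once this is in place, the rest of the proof is essentially bookkeeping.
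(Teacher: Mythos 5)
Your treatment of $\mathcal R$ is essentially the paper's: the paper simply observes that the specialization \eqref{spec} puts the dynamical variables in the situation of Lemmas \ref{inv12} and \ref{nsel} and quotes those results verbatim, which is exactly the Yang--Baxter transport of $\RR_{i+1,i}$ plus the expand-one-coefficient-and-symmetrize computation you carry out. Where you genuinely diverge is in the $\mathcal R^+$ half. The paper does not redo any of the work there: it notes that $\mathcal R^+=(\omega^\vee\otimes\omega)\,\mathcal R\,(\omega^\vee\otimes\omega)^{-1}$ for the element $\omega\in W'$ of \eqref{omeg} sending $(x_1,x_2,\dots,x_n)\mapsto(x_1,-x_n,\dots,-x_2)$, so both the preservation of $M''$ and the explicit formula for $\mathcal R^+|_{M''}$ follow by conjugation, and the replacement of the spectral-parameter subscript $\xi_{12}$ by $\xi_{1n}^+$ drops out automatically from how $\omega$ permutes the dynamical variables. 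Your direct rerun of the argument buys independence from that observation at the cost of having to re-verify the Yang--Baxter triple $\{\epsilon_{i+1}-\epsilon_i,\ \epsilon_1+\epsilon_i,\ \epsilon_1+\epsilon_{i+1}\}$ and to redo the coset bookkeeping for signed transpositions; the conjugation route is shorter and less error-prone.

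One step in your $\mathcal R^+$ computation needs tightening. You identify the coset of $\ss_{1i_1}^+\cdots\ss_{1i_k}^+$ by tracing the image of $\epsilon_1$; but $\epsilon_1\mapsto-\epsilon_{i_k}$ only places this element in $\ss_{1i_k}^+W'$, where $W'$ is the full stabiliser of $\epsilon_1$ \emph{including sign changes} of $x_2,\dots,x_n$. Since $e''$ symmetrizes only over $\mathfrak S_{n-1}$, a residual sign change would not be absorbed, and the identity $\ss_{1i_1}^+\cdots\ss_{1i_k}^+e''=\ss_{1i_k}^+e''$ would fail. The claim is nevertheless true: writing $\ss_{1i}^+=\ss_1\ss_{1i}\ss_1$ gives $\ss_{1i_1}^+\cdots\ss_{1i_k}^+=\ss_1(\ss_{1i_1}\cdots\ss_{1i_k})\ss_1=\ss_1\ss_{1i_k}w\ss_1=\ss_{1i_k}^+w$ with $w\in\mathfrak S_{n-1}$ (using that $w$ commutes with $\ss_1$), so no sign change survives. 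Add that line and the argument is complete; alternatively, the conjugation by $\omega$ sidesteps the issue entirely.
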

\begin{proof}
Note that the dynamical variables satisfy $\xi_{i+1}-\xi_i=-\mu$ for $1<i<n$, so the statement about $\mathcal R$ is known already from Lemmas \ref{inv12}, \ref{nsel}. The result for $\mathcal R^+$ follows by observing that it can be obtained as $(\omega^\vee\otimes\omega)\mathcal R (\omega^\vee\otimes\omega)^{-1}$, where $\omega\in W'$ is the transformation \eqref{omeg}. Note that under this transformation, the dynamical variables change to $(\xi_1, -\xi_n, \dots, -\xi_2)$, so $\xi_{12}=\xi_1-\xi_2$ becomes $\xi_{1n}^+=\xi_{1}+\xi_n$.   
\end{proof}

Next, we restrict $Y_1$ further onto $M'=e'M$, as in Section \ref{clax}. From the above,
\begin{equation}\la{esplit} 
Y_1|_{M'}=(U-\sum_{i\ne 1}^n V_i{\ss}_{1i})\RR(2\epsilon_1)(U^+-\sum_{i\ne 1}^n V_i^+{\ss}_{1i}^+)\RR(\delta+2\epsilon_1)\,t(\epsilon_1)|_{M'}\,.
\end{equation}
The main step is to work out the restriction onto $M'$ of the product 
\begin{equation}\la{esplit1} 
(U-\sum_{i\ne 1}^n V_i{\ss}_{1i})%\RR(2\epsilon_1)
(v_{\nu, g}(x_1)-v_{\xi_1, g}(x_1){\ss}_1)(U^+-\sum_{i\ne 1}^n V_i^+{\ss}_{1i}^+)\,.
\end{equation}

\medskip
\begin{lemma}
The operator \eqref{esplit1} 
preserves the subspace $M'=e'M$, and its restriction onto $M'$ is given by $A+B{\ss}_1-\sum_{i\ne 1} (C_i{\ss}_{1i}+D_i{\ss}_{1i}^+)$
with 
 \begin{gather}
A=v_{\nu, g}(x_1)\prod_{l\ne 1}^n \sigma_\mu(x_{1l})\sigma_\mu(x_{1l}^+)\,,\qquad B=\alpha v_{\eta, g}(x_1)+\beta v_{\nu, g}(-x_1)\,,
\\
C_i=v_{\nu, g}(x_i)\sigma_{\xi_{12}}(x_{1i}) \sigma_\mu(x_{1i}^+)  \prod_{l\ne 1, i} \sigma_\mu(x_{il}) \sigma_\mu(x_{il}^+)
\,,
\\
D_i=(C_i)^{\ss_i}=v_{\nu, g}(-x_i)\sigma_{\xi_{12}}(x_{1i}^+) \sigma_\mu(x_{1i})  \prod_{l\ne 1, i} \sigma_\mu(-x_{li}^+) \sigma_\mu(x_{li})
\,, 
\intertext{where} 
\la{alpc}
\alpha=\left\{-1 + \sum_{i=1}^{n-1}
\frac
{ 
\sigma_{\xi_{12}}(-i/n)\sigma_{\eta+\nu}(i/n)} 
{
\sigma_\mu(i/n)^2}
\right\}
\prod_{l=1}^{n-1}\sigma_\mu(l/n)^2\,,
\\ \la{bet}
\beta=\sum_{i\ne 1} 
\sigma_{\xi_{12}}(x_{1i}) \sigma_{\eta+\nu}(x_{1i}^+)\prod_{l\ne 1, i} \sigma_\mu(x_{il})\sigma_\mu(x_{l1})
\,.
\end{gather}
In these formulas $\xi_1=\eta$ is a spectral parameter, $\xi_{12}=\eta+\nu+(n-2)\mu$. 
\end{lemma}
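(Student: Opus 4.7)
The plan follows the strategy of Lemma \ref{abcd}. First I would show that \eqref{esplit1} preserves $M'=e'M$ by writing \eqref{esplit1}$\,=Y_1\cdot[\RR(\delta+2\epsilon_1)\,t(\epsilon_1)]^{-1}$. By Lemma \ref{il} (in its $C^\vee C_n$ version) $Y_1$ preserves $M'$ under the specialisation \eqref{spec}, while $\RR(\delta+2\epsilon_1)\,t(\epsilon_1)=v_{\ov\nu,\ov g}(x_1+c/2)\,t(\epsilon_1)-v_{\xi_1,\ov g}(x_1+c/2)\,\ss_1$ commutes with every element of $W'=\mathfrak S_{n-1}\ltimes\{\pm 1\}^{n-1}$, since its coefficients depend only on $x_1$ and its group-like factors $\ss_1,\,t(\epsilon_1)$ act only in the $x_1$-slot. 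Hence \eqref{esplit1} preserves $M'$, and its restriction is determined by the coefficients on the $2n$ cosets of $W/W'$ represented by $\id,\ss_1,\ss_{1i},\ss_{1i}^+$ ($2\le i\le n$). The identity $\ss_i\ss_{1i}\ss_i=\ss_{1i}^+$ together with $W'$-invariance forces $D_i=(C_i)^{\ss_i}$.

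Next I would read off $A$ and $C_i$ directly from the expansion of \eqref{esplit1}, exactly as in Lemma \ref{abcd}. The only triple lying in the trivial coset is $U\cdot v_{\nu,g}(x_1)\cdot U^+$, reproducing $A$; the unique triple in $\ss_{1i}W'$ is $(-V_i\ss_{1i})\cdot v_{\nu,g}(x_1)\cdot U^+$, and the computation $(U^+)^{\ss_{1i}}=\sigma_\mu(x_{1i}^+)\prod_{l\ne 1,i}\sigma_\mu(x_{il}^+)$ reproduces $C_i$. The formula for $D_i$ follows by applying $\ss_i$.

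The real work is in computing $B$. The triples whose product lies in the coset $\ss_1 W'$ are of three types: (a) $U\cdot(-v_{\eta,g}(x_1)\ss_1)\cdot U^+$ (central factor supplies $\ss_1$); (b) $(-V_i\ss_{1i})\cdot v_{\nu,g}(x_1)\cdot(-V_i^+\ss_{1i}^+)$ with $i\ne 1$, using $\ss_{1i}\ss_{1i}^+=\ss_1\ss_i$ with $\ss_i\in W'$; (c) $(-V_i\ss_{1i})\cdot(-v_{\eta,g}(x_1)\ss_1)\cdot(-V_i^+\ss_{1i}^+)$, using $\ss_{1i}\ss_1\ss_{1i}^+=\ss_1$. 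After commuting reflections past the $\sigma$-factors, (a) produces a multiple of $v_{\eta,g}(x_1)$, (b) produces a sum of $v_{\nu,g}(x_i)$-terms, and (c) produces a sum of $v_{\eta,g}(x_i)$-terms. Regarded as a function of $x_1$ alone (with the other $x_j$ as parameters), the resulting $B$ must be quasi-elliptic; the apparent poles at $x_1=\pm x_l$ must cancel across the three contributions (since the full operator is defined on $M'$ and therefore regular in $x_1$ outside the half-periods), placing $B$ in the two-dimensional space of quasi-elliptic functions with simple poles only at the four $\omega_r$, which is spanned exactly by $v_{\eta,g}(x_1)$ and $v_{\nu,g}(-x_1)$.

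The main obstacle is extracting the explicit formulas for $\alpha$ and $\beta$. The coefficient $\beta$ is produced by (b): converting $v_{\nu,g}(x_i)$ via the duality \eqref{vsym} and matching the residue of $B$ at $x_1=-x_i$ yields \eqref{bet}. For $\alpha$ one must prove that the coefficient of $v_{\eta,g}(x_1)$ is \emph{independent} of $x_2,\dots,x_n$; I would establish this by extracting $\alpha$ as a residue of $B$ at $x_1=0$ and applying an elliptic Frobenius partial-fraction identity to collapse the remaining $x_2,\dots,x_n$-dependence into a constant. Specialising to the symmetric sample $x_l=(l-1)/n$ for $l=2,\dots,n$ (at which $\prod_{l\ne 1}\sigma_\mu(x_{1l})\sigma_\mu(x_{l1})$ reduces to $\prod_{l=1}^{n-1}\sigma_\mu(l/n)^2$ and the summands in (b), (c) telescope) then exhibits $\alpha$ in the closed form \eqref{alpc}.
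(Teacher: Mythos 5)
Your setup is sound: the reduction of the preservation of $M'$ to known facts, the identification of $A$, $C_i$, $D_i$, and the enumeration of the three types of triples (a), (b), (c) contributing to the coefficient of $\ss_1$ all match the intended argument. The gap is in how you pass from the raw expression for $B$ to the closed form $\alpha v_{\eta,g}(x_1)+\beta v_{\nu,g}(-x_1)$. Your key structural claim --- that the poles of $B$ at $x_1=\pm x_l$ cancel, so that $B$ lies in a two-dimensional space of quasi-elliptic functions of $x_1$ with poles only at the half-periods --- is false. Look at \eqref{bet}: $\beta$ contains the factors $\sigma_{\xi_{12}}(x_{1i})$ and $\sigma_{\eta+\nu}(x_{1i}^+)$, so $\beta$ (and hence $B$) genuinely depends on $x_1$ and has simple poles at $x_1=\pm x_i$ for every $i\ne 1$. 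Preservation of $M'$ is an algebraic statement about the $W'$-action and gives no regularity of the coefficients in $x_1$ (indeed $C_i$ and $D_i$ visibly have poles at $x_1=\pm x_i$). Moreover, all three contributions to $B$ carry the same multiplier $e^{4\pi\mathrm{i}\eta}$ under $x_1\mapsto x_1+\tau$, whereas $v_{\eta,g}(x_1)$ and $v_{\nu,g}(-x_1)$ have the distinct multipliers $e^{4\pi\mathrm{i}\eta}$ and $e^{-4\pi\mathrm{i}\nu}$; the two terms only combine consistently because $\beta$ is $x_1$-dependent. So both the dimension count and the residue-matching at $x_1=-x_i$ (which in any case presupposes the target form) break down.

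What is actually needed at this point is the explicit addition-type identity
\begin{equation*}
v_{\nu}(x_i)\sigma_{\eta-\nu}(x_{1i}^+)-v_{\eta}(x_i)\sigma_{\eta-\nu}(x_{1i})=v_{\nu}(-x_1)\sigma_{\eta+\nu}(x_{1i}^+)+v_{\eta}(x_1)\sigma_{\eta+\nu}(x_{i1})\,,
\end{equation*}
i.e. \cite[(2.8a)]{KH97} --- note this is not the symmetry \eqref{vsym} --- which converts the $v_{\nu}(x_i)$- and $v_{\eta}(x_i)$-sums termwise into multiples of $v_{\nu}(-x_1)$ and $v_{\eta}(x_1)$, yielding \eqref{bet} together with an a priori $x$-dependent expression for $\alpha$. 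The constancy of $\alpha$ then still has to be proved: the efficient route is to use the $W'$-invariance of $B$ (which does follow from the operator preserving $M'$), deduce the $W'$-invariance of $\beta$ by a residue-in-$\nu$ argument, hence that of $\alpha$, and combine this with the ellipticity of $\alpha$ in the $x$-variables and its manifest regularity along $x_1+x_i=0$ and $x_i+x_l=0$ to conclude that $\alpha$ is globally regular, hence constant; only then does evaluation at $x_l=l/n$ give \eqref{alpc}. Your final specialisation step is in the right spirit, but without the identity above and the symmetry argument the intermediate claims do not hold.
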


%%%%%%%%%%%%%

\begin{proof} Let us write $v_{\nu}(z)$ instead of $v_{\nu, g}(z)$ to simplify the notation. 
The coefficients $A$, $C_i$ and $D_i$ are calculated in the same way as in Proposition \ref{abcd}. 
For calculating $B$ we expand the product \eqref{esplit1} and collect the terms that reduce to $\ss_1$ when restricted on $M'$. 
It is easy to check that this happens only for the following choices of the terms in each of the factors: (1) $U$, $v_{\xi_1}(x_1)\ss_1$, $U^+$; (2)  $V_i\ss_{1i}$, $v_{\nu}(x_1)$, $V_i^+\ss_{1i}^+$; (3) $V_i\ss_{1i}$, $v_{\xi_1}(x_1)\ss_1$, $V_i^+\ss_{1i}^+$. As a result, $B$ is calculated from 
\begin{equation*}
B=-v_{\xi_1}(x_1)U(U^+)^{\ss_1}+\sum_{i\ne 1} v_\nu(x_i)V_i(V_i^+)^{\ss_{1i}}-\sum_{i\ne 1} v_{\xi_1}(x_i)V_i(V_i^+)^{\ss_{1i}\ss_1}\,,
\end{equation*}
resulting in
\begin{multline*}
B=-v_{\xi_1}(x_1)\prod_{l\ne 1} \sigma_\mu(x_{1l})\sigma_\mu(x_{l1})
\\
+\sum_{i\ne 1} 
\sigma_{\xi_{12}}(x_{1i}) 
\left\{v_\nu(x_i)
\sigma_{\xi_{1n}^+}(x_{1i}^+)
-v_{\xi_1}(x_i)
\sigma_{\xi_{1n}^+}(x_{1i})\right\} \prod_{l\ne 1, i} \sigma_\mu(x_{il})\sigma_\mu(x_{l1})\,.
\end{multline*}
Here $\xi_{12}=\eta+\nu+(n-2)\mu$, $\xi_{1n}^+=\eta-\nu$ by \eqref{spec}. We now use the following identity \cite[(2.8a)]{KH97} which follows from the addition formulas for $\sigma_\mu(z)$:
\begin{equation*}
v_{\nu}(x_i)\sigma_{\eta-\nu}(x_{1i}^+)-v_{\eta}(x_i)\sigma_{\eta-\nu}(x_{1i})=v_{\nu}(-x_1)\sigma_{\eta+\nu}(x_{1i}^+)+v_{\eta}(x_1)\sigma_{\eta+\nu}(x_{i1})\,.
\end{equation*}
Using this in the previous formula leads to $B=\alpha v_{\eta}(x_1)+\beta v_{\nu}(-x_1)$, with  
\begin{equation}
\la{alpc1}
\alpha=-\prod_{l\ne 1} \sigma_\mu(x_{1l})\sigma_\mu(x_{l1}) + \sum_{i\ne 1} 
\sigma_{\xi_{12}}(x_{1i})\sigma_{\eta+\nu}(x_{i1})\prod_{l\ne 1, i} \sigma_\mu(x_{il})\sigma_\mu(x_{l1})\,,
\end{equation}
and with $\beta$ as in \eqref{bet}.
To see why $\alpha$ is, in fact, constant in $x$, we use symmetry arguments. Indeed, the operator given in the lemma must commute with the action of any $w\in W'$. Therefore, the coefficient $B$ must be invariant under signed permutations of $x_2, \dots, x_n$. Now view $B$ as a function of the parameters $\eta, \nu, \mu$. Then the residue of $B$ at $\nu=0$ is given by
\begin{equation*}
\beta|_{\nu=0}=\sum_{i\ne 1} 
\sigma_{\eta+(n-2)\mu}(x_{1i}) \sigma_{\eta}(x_{1i}^+)\prod_{l\ne 1, i} \sigma_\mu(x_{il})\sigma_\mu(x_{l1})
\,.
\end{equation*} 
Thus, this also must be $W'$-invariant for all $\eta, \mu$. Obviously, replacing in this expression $\eta$ by $\eta+\nu$ gives us back $\beta$, hence $\beta$ is $W'$-invariant, and so must be $\alpha$.

Now, from the formula for $\alpha$ it is easy to see that it is elliptic function of $x_i$, regular at $x_1+x_i=0$ and $x_i+x_l=0$ with $i,l>1$. By $W'$-symmetry, it follows that $\alpha$ is also regular at hyperplanes $x_1-x_i=0$ and $x_i-x_l=0$. As a result, $\alpha$ is globally regular, so is a constant (depending on $\eta$, $\mu$, $\nu$). It can now be evaluated by setting $x_l=l/n$, $1\le l\le n$ in \eqref{alpc1}, which leads to the expression \eqref{alpc}.
\end{proof}

\begin{remark} We see from the proof that $\beta$ is $W'$-invariant. This is easy to confirm for $n=1$ when $B=-v_{\xi_1}(x_1)$, and for $n=2$, in which case $\xi_{12}=\eta+\nu$ so we have 
\begin{equation*}
\alpha=%-\sigma_\mu(x_{12})\sigma_\mu(x_{21}) + \sigma_{\eta+\nu}(x_{12})\sigma_{\eta+\nu}(x_{21})=
-\wp(\mu)+\wp(\eta+\nu)\,,\quad
\beta= \sigma_{\eta+\nu}(x_{12}) \sigma_{\eta+\nu}(x_{12}^+)\,.
\end{equation*} 
However, for $n>2$ such a symmetry is not obvious from the formulas. Note that by this symmetry $\beta$ does not have poles at $x_i-x_l=0$ for $i, l>1$. %Unfortunately, we have not been able to rewrite $B$ in the same way as in the trigonometric case to make its symmetry manifest. 
%In a special case when $\xi_{12}^+=\mu$, we have 
\end{remark}

To write down the Lax matrix, we use the same notation as in Proposition \ref{lmct}, namely, extend the set of vectors $\epsilon_i$ and variables $x_i$ to the range $1\le i \le 2n$ by setting $\epsilon_{i+n}=-\epsilon_i$ and $x_{n+i}=-x_i$ for $1\le i\le n$. 
Denote by $\mathcal P, \mathcal Q$ the following $2n\times 2n$ matrices:
\begin{align*}
\mathcal P_{ij}&=-v_{\nu, g}(x_j)\sigma_{\xi_{12}}(x_{ij})\sigma_{\mu}(x_{ij}^+)  
\prod_{l=1}^{2n}{\vphantom{\prod}}'
\sigma_\mu(x_{jl})\quad(i-j\ne 0, \pm n)\,,\qquad
\mathcal P_{ii}=v_{\nu, g}(x_i)\prod_{l=1}^{2n}{\vphantom{\prod}}' \sigma_\mu(x_{il})\,,\\
&\mathcal P_{i, n+i}=\alpha v_{\eta, g}(x_i)+\beta^{\ss_{1i}}v_{\nu, g}(-x_i)\,,\quad 
\mathcal P_{n+i, i}=\alpha v_{\eta, g}(-x_i)+\beta^{\ss_{1i}^+}v_{\nu, g}(x_i)\qquad (1\le i \le n)\,,
\\
\mathcal Q_{ii}&={v}_{\ov{\nu}, \ov{g}}(x_i+c/2)\,t(\epsilon_i)\,,\qquad 
\mathcal Q_{ij}=-{v}_{\xi_1, \ov{g}}(x_i+c/2)\quad (i-j=\pm n)\,,\qquad
\mathcal Q_{ij}=0\quad (i-j\ne 0, \pm n)\,.
\end{align*}
%In these formulas $a, b, u, v, \widetilde u, \widetilde v$ are the functions \eqref{ab}, \eqref{uv1}--\eqref{uv2}, and 
Here $\alpha, \beta$ are given by \eqref{alpc}, \eqref{bet}, and the symbol $\prod{\vphantom{\prod}}'$ in the formula for $\mathcal P_{ij}$ indicates that we exclude those values of $l$ where either $l-i$ or $l-j$ equals $0, \pm n$ (e.g., two values are excluded if $i=j$).
Explicitly, we have
\begin{gather*}
\beta^{\ss_{1i}}=\sum_{j: j\ne i}^n
\sigma_{\xi_{12}}(x_{ij}) \sigma_{\eta+\nu}(x_{ij}^+)\prod_{l\ne i, j} \sigma_\mu(x_{jl})\sigma_\mu(x_{li})\,,
\\
\beta^{\ss_{1i}^+}=\sum_{j: j\ne i}^n 
\sigma_{\xi_{12}}(-x_{ij}^+) \sigma_{\eta+\nu}(x_{ji})\prod_{l\ne i, j} \sigma_\mu(x_{jl})\sigma_\mu(x_{li}^+)
\,.
\end{gather*}

\begin{prop} 
The quantum Lax matrix $\mathcal L$ for the elliptic van Diejen system is $\mathcal L=\mathcal P\mathcal Q$. It satisfies the quantum Lax equation \eqref{qleq} for every quantum Hamiltonian $\widehat H=L^b$ of the van Diejen system and suitable $\mathcal A$. The classical Lax matrix is $L=PQ$ where $P=\mathcal P$, while $Q$ is obtained from $\mathcal Q$ by setting $c=0$ and replacing $t(\epsilon_i)$ with $e^{\beta p_i}$ (with $p_{n+i}=-p_i$ for $1\le i\le n$). The matrix $L$ deforms isospectrally under each of the Hamiltonian flows of the classical van Diejen system. As a result, the functions $\tr L^k$ are in involution.   
\end{prop}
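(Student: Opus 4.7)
The plan is to assemble the proposition from the general machinery of Sections \ref{eqcase}, \ref{rai} together with the explicit calculation of $\mathcal L=\mathcal P\mathcal Q$ carried out in \ref{calvd}. First I would record what $\mathcal L$ actually is: by construction $\mathcal P$ is the matrix representing the action, on $M'=e'M$, of $\mathcal R\,\RR(2\epsilon_1)\,\mathcal R^+$ after restriction via the preceding lemma, while $\mathcal Q$ represents the action of $\RR(\delta+2\epsilon_1)t(\epsilon_1)$; their product $\mathcal P\mathcal Q$ therefore represents $Y^{\epsilon_1}$ on $M'$, with the dynamical variable specialised as in \eqref{spec} so that $\eta$ becomes a spectral parameter. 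This uses Lemma \ref{il} (adapted to the $C^\vee C_n$ case) which guarantees that $Y^{\epsilon_1}$ preserves $M'$ once $\xi=-\rho_m+\eta\epsilon_1$.

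Next, for each quantum van Diejen Hamiltonian $\widehat H=L^{b_k}$, I would apply Proposition \ref{celclqh}(iv) to the dual operator $L^{\vee}_k$, writing
\begin{equation*}
L^{\vee}_{k}(\xi,\YH)=L^{b_k}+\sum_{0\le l<k}a_{kl}L^{b_l}+\widehat A
\end{equation*}
for some $\widehat A\in\D_q*W$ whose classical limit is a constant (the constant can be absorbed into $\widehat A$). Since all Cherednik operators $\YH^b$ pairwise commute (Remark \ref{ycom} together with the $C^\vee C_n$-analogue of Proposition \ref{prophat}), $Y^{\epsilon_1}$ commutes with the left-hand side, hence with $L^{b_k}+\widehat A$. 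By the $C^\vee C_n$-version of Lemma \ref{il} (for $\lambda=\epsilon_1$, where the condition $\dpr{a_i^\vee,\xi}=-m_i$ for $i\ge 2$ is exactly \eqref{spec}), both $Y^{\epsilon_1}$ and $L^{\vee}_k(\xi,\YH)$ preserve $M'$, so $\widehat A$ does as well. Restricting to $M'$ produces matrices $\mathcal L,\mathcal H=\widehat H\mathbb 1,\mathcal A$ of size $2n$ satisfying $[\mathcal L,\mathcal H+\mathcal A]=0$, which is \eqref{qleq}.

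For the classical limit I would follow \ref{climq} verbatim: writing $c=-\mathrm i\hbar\beta$, the classical limit of $\widehat A$ is a constant scalar matrix (which commutes with $\mathcal L$ and may be discarded), so $(\mathrm i\hbar)^{-1}\mathcal A$ admits a well-defined classical limit $A$, and passing to the classical limit in \eqref{qleq} yields $\{L,H\}=[A,L]$, where $L=PQ$ is obtained from $\mathcal L$ by replacing $t(\epsilon_j)$ with $e^{\beta p_j}$ and setting $c=0$. Therefore each Hamiltonian flow of the van Diejen system induces an isospectral deformation of $L$.

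Finally, for involution, I would argue exactly as in the proof of the proposition preceding Corollary \ref{corinoz}: isospectrality of $L$ under every commuting flow implies $\{h_k,L^{b_l}_c\}=0$ for all $k,l$, where $h_k=\tr L^k$. Since the $n$ classical van Diejen Hamiltonians $L^{b_1}_c,\dots,L^{b_n}_c$ are functionally independent and a Poisson-commutative subalgebra of $\c(V\times V)$ cannot exceed $n$ independent elements, each $h_k$ is a function of $L^{b_1}_c,\dots,L^{b_n}_c$, whence $\{h_k,h_l\}=0$. The main obstacle in this plan is really the input from \ref{rai}: the existence of the higher commuting operators $L_k$ with the residue properties needed for Proposition \ref{celclqh} relies on Rains' Theorem \ref{rath}, and one has to check that those residue conditions (together with Lemmas \ref{lemmay}, \ref{lemmayc}) suffice to deduce regularity of $L^{\vee}_k(\xi,\YH)$ near $\xi=0$ in the $C^\vee C_n$ setting; everything else then follows from the general scheme already worked out for the $\GL_n$ case.
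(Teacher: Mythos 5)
Your proposal is correct and reconstructs essentially the same argument the paper distributes across \S\,\ref{regu1}--\ref{calvd}: the explicit computation identifying $\mathcal P\mathcal Q$ with the action of $Y^{\epsilon_1}$ on $M'$ under the specialisation \eqref{spec}, the decomposition $L^\vee_k(\xi,\YH)=L^{b_k}+\sum_{l<k}a_{kl}L^{b_l}+\widehat A$ from Proposition \ref{celclqh}(iv) combined with the commutativity of the $\YH^b$ and the $C^\vee C_n$-version of Lemma \ref{il}, the classical limit via subtracting the constant from $\widehat A$ before dividing by $\mathrm i\hbar$, and the involution argument copied from the proposition preceding Corollary \ref{corinoz}. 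The only cosmetic point is that the Lax equation you obtain directly is for the combination $L^{b_k}+\sum_{l<k}a_{kl}L^{b_l}$ rather than for $L^{b_k}$ alone, but the triangular structure lets you peel off the lower terms inductively, exactly as the paper implicitly does.
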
 

\begin{remark} Let us remark on how one can calculate a Lax partner for the above $\mathcal L$. Let us look back at the calculation for the $\GL_n$-case in Subsection \ref{arelel}. According to Proposition \ref{elclq}, the Lax partner $\mathcal A$ in that case can be found by considering 
\begin{equation*}
\sum_{i=1}^n \prod_{j\ne i}^n \sigma_\mu(\xi_i-\xi_j) \YH^{\epsilon_i}=\sum_{i=1}^n  Y^{\epsilon_i}\,.
\end{equation*}  
The reason why in Subsection \ref{arelel} we used $Y^{\epsilon_1}+Y^{\epsilon_2}$ is that all other terms $Y^{\epsilon_j}$ vanish after specialising $\xi$ and restricting onto $M'$. 
By a similar reasoning, in constructing a Lax partner for the van Diejen system %instead of using $\sum_{i=1}^n (Y^{\epsilon_i}+ Y^{-\epsilon_i})$ 
one can use $Y^{\epsilon_1}+Y^{\epsilon_2}+Y^{-\epsilon_1}$. 
\end{remark}

%%%%%%%%%%%%%%%%%

\end{document}